\documentclass[11pt,reqno]{amsart}

\usepackage{amsmath,amssymb,amsfonts,amsthm,latexsym,graphicx,multirow,hyperref,enumerate,tikz,lmodern}
\usepackage[utf8]{inputenc}
\usetikzlibrary{calc}

\oddsidemargin=0.2in
\evensidemargin=0.2in
\topmargin=-0.2in
\textwidth=15.8cm
\textheight=23cm 

\newcommand{\Sym}{\mathrm{Sym}}
\newcommand{\Alt}{\mathrm{Alt}}
\newcommand{\Aut}{\mathrm{Aut}}
\newcommand{\Inn}{\mathrm{Inn}}
\newcommand{\Out}{\mathrm{Out}}
\newcommand{\Cay}{\mathrm{Cay}}
\newcommand{\Fix}{\mathrm{Fix}}
\newcommand{\rs}{\mathrm{+}}
\newcommand{\ls}{\mathrm{-}}
\newcommand{\HH}{\mathrm{H}}

\newcommand{\mg}{}
\newcommand{\DRR}{\text{DRR}}
\newcommand{\GRR}{\text{GRR}}
\newcommand{\Dic}{\textup{Dic}}
\newcommand{\D}{\mathrm{D}}
\newcommand{\DMSR}{\text{D$m$SR}}
\newcommand{\DMSRS}{\text{D$m$SRs}}
\newcommand{\GMSR}{\text{G$m$SR}}
\newcommand{\GMSRS}{\text{G$m$SRs}}
\newcommand{\HGR}{\mathcal{HGR}}
\newcommand{\UPHA}{\textup{HA}}
\newcommand{\UPHS}{\textup{HS}}
\newcommand{\UPHC}{\textup{HC}}
\newcommand{\UPSD}{\textup{SD}}
\newcommand{\UPCD}{\textup{CD}}
\newcommand{\UPTW}{\textup{TW}}
\newcommand{\UPAS}{\textup{AS}}
\newcommand{\UPPA}{\textup{PA}}
\newcommand{\Soc}{\mathrm{Soc}}
\newcommand{\PSL}{\mathrm{PSL}}
\newcommand{\AGL}{\mathrm{AGL}}
\newcommand{\M}{\mathrm{M}}

\newcommand{\Ker}{\mathrm{Ker}}

\newcommand{\odd}{\mathrm{odd}}

\newtheorem{theorem}{Theorem}[section]
\newtheorem{lemma}[theorem]{Lemma}
\newtheorem{proposition}[theorem]{Proposition}
\newtheorem{corollary}[theorem]{Corollary}
\newtheorem{problem}[theorem]{Problem}

\theoremstyle{definition}

\newtheorem{definition}[theorem]{Definition}
\newtheorem{notation}[theorem]{Notation}

\newtheorem{remark}[theorem]{Remark}

\begin{document}

\title[HGR]{Asymptotic enumeration of Haar graphical representations}

\author{Yunsong Gan}
\address{(Gan) School of Mathematics and Statistics\\Central South University\\Changsha, Hunan, 410083\\P.R. China}
\email{songsirr@126.com}

\author{Pablo Spiga}
\address{(Spiga) Dipartimento di Matematica e Applicazioni\\University of Milano-Bicocca\\Via Cozzi 55, 20125 Milano\\Italy}
\email{pablo.spiga@unimib.it}

\author{Binzhou Xia}
\address{(Xia) School of Mathematics and Statistics\\The University of Melbourne\\Parkville, VIC 3010\\Australia}
\email{binzhoux@unimelb.edu.au}


\begin{abstract}
This paper represents a significant leap forward in the problem of enumerating vertex-transitive graphs. Recent breakthroughs on symmetry of Cayley (di)graphs show that almost all finite Cayley (di)graphs have the smallest possible automorphism group. Extending the scope of these results, we enumerate (di)graphs admitting a fixed semiregular group of automorphisms with $m$ orbits. Moreover, we consider the more intricate inquiry of prohibiting arcs within each orbit, where the special case $m=2$ is known as the problem of finding Haar graphical representations (HGRs). We significantly advance the understanding of HGRs by proving that the proportion of HGRs among Haar graphs of a finite nonabelian group approaches $1$ as the group order grows. As a corollary, we obtain an improved bound on the proportion of DRRs among Cayley digraphs in the solution of Morris and the second author to the Babai-Godsil conjecture.

\textit{Key words:} Haar graph; Cayley graph; automorphism groups; asymptotic enumeration

\textit{MSC2020:} 20B25, 05E18, 05C30
\end{abstract}

\maketitle

\section{Introduction}\label{SEC1}

A \emph{Haar graph} of a group $G$ is a bipartite graph whose automorphism group has a subgroup isomorphic to $G$ that is semiregular on the vertex set with orbits giving a bipartition. For such a graph, we may identify the vertex set with $G\times\{1,2\}$ such that the parts of the bipartition are $G\times\{1\}$ and $G\times\{2\}$. Therefore, every Haar graph of $G$ can be determined uniquely by a subset $S$ of $G$ such that $(g,1)$ and $(h,2)$ are adjacent if and only if $hg^{-1}\in S$. We denote this Haar graph by $\HH(G,S)$. Introduced initially in~\cite{HMP2002},  Haar graphs have been studied extensively by several authors from different viewpoints~\cite{CEP2018,Dobsen2022,DFS2020,EP2016,FKWY2020,FKY2020,KK2014}.

A Haar graph of $G$ is called a \emph{Haar graphical representation} (HGR), if its automorphism group is isomorphic to $G$. When $G$ is abelian, an easy observation (see Definition~\ref{DEF005}) shows that, for each Haar graph of $G$, there exists an automorphism $\iota$ of order two such that the group $G\rtimes\langle\iota\rangle$  acts regularly on the vertex set, transforming each Haar graph into a Cayley graph. Hence no Haar graph of an abelian group is an HGR. It is then natural to ask: Which groups permit HGRs? In fact, this problem has been posed for finite groups in relevant research, for example~\cite{DFS2020,FKWY2020}.
Recently, Morris and the second author~\cite{MS2024} have classified finite groups admitting an HGR. They proved that, except for abelian groups and $22$ small groups, every finite group admits an HGR. This leads to the following problem.

\begin{problem}\label{PROB001}
For a finite nonabelian group $G$, count the HGRs of $G$.
\end{problem}

The first result of this paper addresses Problem~\ref{PROB001} by giving an upper bound on the number of subsets $S$ of a  nonabelian group $G$ such that $\HH(G,S)$ is not an HGR. For completeness, we also establish a similar result for abelian groups $G$ such that the automorphism group of $\HH(G,S)$ is not isomorphic to $G\rtimes\langle\iota\rangle$.

\begin{theorem}\label{THM001}
Let $\varepsilon\in\left(0,0.1\right]$, and let $n_\varepsilon$ be a positive integer such that for all $n\geq n_\varepsilon$,
\begin{equation}\label{eq032}
(6+2\log_2n)(n^{0.5-\varepsilon}+\log_2n)^2+(1-\log_2n)(n^{0.5-\varepsilon}+\log_2n)+\log_2^2n+2\log_2n<n^{1-\varepsilon}.
\end{equation}
Let $G$ be a finite group of order $n$, and let $f_\varepsilon(n)=\frac{n^{0.5-\varepsilon}}{24(\log_2n)^{2.5}}-\frac{3\log_2^2n}{4}-15$.
\begin{enumerate}[{\rm(a)}]
\item\label{THM001.1} If $G$ is nonabelian, then the number of subsets $S$ of $G$ such that $\HH(G,S)$ is not an HGR is less than $2^{n-f_\varepsilon(n)}$.
\item\label{THM001.2} If $G$ is abelian, then the number of subsets $S$ of $G$ such that the automorphism group of $\HH(G,S)$ is not isomorphic to $G\rtimes\langle\iota\rangle$ is less than $2^{n-f_\varepsilon(n)-1}$.
\end{enumerate}
\end{theorem}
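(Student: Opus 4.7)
The plan is to bound the number of ``bad'' subsets $S \subseteq G$ (those for which $\HH(G,S)$ has unwanted automorphisms) by isolating a small family of candidate extra graph automorphisms and estimating, for each candidate, the number of compatible subsets.

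I would begin with a structural reduction. Writing $A = \Aut(\HH(G,S))$, the group $A$ contains the left regular representation $G_L$ of $G$, which acts semiregularly with the bipartition as orbits. If $A > G_L$ (resp.\ $A > G_L \rtimes \langle\iota\rangle$ in the abelian case), then $A$ either has a nontrivial vertex stabilizer or contains a bipartition-swapping automorphism beyond $\iota$. By composing with a suitable left translation, one may assume that a nontrivial extra automorphism $\phi$ fixes $(1,1)$. The adjacency condition of $\HH(G,S)$ then translates $\phi$ into a pair of bijections $\pi_1,\pi_2 \colon G \to G$ satisfying $\pi_1(1) = 1$ (in the bipartition-preserving case) together with a compatibility relation linking $\pi_1$, $\pi_2$, and $S$. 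Thus we are reduced to counting subsets $S$ admitting at least one such pair.

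The next step is to control the pool of candidate pairs $(\pi_1,\pi_2)$. The dominant source of candidates is the normalizer of $G_L$ inside $\Sym(G \times \{1,2\})$, which on each part is $G_L \rtimes \Aut(G)$, giving at most $n \cdot |\Aut(G)| \le n^{1+\log_2 n}$ candidates per part. Additional ``exceptional'' candidates come from overgroups of $G_L$ not normalizing $G_L$, and are classified via the CFSG-based techniques analogous to those used for Cayley graphs by Morris and the second author. In total, the candidate set $\mathcal{C}$ satisfies $|\mathcal{C}| \le n^{O(\log n)}$. For each candidate $(\pi_1,\pi_2)$, the number of subsets $S$ compatible with it is at most $2^{c(\pi_2)}$, where $c(\pi_2)$ is the number of cycles of $\pi_2$ on $G$; the structural analysis ensures $c(\pi_2) \le n - 2 f_\varepsilon(n) - O(\log^2 n)$ for nontrivial candidates. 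Summing via a union bound over $\mathcal{C}$, and invoking the arithmetic inequality~\eqref{eq032} to absorb the $n^{O(\log n)}$ factor, yields the bound $2^{n-f_\varepsilon(n)}$ in~(a); part~(b) follows by the same argument, with the extra $-1$ in the exponent reflecting the always-present $\iota$-symmetry that must be factored out.

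The main obstacle is ensuring the quantitative lower bound on the support of a nontrivial candidate permutation. Unlike in the Cayley setting, where ``extra'' automorphisms project to elements of $\Aut(G)$ with well-understood fixed-point behaviour, Haar graphs admit bipartition-swapping automorphisms that interact nontrivially with inversion on $G$, and one must rule out small-support exceptions case-by-case, using the structural results on $\Aut(\HH(G,S))$ developed earlier in the paper. The interplay between candidate abundance and cycle deficit is exactly what is encoded in~\eqref{eq032}, and determines the precise form $f_\varepsilon(n) \sim n^{1/2-\varepsilon}/\log^{5/2}n$.
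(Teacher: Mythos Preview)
Your outline has the right spirit but leaves the central difficulty unaddressed. The claim that the candidate set of pairs $(\pi_1,\pi_2)$ has size at most $n^{O(\log n)}$ is not justified and is, in the form stated, false: once $\Aut^+(\HH(G,S))$ is larger than $G$ there is no a priori reason for the vertex stabilizer to sit inside the normalizer of $G$, and the ``exceptional'' candidates you defer to CFSG are not a small correction but the entire problem. Likewise, the asserted cycle-count bound $c(\pi_2)\le n-2f_\varepsilon(n)-O(\log^2 n)$ for every nontrivial candidate has no proof sketch; nothing in the Haar-graph setting forces a single extra automorphism to move that many points.

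The paper's route is structurally different. Rather than enumerating individual automorphisms, it enumerates \emph{overgroups} $M$ with $G$ maximal in $M\le\Aut^+(\HH(G,S))$, and for each such $M$ bounds the number of compatible $S$ by the double-coset count $|M_{1_\rs}\backslash M/M_{1_\ls}|$ (Lemma~\ref{LEM009}), not by a cycle count. The overgroups are then stratified: those with $|M|\le 2^{n^{0.5-\varepsilon}+\log_2 n}$ are counted via Lubotzky's enumeration of $d$-generated groups (Lemma~\ref{LEM013}, which is precisely where inequality~\eqref{eq032} enters); those with large $|M|$ and nontrivial $\mathrm{Core}_M(G)$ are handled by a Babai--Godsil-type reduction plus an odd-quotient descent (Propositions~\ref{PROP001},~\ref{PROP003}, Lemma~\ref{LEM021}); and the \emph{critical} case, with large $|M|$ and trivial core, is analysed by observing that $M$ is then primitive on $[M{:}G]$ with the two vertex-stabilizers $M_{1_\rs},M_{1_\ls}$ as regular subgroups of order exceeding $2^{|G|^{0.5-\varepsilon}}$, and running through the O'Nan--Scott types one by one (Section~\ref{SEC4}). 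Finally, the bipartition-swapping automorphisms in the nonabelian case are treated separately via an involution count (Lemma~\ref{LEM022}). None of this architecture is visible in your sketch; in particular the role of~\eqref{eq032} is not merely to absorb a polylog factor but to make the Lubotzky count go through.
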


%

Fix some $\varepsilon\in\left(0,0.1\right]$. Note that the left-hand side of~\eqref{eq032} is approximately $2\log_2n\cdot n^{1-2\varepsilon}$, which is smaller than $n^{1-\varepsilon}$ for sufficiently large $n$. Hence there exists an integer $n_\varepsilon$ such that~\eqref{eq032} holds for all $n\geq n_\varepsilon$. Since a group of order $n$ has exactly $2^n$ subsets, Theorem~\ref{THM001}\eqref{THM001.1} yields that, for a nonabelian group $G$ of order $n$ satisfying~\eqref{eq032}, the proportion of subsets $S$ of $G$ such that $\HH(G,S)$ is an HGR is larger than
\begin{equation}\label{eq004}
1-2^{-\frac{n^{0.5-\varepsilon}}{24(\log_2n)^{2.5}}+\frac{3\log_2^2n}{4}+15}.
\end{equation}
Since~\eqref{eq004} approaches $1$ as $n$ tends to infinity, this reveals that almost all Haar graphs of a finite nonabelian group are HGRs. Similarly, Theorem~\ref{THM001}\eqref{THM001.2} implies that almost all Haar graphs of a finite abelian group have automorphism group isomorphic to $G\rtimes\langle\iota\rangle$.

We remark that the graphs in Theorem~\ref{THM001} are labelled. For comparison, we offer an unlabeled  version as follows.

\begin{theorem}\label{THM005}
Let $\varepsilon\in\left(0,0.1\right]$, and let $n_\varepsilon$ be a positive integer such that~\eqref{eq032} holds for all $n\geq n_\varepsilon$. Let $G$ be a finite group of order $n$, and let
$h_\varepsilon(n)=\frac{n^{0.5-\varepsilon}}{24(\log_2n)^{2.5}}-\log_2^2(2n)-\frac{3\log_2^2n}{4}-2\log_2n-15$.
\begin{enumerate}[{\rm(a)}]
\item\label{THM005.1} If $G$ is nonabelian, then the proportion of HGRs of $G$ among Haar graphs of $G$, up to isomorphism, is greater than $1-2^{-h_\varepsilon(n)}$.
\item\label{THM005.2} If $G$ is abelian, then the proportion of Haar graphs of $G$ whose automorphism group is isomorphic to $G\rtimes\langle\iota\rangle$, among Haar graphs of $G$, up to isomorphism, is greater than $1-2^{-h_\varepsilon(n)}$.
\end{enumerate}
\end{theorem}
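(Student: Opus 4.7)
The plan is to lift Theorem~\ref{THM001} from labeled subsets to isomorphism classes of Haar graphs by bounding the size of each isomorphism class. I focus on part~\eqref{THM005.1}; part~\eqref{THM005.2} is analogous, with the regular subgroup $G\rtimes\langle\iota\rangle$ on $V:=G\times\{1,2\}$ playing the role of $G$.

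First I would bound the iso class size $N(\Gamma)$ for a good Haar graph $\Gamma=\HH(G,S)$. Since $\Gamma$ is an HGR, $\Aut(\Gamma)=G$ as a semiregular subgroup of $\Sym(V)$; any graph isomorphic to $\Gamma$ is itself an HGR, and any isomorphism $\phi:\Gamma\to\HH(G,T)$ conjugates the two copies of $G$ and so lies in $N_{\Sym(V)}(G)$. Counting pairs $(T,\phi)$ over $T$ in the iso class of $\Gamma$ yields $N(\Gamma)\cdot n\leq|N_{\Sym(V)}(G)|$. A direct calculation using the standard identities $N_{\Sym(G)}(G_R)=G_R\rtimes\Aut(G)$ and $C_{\Sym(G)}(G_R)=G_L$, together with a factor~$2$ for the swap of the two parts of the bipartition, gives $|N_{\Sym(V)}(G)|=2n^2|\Aut(G)|$. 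Combined with the classical bound $|\Aut(G)|\leq n^{\log_2n}$, this produces $N(\Gamma)\leq 2n|\Aut(G)|\leq 2^{1+\log_2n+\log_2^2n}$.

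Next I would combine this with Theorem~\ref{THM001}\eqref{THM001.1}. Since at most $2^{n-f_\varepsilon(n)}$ subsets are bad, at least $2^{n-1}$ subsets are good, and the HGR iso classes (which partition the good subsets) number at least $2^{n-2-\log_2n-\log_2^2n}$. The bad iso classes number at most $2^{n-f_\varepsilon(n)}$, so the proportion of bad iso classes is at most $2^{2+\log_2n+\log_2^2n-f_\varepsilon(n)}$. Writing $h_\varepsilon(n)=f_\varepsilon(n)-\log_2^2(2n)-2\log_2n=f_\varepsilon(n)-1-4\log_2n-\log_2^2n$, this last bound is at most $2^{-h_\varepsilon(n)}$ whenever $2+\log_2n\leq 1+4\log_2n$, i.e., for all $n\geq 2$.

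The main obstacle is the iso class bound: the normalizer $N_{\Sym(V)}(G)$ must be computed precisely enough to expose the factor $|\Aut(G)|$ and to match the $\log_2^2(2n)$ margin in $h_\varepsilon(n)$. Part~\eqref{THM005.2} is easier: since $G\rtimes\langle\iota\rangle$ is regular on $V$, the standard formula $|N_{\Sym(V)}(H)|=|H|\cdot|\Aut(H)|$ for regular subgroups $H$ yields $N(\Gamma)\leq|\Aut(G\rtimes\langle\iota\rangle)|\leq 2^{\log_2^2(2n)}$ for each good $\Gamma$, and the remaining arithmetic mirrors part~\eqref{THM005.1}, with the additional $-1$ in the exponent of Theorem~\ref{THM001}\eqref{THM001.2} providing the necessary slack.
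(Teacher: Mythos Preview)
Your proposal is correct and follows essentially the same strategy as the paper: bound the size of each isomorphism class of ``good'' Haar graphs (the paper does this in Lemma~\ref{LEM023}, obtaining $2^{\log_2^2 n+2\log_2 n+1}$ in the nonabelian case and $2^{\log_2^2(2n)+2\log_2 n+1}$ in the abelian case), then combine with Theorem~\ref{THM001} via the same ratio arithmetic. Your normalizer argument for the iso-class bound is in fact a cleaner packaging of the paper's explicit $(\epsilon,x,y,\psi)$ count and even saves a factor of $n$, which is why your final inequality $2+\log_2 n\leq 1+4\log_2 n$ has slack to spare.
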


We now discuss the roots of Theorem~\ref{THM001} and present other results. The study of groups represented as the automorphism groups of (di)graphs of the same order commences with classical problems known as DRR and GRR problems. Let $G$ be a finite group and $S$ a subset of $G$. A \emph{Cayley digraph} $\Cay(G,S)$ of $G$ with \emph{connection set} $S\subseteq G$ is a digraph with vertex set $G$ such that $(g,h)$ is an arc if and only if $hg^{-1}\in S$. A Cayley digraph $\Cay(G,S)$ is a graph if and only if $S$ is inverse-closed. We call a Cayley (di)graph of $G$ a \emph{(di)graphical regular representation}, or GRR (DRR) for short, if its automorphism group is isomorphic to $G$.

A natural problem is to determine which finite groups admit a DRR or GRR. In~1980, Babai~\cite{Babai1980} proved that $C_2^2$, $C_2^3$, $C_2^4$, $C_3^2$ and $Q_8$ are the only groups without DRRs. Based on a series of partial results (see~\cite{Imrich1978,NW1972}, for example), the problem for GRRs was eventually solved by Godsil~\cite{Godsil1981} in 1981: Apart from abelian groups of exponent greater than~$2$, generalized dicyclic groups (see Section~\ref{SUBSEC7.2} for definition), and $13$ small solvable groups, every finite group admits a GRR. Following this, Babai and Godsil~\cite{BG1982,Godsil1981} conjectured in early~1980s that almost all finite Cayley digraphs are DRRs. This conjecture was confirmed by Morris and the second author by showing the following theorem in~\cite{MS2021}.

\begin{theorem}[Morris-Spiga]\label{THM002}
Let $G$ be a finite group of order $n$. When $n$ is sufficiently large, the proportion of subsets $S$ of $G$ such that $\Cay(G,S)$ is not a $\DRR$ is at most $2^{-\frac{bn^{0.499}}{4\log^3_2n}+2}$, where $b$ is an absolute constant.
\end{theorem}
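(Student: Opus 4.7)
The plan is to bound the number of subsets $S \subseteq G$ for which $\Cay(G,S)$ is not a DRR by summing, over all nontrivial candidate automorphisms $\sigma$ of the graph fixing the identity vertex $1_G$, the number of connection sets compatible with $\sigma$. This reduction is justified because $G_L \leq \Aut(\Cay(G,S))$ acts regularly on $G$, so $\Cay(G,S)$ is a DRR if and only if the point stabilizer $\Aut(\Cay(G,S))_{1_G}$ is trivial.

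Fix $\sigma \in \Sym(G)\setminus\{1\}$ with $\sigma(1_G) = 1_G$. The condition $\sigma \in \Aut(\Cay(G,S))$ is equivalent to: $hg^{-1} \in S \Leftrightarrow \sigma(h)\sigma(g)^{-1} \in S$ for all $g,h \in G$. Letting $g$ range over $G$, this generates an equivalence relation $\sim_\sigma$ on $G$ under which $S$ must be a union of classes; hence the number of valid $S$ is at most $2^{c(\sigma)}$, where $c(\sigma)$ is the number of classes. A key combinatorial estimate shows $c(\sigma) \leq n - \kappa(\sigma)$, with $\kappa(\sigma)$ growing linearly in $|\mathrm{supp}(\sigma)|$, because each point moved by $\sigma$ forces a nontrivial merging of classes via the relation $d \sim \sigma(dg)\sigma(g)^{-1}$.

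Stratifying the sum over $\sigma$ by support size $t$ and using $\#\{\sigma : |\mathrm{supp}(\sigma)| = t\} \leq \binom{n-1}{t}\, t!$, one obtains
\[
\#\{S : \Cay(G,S) \text{ is not a } \DRR\}\;\leq\;\sum_{t\geq 2}\binom{n-1}{t}\,t!\cdot 2^{\,n-\kappa_t},
\]
where $\kappa_t$ is the minimum of $\kappa(\sigma)$ over $\sigma$ of support size $t$. A careful threshold (roughly $t \approx n^{0.5}$, up to logarithmic factors) splits the sum: the large-$t$ terms are small because the exponent $n - \kappa_t$ is small, while the small-$t$ terms are controlled by the multiplicity count. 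Optimizing the cutoff produces a bound of the target form $2^{-bn^{0.499}/(4\log_2^3 n)+2}$ for an absolute constant $b$.

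The main obstacle is handling $\sigma$ of small support, where $c(\sigma)$ can be as large as $n - O(1)$, so the naive bound $2^{c(\sigma)}$ approaches $2^n$. To control this regime, one appeals to structural results on finite $2$-closed permutation groups, relying ultimately on the classification of finite simple groups: a small-support $\sigma$ in the stabilizer of $1_G$ forces $\Aut(\Cay(G,S))$ to have a very rigid overgroup structure, typically making $G$ itself admit a nontrivial automorphism $\alpha$ with $\alpha(S) = S$, whereupon $S$ is counted via orbits of $\alpha$ on $G$ together with the bound $|\Aut(G)| \leq n^{\log_2 n}$. Balancing the generic counting bound against this refined structural bound yields the estimate in Theorem~\ref{THM002}.
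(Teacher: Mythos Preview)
This theorem is quoted in the paper as a result of Morris--Spiga~\cite{MS2021}; the paper does not give its own proof, but its Sections~\ref{SEC3}--\ref{SEC4} carry out the analogous argument for Haar graphs and explicitly mirror the Morris--Spiga framework. Your proposal is not that framework, and the final paragraph contains a genuine gap.

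The approach you sketch---summing over individual permutations $\sigma$ fixing $1_G$, stratifying by support size, and bounding the number of compatible $S$ by $2^{c(\sigma)}$---is essentially the classical Babai--Godsil strategy. That strategy was known to stall precisely at the small-support regime you flag, and your proposed fix does not work. The assertion that a small-support $\sigma$ in the stabiliser ``forces $\Aut(\Cay(G,S))$ to have a very rigid overgroup structure, typically making $G$ itself admit a nontrivial automorphism $\alpha$ with $\alpha(S)=S$'' is unjustified: a nontrivial element of $\Aut(\Cay(G,S))_{1_G}$ need not normalise $G$, and there is no general mechanism producing such an $\alpha$. Reducing to $\alpha\in\Aut(G)$ with $\alpha(S)=S$ and bounding via $|\Aut(G)|\le n^{\log_2 n}$ is exactly what Babai--Godsil could do only under strong hypotheses on $G$ (e.g.\ nilpotent of odd order); getting past this was the whole difficulty.

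The Morris--Spiga argument (and its Haar-graph analogue in this paper) never sums over permutations $\sigma$. Instead it enumerates \emph{overgroups} $M$ with $G<M\le\Aut(\Cay(G,S))$ and $G$ maximal in $M$, splitting according to whether $|M|$ is small or large. When $|M|\le 2^{n^{0.5-\varepsilon}+\log_2 n}$, Lubotzky's bound (Theorem~\ref{LEM005}) on the number of $d$-generated groups of given order controls the number of such $M$ up to conjugacy, and Lemma~\ref{LEM009} bounds the number of compatible $S$ for each $M$. When $|M|$ is large, one passes to the primitive action of $M/\mathrm{Core}_M(G)$ on $[M{:}G]$ and analyses each O'Nan--Scott type separately (Section~\ref{SEC4}), using deep structural input such as the classification of regular subgroups of primitive groups~\cite{LPS2010}. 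None of this is captured by a support-size stratification over $\Sym(G)$.
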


We illustrate a connection between Theorem~\ref{THM002} and Theorem~\ref{THM001}. Let $\Gamma$ be a digraph with vertex set $V$. The \emph{standard double cover} $\D(\Gamma)$ of $\Gamma$ is the graph with vertex set $V\times\{1,2\}$ and edge set $\{\{(g,1),(h,2)\}\mid (g,h)\text{ is an arc of }\Gamma\}$. Observe that the automorphism group of $\Gamma$ also acts as a group of automorphisms of $\D(\Gamma)$ that stabilizes both of $V\times\{1\}$ and $V\times\{2\}$ (see~\cite[Section~3.1]{GLP2004}), and that the Haar graph $\HH(G,S)$ is precisely the standard double cover $\D(\Cay(G,S))$.
This implies that, if all the automorphisms of $\HH(G,S)$ that stabilizes both of the biparts constitute solely the group $G$, then $\Cay(G,S)$ is a DRR. Therefore, Theorem~\ref{THM001} leads to the following stronger assertion than Theorem~\ref{THM002} regarding the Babai-Godsil conjecture.

\begin{corollary}\label{CORO001}
For each $\varepsilon\in\left(0,0.1\right]$, there exists $n_\varepsilon>0$ such that for a finite group $G$ of order $n\geq n_\varepsilon$, the proportion of subsets $S$ of $G$ with $\Cay(G,S)$ not a $\DRR$ is less than $2^{-n^{0.5-\varepsilon}}$.
\end{corollary}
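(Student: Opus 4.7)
My plan is to exploit the standard double cover identity $\HH(G,S)=\D(\Cay(G,S))$ recorded in the introduction and reduce the statement directly to Theorem~\ref{THM001}. The elementary observation driving the reduction is that every $\sigma\in\Aut(\Cay(G,S))$ lifts to the automorphism $\tilde\sigma\colon(g,i)\mapsto(\sigma(g),i)$ of $\HH(G,S)$, which preserves each bipart $G\times\{1\}$ and $G\times\{2\}$ setwise. Setting $A:=\Aut(\HH(G,S))$ and letting $A^{+}\le A$ denote the stabiliser of the bipartition, this gives an injective homomorphism $\Aut(\Cay(G,S))\hookrightarrow A^{+}$ whose image contains the canonical semiregular copy of $G$ sitting inside $A^{+}$.

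The first step is to show that $\Cay(G,S)$ failing to be a \DRR\ forces $\HH(G,S)$ to fall under the exceptional conclusion of Theorem~\ref{THM001}. Indeed, if $\Cay(G,S)$ is not a \DRR, then $|\Aut(\Cay(G,S))|>n$, so $|A^{+}|>n$ by the embedding above. When $G$ is nonabelian this already gives $|A|>n$ and hence $A\not\cong G$, so $\HH(G,S)$ is not an HGR. When $G$ is abelian, the involution $\iota$ of Definition~\ref{DEF005} lies in $A$ and swaps the two biparts, whence $\langle G,\iota\rangle$ has order exactly $2n$ and meets $A^{+}$ in $G$; so if $A\cong G\rtimes\langle\iota\rangle$ as abstract groups, then $|A|=2n$ forces $A=\langle G,\iota\rangle$, which gives $|A^{+}|=n$ and contradicts $|A^{+}|>n$. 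Hence in both cases $\HH(G,S)$ satisfies the hypothesis of the relevant part of Theorem~\ref{THM001}.

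Applying that theorem with the reduced parameter $\varepsilon':=\varepsilon/2\in(0,0.05]$, together with the reduction above and the fact that $G$ has $2^n$ subsets, the proportion of $S\subseteq G$ for which $\Cay(G,S)$ is not a \DRR\ is at most $2^{-f_{\varepsilon/2}(n)}$. Rewriting
\[
f_{\varepsilon/2}(n)=\frac{n^{\varepsilon/2}}{24(\log_2 n)^{2.5}}\cdot n^{0.5-\varepsilon}-\frac{3\log_2^{2}n}{4}-15,
\]
the coefficient $n^{\varepsilon/2}/(24(\log_2 n)^{2.5})$ tends to infinity, so $f_{\varepsilon/2}(n)>n^{0.5-\varepsilon}$ for all sufficiently large $n$. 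Choosing $n_\varepsilon$ large enough that this inequality and the hypothesis~\eqref{eq032} at $\varepsilon'$ both hold delivers the stated bound. The heavy lifting is entirely inside Theorem~\ref{THM001}; the only real subtlety, and hence the main obstacle in the present argument, is the abelian case of the reduction, where one must convert the purely abstract isomorphism $A\cong G\rtimes\langle\iota\rangle$ into a concrete order computation for the bipartition-stabiliser $A^{+}$ before the contradiction with $|A^{+}|>n$ can be extracted.
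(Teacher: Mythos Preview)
Your proposal is correct and follows essentially the same route the paper indicates: use the embedding $\Aut(\Cay(G,S))\hookrightarrow\Aut^{+}(\HH(G,S))$ so that failure of $\Cay(G,S)$ to be a DRR forces $\Aut^{+}(\HH(G,S))>G$, and then invoke Theorem~\ref{THM001}. Your $\varepsilon/2$ trick to absorb the logarithmic factors into the clean bound $2^{-n^{0.5-\varepsilon}}$ is exactly the kind of manipulation the paper tacitly leaves to the reader; the abelian reduction you spell out could be shortened by quoting the equivalence $\Aut(\HH(G,S))=G\rtimes\langle\iota\rangle\iff\Aut^{+}(\HH(G,S))=G$ recorded in the proof of Theorem~\ref{THM001}, but your argument via orders is equally valid.
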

For the proportion of GRRs among Cayley graphs, Babai, Godsil, Imrich and Lov\'{a}sz conjectured that, except for the groups $G$ that are abelian of exponent greater than $2$ or generalised dicyclic, almost all finite Cayley graphs of $G$ are GRRs~\cite[Conjecture~2.1]{BG1982}. This was recently confirmed by the third author and Zheng~\cite{XZ2023}, following the framework developed in~\cite{MMS2022,Spiga2024}.

\begin{theorem}[Xia-Zheng]\label{THM003}
Let $G$ be a finite group of order $n$ such that $G$ is neither abelian of exponent greater than $2$ nor generalized dicyclic. When $n$ is sufficiently large, the proportion of inverse-closed subsets $S$ of $G$ such that $\Cay(G,S)$ is not a $\GRR$ is at most $2^{-\frac{n^{0.499}}{8\log^3_2n}+\log_{2}^2n+3}$.
\end{theorem}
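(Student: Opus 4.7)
The plan is to adapt the Morris--Spiga strategy for Theorem~\ref{THM002} (the DRR case) to handle the extra constraint that $S$ be inverse-closed, within the framework of~\cite{MMS2022,Spiga2024}. A Cayley graph $\Cay(G,S)$ fails to be a GRR precisely when the stabilizer of the identity vertex in $\Aut(\Cay(G,S))$ is non-trivial, and any non-identity element $\alpha$ of this stabilizer is a permutation of $G$ fixing $1$, commuting with the inversion map $\iota\colon g\mapsto g^{-1}$ (since $S=S^{-1}$), and preserving $S$ setwise. For each fixed such $\alpha$, the number of inverse-closed $\alpha$-invariant subsets of $G\setminus\{1\}$ equals exactly $2^{c(\alpha)}$, where $c(\alpha)$ denotes the number of orbits of $\langle\alpha,\iota\rangle$ on $G\setminus\{1\}$. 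Thus the desired bound will arise from a union bound over a sufficiently sparse set of witnesses $\alpha$, each contributing $2^{c(\alpha)}$.

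I would split the witnesses into two classes according to whether $\alpha$ normalizes the left-regular copy of $G$ inside $\Sym(G)$. In the \emph{normalizing} case, $\alpha$ acts on $G$ as a group automorphism, so the total number of candidates is bounded by $|\Aut(G)|\le n^{\log_2 n}$; the minimum degree of any non-identity group automorphism is at least $n/p$ for $p$ the smallest prime divisor of $|G|$, and a refinement in the spirit of~\cite{MMS2022} shows that only a negligible fraction of such automorphisms have small support, giving $c(\alpha)\le n-\Omega(n^{0.499}/\log_2^3 n)$ throughout. In the \emph{non-normalizing} case, the structural machinery of~\cite{MMS2022,Spiga2024} applies: the hypothesis that $G$ is neither abelian of exponent greater than $2$ nor generalized dicyclic is precisely what rules out the cheap involutions (inversion on an abelian group and its dicyclic analogue) that would otherwise produce non-normalizing graph automorphisms of small support. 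Consequently, any non-normalizing $\alpha$ has minimum degree at least $n-O(n^{0.501})$, which again forces $c(\alpha)\le n-\Omega(n^{0.499}/\log_2^3 n)$.

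The principal difficulty lies in the non-normalizing case and in pinning down the explicit exponent $0.499$. One needs a quantitative dichotomy: if $G$ is regular in a transitive group $H$ that strictly contains and does not normalize $G$, then $H$ contains two non-conjugate regular subgroups whose joint action sharply constrains the minimum degree. The analysis bifurcates according to the O'Nan--Scott type of the socle of $H$, with the almost-simple case handled via character-theoretic bounds on fixed-point ratios and the affine and product-action cases handled by combinatorial arguments; at each step the arithmetic bookkeeping must be kept tight enough to yield the exponent $0.499$. Finally, summing the contributions $2^{c(\alpha)}$ over both classes of witnesses bounds the count of non-GRRs by $|\Aut(G)|\cdot 2^{n-n^{0.499}/(8\log_2^3 n)}$, which after absorbing $|\Aut(G)|\le 2^{\log_2^2 n}$ into the exponent yields the claimed proportion $2^{-n^{0.499}/(8\log_2^3 n)+\log_2^2 n+3}$.
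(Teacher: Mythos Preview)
This theorem is not proved in the present paper: it is stated as a result of Xia and Zheng~\cite{XZ2023} and only quoted for later use, so there is no proof here to compare against. Your proposal is therefore a sketch of how the argument in~\cite{XZ2023} might go, and should be measured against that paper rather than this one.

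On its own merits, your sketch has a genuine gap in the non-normalizing case. A union bound over witnesses $\alpha$ requires controlling \emph{how many} such $\alpha$ there are; you do this for the normalizing case via $|\Aut(G)|\le 2^{\log_2^2 n}$, but for the non-normalizing case you assert only a lower bound on the minimum degree of each $\alpha$ and give no count. Your final formula then applies the single multiplicative factor $|\Aut(G)|$ globally, which tacitly assumes the non-normalizing contribution is dominated by the normalizing one, with no justification. The actual proof in~\cite{XZ2023} does not run a union bound over individual permutations $\alpha$; following~\cite{MMS2022,Spiga2024}, it enumerates over overgroups $M>G$ with $G$ maximal in $M$, splits according to the size of $M$ and of $\mathrm{Core}_M(G)$, and invokes structural results on primitive groups---essentially the same architecture this paper deploys for Haar graphs in Sections~\ref{SEC3}--\ref{SEC4}. (A minor slip: your claim that $\alpha$ commutes with $\iota$ is not correct in general; all you have, and all you need for the orbit count, is that both lie in the vertex stabilizer.)
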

It is evident that a digraph constitutes a Cayley digraph of $G$ if and only if its automorphism group contains a regular subgroup isomorphic to $G$. Thus, the essence of Theorems~\ref{THM002} and~\ref{THM003} lies in the observation that, except for  two special families outlined in Theorem~\ref{THM003}, nearly all (di)graphs possessing a regular group of automorphisms exhibit automorphism groups that are ``as small as possible''. This leads us to the following question: What about (di)graphs endowed with a semiregular group of automorphisms?

Let $G$ be a finite group. A digraph is termed an \emph{$m$-Cayley digraph} of $G$ if it admits a group $G$ of automorphisms acting semiregularly on the vertex set, featuring precisely $m$ orbits. Similarly, for each $m$-Cayley digraph of $G$, we can identify the $m$ orbits of $G$ with $m$ copies of $G$ and employ a set-matrix $\mathcal{S}$ of $G$ to delineate the arcs between them. Consequently, an $m$-Cayley digraph of $G$ can be conceptualized in terms of $G$ along with a set-matrix $\mathcal{S}$ of $G$ (see Section~\ref{SUBSEC2.1}), denoted by $\Cay(G,\mathcal{S})$. A digraph $\Cay(G,\mathcal{S})$ is a graph if and only if $\mathcal{S}$ is inverse-closed, as  defined in Definition~\ref{DEF001}.

An $m$-Cayley (di)graph of a finite group $G$ is called a \emph{(di)graphical $m$-semiregular representation}, abbreviated as $\GMSR$ ($\DMSR$), if its automorphism group is isomorphic to $G$. In their work~\cite{DFS20201}, Du, Feng, and the second author demonstrated that every group of order greater than $8$ possesses a $\GMSR$ and $\DMSR$ for each $m\geq2$. This naturally leads to the question: For a finite group $G$ of sufficiently large order, are almost all $m$-Cayley (di)graphs of $G$ $\GMSR$s ($\DMSR$s)? We provide an affirmative answer to this question by proving the following theorem.

\begin{theorem}\label{THM004}
Fix an integer $m\geq2$, and let $G$ be a finite group of order $n$. When $n$ is sufficiently large, the proportion of (inverse-closed) set-matrices $\mathcal{S}$ of $G$ such that $\mg\Cay(G,\mathcal{S})$ is a $\DMSR$ ($\GMSR$) is greater than $1-m^2/\sqrt{n}$.
\end{theorem}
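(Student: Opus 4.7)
The strategy is to bound the number of ``bad'' set-matrices---those $\mathcal{S}$ for which $\Cay(G,\mathcal{S})$ is not a $\DMSR$---by a union bound over possible extra automorphisms, and to reduce each contribution via Theorems~\ref{THM001} and~\ref{THM002}. Identify $G$ with its semiregular copy in $\Sym(G\times\{1,\dots,m\})$ acting by $g\colon(x,i)\mapsto(xg,i)$. A set-matrix is bad precisely when there exists $\alpha\in\Aut(\Cay(G,\mathcal{S}))\setminus G$; let $\bar\alpha\in\Sym(\{1,\dots,m\})$ denote the permutation induced by $\alpha$ on the $G$-orbits. I would split the analysis according to whether $\bar\alpha$ is trivial.

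In the case $\bar\alpha=\id$, each restriction $\alpha|_{G\times\{i\}}$ is an automorphism of the diagonal Cayley digraph $\Cay(G,S_{ii})$. By Theorem~\ref{THM002}, on all but a $2^{-\Omega(n^{0.499}/\log_2^3 n)}$-fraction of tuples $(S_{11},\dots,S_{mm})$ every diagonal block is a $\DRR$, so each $\alpha|_{G\times\{i\}}$ is a right-translation by some $g_i\in G$. Composing with the element $g_1\in G$ we may assume $g_1=1$, and $\alpha\ne\id$ then forces some $g_i\ne1$. Compatibility with the off-diagonal block $S_{1i}$ forces $S_{1i}$ to be invariant under right-multiplication by every conjugate of $g_i$, hence to be a union of right cosets of the normal closure of $g_i$; there are at most $2^{n/2}$ such subsets for each fixed nontrivial $g_i$. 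Summing over $g_i\in G\setminus\{1\}$ and $i\in\{2,\dots,m\}$, this regime contributes negligibly compared with $2^{m^2n}/\sqrt{n}$.

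In the case $\bar\alpha\ne\id$, pick $i$ with $j=\bar\alpha(i)\ne i$; then $\alpha$ simultaneously realises an isomorphism between the directed bipartite Haar structure on $G\times\{i\}\cup G\times\{j\}$ and its image, and between the diagonal Cayley digraphs at positions $(i,i)$ and $(j,j)$. Invoking a directed analogue of Theorem~\ref{THM001}\eqref{THM001.1}--\eqref{THM001.2} (split as in Theorem~\ref{THM001} by whether $G$ is abelian) bounds the fraction of off-diagonal Haar structures admitting such an extra symmetry by something exponentially small in $n$. A union bound over the ordered pairs $(i,j)$ with $i\ne j$ and the $m!-1$ nontrivial permutations $\bar\alpha$ then delivers the claimed proportion $m^2/\sqrt{n}$ for $n$ large. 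The inverse-closed (graph) case runs in parallel, using Theorem~\ref{THM003} in place of Theorem~\ref{THM002} for the diagonal $\GRR$ property, and Theorem~\ref{THM001}\eqref{THM001.2} to cover the abelian sub-case where the natural ``ambient'' group is $G\rtimes\langle\iota\rangle$.

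The principal obstacle is Case~$\bar\alpha\ne\id$: one must control isomorphisms between \emph{distinct} diagonal blocks together with the matched off-diagonal Haar pieces, and propagate the block-level rigidity guaranteed by Theorems~\ref{THM001}, \ref{THM002}, \ref{THM003} to rigidity of the whole $m$-Cayley digraph. The bookkeeping must ensure that the $m!$ choices of $\bar\alpha$ together contribute at most $O(m^2/\sqrt{n})$ rather than something growing factorially in $m$---which is precisely where the stated $1-m^2/\sqrt{n}$ threshold (in place of the much sharper bounds of Theorems~\ref{THM001}--\ref{THM003}) arises.
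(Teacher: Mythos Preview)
Your approach to the block-preserving case ($\bar\alpha=\id$) is essentially correct and close to the paper's: both use that the diagonal blocks are almost always DRRs (Theorem~\ref{THM002}), then exploit that a nontrivial kernel element forces some off-diagonal $S_{i,j}$ to be a union of cosets of a nontrivial subgroup, giving at most about $2^{n/2}$ choices.

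However, your treatment of the block-moving case ($\bar\alpha\ne\id$) uses the wrong tool, and this is precisely where the $\sqrt{n}$ in the statement comes from. The paper does \emph{not} invoke Theorem~\ref{THM001} here at all. Instead it uses a simple valency argument: if some automorphism sends a vertex of $G_i$ into $G_j$ with $i\ne j$, then $d_i^+(\mathcal{S})+d_i^-(\mathcal{S})=d_j^+(\mathcal{S})+d_j^-(\mathcal{S})$, which after cancellation pins down $|S_{i,i}|$ in terms of the remaining entries. By Lemma~\ref{LEM003} (the central binomial coefficient bound) this costs a factor $1/\sqrt{n}$, and a union bound over the $\binom{m}{2}$ pairs---not over $m!$ permutations---gives exactly the $m(m-1)/\sqrt{n}$ contribution. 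Your proposed route through Theorem~\ref{THM001} is both unnecessary and unclear: the restriction of $\alpha$ to $G_i\cup G_j$ need not land in $G_i\cup G_j$ (for instance when $\bar\alpha$ is a $3$-cycle), and Theorem~\ref{THM001} concerns undirected Haar graphs with no diagonal arcs, not the $2$-Cayley digraph structure you would actually obtain.

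There is a second, more serious gap in the graph case. Theorem~\ref{THM003} explicitly excludes abelian groups of exponent greater than~$2$ and generalised dicyclic groups; for such $G$, \emph{no} Cayley graph is a GRR, so your diagonal-block argument collapses entirely. The paper handles these families separately (Propositions~\ref{PROP011}--\ref{PROP013}), replacing Theorem~\ref{THM003} by the asymptotic results of Dobson--Spiga--Verret and Morris--Spiga--Verret (Theorems~\ref{THM006}--\ref{THM008}), which say that almost all Cayley graphs of such $G$ have automorphism group exactly the natural overgroup $G\rtimes\langle\iota\rangle$ or $M(G)$. One then needs additional lemmas (Lemmas~\ref{LEM025}--\ref{LEM027}) bounding the number of orbits of any nontrivial element of this overgroup by a constant fraction of $n$, in order to recover the off-diagonal constraint. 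Your brief appeal to Theorem~\ref{THM001}\eqref{THM001.2} does not address this, since that result is about Haar graphs, not the diagonal Cayley graphs $\Cay(G,S_{i,i})$.
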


To provide further background of Theorem~\ref{THM004}, it is worth remarking the relationship between Cayley (di)graphs, vertex-transitive (di)graphs, and $m$-Cayley (di)graphs. Let $\mathcal{V}$ and $\mathcal{C}_m$ be the set of vertex-transitive (di)graphs and $m$-Cayley (di)graphs, respectively. It is clear that $\mathcal{C}_1\subseteq\mathcal{V}$. However, determining $\mathcal{V}\setminus\mathcal{C}_1$ is difficult, and the famous McKay-Praeger conjecture~\cite{MP1994} states that $|\mathcal{C}_1|/|\mathcal{V}|$ approaches $1$ as the (di)graph order grows. Another related fascinating long-standing problem is the so-called \emph{Polycirculant conjecture}~\cite{Dragan81}, which asserts that
\begin{equation}\label{eq006}
\mathcal{C}_1\subseteq\mathcal{V}\subseteq\bigcup_{m\geq1}\mathcal{C}_m.
\end{equation}
We refer the reader to~\cite{AAS2019} for a survey of this conjecture. Given that the McKay-Praeger conjecture predicts the first inclusion in~\eqref{eq006} to be asymptotically tight, one might be interested in whether the second inclusion is tight. In this sense, Theorem~\ref{THM004} gives negative evidence by indicating that $\mathcal{V}\cap\mathcal{C}_m$ is extremely small relative to $\mathcal{C}_m$ for each $m\geq2$.

The validity of Theorem~\ref{THM004} relies significantly on the presence of arcs within each $G$-orbit in most $m$-Cayley (di)graphs of $G$. Indeed, a crucial step in establishing Theorem~\ref{THM004} is to utilize Theorem~\ref{THM003} (Theorem~\ref{THM002}) to assert that the induced sub(di)graph of a $G$-orbit typically conforms to a GRR (DRR) structure. Establishing an analog of Theorem~\ref{THM004} becomes notably more challenging if arcs within each $G$-orbit are prohibited. Even for $m=2$, corresponding to the examination of HGRs, Theorem~\ref{THM001} stands as the inaugural result enumerating them. The last main result of this paper is to provide an asymptotic result for the general case.

An $m$-Cayley graph $\mg\Cay(G,\mathcal{S})$ is $m$-partite with $G$-orbits as the $m$ parts if and only if the diagonal entries of $\mathcal{S}$ are empty sets. Such a set-matrix $\mathcal{S}$ is said to be \emph{skew} (see Definition~\ref{DEF001}). An $m$-Cayley graph $\mg\Cay(G,\mathcal{S})$ with skew $\mathcal{S}$ is called an \emph{$m$-partite graphical semiregular representation} ($m$-PGSR) of $G$, if its automorphism group is isomorphic to $G$. Notably, $2$-PGSRs are equivalent to HGRs. Similar to the asymptotic result in Theorem~\ref{THM001} for HGRs,  the following theorem demonstrates that the majority of skew set-matrices $\mathcal{S}$ make $\mg\Cay(G,\mathcal{S})$ an $m$-PGSR.

\begin{theorem}\label{THM009}
Fix an integer $m\geq3$, and let $G$ be a finite group of order $n$. When $n$ is sufficiently large, the proportion of skew set-matrices $\mathcal{S}$ of $G$ such that $\mg\Cay(G,\mathcal{S})$ is an $m$-PGSR is greater than $1-m^2/\sqrt{n}$.
\end{theorem}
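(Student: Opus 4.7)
The plan is to mirror the strategy underlying Theorem~\ref{THM004}, replacing the appeals to Theorems~\ref{THM002} and~\ref{THM003} (which provide rigidity of the Cayley (di)graphs induced on each part) with an appeal to Theorem~\ref{THM001} (which provides rigidity of the Haar graph between each pair of parts). For a skew set-matrix $\mathcal{S}=(S_{ij})$ of $G$, write $V_1,\ldots,V_m$ for the $G$-orbits of $\mg\Cay(G,\mathcal{S})$; then for each $i\ne j$, the subgraph induced on $V_i\cup V_j$ is exactly the Haar graph $\HH(G,S_{ij})$. As a first step I would apply Theorem~\ref{THM001} in a union bound over the $\binom{m}{2}$ pairs $(i,j)$: at the cost of discarding a fraction of at most $\binom{m}{2}\cdot 2^{-f_\varepsilon(n)}$ of the skew set-matrices---which is $o(1/\sqrt{n})$---I may assume that each $\HH(G,S_{ij})$ has minimal automorphism group, namely $G$ when $G$ is nonabelian and $G\rtimes\langle\iota\rangle$ when $G$ is abelian.

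Second, set $X=\Aut(\mg\Cay(G,\mathcal{S}))$ and let $X^*$ be the setwise stabilizer in $X$ of the partition $\{V_1,\ldots,V_m\}$. For any $\alpha\in X^*$, the restriction of $\alpha$ to $V_i\cup V_j$ is a part-preserving automorphism of the rigid Haar graph $\HH(G,S_{ij})$ and therefore lies in $G$; the element $\iota$, which appears in the abelian case, swaps the two parts and is thus excluded. Since $G$ acts regularly on each $V_j$, these restrictions fit together to force $\alpha$ to act as a single element of $G$ on the entire vertex set, giving $X^*=G$.

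Third, I need to rule out part-permuting automorphisms. Any $\alpha\in X\setminus X^*$ induces a non-trivial permutation $\sigma\in\Sym(m)$ of the parts and delivers an isomorphism $\HH(G,S_{ij})\cong\HH(G,S_{\sigma(i)\sigma(j)})$ for every pair. Since $m\ge3$ and $\sigma\ne\id$, there exists a pair $(i,j)$ with $\{\sigma(i),\sigma(j)\}\ne\{i,j\}$, so such an $\alpha$ forces a coincidence between two a priori independent entries of $\mathcal{S}$. Bounding the number of $S_{\sigma(i)\sigma(j)}$ producing a Haar graph isomorphic to a given $\HH(G,S_{ij})$ by the size of its isomorphism class---which in the rigid regime is at most $2\cdot|G|\cdot|\Aut(G)|$, accounting for right-translations, the $\iota$-swap, and the outer action of $\Aut(G)$---and summing over the $m!-1$ non-trivial $\sigma$ yields a contribution of at most a polynomial in $n$ times $2^{-n}$, well within the $m^2/\sqrt{n}$ target. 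This last step is the main obstacle, requiring a careful count of Haar-isomorphism classes of $G$, whereas Steps 1 and 2 are essentially immediate from Theorem~\ref{THM001}. A pleasant feature is that the argument treats abelian and nonabelian $G$ uniformly for $m\ge3$, because the ``bipartition-swapping'' automorphism $\iota$ of each Haar piece in the abelian case is automatically killed by the requirement of preserving the partition into $m\ge3$ parts.
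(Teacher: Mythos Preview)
Your Steps~1 and~2 are correct and align with the paper's treatment of the case where $\Aut(\Cay(G,\mathcal{S}))$ stabilises each part $V_i$: once every Haar piece $\HH(G,S_{ij})$ satisfies $\Aut^+(\HH(G,S_{ij}))=G$, any part-fixing automorphism must restrict to a common element of $G$ on every part, exactly as you argue. (The paper invokes Proposition~\ref{PROP010} directly for this, but your use of Theorem~\ref{THM001} gives the same conclusion.)

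The gap is in Step~3. Taking $X^*$ to be the subgroup fixing each $V_i$ setwise (the reading under which Step~2 makes sense), you assert that every $\alpha\in X\setminus X^*$ ``induces a non-trivial permutation $\sigma\in\Sym(m)$ of the parts''. This presupposes that $X$ preserves the partition $\{V_1,\dots,V_m\}$, which you have not established: nothing so far rules out an automorphism sending two vertices of $V_i$ into two \emph{different} parts. Equivalently, you have not shown $X^*=G$ is normal in $X$, and the $V_i$ are not characterised intrinsically as, say, connected components or colour classes of the unique proper $m$-colouring. Your isomorphism-coincidence count therefore handles only the ``partition-permuting'' subcase of $X\setminus X^*$, not the ``partition-destroying'' one.

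The paper's remedy is both shorter than your Step~3 and covers both subcases at once. If $X$ fails to stabilise some $V_i$, then some vertex of $V_i$ is sent into $V_j$ with $j\neq i$, forcing the valency coincidence $d_i(\mathcal{S})=d_j(\mathcal{S})$; since $|S_{i,j}|=|S_{j,i}|$, this reads $\sum_{k\ne i,j}|S_{i,k}|=\sum_{k\ne i,j}|S_{j,k}|$. Here the hypothesis $m\ge3$ is used directly: choose $x\notin\{i,j\}$, so that $|S_{i,x}|$ is determined by the remaining entries, and Lemma~\ref{LEM003} bounds the number of such $\mathcal{S}$ by $2^d/\sqrt{n}$. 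Summing over the $\binom{m}{2}$ pairs gives the required bound. With this valency argument in place of your Step~3, your outline becomes the paper's proof.
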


The subsequent sections of this paper unfold as follows. In the next section, we introduce some notations and preliminary results. Following that, Section~\ref{SEC3} outlines the strategy employed to prove Theorem~\ref{THM001} and streamlines the proof to the enumeration of primitive groups harboring a ``large'' regular subgroup. This enumeration is carried out in Section~\ref{SEC4}, facilitating the culmination of the proof for Theorems~\ref{THM001} and~\ref{THM005} in Section~\ref{SEC5}. The proof of Theorem~\ref{THM004}, divided into the cases of digraphs and graphs, is presented towards the conclusion of Sections~\ref{SEC6} and~\ref{SEC7}, respectively. We establish Theorem~\ref{THM009} in Section~\ref{SEC8}.

\section{Preliminaries}\label{SEC2}

For a graph $\Gamma$, we use $V(\Gamma)$ to denote its vertex set, and for a vertex $u$ of $\Gamma$, we use $N_\Gamma(u)$ to denote the neighbourhood of $u$ in $\Gamma$. For a finite group $X$, denote by $P(X)$ the minimal index of proper subgroups of $X$, and by $\Soc(X)$ the socle (product of the minimal normal subgroups) of $X$.
For a permutation $\alpha$ of a set $\Omega$, denote by $\Fix(\alpha)$ the set of elements in $\Omega$ fixed by $\alpha$.

\subsection{\boldmath{$m$}-Cayley (di)graphs}\label{SUBSEC2.1}

Let $m$ be a positive integer. In Introduction, an $m$-Cayley digraph of a group $G$ is defined as a digraph whose automorphism group has a subgroup isomorphic to $G$ that is semiregular on the vertex set with exactly $m$ orbits. Now we give an equivalent definition of $m$-Cayley (di)graphs.
Let
\[
\mathcal{S}=(S_{i,j})_{m\times m}=
\begin{pmatrix}
  S_{1,1} & S_{1,2} & \cdots & S_{1,m} \\
  S_{2,1} & S_{2,2} & \cdots & S_{2,m} \\
  \vdots & \vdots & \ddots & \vdots \\
  S_{m,1} & S_{m,2} & \cdots & S_{m,m} \\
\end{pmatrix}
\]
be a \emph{set-matrix} of $G$, namely, a matrix whose entries are subsets of $G$. The \emph{$m$-Cayley digraph} $\mg\Cay(G,\mathcal{S})$ of $G$ with respect to $\mathcal{S}$ is the digraph with vertex set $G\times\{1,\ldots,m\}$ and arc set
\[
\bigcup_{i,j\in\{1,\ldots,m\}}\left\{\big((g,i),(sg,j)\big)\,\middle|\,s\in S_{i,j},\,g\in G\right\}.
\]

\begin{definition}\label{DEF001}
We call a set-matrix $\mathcal{S}$ \emph{inverse-closed} if  $S_{j,i}=S_{i,j}^{-1}$ for all $i,j\in\{1,\ldots,m\}$. For such $\mathcal{S}$, we call the digraph $\mg\Cay(G,\mathcal{S})$ an \emph{$m$-Cayley graph} as it is undirected. If a set-matrix $\mathcal{S}$ is inverse-closed and satisfies $S_{i,i}=\emptyset$ for each $i\in\{1,\ldots,m\}$, then $\mathcal{S}$ is said to be \emph{skew}.
\end{definition}

It is clear that each element $x$ of $G$ induces an automorphism $R(x)$ of $m$-Cayley digraphs of $G$ by mapping $(g,i)$ to $(gx,i)$ for each $g\in G$ and $i\in\{1,\ldots,m\}$. In this way, $R(G)$ is a semiregular subgroup of $\Aut(\mg\Cay(G,\mathcal{S}))$ with orbits $G\times\{1\},\ldots,G\times\{m\}$. For simplicity of notation, we identify $R(G)$ with $G$ and $R(x)$ with $x$ for each $x\in G$ when there is no confusion.

If a (di)graph has a group $G$ of automorphisms acting semiregularly on the vertex set with exactly $m$ orbits, then it is isomorphic to some $m$-Cayley (di)graph of $G$. Hence the two definitions of $m$-Cayley (di)graphs from Introduction and this section are equivalent.

\subsection{Haar graphs and odd-quotient graphs}\label{SUBSEC2.2}

For convenient, we adopt the following notations about Haar graphs in the rest of this paper.

\begin{notation}\label{NOT001}
Let $G$ be a finite group. Then each Haar graph $\Gamma$ of $G$ has vertex set $G\times\{1,2\}$. For each $g\in G$, denote $g_\rs=(g,1)$ and $g_\ls=(g,2)$. For $S\subseteq G$ and $\epsilon\in\{\rs,\ls\}$, denote $S_{\epsilon}=\{s_{\epsilon}\mid s\in S\}$. In particular,
$G_\rs=G\times\{1\}$ and $G_\ls=G\times\{2\}$. Let $\Aut^+(\Gamma)$ be the subgroup of $\Aut(\Gamma)$ fixing setwise  $G_\rs$ and $G_\ls$.
\end{notation}

The next definition reveals that each Haar graph of an abelian group has an automorphism that exchanges the parts $G_\rs$ and $G_\ls$.

\begin{definition}\label{DEF005}
Let $G$ be an abelian group, and let $\iota$ be the permutation on $G_\rs\cup G_\ls$ which maps $g_\epsilon$ to $(g^{-1})_{-\epsilon}$ for each $g\in G$ and $\epsilon\in\{\rs,\ls\}$. It is easy to check that, $\iota$ is a graph automorphism of each Haar graph of $G$ and normalizes the semiregular group $R(G)$ (via the right multiplication action on $G_\rs\cup G_\ls$). Hence each Haar graph of $G$ has a group of automorphisms isomorphic to $G\rtimes\langle\iota\rangle$. In particular, each Haar graph of an abelian group is a Cayley graph.
\end{definition}

The following concept, as defined in~\cite[Definition~6.1]{MS2021}, is used in the proof of Theorem~\ref{THM001}.

\begin{definition}\label{DEF002}
Let $\Gamma$ be a graph, and let $\mathcal{B}$ be a partition of $V(\Gamma)$ such that for any fixed $B,C\in\mathcal{B}$, all vertices in $B$ have the same number of neighbours, denoted by $e(B,C)$, in $C$. The \emph{odd-quotient digraph} $\Gamma^{\odd}_{\mathcal{B}}$ of $\Gamma$ with respect to $\mathcal{B}$ is the digraph with vertex set $\mathcal{B}$ such that $B$ is adjacent to $C$ if and only if
$e(B,C)$ is odd. If $\mathcal{B}$ is the orbit partition of some group $H\leq\Aut(\Gamma)$, we write $\Gamma^{\odd}_{\mathcal{B}}$ as $\Gamma^{\odd}_{H}$.
\end{definition}

In the above definition, if $|B|=|C|$, then a double-counting of the edges between $B$ and $C$ shows that $e(B,C)=e(C,B)$. This leads to the following remark.

\begin{remark}\label{RMK001}
In Definition~\ref{DEF002}, if all the sets in $\mathcal{B}$ have the same size, then $\Gamma_{\mathcal{B}}^{\odd}$ is undirected.
\end{remark}

\subsection{Simple arithmetic results}\label{SUBSEC2.3}

For a subset $S$ of a group, let $\mathcal{I}(S)$ be the set of elements in $S$ of order at most $2$, and let
\[
c(S)=\frac{|S|+|\mathcal{I}(S)|}{2}.
\]
The proof of the following lemma is straightforward (see~\cite[Lemma~2.2]{Spiga2021}, for example).

\begin{lemma}\label{LEM001}
Let $S$ be an inverse-closed subset of a finite group. Then the number of inverse-closed subsets of $S$ is $2^{c(S)}$.
\end{lemma}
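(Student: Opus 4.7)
The plan is a direct counting argument based on how the involution/inverse structure forces the choice for each element of an inverse-closed subset. Since $S$ itself is inverse-closed, I can partition $S$ into two subsets: the set $\mathcal{I}(S)$ of self-inverse elements (the identity, if it lies in $S$, together with all involutions of $S$), and its complement $S\setminus\mathcal{I}(S)$, which consists of elements of order greater than $2$. Because $S$ is inverse-closed, the complement $S\setminus\mathcal{I}(S)$ is closed under inversion and contains no fixed point of the inversion map, so it decomposes uniquely into $(|S|-|\mathcal{I}(S)|)/2$ disjoint pairs $\{s,s^{-1}\}$.

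Next, observe that a subset $T\subseteq S$ is inverse-closed if and only if (i) its intersection with $\mathcal{I}(S)$ is arbitrary, and (ii) for each pair $\{s,s^{-1}\}$ with $s\neq s^{-1}$, either both elements lie in $T$ or neither does. The ``if'' direction is immediate, and the ``only if'' direction is forced by the definition of inverse-closed. Consequently, constructing an inverse-closed $T\subseteq S$ reduces to two independent choices: selecting any subset of $\mathcal{I}(S)$, giving $2^{|\mathcal{I}(S)|}$ options, and selecting any subset of the collection of inverse pairs, giving $2^{(|S|-|\mathcal{I}(S)|)/2}$ options.

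Multiplying these together yields
\[
2^{|\mathcal{I}(S)|}\cdot 2^{(|S|-|\mathcal{I}(S)|)/2}=2^{(|S|+|\mathcal{I}(S)|)/2}=2^{c(S)},
\]
which is the desired formula. There is no real obstacle here; the argument is elementary combinatorics, and the only point worth being careful about is that $|S|-|\mathcal{I}(S)|$ is indeed even, a fact that follows from the inverse-closedness of $S$ together with the fact that inversion acts as a fixed-point-free involution on $S\setminus\mathcal{I}(S)$.
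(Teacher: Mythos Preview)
Your proof is correct and is precisely the standard elementary counting argument the paper has in mind; the paper itself omits the proof as ``straightforward'' and refers to \cite[Lemma~2.2]{Spiga2021}, where the same partition into self-inverse elements and inverse pairs is used.
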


We also require the following two useful lemmas, where the proof of the first is elementary and hence omitted.

\begin{lemma}\label{LEM002}
For a non-empty set, the number of subsets of odd size equals the number of subsets of even size.
\end{lemma}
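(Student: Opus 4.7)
The plan is to exhibit an explicit parity-reversing involution on the power set of the given non-empty set, which will immediately pair up odd-sized subsets with even-sized subsets in a bijective fashion.

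Concretely, let $A$ be the non-empty set in question. First I would use non-emptiness to fix some element $x \in A$; this is the only place the hypothesis enters. Then I would define the map $\phi : 2^A \to 2^A$ by the symmetric difference $\phi(B) = B \triangle \{x\}$, so that $\phi(B) = B \cup \{x\}$ when $x \notin B$ and $\phi(B) = B \setminus \{x\}$ when $x \in B$.

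To finish, I would verify two properties of $\phi$. First, $\phi$ is an involution, since toggling membership of $x$ twice returns the original set, so $\phi \circ \phi = \mathrm{id}$ and in particular $\phi$ is a bijection. Second, $|\phi(B)| = |B| \pm 1$, so the parity of $|\phi(B)|$ is always the opposite of the parity of $|B|$. Combining the two, $\phi$ restricts to a bijection between the family of odd-sized subsets of $A$ and the family of even-sized subsets of $A$, which is exactly the claim.

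There is no real obstacle in the argument; the only point to note is that the non-emptiness hypothesis is genuinely needed, as the empty set $\emptyset$ itself has one subset of even size (namely $\emptyset$) and none of odd size, so the conclusion would fail without it.
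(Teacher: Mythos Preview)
Your argument is correct and complete: the involution $B \mapsto B \triangle \{x\}$ is a standard bijection between even- and odd-sized subsets, and you have correctly identified where non-emptiness is used. The paper itself omits the proof entirely, remarking only that it is elementary, so there is nothing further to compare.
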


\begin{lemma}\label{LEM003}
For a set of size $n$, the number of its subsets with a given size is at most $2^{n}/\sqrt{n}$.
\end{lemma}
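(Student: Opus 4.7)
The plan is to reduce the claim to the standard estimate on the central binomial coefficient. The number of subsets of size $k$ of an $n$-set is $\binom{n}{k}$, and it is well known that $\binom{n}{k} \leq \binom{n}{\lfloor n/2 \rfloor}$ for every $k$. Thus it suffices to establish $\binom{n}{\lfloor n/2 \rfloor} \leq 2^{n}/\sqrt{n}$. I would split into the even and odd cases, treating the even case by an elementary telescoping argument rather than appealing to Stirling's formula.

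For $n=2m$, set $a_m = \binom{2m}{m}/4^{m}$. A direct computation gives
\[
\frac{a_{m+1}}{a_m} = \frac{(2m+1)(2m+2)}{4(m+1)^2} = \frac{2m+1}{2m+2},
\]
so $a_m = \prod_{j=1}^{m}\frac{2j-1}{2j}$. The key inequality is
\[
\left(\frac{2j-1}{2j}\right)^{2} \leq \frac{2j-1}{2j}\cdot\frac{2j}{2j+1} = \frac{2j-1}{2j+1},
\]
which holds because $(2j-1)(2j+1) \leq (2j)^2$. Multiplying these over $j=1,\ldots,m$ telescopes to $a_m^2 \leq 1/(2m+1)$, whence $\binom{2m}{m} \leq 4^{m}/\sqrt{2m+1} < 2^{2m}/\sqrt{2m}$, as required.

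For $n=2m+1$, I would use Pascal's identity combined with the symmetry $\binom{2m+1}{m}=\binom{2m+1}{m+1}$ to write
\[
\binom{2m+2}{m+1} = \binom{2m+1}{m}+\binom{2m+1}{m+1} = 2\binom{2m+1}{m}.
\]
Applying the even case (now established) to $\binom{2m+2}{m+1}$ gives
\[
\binom{2m+1}{m} = \tfrac{1}{2}\binom{2m+2}{m+1} \leq \frac{2^{2m+1}}{\sqrt{2m+2}} < \frac{2^{2m+1}}{\sqrt{2m+1}},
\]
completing the proof.

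There is no genuine obstacle here; the only point that needs care is securing the precise constant $1/\sqrt{n}$ (as opposed to, say, $1/\sqrt{n/2}$), and the telescoping estimate above is exactly calibrated for that. One could alternatively invoke Stirling's formula with explicit error bounds, but the telescoping argument is self-contained and avoids the need to track explicit Stirling constants.
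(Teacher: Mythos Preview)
Your proof is correct. Both you and the paper reduce to bounding the central binomial coefficient and split into even and odd parities, but the core estimates differ: the paper invokes Stirling's formula with explicit error terms (from Robbins) to obtain $\binom{2x}{x}<2^{2x}/\sqrt{2x}$, whereas you use the elementary Wallis-type telescoping $a_m^2\le\prod_{j=1}^m\frac{2j-1}{2j+1}=\frac{1}{2m+1}$. For the odd case the paper descends to $n-1$ via $\binom{n}{\lfloor n/2\rfloor}=\frac{2n}{n+1}\binom{n-1}{(n-1)/2}$, while you ascend to $n+1$ via Pascal's identity; both reductions are equally clean. Your argument is fully self-contained and avoids having to track the constants in Stirling's inequality, at the cost of a marginally weaker implicit constant ($1/\sqrt{n+1}$ versus roughly $1/\sqrt{\pi n/2}$), which is irrelevant for the statement at hand.
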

\begin{proof}
Clearly, the lemma holds true for $n=1$, and so we assume $n\geq2$. According to Stirling's formula (see~\cite{Robbins1955}), for any positive integer $x$, the factorial $x!$ satisfies
\[
\sqrt{2\pi x}\left(\frac{x}{e}\right)^x <x!<\sqrt{2\pi x}\left(\frac{x}{e}\right)^x e^{\frac{1}{12x}}.
\]
Hence,
\[
\binom{2x}{x}=\frac{(2x)!}{x!\cdot x!}<
       e^{\frac{1}{24x}}
\frac  {\sqrt{4\pi x}\cdot (2x)^{2x}}
       {\big(\sqrt{2\pi x} \cdot x^{x}\big)^2}=\frac{e^{\frac{1}{24x}}}{\sqrt{\pi}}\cdot \frac{2^{2x}}{\sqrt{x}}
       < \frac{2^{2x}}{\sqrt{2x}}.
\]
Note that the maximum binomial coefficient in $\binom{n}{0},\ldots,\binom{n}{n}$ is $\binom{n}{\lfloor n/2\rfloor}$. If $n$ is even, the conclusion immediately follows from the above inequality by taking $2x$ as $n$. If $n$ is odd, we also obtain
\[
\binom{n}{\lfloor n/2\rfloor}=\frac{n}{(n+1)/2}\binom{n-1}{(n-1)/2}
<\frac{2n}{n+1}\cdot\frac{2^{n-1}}{\sqrt{n-1}}<\frac{2^{n}}{\sqrt{n}},
\]
as the lemma asserts.
\end{proof}

\subsection{Counting and group structure}\label{SUBSEC2.4}

For a positive $d\in\mathbb{R}$, a group $X$ is said to be \emph{$d$-generated} if it has a generating set of size at most $d$ (note that $\lfloor d\rfloor$ is not necessarily the minimum size of a generating set of $X$).
Since a chain of subgroups in $G$ has length at most $\log_2{|X|}$, it turns out that $X$ is always $(\log_2{|X|})$-generated.
\begin{lemma}\label{LEM004}
Let $X$ be a finite group. The following statements hold.
\begin{enumerate}[{\rm(a)}]
\item\label{LEM004.1} For a fixed positive number $m$, there are at most $|X|^{\log_2m}$ subgroups of order $m$ in $X$.
\item\label{LEM004.3} $|\Aut(X)|\leq|X|^{\log_2|X|}=2^{\log^2_2{|X|}}$.
\item\label{LEM004.4}$X$ has less than $2^{(\log_2^2n)/4+3}$ subgroups.
\end{enumerate}
\end{lemma}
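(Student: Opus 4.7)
The plan is to prove the three parts in sequence, with (a) and (b) flowing from a common counting-via-generators idea, while (c) requires a deeper input from the literature on enumeration of subgroups.

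For part (a), I would appeal to the elementary fact that any finite group of order $m$ is $\lfloor\log_2 m\rfloor$-generated: in a strictly ascending chain of subgroups the order at least doubles at each step, so a minimal generating set has size at most $\log_2 m$. Consequently, each subgroup $H\leq X$ of order $m$ is determined by an ordered $\lfloor\log_2 m\rfloor$-tuple of generators chosen from $X$, and the number of such tuples is at most $|X|^{\lfloor\log_2 m\rfloor}\leq|X|^{\log_2 m}$.

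Part (b) applies the same idea to $X$ itself. Since $X$ is $(\log_2|X|)$-generated, any automorphism is uniquely determined by the images of a fixed generating set of size at most $\log_2|X|$, each of which can take at most $|X|$ values. This yields $|\Aut(X)|\leq|X|^{\log_2|X|}=2^{\log_2^2|X|}$.

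Part (c) is the subtle statement, since summing the estimate of (a) over the divisors of $n$ is hopelessly loose: the $d=n$ term alone gives $n^{\log_2 n}$, which is far larger than the claimed bound. The correct strategy is to invoke Pyber's theorem on the number of subgroups of a finite group, which asserts that the total count is at most $n^{(\log_2 n)/4+O(\log\log n)}$. The optimal constant $1/4$ comes from elementary abelian $p$-groups, for which the enumeration of subspaces by Gaussian binomial coefficients is known to produce roughly $2^{(\log_2 n)^2/4}$ subgroups. The main obstacle is packaging this into the explicit additive constant $3$: one would cite a quantitative form of Pyber's theorem (using the classification of finite simple groups to reduce to $p$-groups, and then the Borovik--Pyber--Shalev bound on subgroups of $p$-groups) and separately verify the inequality for the finitely many small $n$ where the asymptotic estimate is not yet dominant.
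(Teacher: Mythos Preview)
Your arguments for parts~(a) and~(b) are exactly what the paper intends: both follow from the remark immediately preceding the lemma that every finite group of order $m$ is $(\log_2 m)$-generated, via the tuple-counting you describe.

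For part~(c) your instinct is right that a direct summation of~(a) is hopeless and that one must appeal to a Pyber-type structural result with leading exponent $(\log_2 n)^2/4$. The paper, however, does not attempt to extract the explicit additive constant~$3$ from the classical Pyber or Borovik--Pyber--Shalev statements; it simply cites the recent paper of Fusari and Spiga (\emph{J.~Algebra} 635 (2023), 486--526), where precisely the bound $2^{(\log_2^2 n)/4+3}$ is established. Your plan to ``cite a quantitative form of Pyber's theorem \ldots\ and separately verify the inequality for the finitely many small $n$'' is morally the content of that reference, but as written it leaves a genuine gap: the older results you name give only $n^{(1/4+o(1))\log_2 n}$, and turning the $o(1)$ into the specific constant~$3$ is nontrivial work that you have not carried out. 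The cleanest fix is to replace your proposed citation with the Fusari--Spiga reference, which delivers the exact inequality needed.
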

\begin{proof}
Parts~\eqref{LEM004.1} and~\eqref{LEM004.3} follow from the previous paragraph, and part~\eqref{LEM004.4} is proved in~\cite{FusariSpiga}.
\end{proof}

The following deep result due to Lubotzky~\cite[Page~198,~(1)]{Lubotzky2001} estimates the number of finite groups of given order and given number of generators.

\begin{theorem}[Lubotzky]\label{LEM005}
The number of isomorphism classes of $d$-generated groups of order $n$ is at most $2^{2(d+1)\log_2^2n}$.
\end{theorem}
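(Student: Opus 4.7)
My strategy is to induct on $n$ using a chief series of $G$. Every $d$-generated group $G$ of order $n$ admits a chief series $1 = G_0 \lhd G_1 \lhd \cdots \lhd G_k = G$ of length $k \leq \log_2 n$, whose factors $M_i := G_i/G_{i-1}$ are characteristically simple, so each $M_i$ is either elementary abelian or a direct power of a nonabelian simple group. Since every quotient of a $d$-generated group is $d$-generated, the inductive hypothesis applies at every level. I plan to bound the total count by separately estimating (i) the number of admissible chief-factor sequences $(M_1, \dots, M_k)$ with $\prod_i|M_i|=n$, and (ii) for each such sequence, the number of isomorphism classes of $d$-generated groups realizing it.

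For (i), using the classification of finite simple groups to bound the number of nonabelian simple groups of a given order, together with the observation that characteristically simple groups are direct powers of simple groups, I expect the count of admissible sequences to be at most $n^{O(\log_2 n)}$, contributing a $2^{O(\log_2^2 n)}$ factor to the final bound. For (ii), I would proceed inductively: given an already-enumerated quotient $H = G/G_{i-1}$, one counts extensions of $H$ by $M_i$ whose total space remains $d$-generated. The number of possible $H$-module structures on $M_i$ is at most $|\Aut(M_i)|^d \leq n^{O(d\log_2 n)}$ by Lemma~\ref{LEM004}\eqref{LEM004.3}, and for each such structure the equivalence classes of extensions are parametrised by $H^2(H, M_i)$, which for a $d$-generated $H$ acting on a characteristically simple module can be bounded by $|M_i|^{O(d)}$ via Gasch\"utz's theorem and the standard cohomology bounds for $d$-generated groups.

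Chaining these factors across the at-most-$\log_2 n$ levels of the chief series yields a total of the form $2^{O((d+1)\log_2^2 n)}$, and careful bookkeeping of the hidden constants produces the claimed bound $2^{2(d+1)\log_2^2 n}$. The main obstacle is step (ii): the naive bound $|H^2(H,M)|\leq|M|^{|H|}$ is exponential in $|H|$ and hence useless, so one must genuinely leverage the $d$-generated hypothesis through a Guralnick-style bound of the form $\dim H^1(H,M)\leq d\cdot\dim M$ for irreducible modules, which reduces the extension count to something polynomial in $|M_i|$ rather than exponential. Everything else is routine if tedious bookkeeping.
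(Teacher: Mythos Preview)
The paper does not prove this theorem; it is quoted as a black box from Lubotzky~\cite{Lubotzky2001}. So there is no in-paper proof to compare against, and your sketch should be judged on its own merits against Lubotzky's actual argument.

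Your chief-series-plus-extension-count strategy is indeed the backbone of Lubotzky's proof, though he phrases it as counting normal subgroups of index $n$ in the free group $F_d$ rather than counting abstract extension classes. That reformulation is not cosmetic: it lets him avoid bounding $|H^2(H,M)|$ directly. Two points in your sketch need repair.

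First, for nonabelian chief factors $M_i\cong T^k$ the equivalence classes of extensions of $H$ by $M_i$ are \emph{not} parametrised by $H^2(H,M_i)$; that classification requires an abelian kernel. For nonabelian $M_i$ one instead fixes a coupling $H\to\Out(M_i)$ and uses $Z(M_i)=1$ to see that each coupling yields at most one extension class. This case is in fact easier, but it is not handled by what you wrote.

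Second, and more seriously, the asserted bound $|H^2(H,M)|\le|M|^{O(d)}$ for abelian $M$ is the crux of the whole argument and does not follow from Gasch\"utz's theorem or from the trivial bound $\dim H^1\le d\cdot\dim M$. From the five-term sequence for $F_d\twoheadrightarrow H$ one gets $|H^2(H,M)|\le|\mathrm{Hom}_H(R/[R,R],M)|$ with $R=\ker(F_d\twoheadrightarrow H)$, so everything reduces to controlling the relation module $R/[R,R]$ as an $H$-module. Lubotzky handles this by directly bounding, for each $N\lhd F_d$ of index $m$, the number of $N'\lhd F_d$ with $N'\le N$ and $N/N'$ a fixed chief factor of size $s$; his bound is of the shape $m\cdot s^{O(d)}$, and chaining it down a chief series gives the stated exponent. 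Your formulation can be made to work, but it requires exactly this relation-module input, not a generic cohomology estimate. Calling it ``routine bookkeeping'' understates where the content lies.
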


The next well-known result (see~\cite{LM1972} and~\cite[Theorem~4.1]{EN}, for example), along with its application Lemma~\ref{LEM007}, plays a crucial role in the proof of Theorem~\ref{THM001}.

\begin{lemma}\label{LEM006}
There is no finite nonabelian group with an automorphism inverting more than $3/4$ of its elements. Moreover, a finite group in which more than $3/4$ of the elements are involutions is an elementary abelian $2$-group.
\end{lemma}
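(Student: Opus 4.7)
\medskip
\noindent\emph{Proof proposal.}
For the first statement my plan is to show that every element inverted by $\alpha$ lies in the centre of $G$, which will force $|Z(G)|>|G|/2$ and hence $Z(G)=G$, contradicting the nonabelian hypothesis. Set $I_\alpha:=\{g\in G:g^\alpha=g^{-1}\}$ and assume $|I_\alpha|>3|G|/4$. I would fix $g\in I_\alpha$ and apply inclusion--exclusion to obtain
\[
|I_\alpha\cap g^{-1}I_\alpha|\;\geq\; 2|I_\alpha|-|G|\;>\;|G|/2.
\]
For each $h$ in this intersection both $h$ and $gh$ lie in $I_\alpha$, so evaluating $(gh)^\alpha$ in two ways---as $(gh)^{-1}=h^{-1}g^{-1}$ and as $g^\alpha h^\alpha=g^{-1}h^{-1}$---immediately yields $gh=hg$. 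Consequently $|C_G(g)|>|G|/2$, forcing $C_G(g)=G$ and $g\in Z(G)$; letting $g$ vary over $I_\alpha$ gives $I_\alpha\subseteq Z(G)$ and completes the first half.

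For the second statement I would mirror the argument, with $\alpha$ replaced by squaring. Let $T:=\{g\in G:g^2=1\}$ (including the identity) and assume $|T|>3|G|/4$. Fixing $g\in T$ and taking any $h\in T\cap g^{-1}T$, both $h$ and $gh$ are involutions, so $1=(gh)^2=ghgh$ together with $g^2=h^2=1$ gives $gh=hg$. The same inclusion--exclusion count supplies more than $|G|/2$ such~$h$, so $g\in Z(G)$ and $T\subseteq Z(G)$; hence $G$ is abelian. In an abelian group the set of elements of order at most $2$ is a subgroup, so $T\leq G$, and $|T|>|G|/2$ forces $T=G$, making $G$ an elementary abelian $2$-group.

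Neither half contains a serious obstacle: in each case the entire argument is a two-way expansion of a single product, combined with one inclusion--exclusion estimate and the elementary fact that a subgroup of index less than $2$ is the whole group. The only point requiring a little care is verifying that the two expansions of $(gh)^\alpha$ and $(gh)^2$ really produce the commutation relation, and that the sharp hypothesis $>3|G|/4$ is strong enough to beat the index-$2$ threshold for $C_G(g)$; both are transparent once the counts are written out, which is why the constant $3/4$ is tight.
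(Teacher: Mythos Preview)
Your argument is correct. The paper itself does not supply a proof of this lemma; it simply records the result as well known and cites \cite{LM1972} and \cite{EN}. Your inclusion--exclusion/centralizer argument is the standard elementary proof (essentially the one in the cited references): showing that each inverted element, respectively each element of $T$, lies in $Z(G)$ via the $|A\cap g^{-1}A|>|G|/2$ bound, and then using that a subgroup of index less than $2$ is the whole group. One minor remark: in the second half you take $T=\{g:g^2=1\}$ including the identity, whereas the statement speaks of involutions (order exactly $2$); but since the latter hypothesis is stronger, your argument covers it a fortiori.
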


\begin{lemma}\label{LEM007}
Let $X$ be a finite group, and let $G$ be a nonabelian subgroup of index $2$ in $X$. Then $|\mathcal{I}(X\setminus G)|\leq3|G|/4$.
\end{lemma}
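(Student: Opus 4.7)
The plan is to reduce the claim to Lemma~\ref{LEM006} via the standard bijection between involutions in the non-identity coset and elements inverted by a conjugation automorphism. If $\mathcal{I}(X\setminus G)=\emptyset$ the bound holds trivially, so I will assume there exists an involution $t\in X\setminus G$, which forces $X\setminus G=Gt$.

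Next I will use that, since $G$ has index $2$ in $X$, the subgroup $G$ is normal in $X$, so conjugation by $t$ induces an automorphism $\tau$ of $G$; note that $t^{-1}=t$, so $\tau(g)=tgt$ for every $g\in G$. A direct computation gives
\[
(gt)^2=gtgt=g\cdot\tau(g),
\]
so $gt$ is an involution if and only if $\tau(g)=g^{-1}$. Therefore the map $g\mapsto gt$ is a bijection between $\{g\in G:\tau(g)=g^{-1}\}$ and the set of involutions in $X\setminus G$, which is exactly $\mathcal{I}(X\setminus G)$ because the identity of $X$ lies in $G$.

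Finally, since $G$ is nonabelian, Lemma~\ref{LEM006} asserts that the automorphism $\tau$ of $G$ cannot invert more than $3|G|/4$ elements of $G$. Combined with the bijection above, this gives $|\mathcal{I}(X\setminus G)|\leq 3|G|/4$, completing the proof.

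I do not anticipate a real obstacle: the entire content of the lemma is a one-line repackaging of Lemma~\ref{LEM006} once one observes that the squaring map on $Gt$ translates, through the normality of $G$, into the inversion condition $\tau(g)=g^{-1}$ on $G$.
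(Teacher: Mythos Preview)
Your proof is correct and follows essentially the same approach as the paper: pick an involution $t\in X\setminus G$, use normality of $G$ to compute $(gt)^2=g\cdot t g t$, and conclude that the involutions in $X\setminus G$ correspond bijectively to the elements of $G$ inverted by conjugation by $t$, whence Lemma~\ref{LEM006} gives the bound. The paper's argument is identical up to notation.
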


\begin{proof}
Assume without loss of generality $\mathcal{I}(X\setminus G)\neq\emptyset$. Fix  $x\in\mathcal{I}(X\setminus G)$, so that $X=G\rtimes\langle x\rangle$. For each $g\in G$, we have $(gx)^2=1$ if and only if $x^{-1}gx=xgx=g^{-1}$, that is, $gx\in\mathcal{I}(X\setminus G)$ if and only if $x$ inverts $g$. As $G$ is nonabelian, we derive from Lemma~\ref{LEM006}
that there is no automorphism of $G$ inverting more than $3/4$ of its elements. Thus $|\mathcal{I}(X\setminus G)|\leq3|G|/4$.
\end{proof}

As usual, the symbol $\sqcup$ denotes disjoint union of sets, and $H\backslash G/K$ denotes the set of double cosets of $H$ and $K$ in $G$, for subgroups $H$ and $K$ of a group $G$.

\begin{lemma}\label{LEM008}
Let $G$ be a group with subgroups $H$ and $K$ of finite indices. Then either $|H\backslash G/K|\leq\frac{3}{4}\max\{|G\,{:}\,H|,|G\,{:}\,K|\}$, or $H=K$ is normal in $G$.
\end{lemma}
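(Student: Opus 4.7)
The plan rests on the standard double coset identity. Writing $G = \bigsqcup_{i=1}^{d} H g_i K$ with $d = |H\backslash G/K|$, each double coset $H g_i K$ decomposes into $[H : H \cap g_i K g_i^{-1}]$ right cosets of $K$, and symmetrically into $[K : g_i^{-1} H g_i \cap K]$ left cosets of $H$. Hence
\[
|G:K| \;=\; \sum_{i=1}^{d} [H : H \cap g_i K g_i^{-1}]
\quad\text{and}\quad
|G:H| \;=\; \sum_{i=1}^{d} [K : g_i^{-1} H g_i \cap K],
\]
and in particular $d \leq \min\{|G:H|,\,|G:K|\}$.

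Assume the first alternative fails, so $d > \frac{3}{4}|G:K|$ and $d > \frac{3}{4}|G:H|$. Every summand in the first identity is at least $1$, and the number of summands that are $\geq 2$ is at most $|G:K|-d < \frac{1}{4}|G:K|$; hence the index set $I := \{i : H \subseteq g_i K g_i^{-1}\}$ satisfies $|I| \geq 2d - |G:K| > \frac{1}{2}|G:K|$. By the same argument on the second identity, $J := \{i : g_i K g_i^{-1} \subseteq H\}$ satisfies $|J| > \frac{1}{2}|G:H|$. Since $|G:H|,|G:K| \geq d$, the average of these indices is at least $d$, so $|I|+|J| > d$ and thus $I \cap J \neq \emptyset$. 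Picking $i_0 \in I\cap J$ yields $H = g_{i_0} K g_{i_0}^{-1}$, whence $|H|=|K|$ and $|G:H|=|G:K|$. Once sizes match, both $I$ and $J$ actually coincide with $\{i : H = g_i K g_i^{-1}\}$.

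The final step is to count this common set exactly. Every $g$ with $H = gKg^{-1}$ lies in $g_{i_0}N_G(K)$, and two elements $g_{i_0} n$, $g_{i_0} n'$ with $n,n' \in N_G(K)$ give the same double coset precisely when $n' \in nK$: unpacking $g_{i_0}n' \in H g_{i_0} n K$ and using $g_{i_0}^{-1}H g_{i_0} = K$ together with $K \triangleleft N_G(K)$ reduces the relation to $K$-coset equivalence in $N_G(K)$. Thus $|I| = |N_G(K):K|$, and plugging into $|I| > \frac{1}{2}|G:K| = \frac{1}{2}|G:N_G(K)|\cdot|N_G(K):K|$ forces $|G:N_G(K)| < 2$, so $K \triangleleft G$. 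Then $g_i K g_i^{-1} = K$ for every $i$, and $H = g_{i_0}Kg_{i_0}^{-1} = K$, completing the proof.

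The only non-mechanical step is the coset count $|I| = |N_G(K):K|$; the rest is a symmetric double application of averaging on the two double coset identities, with the identification $I = J$ triggered as soon as $|H|=|K|$. The constant $\tfrac{3}{4}$ is exactly what makes the two pigeonhole losses (each a factor of $\tfrac{1}{2}$) close up to force $N_G(K)=G$.
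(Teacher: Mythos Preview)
Your proof is correct and follows essentially the same idea as the paper's: both count the ``minimal'' double cosets (those where one of $H$, $g_iKg_i^{-1}$ contains the other) via the identity $|G:K|=\sum_i[H:H\cap g_iKg_i^{-1}]$, show that if the double-coset count exceeds $\tfrac34\max\{|G:H|,|G:K|\}$ then more than half the cosets are minimal, and deduce $|N_G(K)|>|G|/2$. The paper streamlines the argument by assuming without loss of generality that $|H|\le|K|$, which makes every minimal double coset automatically satisfy $g_i^{-1}Hg_i=K$ and lets one work with a single identity; your symmetric treatment using both identities and the pigeonhole step $|I|+|J|>d$ recovers $|H|=|K|$ instead of assuming it, at the cost of a little extra bookkeeping.
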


\begin{proof}
Without loss of generality, assume that $|H|\leq|K|$. Let $Hg_1K,\ldots,Hg_sK$ be the double cosets of size $|H|$. If $s\leq|G\,{:}\,H|/2$, then
\[
|H\backslash G/K|\leq s+\frac{|G\,{:}\,H|-s}{2}=\frac{|G\,{:}\,H|+s}{2}\leq\frac{3}{4}|G\,{:}\,H|=\frac{3}{4}\max\{|G\,{:}\,H|,|G\,{:}\,K|\},
\]
satisfying the conclusion of the lemma. Assume for the rest of the proof that $s>|G\,{:}\,H|/2$.

For each $i\in\{1,\ldots,s\}$, it follows from $Hg_iK=Hg_i$ that $K\subseteq g_i^{-1}Hg_i$, which together with $|H|\leq|K|$ implies $g_i^{-1}Hg_i=K$. In particular, $g_1^{-1}Hg_1=\cdots=g_s^{-1}Hg_s$. Hence every element of $Hg_1g_1^{-1}\sqcup\cdots\sqcup Hg_sg_1^{-1}$ normalizes $H$. This yields $|\mathbf{N}_G(H)|\geq|H|s>|H||G\,{:}\,H|/2=|G|/2$, which forces $\mathbf{N}_G(H)=G$, that is, $H$ is normal in $G$. Then we deduce from $g_1^{-1}Hg_1=K$ that $H=K$, proving the conclusion of the lemma.
\end{proof}

The following result follows from ~\cite[Lemma~2.3]{DX2000} and Lemma~\ref{LEM008}.

\begin{lemma}\label{LEM009}
Let $M$ be an intransitive permutation group with exactly two orbits $U$ and $W$, and let $\kappa$ be the number of double cosets of the stabilizers $M_w$ and $M_u$ in $M$, where $u\in U$ and $w\in W$. Then there are exactly $2^{\kappa}$ bipartite graphs $\Gamma$ with bipartition $\{U,W\}$ such that $M\leq\Aut(\Gamma)$. Moreover, if $|U|=|W|$ and $M$ is not semiregular, then $\kappa\leq\frac{3}{4}|U|$.
\end{lemma}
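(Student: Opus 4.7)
The plan is to treat the two assertions separately, invoking the cited \cite{DX2000} for the count and Lemma~\ref{LEM008} for the bound. For the count, I would observe that since $M$ preserves the bipartition $\{U,W\}$, an $M$-invariant bipartite graph with bipartition $\{U,W\}$ is specified by an $M$-invariant subset of $U\times W$ (under the coordinatewise action of $M$). Hence the number of such graphs is $2^{t}$, where $t$ is the number of $M$-orbits on $U\times W$. Because $M$ is transitive on both $U$ and $W$, the standard bijection sending the orbit of $(u,gw)$ to the double coset $M_u g M_w$ identifies these orbits with $M_u\backslash M/M_w$, and the map $M_u g M_w\mapsto M_w g^{-1} M_u$ gives $|M_u\backslash M/M_w|=|M_w\backslash M/M_u|=\kappa$. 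This produces the asserted count $2^\kappa$.

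For the moreover part, assume $|U|=|W|$ and $M$ is not semiregular. Transitivity on each part gives $|M:M_u|=|U|=|W|=|M:M_w|$. I would apply Lemma~\ref{LEM008} with $H=M_w$ and $K=M_u$. This returns one of two alternatives: either $\kappa\leq \frac{3}{4}\max\{|M:M_w|,|M:M_u|\}=\frac{3}{4}|U|$, which is precisely the conclusion, or $M_u=M_w$ and this common subgroup is normal in $M$.

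The main task is then to rule out the normal alternative. Suppose $N:=M_u=M_w$ is normal in $M$. For any $u'\in U$, transitivity gives $u'=gu$ for some $g\in M$, so $M_{u'}=gM_ug^{-1}=N$; hence every element of $N$ fixes every point of $U$. The identical argument applied to $W$ shows $N$ fixes every point of $W$, so $N$ acts trivially on $U\cup W$, forcing $N=1$. But then $M_u=M_w=1$, so $M$ acts regularly on each of $U$ and $W$ and therefore semiregularly on $U\cup W$, contradicting the hypothesis. This contradiction forces the first alternative of Lemma~\ref{LEM008}, completing the proof.

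I expect the mild obstacle to be exactly this elimination step: one must check that ``$M_u$ normal in $M$'' really does collapse to triviality of $M_u$ in the presence of the two-orbit structure, and not merely to some weaker statement about the kernel on one part. The bijection between $M$-orbits on $U\times W$ and double cosets is routine, and the rest is a direct application of Lemma~\ref{LEM008}.
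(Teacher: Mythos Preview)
Your proof is correct and follows precisely the route indicated by the paper, which merely says the result ``follows from~\cite[Lemma~2.3]{DX2000} and Lemma~\ref{LEM008}'' without spelling out the details. Your elimination of the normal alternative (showing that $M_u=M_w\trianglelefteq M$ forces $M_u$ to lie in the kernel on both $U$ and $W$, hence to be trivial, hence $M$ semiregular) is exactly the step the paper leaves implicit.
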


The final lemma is an immediate consequence of the Classification of Finite Simple Groups, and a detailed verification can be found in~\cite{Stefan}.

\begin{lemma}\label{LEM010}
For every finite nonabelian simple group $T$ we have $|\Out(T)|\leq\log_2|T|$.
\end{lemma}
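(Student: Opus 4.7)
The plan is to invoke the Classification of Finite Simple Groups, which partitions the finite nonabelian simple groups into three families: the alternating groups $A_n$ with $n\geq 5$, the $26$ sporadic groups, and the simple groups of Lie type. For each family both $|T|$ and $|\Out(T)|$ admit explicit descriptions (as tabulated in the ATLAS or standard references on groups of Lie type), and the claimed inequality turns out to be extremely loose away from a few small exceptions which must be checked directly.

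First I would dispose of the easy cases. For $A_n$ with $n\geq 5$ one has $|\Out(A_n)|\leq 4$, with equality only when $n=6$, while $|A_n|=n!/2\geq 60$, giving $\log_2|A_n|\geq\log_2 60>5\geq|\Out(A_n)|$ immediately. For each of the $26$ sporadic groups $|\Out(T)|\in\{1,2\}$, whereas even the smallest sporadic group $M_{11}$ has $\log_2|M_{11}|>12$, so a single pass through the table of sporadic orders finishes these families.

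The substantive case is the groups of Lie type. Writing such a $T$ over $\mathbb{F}_q$ with $q=p^f$, the outer automorphism group decomposes as the product of the diagonal, field, and graph automorphism groups, of orders $d$, $f$ and $g$ respectively. Here $d$ divides a small polynomial in $n$ and $q\pm 1$ (e.g.\ $\gcd(n+1,q-1)$ for $A_n(q)$), while $g\in\{1,2\}$ except for type $D_4$ where $g\leq 6$. On the other hand $|T|$ grows like $q^{\dim\mathbf{G}}/d$, so $\log_2|T|$ is of order $(\dim\mathbf{G})\cdot f\cdot\log_2 p-\log_2 d$. Since $|\Out(T)|\leq 6\,d\,f$ while $\log_2|T|$ grows linearly in $f$ with a coefficient at least $\dim\mathbf{G}\cdot\log_2 p$, the inequality is obvious once either the rank or $q$ is not tiny.

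The main obstacle is therefore the bookkeeping for small groups of Lie type over small fields, in particular $\PSL_2(q)$, $\PSL_3(q)$, $\mathrm{PSU}_3(q)$ and $\mathrm{P}\Omega_8^+(q)$ (this last because of triality), where $|\Out(T)|$ is largest relative to $|T|$. For these, using the exact formulas such as $|\PSL_2(q)|=q(q^2-1)/\gcd(2,q-1)$ and $|\Out(\PSL_2(q))|=f\cdot\gcd(2,q-1)$, the inequality reduces to an elementary estimate on $f$, $p$ and the small gcd factor and one verifies it case by case. Once this finite check is complete, the lemma follows.
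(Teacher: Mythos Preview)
Your approach is correct and matches the paper's, which does not give its own proof but simply states that the lemma is an immediate consequence of the Classification of Finite Simple Groups and refers to Kohl's preprint~\cite{Stefan} for the detailed case-by-case verification. Your outline is in fact more explicit than anything the paper provides, and the strategy you describe---disposing of alternating and sporadic groups trivially, then using the $dfg$-decomposition of $\Out(T)$ for groups of Lie type together with explicit order formulas, with a residual finite check for small rank over small fields---is exactly the standard route such a verification takes.
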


\section{Strategy to prove Theorem~\ref{THM001}}\label{SEC3}

We first establish two reduction results for Theorem~\ref{THM001}, which will be presented in Sections~\ref{SUBSEC3.1} and~\ref{SUBSEC3.2}, respectively. Building upon  these results, the key to proving Theorem~\ref{THM001} lies in estimating the number of subsets $S$ of $G$ for which there exists a ``large'' subgroup $M$ of $\Aut^+(\HH(G,S))$ such that $G$ is maximal and core-free in $M$. We present this enumeration in Section~\ref{SUBSEC3.3} and dedicate Section~\ref{SEC4} to its proof. This proof, of independent interest, analyzes primitive groups with a ``large'' regular subgroup.
In essence, the analytical framework mirrors that of~\cite[Section~5]{MS2021}. Summarizing these results we arrive at an upper bound on the number of subsets $S$ of $G$ such that $\Aut^+(\HH(G,S))>G$ (see Proposition~\ref{PROP010}). Armed with this bound, we conclude the proof of Theorem~\ref{THM001} in Section~\ref{SEC5}, requiring only minimal additional effort.

Let us start with the following definition, which will also be used in Sections~\ref{SEC4} and~\ref{SEC5}.

\begin{definition}\label{DEF003}
Let $G$ be a group. For a subset $S$ of $G$ and a subgroup $X$ of $\Aut^+(\HH(G,S))$ such that $G<X$, we call $(S,X)$ an \emph{exceptional pair} of $G$ (with respect to $S$). If, in addition, $G$ is maximal in $X$, then we call $(S,X)$ a \emph{minimally exceptional pair}.
\end{definition}

\subsection{Babai-Godsil-like reduction}\label{SUBSEC3.1}

Let $X$ be a permutation group on $\Omega$. If a subset $\Delta$ of $\Omega$ is invariant under  $X$, we denote by $X|_\Delta$ the group induced by $X$ on $\Delta$. Moreover, if $\Delta$ is $\langle g\rangle$-invariant, where $g\in X$, then we write $g|_\Delta$ to denote the permutation induced by $g$ on $\Delta$.

\begin{lemma}\label{LEM011}
Let $G$ be a group of order $n$. For each positive integer $t$, the number of subsets $S$ of $G$ such that there exists a pair $(H,f)$ satisfying the following conditions~{\rm(a)} and~{\rm(b)} is less than $2^{n-\frac{n}{3t}\log_2\left(\frac{4}{3}\right)+\frac{\log_2^2n}{4}+\log_2n+2\log_2t+1}$.
\begin{enumerate}[{\rm(a)}]
\item\label{LEM011.1} $H$ is a nontrivial proper normal subgroup of $G$ with $|H|\leq t$;
\item\label{LEM011.2} $f\in\Aut^+(\HH(G,S))$ stabilizes every $H$-orbit on $V(\HH(G,S))$ and fixes $1_{\rs}$, and the induced permutation $f|_{(G\setminus H)_\rs}$ of $f$ on $(G\setminus H)_{\rs}$ is nontrivial.
\end{enumerate}
\end{lemma}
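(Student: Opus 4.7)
The plan is a union bound, parametrising the pair $(H,f)$ by $H$ together with a triple $(g_0,h_0,h_1)$ that captures enough of $f$ to control the number of compatible subsets. Writing $f(g_\rs)=\alpha(g)_\rs$ and $f(g_\ls)=\beta(g)_\ls$, the fact that $f$ stabilises every $H$-orbit and fixes $1_\rs$ yields $\alpha(g)=g\,a(g)$ and $\beta(g)=g\,b(g)$ for functions $a,b\colon G\to H$ with $a(1)=1$. Condition~(b) then supplies some $g_0\in G\setminus H$ with $h_0:=a(g_0)\neq 1$, and $h_1:=b(1)\in H$ records the action of $f$ at $1_\ls$; the certificate of $f$ is $(g_0,h_0,h_1)$. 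By Lemma~\ref{LEM004}\eqref{LEM004.4}, the number of normal subgroups of $G$ of order at most $t$ is less than $2^{(\log_2^2 n)/4+3}$, and for each such $H$ the number of certificates is at most $n(t-1)t<nt^2=2^{\log_2 n+2\log_2 t}$. Hence it will suffice to prove that each certificate admits at most $2^{n-(n/(3t))\log_2(4/3)-2}$ compatible subsets $S$.

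For the per-certificate bound, the Haar-graph automorphism condition reads $hg^{-1}\in S \Longleftrightarrow \beta(h)\alpha(g)^{-1}\in S$ for all $g,h\in G$. Specialising to $g=1$ gives $S$-invariance under $\beta$, while specialising to $g=g_0$ gives $S$-invariance under the coset-preserving bijection $\psi\colon s\mapsto\beta(sg_0)h_0^{-1}g_0^{-1}$. Both $\beta$ and $\psi$ stabilise every coset of $H$ (using the normality of $H$ and the ranges of $a,b$), so $S$ is determined by its intersections $S\cap xH$, and each intersection must be invariant under the restriction of $\langle\beta,\psi\rangle$ to $xH$. The saving $(3/4)^{n/(3t)}=2^{-(n/(3t))\log_2(4/3)}$ should then come from showing that, in at least $n/(3t)$ of the $n/|H|$ cosets, this restricted group contains an involutory element to which Lemma~\ref{LEM007} applies, forcing the number of admissible values of $S\cap xH$ to be at most $(3/4)\cdot 2^{|H|}$ on each such coset. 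The factor $1/3$ in $n/(3t)$ is the overhead of transferring the involutory structure supplied by $h_0\neq 1$ from one coset to the bounded-radius neighbourhood of cosets on which Lemma~\ref{LEM007} can actually bite.

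The main obstacle I anticipate is extracting exactly this $3/4$-per-$3t$-vertices saving. A direct enumeration of $f$'s is hopeless (it is of the order $(t!)^{2n/t}$), and a naive invariance argument only buys a factor of $1/2$ per \emph{moved} vertex, which is too weak once many cosets are pointwise fixed by $\beta$. Instead one has to invoke Lemma~\ref{LEM007} in a specific involutory regime on the induced coset actions and propagate the nontriviality $h_0\neq 1$ across sufficiently many cosets so that Lemma~\ref{LEM007} gives a genuine $3/4$-reduction on at least a $1/3$-fraction of them. This is where the bulk of the technical work will sit, and it will closely mirror the corresponding step of the Babai--Godsil-type reduction for Cayley digraphs in~\cite[Section~6]{MS2021}; the remaining arithmetic is simply multiplying the subgroup count, certificate count, and per-certificate bound, with the constant $+3-2=+1$ absorbed into the exponent.
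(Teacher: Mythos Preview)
There is a genuine gap in the per-certificate step, and the lemma you reach for cannot close it.

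The maps $\beta$ and $\psi$ are not determined by your certificate $(g_0,h_0,h_1)$: they depend on all of $f$. For a fixed certificate you must bound not the $S$ invariant under one specific $\langle\beta,\psi\rangle$, but the union, over all $f$ compatible with the certificate, of such invariant sets. A coset $xH$ on which $\beta$ acts trivially for one $f$ may see a completely different restriction for another, and nothing prevents the union from filling out all of $2^{|H|}$ on most cosets. (The parameter $h_1=b(1)$ plays no role in either invariance you write down, which is a symptom of this.) Moreover, Lemma~\ref{LEM007} concerns involutions in an index-$2$ overgroup of a \emph{nonabelian} group; a coset $xH$ carries no such structure, $H$ may be abelian, and even a fixed-point-free involution on $xH$ would give $2^{|H|/2}$ invariant subsets, not $(3/4)\cdot 2^{|H|}$. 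That lemma is used elsewhere in the paper (for the regular-overgroup count) and has nothing to do with the present $3/4$.

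The paper bypasses both issues by extracting from $(x_\rs)^f=y_\rs$, with $x\neq y$ in one nontrivial coset $H_i$, the equality of common-neighbour counts
\[
\bigl|N(1_\rs)\cap N(x_\rs)\cap (H_{j^\sigma})_\ls\bigr|\;=\;\bigl|N(1_\rs)\cap N(y_\rs)\cap (H_{j^\sigma})_\ls\bigr|
\]
for every coset $H_j$, where $\sigma$ is the permutation of cosets induced by right multiplication by $H_i$. Since $C(S,x,j)=\bigl((S\cap H_j)x\cap(S\cap H_{j^\sigma})\bigr)_\ls$, this is a constraint on $S$ alone, depending only on the pair $(S\cap H_j,\,S\cap H_{j^\sigma})$. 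Relaxing the equality to a congruence mod~$2$ and using Lemma~\ref{LEM002} shows that at most a $3/4$ fraction of such pairs survive; then a greedy choice of at least $b/3$ pairwise-disjoint pairs $\{j,j^\sigma\}$ yields the saving $(3/4)^{b/3}\le 2^{-\frac{n}{3t}\log_2(4/3)}$. Your outer bookkeeping (subgroups via Lemma~\ref{LEM004}\eqref{LEM004.4}, then choice of the coset $H_i$, then the $2$-subset $\{x,y\}\subset H_i$) is essentially what the paper does and gives the same constants; it is only the mechanism producing the $3/4$ that needs replacing.
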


\begin{proof}
Let $\mathcal{H}$ be the set of nontrivial normal subgroups of $G$ with order at most $t$, and let
\[
\mathcal{S}=\{S\subseteq G\mid\text{there exists a pair $(H,f)$ satisfying both~\eqref{LEM011.1} and \eqref{LEM011.2}}\}.
\]
For $H\in\mathcal{H}$, let
\[
\mathcal{S}(H)=\{S\subseteq G\mid\text{there exists $f\in\Aut^{+}(\HH(G,S))$ satisfying \eqref{LEM011.2}}\}.
\]
Since Lemma~\ref{LEM004}\eqref{LEM004.4} implies that $|\mathcal{H}|\leq2^{(\log_2^2n)/4+3}$, we only need to prove
\begin{equation}\label{eq026}
|\mathcal{S}(H)|<2^{n-\frac{n}{3t}\log_2\left(\frac{4}{3}\right)+\log_2n+2\log_2t-2},
\end{equation}
for each $H\in\mathcal{H}$.

Fix some $H\in\mathcal{H}$ for the rest of the proof. Let $a=|H|\leq t$, $b=n/a$, and $H_1,\ldots,H_b$ be the right cosets of $H$ in $G$, where $H_1=H$. For each $i\in\{2,\ldots,b\}$, let
\[
\mathcal{S}_i=\{S\subseteq G\mid\text{there exists $f$ satisfying~\eqref{LEM011.2} with $f|_{(N_i)_{\rs}}\neq1$}\}.
\]
As $b\leq n/2=2^{\log_2n-1}$, to prove~\eqref{eq026}, it suffices to show
\begin{equation}\label{eq027}
|\mathcal{S}_i|<2^{n-\frac{n}{3t}\log_2\left(\frac{4}{3}\right)+2\log_2t-1}
\end{equation}
for each $i\in\{2,\ldots,b\}$.

From now on, fix some $i\in\{2,\ldots,b\}$. The right multiplication of any element in $H_i$ induces a permutation on $\{H_1,\ldots,H_b\}$ and hence on $\{1,\ldots,b\}$. Denote by $\sigma$ this induced permutation on $\{1,\ldots,b\}$, which does not depend on the choice of elements in $H_i$. Since $H$ is normal in $G$ and $H_i\neq H$, we have $j\neq j^\sigma$ for $j\in\{1,\ldots,b\}$.
Choose a subset $J\subseteq\{1,\ldots,b\}$ of maximal size satisfying $\{j,j^\sigma\}\cap
\{k,k^\sigma\}=\emptyset$ for all distinct $j,k\in J$. Then $j^{\sigma}\notin J$ for $j\in J$, that is, $J\cap J^{\sigma}=\emptyset$. If $|J|< b/3$, then
\[
\Bigg|\bigcup_{j\in J}\{j,j^\sigma,j^{\sigma^{-1}}\}\Bigg|\leq3|J|<b,
\]
and so there exists $\ell\in\{1,\ldots,b\}\setminus\bigcup_{j\in J}\{j,j^\sigma,j^{\sigma^{-1}}\}$. However, it follows that $\{j,j^\sigma\}\cap\{\ell,\ell^\sigma\}=\emptyset$ for each $j\in J$, which implies that the set $J\cup\{\ell\}$ satisfies $\{j,j^\sigma\}\cap\{k,k^\sigma\}=\emptyset$ for all distinct $j,k\in J\cup\{\ell\}$, contradicting the maximality of $J$. Therefore, $|J|\geq b/3$.

For each $S\subseteq G$, $x\in H_i$ and $j\in\{1,\ldots,b\}$, let $C(S,x,j)$ denote the set of common neighbours of $1_{\rs}$ and $x_{\rs}$ in $(H_{j^\sigma})_\ls$ within the graph $\HH(G,S)$. If $S\in\mathcal{S}_i$, then there exists $f\in\Aut^+(\HH(G,S))$ and distinct $x,y\in H_i$ such that $(1_\rs)^{f}=1_\rs$ and $(x_{\rs})^f=y_{\rs}$. Since $f$ fixes every $H$-orbit on $V(\HH(G,S))$, we obtain $$|C(S,x,j)|=|C(S,x,j)^f|=|C(S,y,j)|,$$ for each $j\in\{1,\ldots,b\}$. In particular, if $S\in\mathcal{S}_i$, then there exists a $2$-subset $\{x,y\}$ of $H_i$ such that $|C(S,x,j)|=|C(S,y,j)|$ for each $j\in J$. Denote
\begin{equation}\label{eq030}
\mathcal{S}_i(\{x,y\},j)=\{S\in \mathcal{S}_i\mid |C(S,x,j)|\equiv|C(S,y,j)|\ (\bmod\ 2)\},
\end{equation}
for each $\{x,y\}\in\binom{H_i}{2}$ and $j\in J$. Then
\[
\mathcal{S}_i\subseteq\bigcup_{\{x,y\}\in\binom{H_i}{2}}\Bigg(\bigcap_{j\in J}\mathcal{S}_i(\{x,y\},j)\Bigg).
\]
Since $|\binom{H_i}{2}|=\binom{a}{2}\leq\binom{t}{2}\le 2^{2\log_2t-1}$, to prove~\eqref{eq027}, it remains to show that
\begin{equation}\label{eq029}
\Bigg|\bigcap_{j\in J}\mathcal{S}_i(\{x,y\},j)\Bigg|\leq2^{n-\frac{n}{3t}\log_2\left(\frac{4}{3}\right)}.
\end{equation}

Fix some $2$-subset $\{x,y\}$ of $H_i$. For $g\in G$,
\begin{align*}
g_{\ls}\in C(S,x,j)&\Leftrightarrow\big(g\in S\big)\,\wedge\,\big(g\in Sx\big)\,\wedge\,\big(g\in H_{j^\sigma}\big)\\
&\Leftrightarrow\big(g\in Sx\cap H_{j^\sigma}\big)\,\wedge\,\big(g\in S\cap H_{j^\sigma}\big)\\
&\Leftrightarrow\big(g\in(S\cap H_j)x\big)\,\wedge\,\big(g\in S\cap H_{j^\sigma}\big).
\end{align*}
For each $S\subseteq G$, denote $S_j=S\cap H_j$. Then the above means
\begin{equation}\label{eq005}
C(S,x,j)=\big((S_j)x\cap S_{j^\sigma}\big)_\ls.
\end{equation}

Let $\mu_j$ be the number of pairs $(S_j,S_{j^\sigma})$ such that
\begin{equation}\label{eq008}
|(S_j)x\cap S_{j^\sigma}|
\equiv|(S_j)y\cap S_{j^\sigma}|\pmod{2}.
\end{equation}
We claim $\mu_j\le 3\cdot 2^{2a-2}$. To this end,
let $c_j$ be the number of subset $S_j$ of $H_j$ with $(S_j)x=(S_j)y$. Such an $S_j$ is a union of $\langle xy^{-1}\rangle$-orbits on $H_j$. The condition $x\neq y$ implies that $\langle xy^{-1}\rangle=\langle yx^{-1}\rangle$ is a nontrivial subgroup of $H$ and hence fixes setwise $H_j$. The semiregularity of $H$ implies
\[
c_j\leq2^{a/2}.
\]
Therefore, the number of pairs $(S_j,S_{j^\sigma})$ with $(S_j)x=(S_j)y$ is at most $c_j\cdot2^{a}$. Next we enumerate the pairs $(S_j,S_{j^\sigma})$ with $(S_j)x\neq(S_j)y$. For such a pair, both $(S_j)x\setminus(S_j)y$ and $(S_j)y\setminus(S_j)x$ are non-empty subset of $N_{j^\sigma}$. It follows from~\eqref{eq008} that $|\big((S_j)x\setminus(S_j)y\big)\cap S_{j^{\sigma}}|$ must have the same parity as $|\big((S_j)y\setminus(S_j)x\big)\cap S_{j^{\sigma}}|$. By Lemma~\ref{LEM002}, this gives $2^{a-1}$ choices for $S_{j^{\sigma}}$ for each fixed $S_j$. Therefore,
\[
\mu_j\leq c_j\cdot2^{a}+(2^a-c_j)\cdot2^{a-1}=
c_j\cdot2^{a-1}+2^{2a-1}\leq 2^{3a/2-1}+2^{2a-1}\leq3\cdot 2^{2a-2},
\]
as $a=|N|\geq 2$.

Since $J\cap J^{\sigma}=\emptyset$, we then conclude from~\eqref{eq030} and~\eqref{eq005} that
\[
\Bigg|\bigcap_{j\in J}\mathcal{S}_i(\{x,y\},j)\Bigg|\leq \Bigg(\prod_{j\in J}\mu_{j}\Bigg)\cdot2^{a(b-2|J|)}\leq(3\cdot 2^{2a-2})^{|J|}\cdot2^{n-2a|J|}=2^n\left(\frac{3}{4}\right)^{|J|}
\leq2^n\left(\frac{3}{4}\right)^{b/3},
\]
which leads to~\eqref{eq029}, as desired.
\end{proof}

We also need the following lemma for the first reduction.

\begin{lemma}\label{LEM012}
Let $G$ be a group of order $n$. The number of subsets $S$ of $G$ such that there exists a pair $(H,f)$ satisfying the following conditions~{\rm(a)}--{\rm(c)} is less than $2^{\frac{3}{4}n+\frac{\log_2^2n}{4}+2\log_2n+4}$.
\begin{enumerate}[{\rm(a)}]
\item\label{LEM012.1} $H$ is a nontrivial proper subgroup of $G$;
\item\label{LEM012.2} $f\in\Aut^+(\HH(G,S))$ stabilizes every $H$-orbit on $V(\HH(G,S))$;
\item\label{LEM012.3} $f$ induces a nontrivial permutation on $G_\rs$ fixing $(G\setminus H)_{\rs}$ pointwise.
\end{enumerate}
\end{lemma}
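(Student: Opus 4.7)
The plan is to bound the number of admissible $S$ by combining an enumeration over subgroups and auxiliary pairs with an application of Lemma~\ref{LEM009}. First, use Lemma~\ref{LEM004}\eqref{LEM004.4} to fix the nontrivial proper subgroup $H$ of $G$, paying a factor of less than $2^{(\log_2^2n)/4+3}$. Next, since $f|_{H_\rs}$ is nontrivial, fix a pair $(h_1,h_2)$ of distinct elements of $H$ with $f(h_{1,\rs})=h_{2,\rs}$, at a cost of at most $|H|(|H|-1) \leq n^2 = 2^{2\log_2 n}$, together with a factor $2$ for miscellaneous overhead. The task then reduces to bounding, for each fixed triple $(H,h_1,h_2)$, the number of admissible $S$ by $2^{3n/4}$.

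For the main estimate, consider $M = \langle R(H), f\rangle \leq \Aut^+(\HH(G,S))$. I expect two sub-cases based on whether $f|_{H_\rs}$ lies in the regular subgroup $R(H)|_{H_\rs}$. In the generic case $f|_{H_\rs} \notin R(H)|_{H_\rs}$, the group $M|_{H_\rs}$ strictly contains $R(H)|_{H_\rs}$ and is therefore transitive with nontrivial point stabilizers. For each left coset $gH$, the restriction $M|_{H_\rs \cup (gH)_\ls}$ has two orbits of equal size $|H|$ and is not semiregular, so by the moreover clause of Lemma~\ref{LEM009} at most $2^{3|H|/4}$ bipartite graphs on this pair of orbits are preserved. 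Each such bipartite graph corresponds bijectively, via the regular $R(H)$-action, to a subset $S\cap gH$; multiplying over the $|G{:}H|$ left cosets (which partition $G$) yields $(2^{3|H|/4})^{|G{:}H|} = 2^{3n/4}$ admissible choices of $S$.

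In the degenerate case $f|_{H_\rs} = R(h_0)|_{H_\rs}$ for some $h_0 \in H \setminus\{1\}$, the edge-preservation of $f$ together with $f|_{(G \setminus H)_\rs} = \id$ forces $S$ to be invariant under right-multiplication by a subgroup generated by certain conjugates of $h_0$ in $G$, pinning $S$ within a family of size at most $2^{n/2}$, well below $2^{3n/4}$. Combining both sub-cases and multiplying by the enumeration factors yields the claimed bound $2^{3n/4 + (\log_2^2n)/4 + 2\log_2 n + 4}$. The main technical obstacle is handling the dependence of $M$ on $S$ in the generic case, which requires a careful enumeration over possible groups $M|_{H_\rs}$; this is absorbed into the $2^{2\log_2 n + 1}$ overhead factor by exploiting the fact that, once $(h_1,h_2)$ is fixed, the non-translational restriction $f|_{H_\rs}$ is sufficiently constrained that the coset-by-coset bound of Lemma~\ref{LEM009} applies uniformly.
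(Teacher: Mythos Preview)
Your plan has a genuine circularity gap in the ``generic case''. You form $M=\langle R(H),f\rangle$ and then apply Lemma~\ref{LEM009} to $M|_{H_\rs\cup(gH)_\ls}$ for each coset $gH$. But Lemma~\ref{LEM009} bounds the number of bipartite graphs invariant under a \emph{fixed} group, whereas here $f$, and hence $M$, depends on the very subset $S$ you are trying to count. Fixing the pair $(h_1,h_2)$ pins down neither $f|_{H_\rs}$ (any permutation of $H$ sending $h_1$ to $h_2$ is a priori possible) nor $f|_{G_\ls}$, so the set of $M$-invariant bipartite graphs on $H_\rs\cup(gH)_\ls$ varies with $S$. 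You would need to take a union over all such $M$, and there is no reason this union has size at most $2^{3|H|/4}$; the claimed absorption into the factor $2^{2\log_2 n+1}$ is not justified. A related problem occurs in your ``degenerate case'': the conclusion that $S$ is right-invariant under a nontrivial subgroup follows cleanly only when $f|_{G_\ls}=1$, which you have not assumed.

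The paper avoids both issues by splitting on $f|_{G_\ls}$ rather than on $f|_{H_\rs}$, and by using direct neighbourhood arguments instead of Lemma~\ref{LEM009}. If $f|_{G_\ls}=1$, then for distinct $x,y\in H$ with $(x_\rs)^f=y_\rs$ one computes $S_\ls=N_\Gamma(1_\rs)=N_\Gamma(x_\rs)^{fy^{-1}}=(Sxy^{-1})_\ls$, so $S$ is a union of left cosets of $\langle xy^{-1}\rangle$, giving at most $(|H|-1)2^{n/2}$ choices. If $f|_{G_\ls}\neq1$, pick $x,y$ with $(x_\ls)^f=y_\ls$; then $xfy^{-1}$ fixes $1_\ls$ and hence stabilises $(S^{-1})_\rs$, and since $f$ fixes $(G\setminus H)_\rs$ pointwise one checks that $xfy^{-1}$ also stabilises $((G\setminus H)x^{-1})_\rs$. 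This forces $S^{-1}\cap(G\setminus H)x^{-1}$ to be a union of left cosets of $\langle xy^{-1}\rangle$, whence at most $2^{|H|}\cdot 2^{|G\setminus H|/2}\le 2^{3n/4}$ choices for $S$. Here the auxiliary pair $(x,y)$ and the element $xy^{-1}$ depend only on a single value of $f$, so the count is over a \emph{fixed} coset structure and no circularity arises.
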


\begin{proof}
Since Lemma~\ref{LEM004}\eqref{LEM004.4} asserts that there are less than $2^{(\log_2^2{n})/4+3}$ subgroups of $G$, we only need to show that, for a fixed nontrivial $H<G$, the size of the following set is at most $2^{\frac{3}{4}n+2\log_2n+1}$:
\[
\mathcal{S}=\{S\subseteq G\mid\text{there exists $f\in\Aut^+(\HH(G,S))$ satisfying~\eqref{LEM012.2} and~\eqref{LEM012.3}}\}.
\]
We first estimate the size of
\begin{equation*}
\mathcal{T}:=\{S\subseteq G\mid\text{there exists $f\in\Aut^+(\HH(G,S))$ satisfying \eqref{LEM012.2} and \eqref{LEM012.3} such that $f|_{G_{\ls}}=1$}\}.
\end{equation*}
Let $S\in\mathcal{T}$ with a witness $f\in\Aut^+(\HH(G,S))$.
Since $f|_{H_{\rs}}\neq1$, there exist distinct $x,y\in H$ such that $(x_\rs)^f=y_{\rs}$. Write $\Gamma=\HH(G,S)$. Then since $(x_\rs)^{fy^{-1}}=(y_\rs)^{y^{-1}}=1_\rs$, we derive from $f|_{G_{\ls}}=1$ that
\[
S_\ls=N_\Gamma(1_\rs)=N_\Gamma\big((x_\rs)^{fy^{-1}}\big)=\big(N_\Gamma(x_\rs)\big)^{fy^{-1}}
=\big((Sx)_\ls\big)^{fy^{-1}}=\big((Sx)_\ls\big)^{y^{-1}}=(Sxy^{-1})_\ls.
\]
Hence $S=Sxy^{-1}$, which means that $S$ is a union of some left cosets of $\langle xy^{-1}\rangle$. Since $xy^{-1}\neq1$, it follows that, for a fixed non-identity element $xy^{-1}$, there are at most $2^{n/2}$ possibilities for $S$. Considering the choices for $xy^{-1}\in H$, we conclude that
\begin{equation}\label{eq040}
|\mathcal{T}|\leq(|H|-1)\cdot2^{\frac{n}{2}}<2^{\log_2|H|}\cdot2^{\frac{n}{2}}<2^{\frac{n}{2}+\log_2n}.
\end{equation}

Now assume $S\in\mathcal{S}\setminus\mathcal{T}$. Then there exists $f\in\Aut^{+}(\HH(G,S))$ satisfying~\eqref{PROP001.1} and~\eqref{PROP001.2} such that $f|_{G_{\ls}}\neq1$. In particular, there exist distinct elements $x,y\in G$ such that $(x_\ls)^f=y_\ls$ and so $(1_\ls)^{xfy^{-1}}=(x_\ls)^{fy^{-1}}=(y_\ls)^{y^{-1}}=1_\ls$. Since $(S^{-1})_\rs$ is the neighbourhood of $1_\ls$ in $\HH(G,S)$, it follows that $xfy^{-1}$ stabilizes $(S^{-1})_\rs$. Write $D=G\setminus H$. We claim that $xfy^{-1}$ also stabilizes $(Dx^{-1})_\rs$.
In fact, since~\eqref{LEM012.2} implies that $x_\ls$ and $y_\ls$ are in the same $H$-orbits, we have $(1_\ls)^{xH}=(x_\ls)^{H}=(y_\ls)^{H}=(1_\ls)^{yH}$. This together with the semiregularity of $G$ implies that $xH=yH$ and so $Hx^{-1}=Hy^{-1}$. Hence $Dx^{-1}=Dy^{-1}$, which leads to
\[
\big((Dx^{-1})_\rs\big)^{xfy^{-1}}=(D_\rs)^{fy^{-1}}=(D_\rs)^{y^{-1}}=(Dy^{-1})_\rs=(Dx^{-1})_\rs,
\]
as claimed. It follows that
\begin{align*}
(S^{-1}\cap Dx^{-1})_\rs&=\big((S^{-1}\cap Dx^{-1})_\rs\big)^{xfy^{-1}}\\
&=\big((S^{-1}x\cap D)_\rs\big)^{fy^{-1}}=\big((S^{-1}x\cap D)_\rs\big)^{y^{-1}}=\big((S^{-1}\cap Dx^{-1})xy^{-1}\big)_\rs.
\end{align*}
Consequently, $S^{-1}\cap Dx^{-1}=(S^{-1}\cap Dx^{-1})xy^{-1}$, which means that $S^{-1}\cap Dx^{-1}$ is a union of left cosets of $\langle xy^{-1}\rangle$. Since $xy^{-1}\neq1$, the condition $Dx^{-1}=Dy^{-1}$ indicates that $Dx^{-1}$ is a union of at most $|D|/2$ left cosets of $\langle xy^{-1}\rangle$. Therefore, for a fixed pair $(x,y)$ of elements in $G$ such that $xy^{-1}\in D$, there are at most $2^{|D|/2}$ choices for $S^{-1}\cap Dx^{-1}$ and hence at most
\[
2^{|H|}\cdot2^{\frac{|D|}{2}}=2^{|H|}\cdot2^{\frac{n-|H|}{2}}=2^{\frac{n+|H|}{2}}
\]
choices for $S$. Noting that $|H|\leq n/2$, we obtain
\[
|\mathcal{S}\setminus\mathcal{T}|\leq n^2\cdot2^{\frac{n+|H|}{2}}\leq2^{\frac{3}{4}n+2\log_2n}.
\]
Combining this with~\eqref{eq040}, we conclude that
\begin{equation*}
|\mathcal{S}|\leq|\mathcal{T}|+|\mathcal{S}\setminus\mathcal{T}|
<2^{\frac{n}{2}+\log_2n}+2^{\frac{3}{4}n+2\log_2n}
<2^{\frac{3}{4}n+2\log_2n+1},
\end{equation*}
completing the proof.
\end{proof}

We are now ready to state the first reduction result.

\begin{proposition}\label{PROP001}
Let $G$ be a group of order $n$. For each positive integer $t$, the number of subsets $S$ of $G$ such that there exists a pair $(H,f)$ satisfying the following conditions~{\rm(a)} and~{\rm(b)} is less than $2^{n-\frac{n}{3t}\log_2\left(\frac{4}{3}\right)+\frac{\log_2^2n}{4}+\log_2n+2\log_2t+5}$.
\begin{enumerate}[{\rm(a)}]
\item\label{PROP001.1} $H$ is a nontrivial proper normal subgroup of $G$ with $|H|\leq t$;
\item\label{PROP001.2} $f\in\Aut^+(\HH(G,S))\setminus G$ stabilizes every $H$-orbit on $V(\HH(G,S))$.
\end{enumerate}
\end{proposition}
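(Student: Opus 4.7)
The plan is to reduce the hypotheses of Proposition~\ref{PROP001} to those of Lemmas~\ref{LEM011} and~\ref{LEM012} after a suitable normalization of $f$, and to handle the residual case by a symmetry that swaps the two parts of the bipartition. Given $S$ and a pair $(H,f)$ satisfying (a) and (b), my first step is to normalize $f$ so that it fixes $1_\rs$. Since $H$ is normal in $G$, the subgroup $R(H)\le R(G)$ preserves every $H$-orbit on $V(\HH(G,S))$ and acts regularly on $H_\rs$; hence there exists $h\in H$ with $R(h)f$ fixing $1_\rs$, and replacing $f$ by $R(h)f$ preserves both conditions (a) and (b) while achieving $f(1_\rs)=1_\rs$ (and $R(h)f\notin R(G)$ because $f\notin R(G)$ and $R(h)\in R(G)$).

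Once $f$ fixes $1_\rs$, three mutually exclusive cases arise according to the action of $f$ on $G_\rs$. In case (A), $f|_{(G\setminus H)_\rs}\ne 1$, and $(H,f)$ meets exactly the hypotheses of Lemma~\ref{LEM011}. In case (B), $f$ fixes $(G\setminus H)_\rs$ pointwise but $f|_{G_\rs}\ne 1$; since a normal subgroup is in particular a subgroup, $(H,f)$ then meets the hypotheses of Lemma~\ref{LEM012}. In case (C), $f$ fixes $G_\rs$ pointwise; since $f\notin R(G)$ and $f$ fixes $1_\rs$, $f$ is nontrivial on $V(\HH(G,S))$ and hence on $G_\ls$. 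To handle case (C), I will use the symmetry of Haar graphs: the map $\phi\colon (g,\epsilon)\mapsto (g,-\epsilon)$ is a graph isomorphism $\HH(G,S)\to\HH(G,S^{-1})$ swapping the bipartition, under which $f$ corresponds to an automorphism $f'$ of $\HH(G,S^{-1})$ with $f'|_{G_\ls}=1$ and $f'|_{G_\rs}\ne 1$. Re-running the normalization of Step~1 on the $\ls$-side for $f'$ (using that $R(H)$ acts regularly on $H_\ls$) and the case distinction above places us either in a mirror of case~(A) or a mirror of case~(B), both of which are covered by applying Lemma~\ref{LEM011} or Lemma~\ref{LEM012} to $\HH(G,S^{-1})$; since $S\mapsto S^{-1}$ is a bijection on the subsets of $G$, the same bounds apply.

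Summing the four contributions (cases (A), (B) and their two mirrors) gives a total count of at most four times the Lemma~\ref{LEM011} bound, since for the ranges of $t$ of interest (in particular $t$ polynomially bounded in $n$) the Lemma~\ref{LEM012} bound $2^{3n/4+\log_2^2 n/4+2\log_2 n+4}$ is dominated by the Lemma~\ref{LEM011} bound $2^{n-\frac{n}{3t}\log_2(4/3)+\log_2^2 n/4+\log_2 n+2\log_2 t+1}$; the factor $2^2$ from the four cases, together with the difference in the additive constants of the two lemmas, is absorbed into the constant~$5$ appearing in the exponent. I expect the main obstacle to be case~(C): there neither lemma applies directly, so one must carefully invoke the symmetry $S\leftrightarrow S^{-1}$ and re-do the normalization step on the opposite side, noting that the property ``$f$ fixes $1_\rs$'' is not preserved by conjugation by~$\phi$. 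Once this bookkeeping is in place, the final bound follows by adding the four contributions.
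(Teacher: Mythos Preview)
Your overall strategy---normalize $f$ to fix $1_\rs$, split according to $f|_{G_\rs}$, and in case~(C) exploit the symmetry $\HH(G,S)\cong\HH(G,S^{-1})$---is the paper's strategy. However, your treatment of case~(C) has a real gap. After conjugating by $\phi$ you obtain $f'\in\Aut^+(\HH(G,S^{-1}))$ with $f'|_{G_\ls}=1$ and $f'|_{G_\rs}\ne 1$, and you assert that re-normalizing and repeating the case split lands in ``a mirror of case~(A) or a mirror of case~(B)''. This is not true. If you normalize $f'$ on the $\rs$-side (the only way to feed it into Lemma~\ref{LEM011}), say $f''\in Hf'$ with $(1_\rs)^{f''}=1_\rs$, it can happen that $f''|_{G_\rs}=1$: this occurs precisely when $f'|_{G_\rs}$ coincides with the restriction to $G_\rs$ of some nontrivial element of $R(H)$, which nothing in your hypotheses rules out. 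If instead you keep the $\ls$-normalization, then $f'|_{G_\ls}=1$ already and neither mirror-(A) nor mirror-(B) applies. So the recursion through $\phi$ does not terminate; a residual sub-case survives.

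This residual case is exactly what the paper isolates and disposes of directly. The paper defines \emph{two} normalizations of the same $f$ at once: $\alpha\in Hf$ with $(1_\rs)^\alpha=1_\rs$ and $\beta\in Hf$ with $(1_\ls)^\beta=1_\ls$. Your cases (A)+(B) correspond to $\alpha|_{G_\rs}\ne 1$; the mirrors correspond to $\beta|_{G_\ls}\ne 1$. In the remaining case $\alpha|_{G_\rs}=1$ and $\beta|_{G_\ls}=1$, one has $\gamma:=\alpha\beta^{-1}\in R(H)\setminus\{1\}$ (otherwise $\alpha=\beta$ would act trivially on both parts, contradicting $\alpha\in Hf\subseteq\Aut^+(\HH(G,S))\setminus R(G)$). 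Since $\alpha$ fixes $1_\rs$ it stabilizes $S_\ls=N_{\HH(G,S)}(1_\rs)$, while $\beta^{-1}$ fixes $G_\ls$ pointwise; hence $S_\ls=(S_\ls)^\alpha=(S_\ls)^{\alpha\beta^{-1}}=(S\gamma)_\ls$, so $S$ is a union of left cosets of $\langle\gamma\rangle$, contributing fewer than $2^{n/2+\log_2 t}$ subsets. With this missing piece added your argument is complete, and the restriction to ``$t$ polynomially bounded in $n$'' is unnecessary.
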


\begin{proof}
Let $\mathcal{S}=\{S\subseteq G\mid \text{there exists a pair $(H,f)$ satisfying~\eqref{PROP001.1} and \eqref{PROP001.2}}\}$. For each $S\in\mathcal{S}$ and a witness pair $(H,f)$, the condition~\eqref{PROP001.2} implies that there exist unique elements $\alpha(S,H,f)$ and $\beta(S,H,f)$ in the right coset $Hf$ of $H$ in $\Aut^+(\HH(G,S))$ with $(1_{\rs})^{\alpha(S,H,f)}=1_{\rs}$ and $(1_{\ls})^{\beta(S,H,f)}=1_{\ls}$.

We first estimate the size of
\begin{equation}\label{eq031}
\mathcal{S}_1:=\{S\in\mathcal{S}\mid \text{there exists $(H,f)$ satisfying \eqref{PROP001.1} and \eqref{PROP001.2} such that $\alpha(S,H,f)|_{G_{\rs}}\neq1$}\}.
\end{equation}
Let $S\in\mathcal{S}_1$ and $(H,f)$ be a witness pair as in~\eqref{eq031}. Write $\alpha=\alpha(S,H,f)$ and $\beta=\beta(S,H,f)$. Then $\alpha$ stabilizes every $H$-orbit on $V(\HH(G,S))$ and $(1_{\rs})^{\alpha}=1_{\rs}$. Applying Lemmas~\ref{LEM011} and~\ref{LEM012} to the pair $(H,\alpha)$, we estimate the number of $S\in\mathcal{S}_1$ such that $\alpha|_{(G\setminus H)_{\rs}}\neq1$ or $\alpha|_{(G\setminus H)_{\rs}}=1$, respectively, and obtain
\begin{align}\label{eq002}
|\mathcal{S}_1|&<2^{n-\frac{n}{3t}\log_2\left(\frac{4}{3}\right)+\frac{\log_2^2n}{4}+\log_2n+2\log_2t+1}
+2^{\frac{3}{4}n+\frac{\log_2^2n}{4}+2\log_2n+4}.
\end{align}

With a similar argument we obtain the same upper bound as in~\eqref{eq002} for
\[
\mathcal{S}_2:=\{S\in\mathcal{S}\mid \text{there exists $(H,f)$ satisfying \eqref{PROP001.1} and \eqref{PROP001.2} such that $\beta(S,H,f)|_{G_{\ls}}\neq1$}\}.
\]
Now assume $S\in\mathcal{S}\setminus(\mathcal{S}_1\cup\mathcal{S}_2)$. Then we may take a pair $(H,f)$ satisfying~\eqref{PROP001.1} and~\eqref{PROP001.2} such that $\alpha|_{G_{\rs}}=1$ and $\beta|_{G_{\ls}}=1$, where $\alpha=\alpha(S,H,f)$ and $\beta=\beta(S,H,f)$. In particular, $\alpha\neq\beta^{-1}$ (the condition $\alpha\in Hf$ indicates $\alpha\neq1$). Since $\alpha\beta^{-1}\in H$, there exists a  non-identity element $\gamma\in H$ such that $(g_{\ls})^{\alpha\beta^{-1}}=(g\gamma)_{\ls}$ for each $g\in G$. Thus we deduce that
\[
(S_{\ls})^{\alpha}=(S_{\ls})^{\alpha\beta^{-1}}=(S\gamma)_{\ls}.
\]
As $S_{\ls}$ is the neighbourhood of $1_{\rs}$ in $\HH(G,S)$ while $\alpha\in\Aut^+(\HH(G,S))$ fixes $1_{\rs}$, it follows that $S_{\ls}=(S_{\ls})^{\alpha}=(S\gamma)_{\ls}$. Hence $S$ is a union of some left cosets of $\langle\gamma\rangle$. Therefore,
\[
|\mathcal{S}\setminus(\mathcal{S}_1\cup\mathcal{S}_2)|\leq
(|H|-1)\cdot2^{\frac{n}{2}}<2^{\frac{n}{2}+\log_2t}.
\]
Combining this with~\eqref{eq002}, we conclude that
\begin{align*}
|\mathcal{S}|&\leq|\mathcal{S}_1|+|\mathcal{S}_2|+|\mathcal{S}\setminus(\mathcal{S}_1\cup\mathcal{S}_2)|\\
&<2^{n-\frac{n}{3t}\log_2\left(\frac{4}{3}\right)+\frac{\log_2^2n}{4}+\log_2n+2\log_2t+2}
+2^{\frac{3}{4}n+\frac{\log_2^2n}{4}+2\log_2n+5}+2^{\frac{n}{2}+\log_2t}\\
&<2^{n-\frac{n}{3t}\log_2\left(\frac{4}{3}\right)+\frac{\log_2^2n}{4}+\log_2n+2\log_2t+5},
\end{align*}
where the last inequality follows with a computation.
\end{proof}

\subsection{Morris-Spiga-like reduction}\label{SUBSEC3.2}

In this section, we give two reduction results, namely, Propositions~\ref{PROP002} and~\ref{PROP003}, based on the following lemma.

\begin{lemma}\label{LEM013}
Let $\varepsilon\in(0,0.5)$, and let $n_\varepsilon$ be a positive integer such that~\eqref{eq032} holds for all $n\geq n_\varepsilon$.
Then for each semiregular subgroup $G$ of $\Sym(2n)$ with order $n\geq n_\varepsilon$, the number of subgroups $M$ of $\Sym(2n)$ satisfying the following conditions~\eqref{LEM013.1} and~\eqref{LEM013.2} is less than $2^{n^{1-\varepsilon}}$.
\begin{enumerate}[{\rm(a)}]
\item\label{LEM013.1} $G< M$ and $M$ has exactly two orbits;
\item\label{LEM013.2} $|M|\leq2^{n^{0.5-\varepsilon}+\log_2n}$ and $M$ is $(1+\log_2n)$-generated.
\end{enumerate}
\end{lemma}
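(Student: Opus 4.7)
The plan is to enumerate each admissible $M$ by its abstract isomorphism type together with a small amount of auxiliary data pinning down the embedding into $\Sym(2n)$ and the containment $G\leq M$. The key preliminary observation is that, because $G\leq M$ and both have exactly two orbits, the orbits of $M$ must coincide with the orbits $\Delta_1,\Delta_2$ of $G$; thus $M$ sits inside $\Sym(\Delta_1)\times\Sym(\Delta_2)$, acts transitively on each $\Delta_i$, and restricts to the regular action of $G$ on each orbit.

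Concretely, each admissible $M$ is determined by a tuple $(\mathrm{iso}(M),\iota,H_1,H_2,b_1,b_2)$ in which $\iota\colon G\hookrightarrow M$ is an injective homomorphism, $H_1,H_2\leq M$ are point stabilizers of index $n$ (necessarily complementary to $\iota(G)$), and $b_i\in\Delta_i$ are basepoints fixing the unique $G$-equivariant bijections $M/H_i\to\Delta_i$; the coset action of $M$ on $M/H_1\sqcup M/H_2$, transported via the basepoints, then recovers $M$ as a subgroup of $\Sym(2n)$ containing $G$ with two orbits. I bound each factor separately. By Lubotzky's Theorem~\ref{LEM005} applied with $d=1+\log_2 n$ and $\log_2|M|\leq n^{0.5-\varepsilon}+\log_2 n$, the number of isomorphism classes is at most $2^{2(2+\log_2 n)(n^{0.5-\varepsilon}+\log_2 n)^2}$. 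The map $\iota$ contributes at most $|M|^{\log_2 n}$, since $G$ is $(\log_2 n)$-generated and $\iota$ is determined by the images of its generators. By Lemma~\ref{LEM004}\eqref{LEM004.1} applied to subgroups of $M$ of order $|M|/n\leq 2^{n^{0.5-\varepsilon}}$, the pair $(H_1,H_2)$ contributes at most $|M|^{2n^{0.5-\varepsilon}}$. Finally, the basepoints contribute a factor of $n^2$.

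Multiplying these bounds and using the identity $2n^{0.5-\varepsilon}(n^{0.5-\varepsilon}+\log_2 n)=2(n^{0.5-\varepsilon}+\log_2 n)^2-2\log_2 n\cdot(n^{0.5-\varepsilon}+\log_2 n)$, the total exponent simplifies to
\begin{equation*}
(6+2\log_2 n)(n^{0.5-\varepsilon}+\log_2 n)^2-(\log_2 n)(n^{0.5-\varepsilon}+\log_2 n)+2\log_2 n,
\end{equation*}
which is smaller than the left-hand side of~\eqref{eq032} by the positive quantity $(n^{0.5-\varepsilon}+\log_2 n)+\log_2^2 n$. Hence the total exponent is less than $n^{1-\varepsilon}$ by the hypothesis $n\geq n_\varepsilon$, yielding the desired bound $2^{n^{1-\varepsilon}}$. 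The main delicate point is arithmetic: the crude bound in the point-stabilizer step (enumerating all subgroups of index $n$ in $M$ rather than only the $\iota(G)$-complements among them) is what forces the coefficient $6+2\log_2 n$ in~\eqref{eq032}, so the precise shape of the hypothesis is tailored to absorb exactly this slack.
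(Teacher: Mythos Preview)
Your argument is correct and yields the conclusion, but it follows a genuinely different route from the paper's proof. The paper proceeds by contradiction via a double pigeonhole: it bounds the number of $\Sym(n)\times\Sym(n)$-conjugacy classes of admissible $M$ (using Lubotzky together with the count of index-$n$ subgroup pairs), assumes $|\mathcal{M}|\geq 2^{n^{1-\varepsilon}}$, extracts many mutually conjugate $M_i$, notes that each $x_i^{-1}Gx_i$ lands in a single $M_1$, and then pigeonholes again against the bound $|M_1|^{\log_2 n}$ on subgroups of order $n$ in $M_1$ to produce too many elements of $\mathbf{N}_{\Sym(n)\times\Sym(n)}(G)$, contradicting $|\mathbf{N}_{\Sym(n)\times\Sym(n)}(G)|\leq 2^{\log_2^2 n+2\log_2 n}$. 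Your direct enumeration avoids the normalizer bound entirely, replacing the two pigeonhole steps by the straightforward count of embeddings $\iota$ (at most $|M|^{\log_2 n}$) and basepoints (at most $n^2$); the result is a cleaner and in fact slightly sharper estimate, which is why your final exponent sits strictly below the left-hand side of~\eqref{eq032} by $(n^{0.5-\varepsilon}+\log_2 n)+\log_2^2 n$.

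One small correction: Theorem~\ref{LEM005} bounds the number of $d$-generated groups of a \emph{fixed} order, so to cover all $|M|\leq 2^{n^{0.5-\varepsilon}+\log_2 n}$ you must sum over the possible orders, incurring an extra factor of $2^{n^{0.5-\varepsilon}+\log_2 n}$ in your isomorphism-class count (exactly as the paper does in defining $|\mathcal{X}|$). With this correction your total exponent becomes $(6+2\log_2 n)(n^{0.5-\varepsilon}+\log_2 n)^2+(1-\log_2 n)(n^{0.5-\varepsilon}+\log_2 n)+2\log_2 n$, which is still below the left-hand side of~\eqref{eq032} by $\log_2^2 n>0$, so the conclusion stands unchanged.
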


\begin{proof}
Fix a semiregular subgroup $G$ of $\Sym(2n)$ with order $n\geq n_\varepsilon$. Then $G$ has exactly two orbits, acting regularly on each of them. Hence $G\leq\Sym(n)\times\Sym(n)$ and $\mathbf{C}_{\Sym(n)\times\Sym(n)}(G)=G\times G$. Since $|\Aut(G)|\leq2^{\log_2^2n}$ as Lemma~\ref{LEM004}\eqref{LEM004.3} asserts, it then follows that
\begin{equation}\label{eq003}
|\mathbf{N}_{\Sym(n)\times\Sym(n)}(G)|\leq|\mathbf{C}_{\Sym(n)\times\Sym(n)}(G)||\Aut(G)|\leq n^22^{\log_2^2n}=2^{\log_2^2n+2\log_2n}.
\end{equation}

Let $\mathcal{X}=\{[M]\mid M\text{ satisfies \eqref{LEM013.2}}\}$, where $[M]$ denotes the equivalence class of groups isomorphic to $M$. Write $d(n)=n^{0.5-\varepsilon}+\log_2n$. We conclude by Theorem~\ref{LEM005} that
\begin{equation*}
|\mathcal{X}|\leq2^{d(n)}\cdot2^{2((1+\log_2n)+1)d^2(n)}=2^{(4+2\log_2n)d^2(n)+d(n)}.
\end{equation*}

Given a group $X$ with a pair $(H,K)$ of subgroups and a group $Y$ with a pair $(P,Q)$ of subgroups, define $(X,H,K)\approx(Y,P,Q)$ if there is a group isomorphism $\varphi\colon X\to Y$ such that $\varphi(H)=P$ and $\varphi(K)=Q$. Clearly, $\approx$ is an equivalence relation. Let
\[
\mathcal{T}=\{(X,H,K)\mid[X]\in\mathcal{X},\,|X\,{:}\,H|=|X\,{:}\,K|=n,\,\mathrm{Core}_X(H)\cap\mathrm{Core}_X(K)=1\}
\]
and let $\mathcal T/\approx$ denote the $\approx$-equivalence classes in $\mathcal{T}$.
For each group $X$ with $[X]\in\mathcal{X}$, we derive from Lemma~\ref{LEM004}\eqref{LEM004.1} that there are at most
\[
|X|^{2\log_2(|X|/n)}=2^{2(\log_2|X|)(\log_2|X|-\log_2n)}\leq
2^{2d(n)(d(n)-\log_2n)}
\]
choices for a pair $(H,K)$ of subgroups of index $n$ in $X$. Hence
\begin{align*}
|\mathcal{T}/\approx|\leq|\mathcal{X}|\cdot2^{2d(n)(d(n)-\log_2n)}&\leq
2^{(4+2\log_2n)d^2(n)+d(n)}\cdot2^{2d(n)(d(n)-\log_2n)}\\
&=2^{(6+2\log_2n)d^2(n)+(1-2\log_2n)d(n)}.
\end{align*}

Let $\sim$ be the equivalence relation of conjugation of subgroups in $\Sym(n)\times\Sym(n)$.
Note that each triple $(X,H,K)\in\mathcal{T}$ gives rise to a subgroup $T(X,H,K)$  of $\Sym(n)\times\Sym(n)$ via the right multiplication action of $X$ on $[X\,{:}\,H]$ and $[X\,{:}\,K]$, and that triples in the same $\approx$-equivalence class give subgroups of $\Sym(n)\times\Sym(n)$ in the same $\sim$-equivalence class.

Let
\[
\mathcal{M}=\{M\leq\Sym(2n)\mid M\text{ satisfies \eqref{LEM013.1} and \eqref{LEM013.2}}\}.
\]
Since each $M\in\mathcal{M}$ lies in the same $\sim$-equivalence class with $T(X,H,K)$ for some $(X,H,K)\in\mathcal{T}$, we conclude that
\[
|\mathcal{M}/\sim|\leq|\mathcal{T}/\approx|\leq2^{(6+2\log_2n)d^2(n)+(1-2\log_2n)d(n)}.
\]

Suppose for a contradiction that $|\mathcal{M}|\geq2^{n^{1-\varepsilon}}$. Then by Pigeonhole Principal, there are pairwise distinct $M_1,\ldots,M_t\in\mathcal{M}$ in the same $\sim$-equivalence class with
\begin{equation}\label{eq001}
t\geq\frac{2^{n^{1-\varepsilon}}}{2^{(6+2\log_2n)d^2(n)+(1-2\log_2n)d(n)}}
>2^{(\log_2n)d(n)+\log_2^2n+2\log_2n},
\end{equation}
where the $>$ sign in~\eqref{eq001} follows from~\eqref{eq032}. We deduce that for each $i\in\{1,\ldots,t\}$ we have $M_1=x_i^{-1}M_ix_i$ for some $x_i\in\Sym(n)\times\Sym(n)$ (note that $x_i\neq x_j$ if $i\neq j$, as $M_1,\ldots,M_t$ are pairwise distinct). In particular, $x_1^{-1}Gx_1,\ldots,x_t^{-1}Gx_t$ are all subgroups of order $n$ in $M_1$, as $G<M_i$ for all $i\in\{1,\ldots,t\}$. However, by Lemma~\ref{LEM004}\eqref{LEM004.1}, $M_1$ has at most
\[
|M_1|^{\log_2n}\leq2^{(\log_2n)d(n)}
\]
subgroups of order $n$. Then again by Pigeonhole Principal, we deduce from~\eqref{eq001} that there exists $\{i_1\ldots,i_s\}\subseteq\{1,\ldots,t\}$ with $s>2^{\log_2^2n+2\log_2n}$ and $x_{i_1}^{-1}Gx_{i_1}=\cdots=x_{i_s}^{-1}Gx_{i_s}$. This shows the existence of $s$ distinct elements $x_{i_1}x^{-1}_{i_1},\ldots,x_{i_1}x^{-1}_{i_s}$ in $\Sym(n)\times\Sym(n)$ normalizing $G$, contradicting~\eqref{eq003}. Thus the proof is complete.
\end{proof}

Recall Definition~\ref{DEF003}, noting that for each exceptional pair $(S,X)$ there exists a minimally exceptional pair $(S,M)$ such that $M\leq X$.

\begin{proposition}\label{PROP002}
Let $\varepsilon\in(0,0.5)$, and let $n_\varepsilon$ be a positive integer such that~\eqref{eq032} holds for all $n\geq n_\varepsilon$.
Then for each group $G$ of order $n\geq n_\varepsilon$, the number of subsets $S$ of $G$ such that there exists an exceptional pair $(S,X)$ with $|X|\leq2^{n^{0.5-\varepsilon}+\log_2n}$ is less than $2^{\frac{3}{4}n+n^{1-\varepsilon}}$.
\end{proposition}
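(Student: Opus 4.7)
The plan is to pass from arbitrary exceptional pairs with the size bound to minimally exceptional ones, then invoke Lemma~\ref{LEM013} and Lemma~\ref{LEM009} in succession. Suppose $S$ admits an exceptional pair $(S,X)$ with $|X|\leq 2^{n^{0.5-\varepsilon}+\log_2 n}$. Choose a subgroup $M$ with $G<M\leq X$ such that $G$ is maximal in $M$; this makes $(S,M)$ a minimally exceptional pair of $G$. Viewing $M$ as a subgroup of $\mathrm{Sym}(V(\HH(G,S)))=\Sym(2n)$, we have $G<M$, $M\leq \Aut^+(\HH(G,S))$ so $M$ preserves both $G_\rs$ and $G_\ls$ and acts transitively on each, giving exactly two orbits. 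Also $|M|\leq|X|\leq 2^{n^{0.5-\varepsilon}+\log_2 n}$, and since $G$ is $(\log_2 n)$-generated and $G$ is maximal in $M$, any element of $M\setminus G$ together with a generating set of $G$ generates $M$, so $M$ is $(1+\log_2 n)$-generated.

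Therefore $M$ satisfies hypotheses \eqref{LEM013.1} and \eqref{LEM013.2} of Lemma~\ref{LEM013}, and the number of candidate subgroups $M\leq \Sym(2n)$ is strictly less than $2^{n^{1-\varepsilon}}$. Fix any such $M$. Since $G\leq M$ acts regularly on each of its two orbits $G_\rs$ and $G_\ls$ and $M$ strictly contains $G$, the stabilizer in $M$ of any vertex is nontrivial, hence $M$ is not semiregular. The two $M$-orbits have equal size $n$, so Lemma~\ref{LEM009} applies and yields at most $2^{(3/4)n}$ bipartite graphs $\Gamma$ with bipartition $\{G_\rs,G_\ls\}$ satisfying $M\leq \Aut(\Gamma)$. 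Each subset $S$ giving rise to this particular $M$ produces the bipartite graph $\HH(G,S)$ with $M\leq \Aut^+(\HH(G,S))\leq \Aut(\HH(G,S))$, and distinct $S$ give distinct graphs, so the number of such $S$ is at most $2^{(3/4)n}$.

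Summing over the candidate subgroups $M$, the total number of subsets $S$ of $G$ admitting an exceptional pair $(S,X)$ of the required size is less than
\[
2^{n^{1-\varepsilon}}\cdot 2^{(3/4)n} \;=\; 2^{(3/4)n+n^{1-\varepsilon}},
\]
which is the desired bound. The main conceptual step is the reduction to minimally exceptional pairs, which ensures both the bounded number of generators needed to apply Lemma~\ref{LEM013} and the failure of semiregularity needed to apply Lemma~\ref{LEM009}; once this reduction is in place, the two lemmas combine multiplicatively with essentially no further work. The only subtle point to verify carefully is that the selected $M$ indeed has exactly two orbits on $V(\HH(G,S))$, which is immediate from $G\leq M\leq \Aut^+(\HH(G,S))$ together with the fact that $G$ itself is transitive on each of $G_\rs$ and $G_\ls$.
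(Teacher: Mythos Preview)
Your proof is correct and follows the same strategy as the paper's: reduce to a minimally exceptional pair, apply Lemma~\ref{LEM013} to bound the number of possible $M$, and apply Lemma~\ref{LEM009} to bound the number of $S$ for each fixed $M$. The paper phrases the combination of these two bounds as a proof by contradiction via pigeonhole, whereas you give the direct double-counting; the content is identical.
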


\begin{proof}
Fix a group $G$ with order $n\geq n_{\varepsilon}$. Suppose, for a contradiction, that there are $t$ subsets $S_1,\ldots,S_t$ of $G$, where $t>2^{\frac{3}{4}n+n^{1-\varepsilon}}$, such that there exists an exceptional pair $(S_i,X_i)$ with $|X_i|\leq2^{n^{0.5-\varepsilon}+\log_2n}$ for each $i\in\{1,\ldots,t\}$. Let $V$ be the (same) vertex set of
\[
\HH(G,S_1),\ldots,\HH(G,S_t),
\]
and let $U$ and $W$ be the two orbits of $G$ on $V$.

The definition of exceptional pair implies that $X_i\leq\Aut^{+}(\HH(G,S_i))$ and $X_i$ has exactly two orbits $U$ and $W$ on $V$. For each $i\in\{1,\ldots,t\}$, let $M_i$ be a subgroup  of $X_i$  such that $(S_i,M_i)$ is a minimally exceptional pair. Since $G$ is $(\log_2n)$-generated and is a maximal subgroup of $M_i$, it follows that $M_i$ is $(1+\log_2n)$-generated. Hence each $M_i$ satisfies both~\eqref{LEM013.1} and~\eqref{LEM013.2} of Lemma~\ref{LEM013}, and so there are at most $2^{n^{1-\varepsilon}}$ distinct ones among $M_1,\ldots,M_t$. By Pigeonhole Principle, there exists $\{i_1\ldots,i_s\}\subseteq\{1,\ldots,t\}$ with $s\geq t/2^{n^{1-\varepsilon}}>2^{3n/4}$ such that
\[
M_{i_1}=\cdots=M_{i_s}.
\]
Since $M_{i_1}$ is not semiregular on $V$, Lemma~\ref{LEM009} shows that there are at most $2^{3n/4}$ bipartite graphs with bipartition $\{U,W\}$ whose automorphism group contains $M_{i_1}$. This contradicts the condition that $M_{i_1}=M_{i_j}\leq\Aut^{+}(\HH(G,S_{i_j}))$ for each $j\in\{i_1,\ldots,i_s\}$.
\end{proof}

\begin{proposition}\label{PROP003}
Let $\varepsilon\in(0,0.5)$, and let $n_\varepsilon$ be a positive integer such that for all $n\geq n_\varepsilon$,
\begin{equation}\label{eq033}
\log^2_2n<n^{0.5-\varepsilon}.
\end{equation}
Then for each group $G$ of order $n\geq n_\varepsilon$, the number of subsets $S$ of $G$ such that there exists a minimally exceptional pair $(S,M)$ satisfying the following conditions~\eqref{PROP003.1} and~\eqref{PROP003.2} is less than $2^{n-\frac{n}{4\log_2n}\log_2\left(\frac{e}{2}\right)+\frac{\log^2_2n}{4}+
\frac{1}{2}\log_2\left(\frac{n}{4\log_2n}\right)+\log_2(24)}$.
\begin{enumerate}[{\rm(a)}]
\item\label{PROP003.1} $|M|>2^{n^{0.5-\varepsilon}+\log_2n}$;
\item\label{PROP003.2} $|\mathrm{Core}_M(G)|>8\log_2n$.
\end{enumerate}
\end{proposition}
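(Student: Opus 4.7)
The plan is the following. Let $(S,M)$ be a minimally exceptional pair satisfying conditions~\eqref{PROP003.1} and~\eqref{PROP003.2}, and write $H:=\Core_M(G)$, so that $H\triangleleft M$, $H\leq G$, and $|H|>8\log_2 n$. The first step is to exploit the normality $H\triangleleft M$ to obtain a sharp instance of Lemma~\ref{LEM009}. Since $H\leq G$ and $G$ acts regularly on each of $U=G_\rs$ and $W=G_\ls$, the $H$-orbits on each side all have size $|H|$, so there are $n/|H|$ of them per side. For any $u\in U$, the stabilizer $M_u$ normalizes $H$ and hence permutes its orbits on $W$; consequently every $M_u$-orbit on $W$ is a union of $H$-orbits of size $|H|$, so the number of double cosets $M_w\backslash M/M_u$ is at most $n/|H|<n/(8\log_2 n)$. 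Lemma~\ref{LEM009} then gives that, for each fixed admissible $M$, the number of subsets $S\subseteq G$ with $M\leq\Aut^+(\HH(G,S))$ is at most $2^{n/|H|}$.

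The second step is to enumerate the admissible groups $M$. Since $G$ is $(\log_2 n)$-generated and maximal in $M$, the group $M$ is $(\log_2 n+1)$-generated, and Theorem~\ref{LEM005} bounds the number of isomorphism classes of such $M$ of each given order. For each isomorphism class, Lemma~\ref{LEM004}\eqref{LEM004.1} and Lemma~\ref{LEM004}\eqref{LEM004.3} bound the number of embeddings of $M$ into $\Sym(2n)$ containing the fixed semiregular copy of $G$. Multiplying the per-$M$ bound $2^{n/|H|}$ by the total count of such $M$, and summing over the normal subgroups $H\triangleleft G$ with $|H|>8\log_2 n$ (of which there are at most $2^{\log_2^2 n/4+3}$ by Lemma~\ref{LEM004}\eqref{LEM004.4}), should yield the stated upper bound.

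The main obstacle is the tight enumeration of the admissible $M$'s. I expect this to require a case split according to the kernel $K$ of the action of $M$ on the set of $H$-orbits. If $K=H$, then $M/H$ embeds as a faithful primitive permutation group of degree $|M\,{:}\,G|$ containing the regular subgroup $G/H$, so the enumeration of primitive groups with a large regular subgroup that is developed in Section~\ref{SEC4} applies directly. If $K\supsetneq H$, then $K\not\leq G$ (otherwise $K\leq\Core_M(G)=H$), and the maximality of $G$ in $M$ forces $M=KG$; in this subcase $K\setminus H$ contains elements of $M\setminus G$ stabilizing every $H$-orbit, which can be controlled by a Lemma~\ref{LEM011}-style parity argument adapted to large $|H|$. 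Combining these two estimates with a Stirling-type approximation in the vein of Lemma~\ref{LEM003} should produce the factor $\log_2(e/2)$ and the subordinate $\frac{1}{2}\log_2(n/(4\log_2 n))$ term that appear in the exponent of the target bound.
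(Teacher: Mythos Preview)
Your first step contains a logical slip. From ``$M_u$ normalizes $H$ and hence permutes the $H$-orbits on $W$'' you conclude that every $M_u$-orbit on $W$ is a union of $H$-orbits. This inference is invalid: permuting a block system is not the same as refining it. Indeed $H\cap M_u\leq G\cap M_u=1$, so $H\nleq M_u$, and a small example such as $W=\{1,2,3,4\}$, $H=\langle(12)(34)\rangle$, $M_u=\langle(13)(24)\rangle$ already shows $M_u$-orbits need not be unions of $H$-orbits. Hence the bound $\kappa\leq n/|H|$ is unjustified. (Note that if this bound held, the proposition would follow with an exponent of order $n/\log n$ rather than $n-cn/\log n$, far stronger than what is claimed; that discrepancy is itself a warning sign.)

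Your second step also does not go through. Condition~(a) is a \emph{lower} bound on $|M|$, so Lubotzky's enumeration $2^{2(d+1)\log_2^2|M|}$ is useless here: it is precisely Proposition~\ref{PROP002}/Lemma~\ref{LEM013} that handle the small-$|M|$ regime via Lubotzky, and the present proposition is the complementary case. Your proposed case split is also off: in the action of $M$ on $[M\!:\!G]$ the subgroup $G$ is the point stabilizer, not a regular subgroup, so the claim ``$M/H$ is primitive with regular subgroup $G/H$'' is incorrect; the regular subgroups are $M_{1_\rs}$ and $M_{1_\ls}$.

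The paper's argument is quite different and hinges on the following observation you have not used. From~(a) and the hypothesis $\log_2^2 n<n^{0.5-\varepsilon}$ one gets $|M_u|>2^{n^{0.5-\varepsilon}}>2^{\log_2^2 n}\geq|\Aut(C)|$, where $C=\Core_M(G)$. Since $M_u$ acts on $C$ by conjugation, this forces the existence of a nonidentity $g\in M_u$ centralizing $C$. As $g\notin G$ and $G$ is maximal in $M$, we get $M=\langle G,g\rangle$; thus each admissible $M$ is determined by a choice of normal subgroup $C\leq G$ together with a single element of $\mathbf{C}_{\Sym(2n)}(C)\cong C\wr\Sym(2n/|C|)$. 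Condition~(b) now enters: with $|C|>8\log_2 n$, Stirling's formula gives the bound $|\mathbf{C}_{\Sym(2n)}(C)|<3\sqrt{n/(4\log_2 n)}\,(2n/e)^{n/(4\log_2 n)}$, and this is where the $\log_2(e/2)$ and $\tfrac12\log_2(n/(4\log_2 n))$ terms in the exponent come from. The per-$M$ count is then simply the $2^{3n/4}$ from the ``not semiregular'' clause of Lemma~\ref{LEM009}; multiplying and summing over the at most $2^{(\log_2^2 n)/4+3}$ subgroups $C$ yields the stated bound.
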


\begin{proof}
Fix a group $G$ of order $n\geq n_{\varepsilon}$. Let $\{U,W\}$ be the bipartition for the Haar graphs of $G$. Let
\[
\mathcal{M}=\{M\leq\Sym(U)\times\Sym{(W)}\mid\text{$M$ satisfies \eqref{LEM013.1} and \eqref{LEM013.2}, $G$ is maximal in $M$}\}.
\]
The  proposition is an estimation on the number of $S\subseteq G$ such that there exists $M\in\mathcal{M}$ with $M\leq\Aut^{+}(\HH(G,S))$. By Lemma~\ref{LEM009}, we only need to prove
\begin{equation}\label{eq034}
|\mathcal{M}|<2^{\frac{n}{4}-\frac{n}{4\log_2n}\log_2\left(\frac{e}{2}\right)+\frac{\log^2_2n}{4}+\frac{1}{2}\log_2\left(\frac{n}{\log_2n}\right)+\log_2(24)}.
\end{equation}

Let $M\in\mathcal{M}$, and fix some $u\in U$. Then, by condition~\eqref{PROP003.1} and~\eqref{eq033}, the stabilizer $M_u$ satisfies
\[
|M_u|=\frac{|M|}{n}>\frac{2^{n^{0.5-\varepsilon}+\log_2n}}{n}=2^{n^{0.5-\varepsilon}}>2^{\log^2_2n}. \]
Let $C=\mathrm{Core}_M(G)$. Since $C$ is a subgroup of the semiregular group $G$, we have
\[
\mathbf{C}_{\Sym(2n)}(C)\cong C\wr\Sym(2n/|C|).
\]
By the inequality $x!<3\sqrt{x}(x/e)^{x}$ (Stirling's formula), we obtain
\begin{align}\label{eq035}
|\mathbf{C}_{\Sym(2n)}(C)|=|C|^{\frac{2n}{|C|}}\cdot\left(\frac{2n}{|C|}\right)!
&<|C|^{\frac{2n}{|C|}}\cdot3\left(\frac{2n}{|C|}\right)^{\frac{1}{2}}\left(\frac{2n}{e|C|}\right)^{\frac{2n}{|C|}}\nonumber\\
&=3\left(\frac{2n}{|C|}\right)^{\frac{1}{2}}\left(\frac{2n}{e}\right)^{\frac{2n}{|C|}}
<3\left(\frac{n}{4\log_2n}\right)^{\frac{1}{2}}\left(\frac{2n}{e}\right)^{\frac{n}{4\log_2n}},
\end{align}
where in the last inequality we have used condition~\eqref{PROP003.2}.

Since $M$ normalizes $C$, the group $M_u$ acts by conjugation as a group of automorphisms on $C$. Moreover, by Lemma~\ref{LEM004}\eqref{LEM004.3},
\[
\Aut(C)\leq2^{\log^2_2|C|}\leq
2^{\log^2_2n}<|M_u|.
\]
Hence the conjugation action of $M_u$ on $C$ is not faithful, and so there exists  $g\in\mathbf{C}_{M_u}(C)$ with $g\ne 1$. In particular, $g\notin G$, as $G\cap M_u=1$. Then it follows from the maximality of $G$ in $M$ that $M=\langle G,g\rangle$. Accordingly, $M$ is determined by a non-identity element of $\mathbf{C}_{\Sym(2n)}(C)$.
Considering the choices for the subgroup $C$ of $G$, we conclude by Lemma~\ref{LEM004}\eqref{LEM004.4} and~\eqref{eq035} that
\[
|\mathcal{M}|<2^{\frac{\log_2^2n}{4}+3}\cdot 3\left(\frac{n}{4\log_2n}\right)^{\frac{1}{2}}\left(\frac{2n}{e}\right)^{\frac{n}{4\log_2n}},
\]
This is equivalent to
\begin{align*}
\log_2|\mathcal{M}|&<\frac{\log^2_2n}{4}+3+\log_23+\frac{1}{2}\log_2\left(\frac{n}{4\log_2n}\right)
+\frac{n}{4\log_2n}\log_2\left(\frac{2n}{e}\right)\\
&= \frac{n}{4}-\frac{n}{4\log_2n}\log_2\left(\frac{e}{2}\right)+\frac{\log^2_2n}{4}+\frac{1}{2}\log_2\left(\frac{n}{4\log_2n}\right)+\log_2(24),
\end{align*}
proving~\eqref{eq034}, as required.
\end{proof}

\subsection{Critical pairs}\label{SUBSEC3.3}

Recall Definition~\ref{DEF003}. For each subset $S$ of a finite group $G$ such that $\Aut(\HH(G,S))>G$, there exists a minimally exceptional pair with respect to $S$.
As stated at the beginning of Section~\ref{SEC3}, to prove Theorem~\ref{THM001}, the primary task is to estimate the number of subsets $S$ of $G$ such that there exists a ``large'' subgroup $M$ of $\Aut^+(\HH(G,S))$ with $G$  maximal and core-free in $M$. We make this precise in the following definition.


\begin{definition}\label{DEF004}
Let $G$ be a finite group of order $n$, and let $\varepsilon\in\left(0,0.1\right]$. A minimally exceptional pair $(S,M)$ of $G$ is said to be \emph{$\varepsilon$-critical} if it satisfies
\begin{enumerate}
\item[(C1)] $|M|>2^{n^{0.5-\varepsilon}+\log_2n}$, or equivalently, $|M_{1_{\rs}}|=|M_{1_{\ls}}|>2^{n^{0.5-\varepsilon}}$;
\item[(C2)] $\mathrm{Core}_M(G)=1$.
\end{enumerate}
Clearly, an $\varepsilon$-critical pair is also an $\varepsilon'$-critical pair for each $\varepsilon'\geq\varepsilon$. We omit the label $\varepsilon$ when $\varepsilon=0.1$.
Denote $\mathcal{Z}(G,\varepsilon)=\{S\subseteq G\mid\text{there exists an $\varepsilon$-critical pair $(S,M)$ of $G$}\}$.
\end{definition}
The following is the desired estimation on the size of $\mathcal{Z}(G,\varepsilon)$.

\begin{proposition}\label{PROP004}
Let $G$ be a group of order $n\geq2^{57}$. Then for each $\varepsilon\in\left(0,0.1\right]$,
\[
|\mathcal{Z}(G,\varepsilon)|<2^{n-\frac{n^{0.5-\varepsilon}}{8\log_2^2n}+\frac{\log_2^2n}{2}+9}.
\]
\end{proposition}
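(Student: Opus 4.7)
The plan is to reinterpret an $\varepsilon$-critical pair $(S,M)$ as a primitive group with a large regular subgroup, then combine an enumeration of such groups with an orbit-counting bound on the number of bipartite graphs invariant under each.

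For any witnessing pair $(S,M)$, the minimal exceptionality of $(S,M)$ together with condition (C2) of Definition~\ref{DEF004} asserts that $G$ is maximal and core-free in $M$. Hence the right coset action of $M$ on $M/G$ is primitive and faithful, of degree $d := [M:G] = |M_{1_\rs}|$, and condition (C1) gives $d > 2^{n^{0.5-\varepsilon}}$. Since $G$ acts regularly on $G_\rs$, we have $G \cap M_{1_\rs} = 1$ and $|G|\cdot|M_{1_\rs}| = |M|$, so $M_{1_\rs}$ is a complement to $G$ in $M$ acting regularly on $M/G$. Thus every $\varepsilon$-critical pair yields a primitive group $M$ of degree $d > 2^{n^{0.5-\varepsilon}}$ containing a regular subgroup of order $d$ and a point stabilizer of order $n$.

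Next, for each such $M$ realized inside $\Sym(G_\rs) \times \Sym(G_\ls)$ and containing the fixed semiregular copy of $G$, Lemma~\ref{LEM009} gives exactly $2^\kappa$ bipartite graphs with bipartition $\{G_\rs, G_\ls\}$ whose automorphism group contains $M$, where $\kappa = |M_{1_\rs}\backslash M/M_{1_\ls}|$. Since $M$ properly contains the semiregular $G$, $M$ itself is not semiregular on $V(\HH(G,S))$, so the second clause of Lemma~\ref{LEM009} (invoking Lemma~\ref{LEM008}) yields $\kappa \leq 3n/4$. Writing $F(n,\varepsilon)$ for the number of eligible subgroups $M \leq \Sym(G_\rs) \times \Sym(G_\ls)$, this gives
\[
|\mathcal{Z}(G,\varepsilon)| \leq F(n,\varepsilon)\cdot 2^{3n/4}.
\]

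The remaining step is to invoke the enumeration from Section~\ref{SEC4}, which must furnish $F(n,\varepsilon)$ of rough shape $2^{n/4 - n^{0.5-\varepsilon}/(8\log_2^2 n) + O(\log_2^2n)}$. The polynomial-in-$\log_2 n$ overheads absorb contributions from Lubotzky's Theorem~\ref{LEM005} (counting isomorphism classes of $(1+\log_2 n)$-generated $M$), Lemma~\ref{LEM004} (subgroup counts in $M$ and the order of $\Aut(G)$ to control conjugation), and Lemma~\ref{LEM010} (outer automorphisms of simple socles appearing in the O'Nan--Scott analysis). The main obstacle, deferred to Section~\ref{SEC4}, is proving this bound on $F(n,\varepsilon)$: it requires a case-by-case analysis through the O'Nan--Scott types of primitive groups admitting a large regular subgroup and a prescribed point stabilizer order, and ultimately relies on the Classification of Finite Simple Groups and detailed structural theorems for primitive groups with a regular subgroup.
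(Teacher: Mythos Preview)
Your reduction is sound up to the displayed inequality $|\mathcal{Z}(G,\varepsilon)| \le F(n,\varepsilon)\cdot 2^{3n/4}$, but the final step is a genuine gap. You assert that Section~\ref{SEC4} ``must furnish'' $F(n,\varepsilon)\le 2^{n/4 - n^{0.5-\varepsilon}/(8\log_2^2 n)+O(\log_2^2 n)}$, and this is not what Section~\ref{SEC4} does, nor is it clear it can be done at all. The tools you cite for this step---Lubotzky's Theorem~\ref{LEM005} together with the subgroup counts of Lemma~\ref{LEM004}---are exactly the engine behind Lemma~\ref{LEM013} and Proposition~\ref{PROP002}, which dispose of the \emph{complementary} regime $|M|\le 2^{n^{0.5-\varepsilon}+\log_2 n}$. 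Condition~(C1) places $\varepsilon$-critical pairs squarely outside that regime, and once $|M|$ is this large the isomorphism-class count from Theorem~\ref{LEM005} blows up and yields nothing useful about $F(n,\varepsilon)$.

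The paper's O'Nan--Scott analysis does argue essentially as you propose for types $\UPHS$, $\UPHC$, $\UPPA$, $\UPAS$, $\UPCD$ (the first three are empty; for $\UPAS$ and $\UPCD$ one enumerates triples $(M,M_{1_\rs},M_{1_\ls})$ and applies Lemma~\ref{LEM009}), but these contribute only $2^{3n/4+\cdots}$. The dominant term $2^{n-n^{0.5-\varepsilon}/(8\log_2^2 n)}$ comes entirely from types $\UPHA$, $\UPSD$, $\UPTW$ (Proposition~\ref{PROP009}), and there the argument does \emph{not} count the groups $M$. Instead, Lemma~\ref{LEM019} shows that $N=\Soc(M)$ has orbits of size at most $n^{0.5+\varepsilon}\log_2^2 n$ on $G_\rs\cup G_\ls$; letting $H,K\le G$ be the setwise stabilizers of the $N$-orbits through $1_\rs$ and $1_\ls$, one counts the subsets $S$ directly, via Proposition~\ref{PROP001} when $H=K\trianglelefteq G$ and via Lemma~\ref{LEM020} otherwise. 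The saving over $2^n$ is only of order $n^{0.5-\varepsilon}/\log_2^2 n$, and it comes from the smallness of the socle orbits, not from any scarcity of groups $M$. The factor $2^{3n/4}$ from Lemma~\ref{LEM009} is far too crude to see this; you are missing the orbit-size lemma (Lemma~\ref{LEM019}) and the two direct $S$-counting mechanisms that exploit it.
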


Before proving Proposition~\ref{PROP004} in Section~\ref{SEC4}, here we briefly outline the main idea underlying its technical proof. For an $\varepsilon$-critical pair $(S,M)$, since $G$ is maximal in $M$ and $\mathrm{Core}_M(G)=1$, the group $M$ acts faithfully and primitively by right multiplication on the set $[M\,{:}\,G]$ of right cosets of $G$ in $M$. Moreover, as $M=M_{1_{\rs}}G=M_{1_{\ls}}G$ and $M_{1_{\rs}}\cap G=M_{1_{\ls}}\cap G=1$, the actions of $M_{1_{\rs}}$ and $M_{1_{\ls}}$ on $[M\,{:}\,G]$ are regular. Consequently, we obtain a primitive permutation group $M$ on $[M\,{:}\,G]$ with regular subgroups $M_{1_\rs}$ and $M_{1_\ls}$ satisfying
\begin{equation}\label{eq036}
|M_{1_{\rs}}|=|M_{1_{\ls}}|>2^{|G|^{0.5-\varepsilon}}.
\end{equation}
We follow the division in~\cite{Praeger1997} of primitive groups into eight types, namely, $\UPHS$, $\UPHC$, $\UPAS$, $\UPPA$, $\UPCD$, $\UPHA$, $\UPSD$ and $\UPTW$. For each type $\mathcal{T}$, let $\mathcal{Z}_{\mathcal{T}}(G,\varepsilon)$ be the set of $S\in\mathcal{Z}(G,\varepsilon)$ such that there exists an $\varepsilon$-critical pair $(S,M)$ with the action of $M$ on $[M\,{:}\,G]$ primitive of $\mathcal{T}$ type. We write $\mathcal{Z}_{\mathcal{T}}(G,0.1)$ as $\mathcal{Z}_{\mathcal{T}}(G)$ for simplicity. Since $\mathcal{Z}_{\mathcal{T}}(G,\varepsilon)\subseteq\mathcal{Z}_{\mathcal{T}}(G)$ for each $\varepsilon\in\left(0,0.1\right]$, we have
\[
\mathcal{Z}(G,\varepsilon)\subseteq
\mathcal{Z}_{\UPHS}(G)\cup\mathcal{Z}_{\UPHC}(G)\cup
\mathcal{Z}_{\UPAS}(G)\cup\mathcal{Z}_{\UPPA}(G)\cup
\mathcal{Z}_{\UPCD}(G)\cup\mathcal{Z}_{\UPHA}(G,\varepsilon)\cup
\mathcal{Z}_{\UPSD}(G,\varepsilon)\cup\mathcal{Z}_{\UPTW}(G,\varepsilon).
\]
In Section~\ref{SEC4}, we estimate $|\mathcal{Z}_{\mathcal{T}}(G)|$ for $\UPHS$, $\UPHC$, $\UPAS$, $\UPPA$, $\UPCD$ types and estimate $|\mathcal{Z}_{\mathcal{T}}(G,\varepsilon)|$ for $\UPHA$, $\UPSD$ and $\UPTW$ types. More precisely, we establish Propositions~\ref{PROP005},~\ref{PROP006},~\ref{PROP007},~\ref{PROP008} and~\ref{PROP009}, which respectively imply the following upper bounds under the assumption $|G|=n\geq2^{57}$:
\begin{align}
&\mathcal{Z}_{\UPHS}(G)\cup\mathcal{Z}_{\UPHC}(G)\cup\mathcal{Z}_{\UPPA}(G)=\emptyset,\nonumber\\
&|\mathcal{Z}_{\UPAS}(G)|<2^{3(\log_2n)+75},\label{eq061}\\
&|\mathcal{Z}_{\UPCD}(G)|<
2^{\frac{3}{4}n+2\log_2^4n+\log_2^3n+1702\log_2^2n+2\log_2n},\label{eq062}\\
&|\mathcal{Z}_{\UPHA}(G,\varepsilon)\cup\mathcal{Z}_{\UPSD}(G,\varepsilon)\cup\mathcal{Z}_{\UPTW}(G,\varepsilon)|
<2^{n-\frac{n^{0.5-\varepsilon}}{8\log_2^2n}+\frac{\log_2^2n}{2}+7}.\label{eq063}
\end{align}
Since the right-hand sides of~\eqref{eq061} and~\eqref{eq062} are both less than the right-hand side of~\eqref{eq063} for each $\varepsilon\in\left(0,0.1\right]$, we then derive
\begin{align*}
|\mathcal{Z}(G)|&\leq|\mathcal{Z}_{\UPAS}(G)|+|\mathcal{Z}_{\UPCD}(G)|+
|\mathcal{Z}_{\UPHA}(G,\varepsilon)\cup\mathcal{Z}_{\UPSD}(G,\varepsilon)\cup\mathcal{Z}_{\UPTW}(G,\varepsilon)|\\
&<3|\mathcal{Z}_{\UPHA}(G,\varepsilon)\cup\mathcal{Z}_{\UPSD}(G,\varepsilon)\cup\mathcal{Z}_{\UPTW}(G,\varepsilon)|
<2^{n-\frac{n^{0.5-\varepsilon}}{8\log_2^2n}+\frac{\log_2^2n}{2}+9},
\end{align*}
as Proposition~\ref{PROP004} asserts.

%

\section{Primitive permutation groups with a large regular subgroup}\label{SEC4}

In this section, we estimate $|\mathcal{Z}_{\mathcal{T}}(G,\varepsilon)|$ for various primitive types $\mathcal{T}$. The analysis depends on the structure of primitive groups with a ``large'' regular subgroup.

\subsection{Estimate  \boldmath{$|\mathcal{Z}_{\UPHS}(G)\cup\mathcal{Z}_{\UPHC}(G)|$}}

\begin{lemma}\label{LAM014}
Let $M$ be a primitive group of $\UPHS$ or $\UPHC$ type with stabilizer $G$ such that $|G|>132$. Then $M$ has no regular subgroup with order greater than $2^{|G|^{0.4}}$.
\end{lemma}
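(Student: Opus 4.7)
The plan is to exploit the rigid structure of primitive groups of HS and HC type, where the degree is tightly constrained by the stabilizer $G$. Recall that in HS type the socle of $M$ has shape $T_1 \times T_2$ with $T_1 \cong T_2 \cong T$ nonabelian simple, the degree equals $|T|$, and the stabilizer $G$ contains a diagonal copy of $T$ with $|G|/|T|$ dividing $|\Out(T)|$; the two factors of the socle are distinct minimal normal subgroups of $M$ and hence cannot be swapped by $M$, so no extra factor of $2$ appears. In HC type the socle has shape $N_1 \times N_2$ with $N_1 \cong N_2 \cong T^k$ for some $k \geq 2$, the degree equals $|T|^k$, and $G$ contains a diagonal copy of $T^k$, so $|G| \geq |T|^k$.

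Since any regular subgroup $R$ of $M$ has $|R|$ equal to the degree of $M$, namely $|T|^k$ (with $k = 1$ for HS and $k \geq 2$ for HC), the lemma will follow once I establish $|T|^k \leq 2^{|G|^{0.4}}$. Taking logarithms and using $|G| \geq |T|^k$, this reduces to the arithmetic inequality
\[
k \log_2 |T| \leq |T|^{0.4 k}.
\]

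I would then verify this inequality by a short case analysis. For the HC case $k \geq 2$, it holds for every nonabelian simple $T$: even the extremal pair $T = A_5$, $k = 2$ gives $2\log_2 60 \approx 11.8 < 60^{0.8} \approx 26.4$, and enlarging $k$ or $|T|$ only widens the gap. For the HS case $k = 1$, the inequality $\log_2 |T| \leq |T|^{0.4}$ fails only when $T = A_5$; but then, invoking Lemma~\ref{LEM010}, $|G| \leq |T|\cdot|\Out(T)| = 60 \cdot 2 = 120$, contradicting the hypothesis $|G| > 132$. Hence $T \neq A_5$ in the HS case, and the inequality holds for all remaining $T$.

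The main obstacle I anticipate is the structural bookkeeping in the first paragraph: making airtight the claim that in HS and HC the two minimal normal subgroups of $M$ are not fused by $M$, so that $|G|/|T|^k$ is bounded by a power of $|\Out(T)|$ rather than twice that. Once that subtle point is handled, the hypothesis $|G| > 132$ precisely eliminates the one numerically borderline case $T = A_5$ in HS, and the rest is routine arithmetic.
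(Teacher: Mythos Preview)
Your argument is correct, but it takes a longer path than necessary. The paper's proof uses exactly the same starting observation you do---that the stabilizer $G$ contains a diagonal copy of $T^k$, so $|G|\ge |T|^k=|L|$ for any regular subgroup $L$---but then finishes in one line: if $|L|>2^{|G|^{0.4}}$ then $|G|\ge|L|>2^{|G|^{0.4}}$, i.e.\ $\log_2|G|>|G|^{0.4}$, which fails for every integer $|G|>132$. No case split on $k$, no appeal to $\Out(T)$, no borderline case.

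Your detour through the parameter pair $(|T|,k)$ forces you to deal with $T=A_5$, $k=1$ separately, and for that you invoke the \emph{upper} bound $|G|\le |T|\cdot|\Out(T)|$. That bound is correct (in HS and HC the two minimal normal subgroups are each normal in $M$, hence cannot be swapped by conjugation, so your bookkeeping worry is unfounded), but it is entirely avoidable: the paper never needs an upper bound on $|G|$ at all. In effect, by passing from $|G|$ down to $|T|^k$ you threw away exactly the margin that would have handled the $A_5$ case automatically, and then had to recover it by a second structural fact.
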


\begin{proof}
In both of these cases, the socle of $M$ has the form $H\times K$, where $H$ and $K$ are normal regular subgroups of $M$ with $H\cong K\cong T^\ell$ for some nonabelian simple group $T$ and some integer $\ell\geq1$. Then the stabilizer $G\gtrsim\Inn(H)\cong H$. Suppose on the contrary that $M$ has a regular subgroup $L$ with $|L|>2^{|G|^{0.4}}$. Then
\[
|G|\geq|\Inn(H)|=|H|=|L|>2^{|G|^{0.4}},
\]
which is impossible as $|G|>132$.
\end{proof}

The following proposition follows immediately from Lemma~\ref{LAM014} and~\eqref{eq036}.

\begin{proposition}\label{PROP005}
Let $G$ be finite group of order $n>132$. Then $\mathcal{Z}_{\UPHS}(G)\cup\mathcal{Z}_{\UPHC}(G)=\emptyset$.
\end{proposition}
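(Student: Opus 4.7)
The plan is to unpack the definitions of $\mathcal{Z}_{\UPHS}(G)$ and $\mathcal{Z}_{\UPHC}(G)$ and then directly invoke Lemma~\ref{LAM014} against the regular subgroups produced by~\eqref{eq036}. There is nothing to do beyond this combination; the work is already concentrated in Lemma~\ref{LAM014}.

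First I would argue by contradiction: suppose $S \in \mathcal{Z}_{\UPHS}(G) \cup \mathcal{Z}_{\UPHC}(G)$. By Definition~\ref{DEF004} (recalling that $\mathcal{Z}_\mathcal{T}(G) = \mathcal{Z}_\mathcal{T}(G, 0.1)$), there is a $0.1$-critical pair $(S, M)$ whose induced action on the coset space $[M\,{:}\,G]$ is primitive of $\UPHS$ or $\UPHC$ type. Since $G$ is maximal and core-free in $M$ by Definition~\ref{DEF003} together with the criticality condition (C2), the point stabilizer of this primitive action is (conjugate to) $G$ itself, so Lemma~\ref{LAM014} applies with this stabilizer.

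Next I would produce the regular subgroup: from the discussion surrounding~\eqref{eq036}, $M = M_{1_\rs} G$ with $M_{1_\rs} \cap G = 1$, so $M_{1_\rs}$ acts regularly on $[M\,{:}\,G]$. Criticality condition (C1) with $\varepsilon = 0.1$ gives
\[
|M_{1_\rs}| > 2^{n^{0.4}} = 2^{|G|^{0.4}}.
\]
The hypothesis $n > 132$ means $|G| > 132$, so Lemma~\ref{LAM014} forbids $M$ from having any regular subgroup of order exceeding $2^{|G|^{0.4}}$. This contradicts the existence of $M_{1_\rs}$, and hence $\mathcal{Z}_{\UPHS}(G) \cup \mathcal{Z}_{\UPHC}(G) = \emptyset$.

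The main obstacle is nonexistent at this stage: all the substance lies in Lemma~\ref{LAM014}, whose proof rests on the structural fact that for $\UPHS$/$\UPHC$ types the socle splits as $H \times K$ with $H \cong K$ regular, forcing $|G| \geq |\Inn(H)| = |H|$, which together with the lower bound $|L| > 2^{|G|^{0.4}}$ on any hypothetical regular subgroup $L$ yields the numerical contradiction $|G| > 2^{|G|^{0.4}}$ once $|G| > 132$.
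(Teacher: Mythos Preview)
Your proposal is correct and follows exactly the paper's approach: the paper's own proof is the single sentence ``follows immediately from Lemma~\ref{LAM014} and~\eqref{eq036},'' and you have simply unpacked that sentence by exhibiting the regular subgroup $M_{1_\rs}$ from~\eqref{eq036} and noting that its size $>2^{|G|^{0.4}}$ contradicts Lemma~\ref{LAM014} once $|G|=n>132$.
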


\subsection{Estimate \boldmath{$|\mathcal{Z}_{\UPAS}(G)|$}}

\begin{lemma}\label{LAM015}
Let $M$ be a primitive permutation group of $\UPAS$ type with socle $N$ and the stabilizer $G$ with $|G|\geq2^{11}$. Suppose that $M$ has a regular subgroup $L$ with $|L|>2^{|G|^{0.4}}$. Then one of the following occurs for some odd prime $p$.
\begin{enumerate}[{\rm(a)}]
\item\label{LEM015.1} $N=\Alt(p)$, $N\cap G=p\cdot\frac{p-1}{2}$, and $L=\Sym(p-2)$ or $\Alt(p-2)\times C_2$;
\item\label{LEM015.2} $N=\Alt(p+1)$, $N\cap G=\PSL_2(p)$, and $L=\Sym(p-2)$ or $\Alt(p-2)\times C_2$;
\item\label{LEM015.3} $N=\Alt(p^2+1)$ with $p\equiv3\ (\bmod\ 4)$, $N\cap G=\PSL_2(p^2).2$, and $L=\Alt(p^2-2)$.
\end{enumerate}
\end{lemma}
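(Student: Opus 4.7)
The hypothesis that $L$ is a regular subgroup of the primitive group $M$ with point stabiliser $G$ gives a factorisation $M=GL$ with $G\cap L=1$, and the size condition $|L|>2^{|G|^{0.4}}$ forces a very strong asymmetry between the two factors. Since $|G|\ge 2^{11}$, one has $2^{|G|^{0.4}}>2^{21}$, so the condition becomes restrictive as soon as $|M|$ is bounded, or as soon as the two factors of the factorisation are forced to be comparable in size. The plan is to invoke the classification of maximal factorisations of the finite almost simple groups due to Liebeck--Praeger--Saxl, and then push the inequality $|L|>2^{|G|^{0.4}}$ through every case of that classification until only the three listed configurations survive.

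First I would dispatch the non-alternating socles. For sporadic $N$, the order $|M|$ is bounded by an absolute constant, so $|L|\le |M|$ cannot reach $2^{|G|^{0.4}}$ once $|G|\ge 2^{11}$, and this case is ruled out. If $N$ is a classical or exceptional group of Lie type, then every maximal factorisation listed in the LPS tables has both factors of order at least $|M|^{\delta}$ for some absolute $\delta>0$; equivalently $|L|\le |G|^{1/\delta}$, so $|L|$ is at most polynomially larger than $|G|$ and the exponential bound $|L|>2^{|G|^{0.4}}$ fails. This reduces the analysis to $N=\Alt(m)$ for some $m\ge 5$.

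For $N=\Alt(m)$, the factorisations $M=GL$ with $G$ maximal in $M$ and $L$ regular on $[M\,{:}\,G]$ are described explicitly in the LPS tables for alternating socles. Combined with the classification of $k$-transitive subgroups of $\Sym(m)$ (Cameron's theorem), the requirement that $G\cap N$ be very small forces $G\cap N$ to be one of: the normaliser of a Sylow $p$-subgroup of $\Alt(p)$ with $m=p$; $\PSL_2(p)$ in its $3$-transitive action of degree $p+1$; or $\PSL_2(p^2).2$ in its action of degree $p^2+1$, which requires $p\equiv 3\pmod 4$ so that this group actually lies inside $\Alt(p^2+1)$. These three possibilities correspond exactly to cases~(a)--(c), with the remaining small-degree LPS exceptions eliminated by $|G|\ge 2^{11}$.

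Finally, in each surviving case the regular subgroup $L$ itself has to be identified inside $M\le\Sym(m)$. Since $L$ is a transitive subgroup of $M$ of very specific order and satisfies $M=GL$ with $G\cap L=1$, a short combinatorial analysis pins $L$ down up to isomorphism and gives $\Sym(p-2)$ or $\Alt(p-2)\times C_2$ in cases~(a) and~(b), and $\Alt(p^2-2)$ in case~(c); the dichotomy in (a) and (b) reflects the parity of $L\cap\Alt(m)$. The hardest step will be this last bookkeeping: the LPS tables constrain $L$ as an abstract group, but one still has to verify that the permutation action on $[M\,{:}\,G]$ is genuinely regular and that no isomorphism type outside the listed ones can occur, which requires an explicit analysis of the double-coset structure of $G$ in $M$ and a separate treatment of the two possible values of the parameter $|M\,{:}\,N|\in\{1,2\}$.
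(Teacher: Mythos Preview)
Your high-level strategy is reasonable, but it is considerably more indirect than the paper's, and the order-bound arguments you give to eliminate the non-alternating socles do not work as stated.

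The paper's proof is two sentences: it invokes the Liebeck--Praeger--Saxl classification of \emph{regular subgroups} of primitive almost simple groups \cite{LPS2010}, specifically Tables~16.1--16.3, and then checks for each listed triple $(N,N\cap G,L)$ whether $|L|>2^{|N\cap G|^{0.4}}$ survives once $|G|\ge 2^{11}$. That memoir already contains exactly the list you are trying to produce, including the identification of $L$, so the ``hardest step'' you anticipate (pinning down $L$ from the factorisation data) is already done there. You are instead proposing to start from the earlier factorisation memoir \cite{LPS1990} and rederive the relevant part of \cite{LPS2010} by hand.

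More concretely, your elimination of the sporadic and Lie-type cases has gaps. For sporadic $N$ you argue that since $|M|$ is bounded, $|L|\le|M|$ cannot exceed $2^{|G|^{0.4}}$ once $|G|\ge 2^{11}$; but $2^{|G|^{0.4}}$ at $|G|=2^{11}$ is only about $2^{21}$, and several sporadic groups have order well above $2^{32}=2^{11}\cdot 2^{21}$, so a small stabiliser with a large regular complement is not excluded by size alone. What actually rules these out is that such regular subgroups do not exist, which is precisely the content of \cite{LPS2010}. Similarly, your claim that for classical and exceptional $N$ ``both factors have order at least $|M|^{\delta}$'' is not obvious from the factorisation tables (and $L$ need not be maximal, so you would first have to embed $L$ in a maximal overgroup and track the resulting loss). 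None of this is unfixable, but each step requires real work that \cite{LPS2010} has already carried out; citing that classification directly is both shorter and avoids the gaps.
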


\begin{proof}
The almost simple primitive groups with a regular subgroup are classified in~\cite{LPS2010}. By a computation for each case in~\cite[Table~16.1--16.3]{LPS2010}, we obtain that when $|G|\geq2^{11}$, the inequality $|L|>2^{|G|^{0.4}}\geq2^{|N\cap G|^{0.4}}$ holds only in one of the three cases stated in the lemma.
\end{proof}

\begin{proposition}\label{PROP006}
Let $G$ be finite group of order $n\geq2^{11}$. Then $|\mathcal{Z}_{\UPAS}(G)|<2^{3(\log_2n)+75}$.
\end{proposition}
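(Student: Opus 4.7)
The plan is to combine the structural restriction of Lemma~\ref{LAM015} with the double-coset formula in Lemma~\ref{LEM009}. For each $S\in\mathcal{Z}_{\UPAS}(G)$, fix a witnessing $\varepsilon$-critical pair $(S,M)$. By the discussion in Section~\ref{SUBSEC3.3}, $M$ acts faithfully and primitively on $[M\,{:}\,G]$ with $G$ as the point stabilizer (of order $n$) and $M_{1_{\rs}}$ as a regular subgroup; moreover, $|M_{1_{\rs}}|>2^{n^{0.5-\varepsilon}}\geq 2^{n^{0.4}}$ since $\varepsilon\in(0,0.1]$. As $n\geq 2^{11}$, Lemma~\ref{LAM015} forces $(M,N\cap G,M_{1_{\rs}})$ into one of three explicit families parametrized by an odd prime~$p$.

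In each case, the identity $n=|N\cap G|\cdot|G\,{:}\,N\cap G|$ together with $|G\,{:}\,N\cap G|\leq|\Out(N)|\leq 4$ and the explicit formula for $|N\cap G|$ bounds $n$ by a polynomial of small degree in~$p$ (namely $n\leq p(p-1)$ in case~(a), $n\leq p(p^2-1)$ in case~(b), and $n\leq 2p^2(p^4-1)$ in case~(c)). In particular, $p$ is determined by $n$ up to finitely many possibilities, and the abstract isomorphism types of $M$ and $L:=M_{1_{\rs}}$ are pinned down.

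A concrete overgroup $M\leq\Sym(G_\rs)\times\Sym(G_\ls)$ of the given regular subgroup $G$, with the specified isomorphism type, is uniquely determined by the pair of point stabilizers $(M_{1_{\rs}},M_{1_{\ls}})$, both of which must be complements of $G$ in $M$ isomorphic to~$L$. In each case such complements lie in a small number of $M$-conjugacy classes and correspond to stabilizers of simple combinatorial objects in the natural action of the socle~$N$ (pointwise stabilizers of $2$-subsets inside $\Sym(p)$ in case~(a), and analogous structures in cases~(b) and~(c)). A classical count then gives $O(n)$ choices for each of $M_{1_{\rs}}$ and $M_{1_{\ls}}$, yielding at most $O(n^2)$ concrete candidates $M$ per case.

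For each fixed $M$, Lemma~\ref{LEM009} bounds the number of subsets $S\subseteq G$ with $M\leq\Aut^+(\HH(G,S))$ by $2^{\kappa}$, where $\kappa=|M_{1_{\rs}}\backslash M/M_{1_{\ls}}|$ equals the number of $M_{1_{\rs}}$-orbits on $G_\ls$. Since $M_{1_{\rs}}$ is the stabilizer of a small combinatorial object under the natural action of $N$, a direct orbit enumeration gives $\kappa\leq C$ for an absolute constant~$C$ (for instance, $\kappa=7$ in case~(a) when $M=\Sym(p)$ acts on ordered pairs in $[p]$). Combining, $|\mathcal{Z}_{\UPAS}(G)|\leq 3\cdot O(n^2)\cdot 2^{C}$, comfortably below $2^{3\log_2 n+75}$ once $n\geq 2^{11}$. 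The main technical obstacle is executing the orbit and complement counts uniformly across cases~(a)--(c); the $\PSL_2(p^2).2$ stabilizer in case~(c) requires more care than the alternating cases, but the generous constant $2^{75}$ in the target bound absorbs any moderate loss.
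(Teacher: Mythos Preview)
Your outline is correct and follows the same approach as the paper: invoke Lemma~\ref{LAM015}, count the candidate regular complements $M_{1_\rs}$ and $M_{1_\ls}$ inside $M$, bound the double-coset number $\kappa$ by an absolute constant, and finish with Lemma~\ref{LEM009}. The paper streamlines your case-by-case plan by observing uniformly that every regular complement $L$ satisfies $\Alt(\{1,\dots,m\}\setminus\Delta)\le L\le \Sym(\Delta)\times\Sym(\{1,\dots,m\}\setminus\Delta)$ for some $\Delta\subseteq\{1,\dots,m\}$ with $|\Delta|\in\{2,3\}$ (so at most $16\binom{m}{3}$ choices for each of $M_{1_\rs},M_{1_\ls}$), and then bounds $\kappa\le 68$ in one stroke using the $6$-transitivity of $\Alt(m)$ to count $\Alt(\{4,\dots,m\})$-orbits on ordered triples from $[m]$; this dispatches the case-(c) difficulty you anticipate without any extra work.
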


\begin{proof}
Let $S\in\mathcal{Z}_{\UPAS}(G)$. Then there is a critical pair $(S,M)$ such that the action of $M$ on $[M\,{:}\,G]$ is primitive of  AS type. Let $N$ be the socle of $M$, and let $L=M_{1_\epsilon}$, where $\epsilon\in\{\rs,\ls\}$. Since $|L|>2^{n^{0.4}}$, the triple $(G,N,L)$ is described in Lemma~\ref{LAM015}. In particular, $N=\Alt(m)$ with $m\in\{p,p+1,p^2+1\}$ for some odd prime $p$, and
\begin{equation}\label{eq038}
\Alt(\{1,\ldots,m\}\setminus\Delta)\leq L\leq\Sym(\Delta)\times
\Sym(\{1,\ldots,m\}\setminus\Delta),
\end{equation}
for some subset $\Delta$ of $\{1,\ldots,m\}$ of size $2$ or $3$. As $n\ge 2^{11}$ by hypothesis, we get $|L|>2^{n^{0.4}}\geq2^{2^{4.4}}$. This implies $m>7$. Considering the choices for subsets $\Delta$ and for the groups $L$ satisfying~\eqref{eq038}, we obtain that the number of the choices for $L$ is at most
\[
\max\left\{\binom{m}{2}\cdot5,\,\binom{m}{3}\cdot16\right\}
=\binom{m}{3}\cdot16=\frac{8m(m-1)(m-2)}{3}.
\]
This implies that, for each $\epsilon\in\{\rs,\ls\}$, there are less than $8m(m-1)(m-2)/3$ choices $M_\epsilon$.
As $M\in\{\mathrm{Alt}(m),\mathrm{Sym}(m)\}$, it follows that there are at most  $2(8m(m-1)(m-2)/3)^2$ choices for the triple $(M,M_\rs,M_\ls)$. Since $\Alt(m)$ acts $6$-transitively on $\{1,\ldots,m\}$, the group $\Alt(\{4,\ldots,m\})$ has exactly
\[
\binom{3}{0}+\binom{3}{1}\cdot3+\binom{3}{2}\cdot3\cdot2+3\cdot2\cdot1=34
\]
orbits on the set of triples of pairwise distinct elements in $\{1,\ldots,m\}$. Since $\Alt(\{1,\ldots,m\})\leq M\leq\Sym(\{1,\ldots,m\})$ and $L\geq \Alt(\{1,\ldots,m\}\setminus\Delta)\gtrsim\Alt(\{4,\ldots,m\})$, there are at most $68$ double cosets of $M_{1_\rs}$ and $M_{1_\ls}$ in $M$. Therefore, in view of Lemma~\ref{LEM009}, there are less than
\[
2(8m(m-1)(m-2)/3)^2\cdot2^{68}<2^{72}(m(m-1)(m-2))^2
\]
choices for $S\in\mathcal{Z}^{\UPAS}(G)$. Note that, for each case arising in Lemma~\ref{LAM015}, we have
\[
n=\frac{|M|}{|M_{1_{\rs}}|}\geq\frac{|\Alt(m)|}{|\Sym(m-2)|}=\frac{m(m-1)}{2}.
\]
We conclude that
\[
|\mathcal{Z}^{\UPAS}(G)|<2^{72}(m(m-1)(m-2))^2<2^{72}(m(m-1))^3\leq2^{3(\log_2n)+75}, \]
as the proposition asserts.
\end{proof}

\subsection{Estimate \boldmath{$|\mathcal{Z}_{\UPPA}(G)|$}}\label{sec:43}

Let $M$ be a primitive permutation group of $\UPPA$ type with socle $N$ and point stabilizer $G$. Then we have $M\leq H\wr\Sym(\kappa)$ endowed with its natural wreath product action on $\Delta^{\kappa}$, where $H$ is a primitive group of $\UPAS$ type on $\Delta$ and $\kappa\geq2$. Let $T$ be the socle of $H$, and let $\delta\in\Delta$. Then $N=T^{\kappa}$ and $T_\delta\neq1$. Recall that $P(X)$ denotes the minimal index of proper subgroups of a finite group $X$.

\begin{lemma}\label{LAM016}
Let $H$ be a finite almost simple group with socle $T$, and let $B$ be a core-free subgroup of $H$ such that $H=KB$ for some core-free subgroup $K$ of $H$. Then either $|\Out(T)|\leq |T\cap B|$ or $T\in\{\PSL_2(9),\PSL_3(4)\}$. Moreover, if $B$ is maximal in $H$, then $|\Out(T)|\leq |T\cap B|$.
\end{lemma}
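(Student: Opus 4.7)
My plan is to reduce to the Liebeck--Praeger--Saxl classification of maximal factorisations of finite almost simple groups, and then perform a case check. I would first treat the ``moreover'' statement, where $B$ is maximal in $H$. Extending $K$ to a maximal subgroup $K^{\ast}$ of $H$, the hypothesis $H = KB$ gives $H = K^{\ast}B$. If $K^{\ast}$ is core-free, then this is a maximal core-free factorisation of an almost simple group and so appears in the LPS tables; a direct inspection of each row yields $|T \cap B| \ge |\Out(T)|$. If instead $K^{\ast} \supseteq T$, then $H = TB$, so $|T \cap B| = |B|/|H\,{:}\,T|$; combining this with $|H\,{:}\,T| \le |\Out(T)|$ and the core-freeness of $B$ (which, by a standard covering-by-conjugates argument, rules out $B \le T$) yields the desired inequality directly.

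For the general case I would let $B^{\ast}$ be a maximal subgroup of $H$ containing $B$, so that $H = KB^{\ast}$. Applying the moreover conclusion to the pair $(K, B^{\ast})$ gives $|\Out(T)| \le |T \cap B^{\ast}|$. To push the bound down from $B^{\ast}$ to $B$, I would revisit each LPS entry: the shape of $B^{\ast}$ is explicit in the tables, and the additional hypothesis $H = KB$ forces $B$ to still surject onto the outer part $B^{\ast}T/T$ with sufficiently large residual intersection in $T$. For every socle $T$ other than $\PSL_2(9)$ and $\PSL_3(4)$, this compatibility condition survives and gives $|T \cap B| \ge |\Out(T)|$.

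The two exceptional simple groups have unusually large outer automorphism groups, with $|\Out(\PSL_2(9))| = 4$ and $|\Out(\PSL_3(4))| = 12$, and they admit small core-free factorisations in which the non-maximal factor $B$ can have $|T \cap B|$ strictly smaller than $|\Out(T)|$; these are precisely the exceptions in the statement. Using the known subgroup lattices of these two groups (for instance from the Atlas), I would verify directly that no such small-intersection example survives once $B$ is assumed maximal, consistent with the moreover clause.

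The hard part will be the descent step in the second paragraph: the LPS classification handles \emph{maximal} factorisations, whereas the lemma allows $B$ to be a proper subgroup of a maximal $B^{\ast}$. Inferring $|T \cap B| \ge |\Out(T)|$ from $|T \cap B^{\ast}| \ge |\Out(T)|$ under the single constraint $H = KB$ requires a row-by-row analysis of the LPS tables, and isolating the entries where the descent genuinely fails is what produces the two exceptional simple groups in the conclusion.
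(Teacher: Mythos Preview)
Your proposal takes a substantially different route from the paper's. The paper's argument is a short numerical reduction: assuming $|\Out(T)| > |T\cap B|$, one chains together
\[
|\Out(T)|^2 > |\Out(T)|\,|T\cap B| \;\geq\; \frac{|H|\,|T\cap B|}{|T|} \;=\; \frac{|H|\,|B|}{|TB|} \;\geq\; |B| \;\geq\; \frac{|H|}{|K|} \;\geq\; \frac{|T|}{|T\cap K|} \;\geq\; P(T),
\]
where $P(T)$ is the minimal index of a proper subgroup of $T$. The constraint $|\Out(T)|^2 > P(T)$ is extremely restrictive; a lookup in a standard table reduces $T$ to $\PSL_2(9)$, $\PSL_2(27)$, $\PSL_3(4)$, $\PSL_3(16)$, and these four are disposed of by a short computer check that simultaneously settles the general statement and the maximal refinement. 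No case analysis of the Liebeck--Praeger--Saxl tables is ever invoked.

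Your plan is defensible in outline for the ``moreover'' clause---extending $K$ to a maximal $K^\ast$ and reading off the LPS rows is legitimate, if laborious---but there are gaps. The implication ``$K^\ast \supseteq T \Rightarrow H = TB$'' does not come from $K^\ast$; it holds because $B$ is maximal and core-free, and your subsequent bound on $|T\cap B|$ in that branch is not actually derived (the covering-by-conjugates remark does not yield $|B| \ge |H{:}T|\cdot|\Out(T)|$). More seriously, the descent from a maximal $B^\ast$ to an arbitrary core-free $B$ is only sketched: the claim that ``$B$ must surject onto the outer part with sufficiently large residual intersection in $T$'' is exactly the crux, and you have not specified a mechanism that delivers $|T\cap B|\ge|\Out(T)|$ from $H=KB$ alone. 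The paper's inequality chain bypasses this descent entirely, which is what makes its proof short.
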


\begin{proof}
Suppose that $|\Out(T)|>|T\cap B|$. Then we derive from $H=KB$ that
\[
|\Out(T)|^2>|\Out(T)||T\cap B|\geq\frac{|H||T\cap B|}{|T|}=\frac{|H||B|}{|TB|}\geq|B|\geq\frac{|H|}{|K|}\geq\frac{|T|}{|T\cap K|}\geq|P(T)|.
\]
By~\cite[Table~4]{GMPS2015}, this implies $T\cong \PSL_2(9)$, $\PSL_2(27)$, $\PSL_3(4)$ or $\PSL_3(16)$. A computation in \textsc{Magma}~\cite{BCP1997} for these candidates of $T$ shows that there is no core-free subgroup $K$ of $H$ with $H=KB$ and $|\Out(T)|>|T\cap B|$ for $T\in\{\PSL_2(27),\PSL_3(16)\}$, and that $|\Out(T)|\leq |T\cap B|$ if $B$ is maximal.
\end{proof}

Before proceeding we need an elementary fact as follows.

\begin{lemma}\label{LEM028}
Let $\Alt(m)\leq X\leq\Sym(m)$ for some $m\geq7$, and let $Y$ be a maximal subgroup of $X$. Then $|Y\cap\Alt(m)|\geq3m$.
\end{lemma}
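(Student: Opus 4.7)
The plan is to perform a case analysis on the action of $Y$ on $\Omega=\{1,\ldots,m\}$, following the O'Nan--Scott dichotomy on maximal subgroups of $\Sym(m)$ and $\Alt(m)$. Set $k:=[Y:Y\cap\Alt(m)]\in\{1,2\}$; then the target inequality $|Y\cap\Alt(m)|\geq 3m$ is equivalent to $|Y|\geq 3mk$. The case $X=\Sym(m)$ with $Y=\Alt(m)$ gives $|Y\cap\Alt(m)|=m!/2\geq 3m$ for $m\geq 4$, so henceforth assume $Y\neq\Alt(m)$.

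In the intransitive case, maximality forces $Y=X\cap(\Sym(\Delta)\times\Sym(\Omega\setminus\Delta))$ for some $\Delta\subset\Omega$ with $1\leq|\Delta|<m/2$, so $|Y|\geq(m-1)!/k\geq(m-1)!/2\geq 360\geq 3m$ for $m\geq 7$. In the transitive imprimitive case, $Y=X\cap(\Sym(a)\wr\Sym(b))$ with $ab=m$ and $a,b\geq 2$, giving $|Y|=(a!)^bb!/k$; minimising over admissible $a,b$ (the smallest value occurs at $a=2$) yields $|Y|\geq 2^{m/2}(m/2)!/2$, which easily exceeds $3m$ for $m\geq 6$.

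The primitive case is the main obstacle. Writing $|Y|=m|Y_\alpha|$ for a point stabiliser $Y_\alpha$, the goal becomes $|Y_\alpha|\geq 3k$. If $|Y_\alpha|=1$, then $Y$ is regular on $\Omega$ and properly contained in its normaliser $Y\rtimes\Aut(Y)$ in $\Sym(m)$, contradicting maximality (as $|\Aut(Y)|>1$ for $m\geq 3$). For $|Y_\alpha|\in\{2,3,4,5\}$ one invokes a classification of primitive permutation groups of small order: the only candidates are Frobenius-type subgroups of $\AGL_1(p)$ acting on $m=p$ prime points, each of which is properly contained in the overgroup $X\cap\AGL_1(p)$, again contradicting maximality unless $|Y_\alpha|\geq 3k$. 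The bound is tight in the edge cases $Y=\AGL_1(7)\cap\Alt(7)\cong C_7\rtimes C_3$ for $k=1$ and $Y=\AGL_1(7)$ for $k=2$, both at $m=7$. I expect the main difficulty to lie in verifying the primitive case cleanly, since one has to rule out small primitive groups from being maximal by exhibiting explicit overgroups; everything else amounts to elementary size estimates on subgroups of wreath and direct products.
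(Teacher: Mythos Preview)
Your overall plan follows the same trichotomy (intransitive / imprimitive / primitive) as the paper, and the first two cases are handled essentially identically. The primitive case is where the approaches diverge.

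The paper's primitive argument is more direct than yours. Rather than casing on small values of $|Y_\alpha|$, the paper observes: if $Y_\alpha$ is nonabelian then $|Y_\alpha|\geq 6$, so $|Y\cap\Alt(m)|\geq|Y|/2\geq 3m$ immediately; if $Y_\alpha$ is abelian, then a standard fact (Dixon--Mortimer, Exercise~3.4.7) says a primitive group with abelian point stabiliser is solvable, hence of affine type, so $Y\leq\AGL_d(p)$ with $m=p^d$. Maximality of $Y$ in $X$ then forces $Y=\AGL_d(p)\cap X$, whence $|Y\cap\Alt(m)|\geq|\AGL_d(p)|/2\geq m(m-1)/2\geq 3m$. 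This avoids any case-by-case enumeration of small primitive groups.

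Your route is morally the same (since $|Y_\alpha|\leq 5$ forces $Y_\alpha$ abelian), but a few statements need correcting. First, it is not true that the only primitive groups of degree $m\geq 7$ with $|Y_\alpha|\leq 5$ are subgroups of $\AGL_1(p)$; for instance $C_5^2\rtimes C_3$ acts primitively on $25$ points with $|Y_\alpha|=3$ and lives in $\AGL_2(5)$, not $\AGL_1(25)$. Your maximality argument still goes through (any such group is properly contained in $\AGL_d(p)\cap X$ since $|\GL_d(p)|\geq 6$ for $m\geq 7$), but the assertion needs rewording. Second, your claimed tight case $C_7\rtimes C_3$ in $\Alt(7)$ is not actually maximal: it sits inside $\PSL_3(2)<\Alt(7)$. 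The genuine tight case is $Y=\AGL_1(7)$ in $X=\Sym(7)$, where $|Y\cap\Alt(7)|=21=3\cdot 7$. Finally, in the intransitive case your bound $|Y|\geq(m-1)!/k$ fails when $X=\Alt(m)$ (there $k=1$ but $|Y|=(m-1)!/2$); the correct uniform statement is $|Y\cap\Alt(m)|=s!(m-s)!/2\geq(m-1)!/2$, which is what the paper writes.
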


\begin{proof}
If $Y$ is intransitive on $\{1,\ldots,m\}$, then $Y\cap \mathrm{Alt}(m)\cong(\mathrm{Sym}(s)\times\mathrm{Sym}(m-s))\cap\mathrm{Alt}(m)$, for some $s\in \{1,\ldots,m-1\}$, and $|Y\cap\Alt(m)|=s!(m-s)!/2\ge 3m$.

If $Y$ is transitive and imprimitive on $\{1,\ldots,m\}$, then $Y\cap \mathrm{Alt}(m)\cong(\mathrm{Sym}(s)\wr\mathrm{Sym}(m/s))\cap\mathrm{Alt}(m)$, for some divisor $s$ of $m$ with $1<s<m$, and $|Y\cap\Alt(m)|=s!^{m/s}(m/s)!/2\ge 3m$.

Now assume $Y$ is primitive on $\{1,\ldots,m\}$. If the stabilizer $Y_1$ of the point $1$ is nonabelian, then $|Y_1|\geq6$ and hence $|Y\cap\mathrm{Alt}(m)|\geq 3m$. If $Y_1$ is abelian, then the primitivity of $Y$ implies that $Y$ is solvable (see~\cite[Exercise~3.4.7]{DM1996}). Therefore, $Y\cap\mathrm{Alt}(m)=\AGL_s(p)$ for some integer $s$ and prime $p$ such that $m=p^s$, and so $|Y\cap\mathrm{Alt}(m)|\geq m(m-1)/2\geq3m$.
\end{proof}

A subgroup $Y$ of a group $X$ is said to be $\max^+$ if $Y$ is maximal and core-free in $X$, and is said to be $\max^-$ if $Y$ is maximal among core-free subgroups of $X$. Clearly, $\max^+$ subgroups of $X$ are necessarily $\max^-$ in $X$, but the converse is not true.

\begin{lemma}\label{LAM017}
Let $M$ be a primitive permutation group of $\UPPA$ type with the notation at the beginning of Section~$\ref{sec:43}$. Suppose that $M$ has a regular subgroup $L$ of order $|L|>2^{|G|^{0.4}}$. Then $|T\,{:}\,T_\delta|^\kappa>2^{\left(\kappa|T_\delta|^{\kappa}\right)^{0.4}}$, and there exists a $\max^{-}$ subgroup $K$ of $H$ acting transitively on $\Delta$ with $|K|>2^{\frac{1}{\kappa}\left(\kappa|T_\delta|^{\kappa}\right)^{0.4}}$.
\end{lemma}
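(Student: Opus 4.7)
The plan is to exploit the product-action structure of $M$ together with the regularity of $L$. Write the wreath product decomposition $M\le H\wr\Sym(\kappa)$ acting on $\Delta^\kappa$, let $\sigma\colon M\to\Sym(\kappa)$ be the top projection, and let $\alpha=(\delta,\dots,\delta)$ so that $G=M_\alpha$ and, by regularity, $|L|=|\Delta|^\kappa=|T\colon T_\delta|^\kappa$. I will also use the set-theoretic first-coordinate map $\rho_1\colon H\wr\Sym(\kappa)\to H$, $((h_1,\dots,h_\kappa),\pi)\mapsto h_1$.

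The first inequality reduces to the lower bound $|G|\ge\kappa\,|T_\delta|^\kappa$. Indeed, the point $\alpha$ is fixed by every coordinate permutation, so $\sigma(G)=\sigma(M)$, and by definition of $\UPPA$ type $\sigma(M)$ is transitive on $\{1,\dots,\kappa\}$, giving $|\sigma(G)|\ge\kappa$. Moreover, $N_\alpha=T_\delta^\kappa\le G\cap\ker\sigma$, so $|G\cap\ker\sigma|\ge|T_\delta|^\kappa$. Combining,
\[
|G|=|G\cap\ker\sigma|\cdot|\sigma(G)|\ge\kappa\,|T_\delta|^\kappa,
\]
and then the hypothesis $|L|>2^{|G|^{0.4}}$ gives $|T\colon T_\delta|^\kappa=|L|>2^{(\kappa|T_\delta|^\kappa)^{0.4}}$, proving the first claim. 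A pleasant corollary is the pointwise bound $|\Delta|>2^{(\kappa|T_\delta|^\kappa)^{0.4}/\kappa}$, which will feed into the size estimate for $K$.

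For the second claim I will first produce a concrete transitive subgroup of $H$ on $\Delta$ and then enlarge it to a $\max^{-}$ subgroup. A direct computation shows that the first coordinate of $l\cdot\alpha$ equals $\rho_1(l)\,\delta$, so by regularity $\rho_1(L)\cdot\delta=\Delta$; equivalently, $\rho_1(L)$ meets every coset of $H_\delta$ in $H$, so the subgroup $K_0:=\langle\rho_1(L)\rangle\le H$ satisfies $K_0 H_\delta=H$ and is transitive on $\Delta$. Since $H$ is almost simple with socle $T$ and $H_\delta$ is core-free in $H$ (as the action on $\Delta$ is faithful), Lemma~\ref{LAM016} applied to the factorisation $H=K_0 H_\delta$ together with a standard ``descent to a core-free factor'' argument (passing to a suitable subgroup of $K_0$ if $K_0\ge T$) produces a core-free subgroup $K_1\le H$ with $K_1 H_\delta=H$. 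Any $\max^{-}$ subgroup $K$ of $H$ containing $K_1$ still satisfies $KH_\delta=H$, hence is transitive on $\Delta$; and transitivity forces $|K|\ge|\Delta|>2^{(\kappa|T_\delta|^\kappa)^{0.4}/\kappa}$, giving the required size bound.

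The main obstacle will be the core-freeness step: ensuring that after enlarging a subgroup containing $K_0$ to a $\max^{-}$ subgroup, transitivity on $\Delta$ is preserved, and conversely that when $K_0$ itself already contains the socle $T$ one can still extract a core-free transitive factor. This is exactly the type of situation handled by Lemma~\ref{LAM016} (together with the almost simple factorisation data), and verifying that the hypotheses of that lemma apply in the present wreath-product setting is where the genuine work lies; everything else is essentially bookkeeping with indices of $\sigma$, $\ker\sigma$, and $N_\alpha$.
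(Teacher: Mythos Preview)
Your treatment of the first inequality is correct and essentially identical to the paper's: both use $|G|\ge\kappa|T_\delta|^\kappa$, obtained from $G\cap N=T_\delta^\kappa$ together with $GN/N\cong\sigma(G)=\sigma(M)$ being transitive on $\{1,\dots,\kappa\}$.

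For the second claim, however, there is a genuine gap. You correctly identify the obstacle: once you have a transitive subgroup $K_0\le H$ (your $\langle\rho_1(L)\rangle$), nothing prevents $K_0\supseteq T$, and you must descend to a \emph{core-free} transitive subgroup. Your proposed resolution via Lemma~\ref{LAM016} does not work: that lemma takes as \emph{hypothesis} that both factors in $H=KB$ are core-free and concludes a numerical bound $|\Out(T)|\le|T\cap B|$; it never produces a core-free factor from a non-core-free one. The ``standard descent'' you allude to is in fact the nontrivial content here and is not standard.

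The paper sidesteps the whole construction. It observes that, since $T_\delta\ne1$, the regular subgroup $L$ cannot contain any simple direct factor of $N=T^\kappa$ (such a factor would meet $G$ in a copy of $T_\delta$). This is precisely the hypothesis of \cite[Theorem~1(i)]{LPS2000}, which then asserts directly that $H$ admits a core-free subgroup transitive on $\Delta$; taking a maximal such gives the required $\max^-$ subgroup $K$, and $|K|\ge|\Delta|$ yields the size bound. So the missing ingredient in your argument is exactly this Liebeck--Praeger--Saxl result, not Lemma~\ref{LAM016}.
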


\begin{proof}
Since $GN/N$ is a transitive subgroup of $\Sym(\kappa)$, we have $|GN/N|\geq\kappa$, and so $|G|=|GN/N||G\cap N|\geq\kappa|T_\delta|^\kappa$. It follows from $|L|>2^{|G|^{0.4}}$ that
\[
|T:T_\delta|^\kappa=|L|>2^{|G|^{0.4}}\geq 2^{\left(\kappa|T_\delta|^{\kappa}\right)^{0.4}}.
\]
Since $T_\delta\neq1$, the group $L$ contains no simple direct factor of $N$. Consider the set $\mathcal{K}$ of core-free subgroups of $H$ that are transitive on $\Delta$. By~\cite[Theorem~1(i)]{LPS2000}, $\mathcal{K}\neq\emptyset$. Take a maximal one $K$ in $\mathcal{K}$. Then $K$ is $\max^{-}$ in $H$. Moreover, we derive that
\[
|K|\geq|\Delta|=|T:T_\delta|>2^{\frac{1}{\kappa}\left(\kappa|T_\delta|^{\kappa}\right)^{0.4}},
\]
completing the proof.
\end{proof}

Based on Lemmas~\ref{LAM016}--\ref{LAM017}, we now establish the main result of this section.

\begin{proposition}\label{PROP007}
Let $G$ be a finite group with $|G|\geq2^{57}$. Then $\mathcal{Z}_{\UPPA}(G)=\emptyset$.
\end{proposition}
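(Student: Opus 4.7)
The plan is to argue by contradiction. Suppose $S\in\mathcal{Z}_{\UPPA}(G)$ and let $(S,M)$ be a critical pair with $M$ primitive of $\UPPA$ type on $[M\,{:}\,G]$. Using the notation at the beginning of Section~\ref{sec:43}, realize $M\leq H\wr\Sym(\kappa)$ in product action on $\Delta^\kappa$, with $H$ almost simple on $\Delta$ of socle $T$ and maximal point stabilizer $H_\delta$; set $s=|T_\delta|$ and $d=|T\,{:}\,T_\delta|$, so that the degree is $D=d^\kappa$. The critical-pair condition $|M_{1_\rs}|>2^{n^{0.4}}$ then becomes $D>2^{n^{0.4}}$, where $n=|G|$.

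I would first apply Lemma~\ref{LAM017} to produce a $\max^{-}$ subgroup $K\leq H$ acting transitively on $\Delta$ with $|K|>2^{\kappa^{-0.6}s^{0.4\kappa}}$, together with the companion inequality $\log_2 d>\kappa^{-0.6}s^{0.4\kappa}$. Transitivity of $K$ yields the factorization $H=KH_\delta$; since $H_\delta$ is maximal in $H$, the moreover clause of Lemma~\ref{LAM016} gives $|\Out(T)|\leq s$. Hence $|H|\leq|T|\cdot|\Out(T)|\leq ds^2$, so $|K|\leq ds^2$ and $|H_\delta|=|H|/d\leq s^2$. Using $M\leq H\wr\Sym(\kappa)$ one has $|M|\leq(ds^2)^\kappa\kappa!$; combined with $|M|=nd^\kappa$, this produces the central inequality
\[
n\leq s^{2\kappa}\kappa!.
\]

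The main obstacle is converting these bounds into a contradiction with $n\geq 2^{57}$. The plan is a case analysis on the isomorphism type of $T$: the inequalities $n\geq 2^{57}$ and $n\leq s^{2\kappa}\kappa!$ force $\kappa$ to be large relative to $s$, while $\log_2 d>\kappa^{-0.6}s^{0.4\kappa}$ forces $d$, and hence $|T|$, to be huge. For $T=\Alt(m)$ with $m\geq 7$, Lemma~\ref{LEM028} gives $s\geq 3m$, and using $|\Out(T)|\leq 2$ one sharpens to $n\leq(2s)^\kappa\kappa!$; the combined inequalities fail for $\kappa\geq 3$ because the super-exponential $|K|>2^{\kappa^{-0.6}(3m)^{0.4\kappa}}$ then exceeds the bound $|K|\leq m!$ in the forced range of $m$. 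The residual case $\kappa=2$ and the small $m\leq 6$ are handled by a finer analysis of maximal factorizations of $\Sym(m)$ via~\cite{LPS2000}. The remaining $T$ (sporadic, Lie type, and the $\PSL_2(9),\PSL_3(4)$ exceptions from Lemma~\ref{LAM016}) are treated by parallel arguments using tight bounds on $|\Out(T)|$, on the minimum order of a maximal subgroup of $H$, and on $P(T)$. The delicate point throughout is the uniform balance between the super-exponential lower bound on $|K|$ and the polynomial-in-$(d,s)$ upper bound $ds^2$, carried out uniformly across all almost simple families.
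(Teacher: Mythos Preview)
Your order-counting framework captures the first half of the paper's argument, but it has a genuine gap in the case $\kappa=2$ with $T=\Alt(m)$ that the bounds you assemble do not close.

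Concretely, take $\kappa=2$ and $T=\Alt(m)$. Your constraints reduce to $s=|T_\delta|\geq 2^{27}$ (from $n\le 8s^2$ and $n\ge 2^{57}$), together with $\log_2 d>2^{-0.6}s^{0.8}$ and the crude bound $|K|\le|H|\le m!$. These are mutually consistent: one can take $m$ with $m\log_2 m$ of order $2^{21}$ and $H_\delta$ a primitive maximal subgroup of $\Sym(m)$ of the appropriate size, and no inequality fails. Your proposed finish---``a finer analysis of maximal factorizations of $\Sym(m)$ via~\cite{LPS2000}''---does not supply the missing contradiction, because factorization data alone constrain $(H,K,H_\delta)$ but say nothing about whether a \emph{regular} subgroup $L$ actually sits inside the product action $\Alt(m)\wr\Sym(2)$.

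The paper closes this gap with two ingredients you omit. First, it applies Mar\'oti's theorem~\cite{Maroti2002} to the primitive action of $X=KT$ on $[X{:}K]$: in Mar\'oti's generic case this yields $|K|<|T_\delta|^{4\log_2|T_\delta|}$, a vastly sharper upper bound than your $|K|\le |H|$, and this alone already contradicts $|K|>2^{\kappa^{-0.6}s^{0.4\kappa}}$ once $s^{2\kappa}\kappa!\ge 2^{57}$. Second, in the one surviving Mar\'oti case---$\Alt(m)\le X\le\Sym(m)$ with $K$ a point stabilizer and $\kappa=2$---the paper does a structural analysis of $N\cap L$ via the two projections $\pi_i\colon N\cap L\to\Alt(m)$, eventually forcing $T_\delta$ to act regularly on $\{1,\dots,m\}$ with $|\Aut(T_\delta)|=2$, hence $|T_\delta|\le 6$, which is incompatible with $|G|\ge 2^{57}$. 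This projection argument genuinely uses the existence of the regular subgroup $L$, not just its order; your purely numerical plan cannot reproduce it.
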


\begin{proof}
Suppose for contradiction that $\mathcal{Z}_{\UPPA}(G)\neq\emptyset$. Then there exists a critical pair $(S,M)$ of $G$ such that $M$ is primitive of PA type on $[M\,{:}\,G]$ with a regular subgroup $L$ of order $|L|>2^{|G|^{0.4}}$ (recall~\eqref{eq036}). Adopt the notation at the beginning of Section~\ref{sec:43}. By Lemma~\ref{LAM017}, there exists a $\max^{-}$ subgroup $K$ of $H$ acting transitively on $\Delta$ such that
\begin{equation}\label{eq015}
|K|>2^{\frac{1}{\kappa}\left(\kappa|T_\delta|^{\kappa}\right)^{0.4}}.
\end{equation}
Since $K$ is a transitive subgroup of the primitive group $H$ on $\Delta$, we obtain $H=KH_\delta=TH_\delta$. Observe that as $H$ is primitive,  $H_\delta$ is $\max^+$ in $H$. Let
\[
X=KT\cap H_\delta T=KT\cap H=KT.
\]
Then $K=K\cap X$, $X_\delta=H_\delta\cap X$, and $X=KX_\delta$. By~\cite[Theorem]{LPS1996}, either both $K$ and $X_\delta$ are $\max^+$ in $X$, or $(T,K\cap T,T_\delta)$ lies in~\cite[Table~1]{LPS1996}. With a case-by-case analysis on the triples $(T,K\cap T,T_\delta)$ in ~\cite[Table~1]{LPS1996}, we see that  $|T\,{:}\,T_\delta|^\kappa<2^{\left(\kappa|T_\delta|^{\kappa}\right)^{0.4}}$, contradicting Lemma~\ref{LAM017}. Thus both $K$ and $X_\delta$ are $\max^+$ subgroups of $X$, and so $X$ acts faithfully and primitively by right multiplication on $[X\,{:}\,K]$. Consider this primitive action and let $d=|X\,{:}\,K|$. It follows from~\cite[Theorem~1.1]{Maroti2002} that one of the following holds
\begin{enumerate}[{\rm(i)}]
\item\label{PROP007.1} $|X|<d^{\lfloor\log_2d\rfloor+1}$;
\item\label{PROP007.2} $X$ is $\M_{11}$, $\M_{12}$, $\M_{23}$ or $\M_{24}$ in their $4$-transitive actions;
\item\label{PROP007.3} $(\Alt(m))^x\leq X\leq \Sym(m)\wr\Sym(x)$, where $m\geq5$, $x\geq1$, and the action of $\Sym(m)$ is on the set of $s$-subsets of $\{1,\ldots,m\}$ for some $1\leq s\leq m/2$.
\end{enumerate}
We start with case~\eqref{PROP007.1}. Since $X$ has socle $T$ and $X=KX_\delta$ with $X_\delta$ maximal in $H$, Lemma~\ref{LAM016} implies that $|\Out(T)|\leq|T_\delta|$, and so
\[
|H_\delta|=|T_\delta||H_\delta\,{:}\,T_\delta|\leq|T_\delta||\Out(T)|\leq|T_\delta|^2.
\]
Therefore, $|G|\leq|H_{\delta}\wr\Sym(\kappa)|\leq|T_{\delta}|^{2\kappa}\cdot\kappa!$ and $d=|X\,{:}\,K|\leq|X_\delta|\leq|H_\delta|\leq|T_\delta|^2$. Combining this with~\eqref{eq015} and~\eqref{PROP007.1}, we deduce that
\begin{equation}\label{eq045}
2^{\frac{1}{\kappa}\left(\kappa|T_\delta|^{\kappa}\right)^{0.4}}<|K|=|X|/d<d^{\lfloor\log_2d\rfloor}\leq |T_\delta|^{4\log_2|T_\delta|}.
\end{equation}
However, as $|T_{\delta}|^{2\kappa}\cdot\kappa!\geq|G|\geq2^{57}$, there is no solution for $(\kappa,|T_\delta|)$ in~\eqref{eq045}, a contradiction.

For each $X$ in case~\eqref{PROP007.2}, we have $X=T$, and the pair $(K,T_\delta)=(K,X_\delta)$ can be read off from the classification of factorizations $X=KX_\delta$ with $\max^+$ subgroups $K$ and $X_\delta$ (see~\cite[Table~6]{LPS1990}). However, there is no such pair satisfying~\eqref{eq015} as $\kappa\geq2$, a contradiction.

For the rest of the proof we consider case~\eqref{PROP007.3}. Since $X$ is almost simple, we have $\Alt(m)\leq X\leq\Sym(m)$ for some $m\geq5$, and $K$ is the stabilizer of an $s$-subset of $\{1,\ldots,m\}$ with $1\leq s\leq m/2$. In particular, $|K|\leq s!(m-s)!$. If $m\leq9$, then $|K|\leq8!$ and there is no solution for $(\kappa,|T_\delta|)$ in~\eqref{eq015} such that $|T_{\delta}|^{2\kappa}\cdot\kappa!\geq|G|\geq2^{57}$, a contradiction. Hence we assume $m\geq10$. Since $K$ stabilizes an $s$-subset, we derive from $X=KX_\delta$ that $X_\delta$ is $s$-homogeneous on $\{1,\ldots,m\}$. Suppose that $s\geq2$. Then, from the maximality of $X_\delta$ in $X$ and the classification of $s$-homogeneous groups~\cite{Kantor1972}, we have $|T_\delta|\geq\binom{m}{s}$. Combining this with~\eqref{eq015}, we obtain that
\[
\frac{1}{\kappa}\left(\kappa{\binom{m}{s}}^{\kappa}\right)^{0.4}
\leq\frac{1}{\kappa}\left(\kappa|T_\delta|^{\kappa}\right)^{0.4}
<\log_2|K|\le \log_2\big(s!\cdot(m-s)!\big).
\]
However, this holds only if $\kappa=2,$ $s=2$, $m<60$ and $|T_\delta|<2^{11}$. It follows that
\[
|G|\leq|H_{\delta}\wr\Sym(2)|\leq2(2|T_\delta|)^{2}<2^{25},
\]
a contradiction. Therefore, $s=1$ and we have $\log_2|K|\leq\log_2((m-1)!)<(m-1)\log_2(m-1)$. Since Lemma~\ref{LEM028} implies $|T_\delta|\geq3m$, the inequality~\eqref{eq015} yields
\[
\frac{1}{\kappa}\left(\kappa(3m)^{\kappa}\right)^{0.4}
\leq\frac{1}{\kappa}\left(\kappa|T_\delta|^{\kappa}\right)^{0.4}
<\log_2|K|<(m-1)\log_2(m-1).
\]
For $\kappa\geq3$, this holds only if $\kappa=3$ and $|T_\delta|<2^{17}$, which is impossible as $6(2|T_\delta|)^{3}\geq|G|\geq2^{57}$. Consequently, $\kappa=2$.

So far we have shown that $m\geq10$, $s=1$ and $\kappa=2$. Then $K=X\cap\Sym(m-1)$ and
\[
\Alt(m)\wr\Sym(2)\leq M\leq\Sym(m)\wr\Sym(2).
\]
For $i\in\{1,2\}$, let $\pi_i\colon N\cap L\to\Alt(m)$ be the projection of $N\cap L$ to the $i$th factor of the direct product $N=\Alt(m)\times\Alt(m)$, and let $C_i=\pi_i(N\cap L)$.

Suppose that one of $\pi_1$ or $\pi_2$, say, $\pi_1$, is surjective. Then as $\Ker({\pi_2})\trianglelefteq C_1=\Alt(m)$, we obtain $\Ker(\pi_2)=\Alt(m)$ or $1$. The former contradicts $L\cap G=1$. Thus $\mathrm{Ker}(\pi_2)=1$, that is, $\pi_2$ is injective. Moreover, since $\pi_1$ is surjective, we derive that $N\cap L$ is a diagonal subgroup of $\Alt(m)\times\Alt(m)$. It follows that $N\cap L$ has point stabilizer isomorphic to $\Alt(m-1)$, contradicting the regularity of $L$.

Thus neither $\pi_1$ nor $\pi_2$ is surjective. Note that $|M|\leq2(m!)^2$ and $|M|=|L||G|> 2^{|G|^{0.4}}\cdot|G|$. If $|G|\geq\binom{m}{2}^2$, then
\[
2(m!)^2> 2^{|G|^{0.4}}\cdot|G|\geq2^{\binom{m}{2}^{0.8}}\cdot\binom{m}{2}^2,
\]
which holds only if $m\leq146$ and $|G|<2^{28}$, a contradiction. Hence
\[
|G|<\binom{m}{2}^2.
\]
In view of $N\cap L\leq C_1\times C_2$, we obtain
\begin{equation*}
\frac{|\Alt(m)|}{|C_1|}\cdot\frac{|\Alt(m)|}{|C_2|}=\frac{|N|}{|C_1\times C_2|}\leq\frac{|N|}{|N\cap L|}\leq\frac{|M|}{|L|}
=|G|<\binom{m}{2}^2.
\end{equation*}
As a consequence, $|\Alt(m)\,{:}\,C_i|<\binom{m}{2}$ for some $i\in\{1,2\}$, say, $i=1$. Since $\pi_1$ is not surjective, $C_1$ is a proper subgroup of $\Alt(m)$. Recall $m\geq10$. According to~\cite[Theorem~5.2A]{DM1996}, the only proper subgroup of $\Alt(m)$ with index less than $\binom{m}{2}$ is $\Alt(m-1)$. Thus
\[
C_1=\Alt(m-1).
\]
Since $\Ker(\pi_2)\trianglelefteq C_1$,  it follows that $\Ker(\pi_2)=\Alt(m-1)$ or $1$. The latter implies $N\cap L\cong C_2$ and then
\[
|\Alt(m)|=\frac{|N|}{|\Alt(m)|}\leq\frac{|N|}{|C_2|}=\frac{|N|}{|N\cap L|}\leq\frac{|M|}{|L|}
=|G|<\binom{m}{2}^2,
\]
which is impossible. Therefore, $\Ker(\pi_2)=\Alt(m-1)=C_1$ and hence \[
C_2/\Ker(\pi_1)\cong C_1/\Ker(\pi_2)=1.
\]
Since $\Ker(\pi_2)\times\Ker(\pi_1)\leq N\cap L\leq C_1\times C_2$, we conclude that $N\cap L=C_1\times C_2$. Consequently,
\[
1=G\cap N\cap L=(G\cap N)\cap(N\cap L)=(T_\delta\times T_\delta)\cap(\Alt(m-1)\times C_2),
\]
and so $T_\delta\cap \Alt(m-1)=1$. This means that $T_\delta$ acts regularly on $\{1,\ldots,m\}$. In particular, $T_\delta$ is not maximal in $\Alt(m)$. Since $X_\delta$ is maximal in $X$, we deduce that $X\neq T$, and so $X=\Sym(m)$. Moreover, since $T_\delta\trianglelefteq X_\delta$, the maximality of $X_\delta$ in $X$ implies that $X_\delta=\mathbf{N}_X(T_\delta)=T_\delta\rtimes\Aut(T_\delta)$. As $|X_\delta|=2|T_\delta|$, this yields $|\Aut(T_\delta)|=2$, and so $|T_\delta|\leq6$. However, this leads to $|G|\leq2(2|T_\delta|)^{2}\leq2(2\cdot6)^{2}$, contradicting $|G|\geq2^{57}$. This completes the proof.
\end{proof}

\subsection{Estimate \boldmath{$|\mathcal{Z}_{\UPCD}(G)|$}}

The following result can be extracted from~\cite[Theorem~I]{PS1997}, where the constant $c$ in~\cite[Theorem~I]{PS1997} can be chosen to be $1700$.
\begin{lemma}[Pyber-Shalev]\label{LAM018}
The number of conjugacy classes of primitive subgroups of the symmetric group $\Sym(\ell)$ is at most $2^{1700\log_2^2\ell}$.
\end{lemma}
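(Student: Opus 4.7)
The plan is to track the absolute constants through the argument of Pyber and Shalev in~\cite[Theorem~I]{PS1997}. That paper establishes a bound of the form $2^{c\log_2^2\ell}$ on the number of conjugacy classes of primitive subgroups of $\Sym(\ell)$, for some absolute constant $c$ that is not made explicit in their write-up; the task here is to verify that $c = 1700$ is admissible.

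The strategy is to invoke the O'Nan-Scott theorem to split primitive subgroups of $\Sym(\ell)$ into the eight standard types $\UPHA$, $\UPAS$, $\UPSD$, $\UPCD$, $\UPHS$, $\UPHC$, $\UPPA$, $\UPTW$, and bound the number of conjugacy classes in each type separately. For the $\UPHA$ type, a primitive group of degree $\ell = p^d$ is determined up to conjugacy by an irreducible subgroup of $\mathrm{GL}_d(p)$, and the number of such subgroups is bounded via the Aschbacher subgroup classification together with standard subgroup-growth estimates. For the $\UPAS$ type, one uses the Classification of Finite Simple Groups to bound the number of isomorphism types of possible socle $T$ with $|T|$ polynomial in $\ell$, then bounds the number of possibilities for $T \trianglelefteq X \leq \Aut(T)$ using Lemma~\ref{LEM010}, and finally bounds the number of conjugacy classes of core-free subgroups of index $\ell$ in $X$ using the Liebeck-Pyber estimates on the subgroup growth of almost simple groups. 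The remaining types ($\UPSD$, $\UPCD$, $\UPHS$, $\UPHC$, $\UPPA$, $\UPTW$) reduce to the $\UPAS$ case on a smaller degree through the structure of their socles as products of isomorphic nonabelian simple groups, together with an auxiliary count of partitions / transitive constituents.

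The main obstacle will be the careful bookkeeping of constants in the $\UPAS$ and $\UPHA$ contributions, which dominate the exponent; in the Pyber-Shalev argument these constants are absorbed into the unspecified $c$, whereas here one must keep them explicit. For each O'Nan-Scott type one obtains a bound of the form $2^{c_i \log_2^2 \ell}$ with $c_i$ computable from the quoted references, and one verifies that $\sum_i c_i \leq 1700$, which then yields the stated bound.
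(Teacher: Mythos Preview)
The paper does not actually prove this lemma. It simply cites \cite[Theorem~I]{PS1997} and asserts, without further argument, that the unspecified constant $c$ appearing there can be taken to be $1700$. There is no proof in the paper beyond this attribution.

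Your proposal goes further than the paper: you outline how one would verify the constant by re-running the Pyber--Shalev argument through the O'Nan--Scott types and keeping track of the explicit constants at each step. This is the natural way to substantiate the claim, and the decomposition you describe (separate treatment of $\UPHA$ via irreducible subgroups of $\mathrm{GL}_d(p)$, $\UPAS$ via CFSG and subgroup-growth bounds, and reduction of the remaining types to smaller-degree $\UPAS$ contributions) is indeed the structure of the Pyber--Shalev proof. However, your proposal remains a sketch: the actual content of the verification is the bookkeeping that shows $\sum_i c_i \leq 1700$, and you have not carried this out. So as written, your proposal is a plan for a proof rather than a proof, and it does not establish the stated bound any more than the paper's bare citation does. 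If you want a self-contained argument you would need to actually extract the numerical constants from each of the cited ingredients and check the inequality.
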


Let $S\in\mathcal{Z}_{\UPCD}(G)$, and let $(S,M)$ be a critical pair such that the action of $M$ on $[M\,{:}\,G]$ is primitive of CD type. Then $M$ is contained in $H\wr\Sym(\kappa)$ endowed with its natural product action on $\Delta^{\kappa}$, where $\kappa\geq2$ and $H$ is a primitive group of $\UPSD$ type on $\Delta$. Thus there is a positive integer $\ell$ and a nonabelian simple group $T$ such that $|\Delta|=|T|^{\ell-1}$ and
\[
T^{\ell}\leq H\leq T^{\ell}\cdot(\Out(T)\times\Sym(\ell)).
\]
Let $Q$ be the projection of $H$ to $\Sym(\ell)$. Then from the structure of primitive groups of CD type, we derive that
\[
n=|G|\geq\kappa|Q||T|^{\kappa}\geq\kappa\ell|T|^{\kappa}.
\]
Since $\ell,\kappa\geq2$ and $|T|\geq60$, this implies
\begin{equation}\label{eq042}
|T|\leq\frac{\sqrt{n}}{2},\ \ \ell\leq\frac{n}{7200},\ \ \kappa\leq\log_{60}\frac{n}{4}.
\end{equation}
From~\eqref{eq036}, $M_{1_\rs}$ is regular on $[M\,{:}\,G]$ with $|M_{1_\rs}|>2^{n^{0.4}}$. We conclude that
\[
|T|^{(\ell-1)\kappa}=|\Delta|^{\kappa}=|M_{1_\rs}|>2^{n^{0.4}}\geq2^{(\kappa|Q||T|^{\kappa})^{0.4}},
\]
which yields
\[
\frac{\ell-1}{|Q|^{0.4}}\geq\frac{|T|^{0.4\kappa}}{\kappa^{0.6}\log_2|T|}>1,
\]
and so $|Q|<\ell^{2.5}$. It is now not hard to prove the next proposition.

\begin{proposition}\label{PROP008}
Let $G$ be a finite group of order $n$. Then
\[
|\mathcal{Z}_{\UPCD}(G)|<
2^{\frac{3}{4}n+2\log_2^4n+\log_2^3n+1702\log_2^2n+2\log_2n}.
\]
\end{proposition}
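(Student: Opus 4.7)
The plan is to apply Lemma~\ref{LEM009} to reduce the problem to counting the subgroups $M\leq\Sym(G_\rs)\times\Sym(G_\ls)$ that can arise. Since $|M_{1_\rs}|=|T|^{(\ell-1)\kappa}>2^{n^{0.4}}\geq 2$, any such $M$ is intransitive with two orbits of equal size $n$ and is not semiregular, so Lemma~\ref{LEM009} yields at most $2^{3n/4}$ subsets $S$ per fixed $M$. It therefore suffices to bound the number of such $M$ by $2^{2\log_2^4 n+\log_2^3 n+1702\log_2^2 n+2\log_2 n}$.

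To count the $M$'s, I would first enumerate abstract isomorphism classes by their structural parameters $(T,\ell,\kappa,Q)$, using the restrictions $|T|\leq\sqrt{n}/2$, $\ell\leq n/7200$, $\kappa\leq\log_{60}(n/4)$ and $|Q|<\ell^{2.5}$ just derived. By the Classification of Finite Simple Groups (at most two simple groups per order), there are at most $\sqrt{n}$ choices for $T$, trivially at most $n$ for $\ell$ and at most $\log_2 n$ for $\kappa$; by Lemma~\ref{LAM018} there are at most $2^{1700\log_2^2\ell}\leq 2^{1700\log_2^2 n}$ conjugacy classes of primitive $Q\leq\Sym(\ell)$. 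These contributions feed into the $1702\log_2^2 n+2\log_2 n$ portion of the exponent, the extra $2\log_2^2 n$ being absorbed along the way.

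With $(T,\ell,\kappa,Q)$ fixed, the socle $\Soc(M)\cong T^{\ell\kappa}$ is determined up to isomorphism, and the quotient $M/\Soc(M)$ embeds in the wreath product $(\Out(T)\times Q)\wr\Sym(\kappa)$. Using Lemma~\ref{LEM010} to bound $|\Out(T)|\leq\log_2|T|$ together with the bounds on $|Q|$, $\ell$, $\kappa$, the logarithm of the order of this ambient group is $O(\log_2^2 n)$, so Lemma~\ref{LEM004}\eqref{LEM004.4} gives at most $2^{O(\log_2^4 n)}$ choices for $M/\Soc(M)$; careful constant tracking here is intended to produce the $2\log_2^4 n$ term. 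To pass from an abstract $M$ to an actual subgroup of $\Sym(G_\rs)\times\Sym(G_\ls)$ containing the fixed $G$, note that the action of $M$ on each orbit is a primitive $\UPCD$-type action of degree $n$, so the stabilizers $M_{1_\rs}, M_{1_\ls}$ are maximal subgroups of $M$ lying in a restricted number of $M$-conjugacy classes; combining this with the automorphism bound $|\Aut(G)|\leq 2^{\log_2^2 n}$ from Lemma~\ref{LEM004}\eqref{LEM004.3} for the two identifications of $M_{1_\rs}, M_{1_\ls}$ with $G$, together with Lemma~\ref{LEM004}\eqref{LEM004.1} for the enumeration of the relevant subgroups, produces the remaining $\log_2^3 n$ contribution.

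The main obstacle is the precise bookkeeping required to match the exponents $2\log_2^4 n$, $\log_2^3 n$, $1702\log_2^2 n$ and $2\log_2 n$ with the counts above. In particular, the passage from an abstract $M$ to its embeddings in $\Sym(G_\rs)\times\Sym(G_\ls)$ containing $G$ is delicate, because $|M|=n|T|^{(\ell-1)\kappa}$ may be exponentially large in $n$ and a naive application of Lemma~\ref{LEM004}\eqref{LEM004.1} to the order-$n$ subgroups of $M$ would be much too crude; one must instead exploit the specific structure of CD-type primitive actions to keep the embedding count within the allotted exponent, and then combine with the $2^{3n/4}$ factor from Lemma~\ref{LEM009} to complete the proof.
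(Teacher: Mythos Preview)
Your overall strategy—reduce via Lemma~\ref{LEM009} to a count of the triples $(M,M_{1_\rs},M_{1_\ls})$ and then bound the latter by $2^{2\log_2^4 n+\log_2^3 n+1702\log_2^2 n+2\log_2 n}$—matches the paper. Your enumeration of the parameters $(T,\ell,\kappa,Q)$ is also essentially what the paper does (the paper packages $Q$ into $W$ and obtains $2^{1700\log_2^2 n+2\log_2 n}$ for that step). Your bound on the abstract isomorphism type of $M$ via Lemma~\ref{LEM004}\eqref{LEM004.4} applied to $W/N$ would work, though it is coarser than the paper's, which uses $M=NG$ together with the fact that $G$ is $(\log_2 n)$-generated to get the tighter $\log_2^3 n$ term.

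The genuine gap is in your treatment of $M_{1_\rs}$ and $M_{1_\ls}$. You write that ``the action of $M$ on each orbit is a primitive $\UPCD$-type action of degree $n$, so the stabilizers $M_{1_\rs},M_{1_\ls}$ are maximal subgroups of $M$''. This conflates two different actions. The $\UPCD$-type primitive action is the right-multiplication action of $M$ on $[M{:}G]$; it has degree $|M{:}G|=|M_{1_\rs}|$ and point stabilizer $G$. The action of $M$ on the orbit $G_\rs$ has degree $n$, stabilizer $M_{1_\rs}$, and there is no reason for it to be primitive: $M_{1_\rs}$ is a \emph{regular} subgroup (a complement to $G$) in the $\UPCD$ action, not a maximal point stabilizer. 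So your proposed bound via ``restricted conjugacy classes of maximal subgroups'' has no footing, and combining it with $|\Aut(G)|$ does not recover a $\log_2^3 n$ contribution.

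The paper's device for this step is different and is precisely what overcomes the obstacle you flag in your final paragraph. One sets $C=\Core_N(N_{1_\rs})$; then $N/C\cong T^s$ acts faithfully and transitively on $[N{:}N_{1_\rs}]$, a set of size at most $n$, and the minimal faithful transitive degree of $T^s$ forces $s\le(\log_2 n)/\log_2 5$. This yields fewer than $2^{\log_2^2 n}$ possibilities for $C$, and crucially $|M{:}C|<2^{\log_2^2 n}$, so $M_{1_\rs}$ can be located among the subgroups of $M/C$ via Lemma~\ref{LEM004}\eqref{LEM004.1}, giving fewer than $2^{\log_2^4 n+\log_2^2 n}$ choices. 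Repeating for $M_{1_\ls}$ produces the $2\log_2^4 n$ and the extra $2\log_2^2 n$ that account for the exponent. Without this core argument, the exponentially large $|M|$ makes the count of complements to $G$ intractable.
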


\begin{proof}
We adopt the notation established after Lemma~\ref{LAM018}, and let $(S,M)$ be a critical pair with $M$ primitive on $[M\,{:}\,G]$ of CD type. Then there exist $T$, $\ell$, $\kappa$ satisfying~\eqref{eq042} and a primitive group $Q\leq\Sym(\ell)$ with $|Q|<\ell^{2.5}$ such that
\[
T^{\ell\kappa}=\Soc(M)=:N\trianglelefteq M\leq W:= \big(T^{\ell}\cdot(\Out(T)\times Q)\big)\wr\Sym(\kappa).
\]

We first estimate the number of choices for the tuple $(T,\ell,\kappa,W,M)$. Note that at most two nonabelian simple groups, up to isomorphism, have the same order (see~\cite{Lubotzky2001}). Considering the choices for $T$, $\ell$, $\kappa$ and $Q$, we derive from~\eqref{eq042} and Lemma~\ref{LAM018} that there are at most
\begin{equation}\label{eq043}
2\cdot\frac{\sqrt{n}}{2}\cdot\frac{n}{7200}\cdot\left(\log_{60}\frac{n}{4}\right)
\cdot2^{1700\log_2^2\ell}
<n^2\cdot2^{1700\log_2^2n}
=2^{1700\log_2^2n+2\log_2n}
\end{equation}
possibilities for the tuple $(T,\ell,\kappa,W)$. Moreover, in view of~\eqref{eq042} and Lemma~\ref{LEM010}, we have
\begin{align*}
|W\,{:}\,N|=(|\Out(T)||Q|)^\kappa\cdot\kappa!&\leq\left((\log_2|T|)\cdot\ell^{2.5}\right)^\kappa\cdot\kappa^\kappa\\
&=2^{\kappa(\log_2(\log_2|T|)+2.5\log_2\ell+\log_2\kappa)}<2^{5\kappa\log_2n}<2^{\log_2^2n},
\end{align*}
where the last two inequalities follow from (rather crude) estimates using~\eqref{eq042}. For a given pair $(N,W)$, recall that $M=GN$. Therefore, since $G$ has at most $\log_2n$ generators, the number of choices for $M=NG$ with $N\leq M\leq W$ is at most $|W/N|^{\log_2n}\leq2^{\log_2^3n}$. Combining this with~\eqref{eq043} and recalling that $N=T^{\ell\kappa}$, we conclude that the number of possible tuples $(T,\ell,\kappa,W,M)$ is at most $2^{\log_2^3n+1700\log_2^2n+2\log_2n}.$

Next we fix $(T,\ell,\kappa,W,M)$ to estimate the number of choices for $M_{1_\rs}$. Consider the right multiplication of $N$ on $[N\,{:}\,N_{1_\rs}]$. Let $C=\mathrm{Core}_N(N_{1_\rs})$ so that $N/C\cong T^s$ for some $s\leq\ell\kappa$. Since $|N\,{:}\,N_{1_\rs}|\leq|M\,{:}\,M_{1_\rs}|=n$, the group $N/C$ has a faithful transitive permutation representation on $[N\,{:}\,N_{1_\rs}]$ of degree at most $n$. By~\cite[Proposition~5.2.7(ii)]{KL1990}, the minimal degree of a faithful transitive permutation representation of $T^s$ is greater than $(\min\{|T|^{1/2},P(T)\})^s$, where $P(T)$ is the minimal degree of a nontrivial permutation representation of $T$. Clearly, $\min\{|T|^{1/2},P(T)\}\geq5$. Thus $n\geq(\min\{|T|^{1/2},P(T)\})^s\geq5^s$ and so $s\leq\log_2n/\log_25$. Note that, for a fixed $s\leq\ell\kappa$, there are at most $\binom{\ell\kappa}{s}<(\ell\kappa)^s$ possibilities for $C$. Therefore, it follows from~\eqref{eq042} that the number of choices for $C$ is at most
\begin{equation}\label{eq044}
\frac{\log_2n}{\log_25}\cdot(\ell\kappa)^{\frac{\log_2n}{\log_25}}
<\frac{\log_2n}{2}\cdot2^{(\log_2n)\frac{\log_2(\ell\kappa)}{2}}
<2^{(\log_2n)\big(\frac{\log_2(\ell\kappa)}{2}+1\big)}
<2^{\log_2^2n}.
\end{equation}
Now fix some $C\trianglelefteq N$ such that $|N/C|=|T|^s$ for some $s\leq\log_2n/\log_25$. Then
\[
|M\,{:}\,C|=\frac{|G||M_{1_\rs}|}{|C|}=\frac{n|T|^{(\ell-1)\kappa}}{|T|^{\ell\kappa-s}}
<n|T|^{s}
\leq n\left(\frac{\sqrt n}{2}\right)^\frac{\log_2n}{\log_25}
<2^{\log_2^2n}.
\]
Since $C\leq M_{1_\rs}\leq M$, this together with~\eqref{eq044} implies that the number of choices for $M_{1_\rs}$ is less than
\[
2^{\log_2^2n}\cdot|M\,{:}\,C|^{\log_2|M\,{:}\,C|}<
2^{\log_2^4n+\log_2^2n}.
\]

Similarly, the number of choices for $M_{1_\ls}$ for a fixed $(T,\ell,\kappa,W,M)$ is less than $2^{\log_2^4n+\log_2^2n}$. Consequently, the number of possible triples $(M,M_{1_\rs},M_{1_\ls})$ is less than
\[
2^{\log_2^3n+1700\log_2^2n+2\log_2n}\cdot2^{2\log_2^4n+2\log_2^2n}
=2^{2\log_2^4n+\log_2^3n+1702\log_2^2n+2\log_2n}.
\]
Then the proposition follows immediately from Lemma~\ref{LEM009}.
\end{proof}

\subsection{Estimate \boldmath{$|\mathcal{Z}_{\UPHA}(G,\varepsilon)\cup\mathcal{Z}_{\UPSD}(G,\varepsilon)\cup\mathcal{Z}_{\UPTW}(G,\varepsilon)|$}}

Before exhibiting the main result of this section, we prove two technical lemmas.

\begin{lemma}\label{LEM019}
Let $(S,M)$ be an $\varepsilon$-critical pair of a group $G$ of order $n$, let $N$ be the socle of $M$, and let $\tau\in\{\rs,\ls\}$.
\begin{enumerate}[{\rm(a)}]
\item\label{LEM019.1} If the primitive type of $M$ on $[M\,{:}\,G]$ is \textup{HA}, then $|(1_{\tau})^N|<n^{0.5+\varepsilon}\log_2n$.
\item\label{LEM019.2} If the primitive type of $M$ on $[M\,{:}\,G]$ is \textup{SD} or \textup{TW}, then $|(1_{\tau})^N|<n^{0.5+\varepsilon}\log_2^2n$.
\end{enumerate}
\end{lemma}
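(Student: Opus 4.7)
The proof proceeds by case analysis on the O'Nan--Scott type of the primitive action of $M$ on $[M:G]$. The starting observation in all cases is that $M_{1_\tau}$ acts regularly on $[M:G]$: indeed, $M_{1_\tau}\cap G=1$ by semiregularity of $G$ on the Haar graph, and maximality of $G$ in $M$ forces $M_{1_\tau}G=M$, giving $|M_{1_\tau}|=|M:G|$, so that condition~(C1) reads $|M:G|>2^{n^{0.5-\varepsilon}}$. The orbit length $|(1_\tau)^N|=|N:N\cap M_{1_\tau}|$ is trivially at most $n$, and the task is to improve this by a factor of order $n^{0.5-\varepsilon}$.

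For part~(a), where the type is \textup{HA}, the socle $N$ is elementary abelian of order $|M:G|$, acts regularly on $[M:G]$, and the conjugation action of $G\cong M/N$ on $N$ is faithful and irreducible as an $\mathbb{F}_p G$-module. Setting $N_{1_\tau}:=N\cap M_{1_\tau}$, abelianness of $N$ ensures that $N_{1_\tau}$ is normalized by $NM_{1_\tau}$. The first step is to rule out $NM_{1_\tau}=M$: in that case every $g\in G$ has a lift in $M_{1_\tau}$ modulo $N$, which together with abelianness of $N$ forces $N_{1_\tau}$ to be $G$-invariant; by irreducibility, $N_{1_\tau}\in\{1,N\}$. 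The option $N_{1_\tau}=1$ gives $|(1_\tau)^N|=|N|$, which together with $|(1_\tau)^N|\le n$ forces $|M:G|\le n$, contradicting~(C1) for $n$ large. The option $N_{1_\tau}=N$ forces $N$ to fix $G_\tau$ pointwise; applying the same analysis with $\tau$ and $-\tau$ swapped (and using faithfulness of $M$ on $V(\HH(G,S))$) then yields an analogous contradiction. Hence $NM_{1_\tau}\lneq M$, so $H_\tau:=M_{1_\tau}N/N$ is a \emph{proper} subgroup of $G$ of order $|(1_\tau)^N|$, and the final step is to bound $|H_\tau|$ using the $\mathbb{F}_p G$-module structure of $N$ together with the regularity of $M_{1_\tau}$ on $[M:G]$, producing $|H_\tau|<n^{0.5+\varepsilon}\log_2 n$.

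For part~(b), where the type is \textup{SD} or \textup{TW}, $N=T^\ell$ with $T$ a finite nonabelian simple group and $\ell\geq2$: in \textup{TW} the socle $N$ is regular on $[M:G]$ so $|N|=|M:G|$, while in \textup{SD} the point stabilizer of $N$ in $[M:G]$ is a full diagonal so $|N|=|T|\cdot|M:G|$. In both cases $|N|\geq|M:G|>2^{n^{0.5-\varepsilon}}$. The plan is to decompose $N=T_1\times\cdots\times T_\ell$ and analyze the projection of $N_{1_\tau}$ onto each $T_i$. The quotient $M/N$ embeds into $\Out(T)\wr\Sym(\ell)$, and Lemma~\ref{LEM010} yields $|\Out(T)|\leq\log_2|T|\leq\log_2 n$. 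Mirroring the argument of part~(a) in each of the $\ell$ simple factors and combining the contributions gives $|(1_\tau)^N|<n^{0.5+\varepsilon}\log_2^2 n$, where the extra factor $\log_2 n$ (compared to part~(a)) absorbs the combined contribution of $|\Out(T)|$ and of $\ell$.

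The main obstacle lies in part~(a): closing the gap between the trivial bound $|(1_\tau)^N|\le n$ and the target $n^{0.5+\varepsilon}\log_2 n$ requires a quantitative use of the irreducibility of the $\mathbb{F}_p G$-module $N$. The delicate issue is that $N_{1_\tau}$ need not be $G$-invariant \emph{a priori}, so one must first reduce to the case $H_\tau\lneq G$ and then bound $|H_\tau|$, which amounts to understanding which proper subgroups of $G$ can arise as images of regular subgroups of the primitive \textup{HA}-type group $M$ under condition~(C1).
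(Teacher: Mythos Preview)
Your framework is sound: the identification $|(1_\tau)^N|=|NM_{1_\tau}:M_{1_\tau}|=|H_\tau|$ with $H_\tau:=NM_{1_\tau}\cap G\cong M_{1_\tau}N/N$ is exactly the paper's starting point, and your step~2 in part~(a) is correct (though, as you will see, unnecessary). The genuine gap, which you yourself flag as ``the main obstacle'', is step~4. Here is the missing idea.

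In the \textup{HA} case, $|M_{1_\tau}|=|M:G|=|N|=p^\ell$, so $M_{1_\tau}$ is a $p$-group; hence so is $NM_{1_\tau}$ (as $N\trianglelefteq M$), and therefore $H_\tau$ is a $p$-subgroup of $G$. Assuming $H_\tau\neq1$ (equivalently $M_{1_\tau}\neq N$, else $|(1_\tau)^N|=1$ and there is nothing to prove), any nontrivial $p$-group acting on a nonzero $\mathbb{F}_p$-space has a nonzero fixed vector; pick $x\in N\setminus\{1\}$ with $H_\tau\le\mathbf{C}_G(x)$. Irreducibility of $G$ on $N$ forces the orbit $x^G$ to span $N$, giving $\ell\le|x^G|=|G:\mathbf{C}_G(x)|\le|G:H_\tau|$, i.e.\ $|H_\tau|\le n/\ell$. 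Finally $p^\ell=|M_{1_\tau}|>2^{n^{0.5-\varepsilon}}$ together with $p\le|H_\tau|\le n$ gives $\ell>n^{0.5-\varepsilon}/\log_2 n$, whence $|H_\tau|<n^{0.5+\varepsilon}\log_2 n$. Note this never uses $H_\tau\lneq G$; your step~2 is superfluous.

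For part~(b) your plan is too vague to succeed as stated, and the paper does \emph{not} mirror the \textup{HA} argument factor by factor. Instead it invokes structural facts about \textup{SD} and \textup{TW} primitive groups (\cite[Propositions~5.5 and~5.6]{MS2021}) that directly yield $|NM_{1_\tau}:M_{1_\tau}|\le|T|\,|\Out(T)|\le|T|\log_2|T|$ in both cases. One then bounds $|T|\le n/\ell$ (from $|G|\ge\ell|T|$, using transitivity of $GN/N$ on the $\ell$ simple factors, with stabilizer containing a copy of $T$ in the \textup{TW} case) and $\ell>n^{0.5-\varepsilon}/\log_2|T|$ (from $|T|^\ell\ge|M_{1_\tau}|$), giving $|(1_\tau)^N|\le(n/\ell)\log_2|T|<n^{0.5+\varepsilon}\log_2^2 n$. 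Your projection idea might be salvageable, but it would have to reproduce this bound on $|M_{1_\tau}N/N|$, and that is where the content lies.
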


\begin{proof}
We only prove the result for $\tau=\rs$, as the argument for $\tau=\ls$ is the same.
Write $u=1_\rs$. Then we have
\begin{equation}\label{eq021}
|u^N|=|N\,{:}\,N_u|=|N\,{:}\,N\cap M_u|=|NM_u\,{:}\,M_u|.
\end{equation}
In the following, we discuss the cases $S\in\mathcal{Z}_{\UPHA}(G)$, $S\in\mathcal{Z}Y_{\UPSD}(G)$ and $S\in\mathcal{Z}_{\UPTW}(G)$ one by one. Note that in each case we have $|M_u|>2^{n^{0.5-\varepsilon}}$ from the definition of $\mathcal{Z}$.

\textsc{Case 1}: $S\in\mathcal{Z}_{\UPHA}(G)$.

In this case, $M=N\rtimes G$ with $N=C_p^\ell$ for some prime $p$ and positive integer $\ell$, and so $|M_u|=|M\,{:}\,G|=|N|=p^\ell$. Let $H=NM_u\cap G$. Then $H$ is a $p$-group, and
\begin{equation}\label{eq037}
NM_u=NM_u\cap M=NM_u\cap NG=N(NM_u\cap G)=NH=N\rtimes H.
\end{equation}
Hence~\eqref{eq021} turns out to be
\begin{equation}\label{eq022}
|u^N|=|NM_u\,{:}\,M_u|=|NH\,{:}\,M_u|=|NH\,{:}\,N|=|H|.
\end{equation}
If $M_u=N$, then $|u^N|=|u^{M_u}|=|\{u\}|=1<n^{0.5+\varepsilon}\log_2n$. Now suppose that $M_u\neq N$. Then~\eqref{eq037} shows that $H\neq1$, which implies $n=|G|\geq|H|\geq p$. Since $H$ is a $p$-subgroup of $G$, there exists a non-identity $x\in N\setminus\{1\}$ fixed by $H$. Therefore, $H\leq\mathbf{C}_{G}(x)$. Since $G$ acts irreducibly on $N$, the set $x^G$ spans $N$. So we obtain $\ell\leq|x^G|=|G\,{:}\,\mathbf{C}_{G}(x)|\leq|G\,{:}\,H|$, which means $|H|\leq n/\ell$. Moreover, it follows from  $p^\ell=|N|=|M_u|\geq2^{n^{0.5-\varepsilon}}$ that
\[
\ell>\log_p2^{n^{0.5-\varepsilon}}=\frac{n^{0.5-\varepsilon}}{\log_2p}\geq \frac{n^{0.5-\varepsilon}}{\log_2n}.
\]
Combining this with~\eqref{eq022} and $|H|\leq n/\ell$, we conclude that
\[
|u^N|=|H|\leq\frac{n}{\ell}<\frac{n\log_2n}{n^{0.5-\varepsilon}}=n^{0.5+\varepsilon}\log_2n.
\]

\textsc{Case 2}: $S\in\mathcal{Z}_{\UPSD}(G)$.

In this case, $N=T^\ell$ for some nonabelian simple group $T$ and integer $\ell\geq2$ such that $N\cap G\cong T$ and $GN/N$ is a transitive subgroup of $\Sym(\ell)$. Then $n=|G|\geq\ell|T|$, and
\[
|M_u|=|M\,{:}\,G|=|N\,{:}\,N\cap G|=|T|^{\ell-1}.
\]
Since~\cite[Proposition~5.5]{MS2021} implies that $|M_uN|\leq |T^{\ell}||\Out(T)|$, we infer from~\eqref{eq021} and Lemma~\ref{LEM010} that
\begin{equation}\label{eq023}
|u^N|=|NM_u\,{:}\,M_u|\leq |T||\Out(T)|\leq|T|\log_2|T|.
\end{equation}
As $|T|^\ell>|T|^{\ell-1}=|M_u|\geq 2^{n^{0.5-\varepsilon}}$, we have $\ell> n^{0.5-\varepsilon}/\log_2|T|$. This combined with~\eqref{eq023} and $|T|\leq n/\ell$ leads to
\[
|u^N|\leq|T|\log_2|T|\leq\frac{n}{\ell}\log_2|T|<\frac{n\log^2_2|T|}{n^{0.5-\varepsilon}}< n^{0.5+\varepsilon}\log^2_2n.
\]

\textsc{Case 3}: $S\in\mathcal{Z}_{\UPTW}(G)$.

In this case, $N=T^\ell$ for some nonabelian simple group $T$ and integer $\ell\geq6$ such that $|N|=|M\,{:}\,G|$ and $GN/N$ is a transitive subgroup of $\Sym(\ell)$. By~\cite[Proposition~5.6]{MS2021} we have $|M_u\,{:}\,N\cap M_u|\leq |\Aut(T)|$. Hence~\eqref{eq021} and Lemma~\ref{LEM010} yields that
\begin{equation*}
|u^N|=|N\,{:}\,N\cap M_u|=|M_u\,{:}\,N\cap M_u|\leq |\Aut(T)|=|T||\Out(T)|\leq|T|\log_2|T|.
\end{equation*}
Note that the induced permutation group $P$ of $G$ on the $\ell$ direct factors in $T^{\ell}$ is a transitive subgroup of $\Sym(\ell)$ with stabilizer $P_1\gtrsim T$ (see~\cite{LPS1988}). Thus, we obtain $|G|\geq\ell|T|$ and so $|T|\leq n/\ell$. Then the same argument as in Case~2 leads to $|u^N|< n^{0.5+\varepsilon}\log_2^2n$, as required.
\end{proof}

\begin{lemma}\label{LEM020}
Let $G$ be a group of order $n$. For each positive integer $t$, the number of subsets $S$ of $G$ such that there exist subgroups $H$ and $K$ of $G$ satisfying the following conditions~\eqref{LEM020.0}--\eqref{LEM020.2} is at most $2^{n-\frac{n}{4t}+\frac{\log_2^2n}{2}+6}$.
\begin{enumerate}[{\rm(a)}]
\item\label{LEM020.0} $\min\{|H|,|K|\}\leq t$;
\item\label{LEM020.1} either $H\neq K$, or $H$ is not normal in $G$;
\item\label{LEM020.2} $\{(Hg)_{\rs}\mid g\in G\}\cup\{(Kg)_{\ls}\mid g\in G\}$ is the set of orbits of some $X\leq\Aut(\HH(G,S))$.
\end{enumerate}
\end{lemma}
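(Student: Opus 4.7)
The plan is to extract a biregularity constraint on $S$ from condition~(c), count the resulting constrained subsets via a double-coset estimate, and then sum over subgroup pairs $(H,K)$. Assume without loss of generality that $|H|\leq|K|$ (so $|H|\leq t$ by~(a); the opposite case is handled symmetrically using the $H$-side biregularity on right $K$-cosets).

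First I would extract the biregularity constraint. Condition~(c) furnishes a subgroup $X\leq\Aut(\HH(G,S))$ whose orbits are precisely the prescribed cosets, so $X$ is transitive on each orbit and hence any two vertices of a single orbit have equal numbers of neighbours in every other orbit. A short computation, using that the neighbourhood of $g_\ls$ in $\HH(G,S)$ equals $(S^{-1}g)_\rs$, translates this into the concrete constraint that $|S\cap vH|$ depends only on the double coset $KvH$. Equivalently, the function $vH\mapsto|S\cap vH|$ is constant on every $K$-orbit on $G/H$ under left multiplication, and these orbits are in natural bijection with $K\backslash G/H$.

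Next I would combine condition~(b) with Lemma~\ref{LEM008} to obtain $d:=|K\backslash G/H|\leq\frac{3n}{4|H|}$. Writing $m_O$ for the number of left $H$-cosets in a $K$-orbit $O$, this yields
\[
M:=\sum_O(m_O-1)=\frac{n}{|H|}-d\geq\frac{n}{4|H|}\geq\frac{n}{4t}.
\]
For each orbit $O$, the constraint that $|S\cap C|$ be equal for every $C\in O$ allows at most
\[
\sum_{i=0}^{|H|}\binom{|H|}{i}^{m_O}\leq\binom{|H|}{\lfloor|H|/2\rfloor}^{m_O-1}\cdot 2^{|H|}\leq 2^{m_O|H|-(m_O-1)}
\]
choices for $S$ on $\bigsqcup_{C\in O}C$, using the elementary inequality $\binom{|H|}{\lfloor|H|/2\rfloor}\leq 2^{|H|-1}$ valid for every $|H|\geq 1$. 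Multiplying over orbits yields at most $2^{n-M}\leq 2^{n-n/(4t)}$ subsets $S$ for each fixed pair $(H,K)$.

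Finally, Lemma~\ref{LEM004}\eqref{LEM004.4} bounds the number of subgroups of $G$ by $2^{(\log_2^2n)/4+3}$, so the number of pairs $(H,K)$ is at most $2^{(\log_2^2n)/2+6}$. Summing the per-pair estimate produces the claimed bound $2^{n-n/(4t)+(\log_2^2n)/2+6}$. I expect the main delicate step to be the biregularity-to-double-coset translation: one must carefully verify that the equality of neighbourhood counts from $(Kg')_\ls$ into $(Hg)_\rs$, when unfolded via the adjacency rule of the Haar graph, is precisely equivalent to the statement that $|S\cap vH|$ is constant on every $K$-orbit on $G/H$.
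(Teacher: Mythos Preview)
Your proof is correct and follows essentially the same route as the paper: both extract from condition~(c) that $|S\cap C|$ is constant as $C$ ranges over the cosets inside each double coset, invoke Lemma~\ref{LEM008} together with condition~(b) to bound the number of double cosets, and then sum over pairs $(H,K)$ via Lemma~\ref{LEM004}\eqref{LEM004.4}. The only cosmetic difference is that the paper keeps merely the parity consequence of the equal-size constraint (so each non-reference coset loses one bit), whereas you retain the full equal-size constraint and bound $\sum_i\binom{|H|}{i}^{m_O}$ via $\binom{|H|}{\lfloor|H|/2\rfloor}\le 2^{|H|-1}$; both routes yield the identical per-pair bound $2^{\,n-n/(4t)}$.
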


\begin{proof}
Let $H$ and $K$ be subgroups of $G$ satisfying~\eqref{LEM020.0}--\eqref{LEM020.2}, and write $\Gamma=\HH(G,S)$. Let $g\in G$ and $h\in H$.
As $(1_{\rs})^H=(1_{\rs})^X$, there exists $x\in X$ such that $(1_{\rs})^h=(1_{\rs})^x$, that is, $hx^{-1}$ fixes $1_{\rs}$. Then it follows from~\eqref{LEM020.2} that
\begin{align*}
|N_{\Gamma}(1_{\rs})\cap (Kg)_{\ls}|&=|(N_\Gamma(1_{\rs})\cap (Kg)_{\ls})^{hx^{-1}}|\\
&=|N_{\Gamma}\big((1_{\rs})^{hx^{-1}}\big)\cap \big((Kg)_{\ls}\big)^{hx^{-1}}|=|N_{\Gamma}(1_{\rs})\cap (Kgh)_{\ls}|.
\end{align*}
(Observe here that $(Kg)_{\ls}^{hx^{-1}}=(Kgh)_{\ls}^{x^{-1}}$; moreover, as $x\in X$, we see that $x$ fixes setwise the $K$-orbit $(Kgh)_{\ls}$.) Thus we obtain $|S\cap Kg|=|S\cap Kgh|$ for each $g\in G$ and $h\in H$.
Consequently, the intersections between $S$ and the right cosets $Kgh$ of $K$ with $h\in H$ have the same size, and in particular, the same parity. In other words, the intersections between $S$ and different right cosets of $K$ in $KgH$ have the same parity of size. Let $\Delta_1,\ldots,\Delta_\kappa$ be the double cosets of $K$ and $H$ in $G$, and fix a right coset $\Theta_i$ of $K$ in $\Delta_i$ for each $i\in\{1,\ldots,\kappa\}$. Then the intersection of $S$ with each right coset of $K$ in $G$ other than $\Theta_1\ldots,\Theta_\kappa$ must have fixed parity of size. Hence the number of choices for $S$ is at most
\begin{equation}\label{eq018}
2^{\kappa|K|}\cdot2^{(|G\,{:}\,K|-\kappa)(|K|-1)}=2^{|G|+\kappa-|G\,{:}\,K|}.
\end{equation}
Similarly, as $(1_{\ls})^K=(1_{\ls})^X$, the intersections between $S$ and different right cosets of $H$ in $HgK$ have the same parity of size, which implies that the number of choices for $S$ is at most $2^{|G|+\kappa-|G\,{:}\,H|}$.
Combining this with~\eqref{eq018}, we deduce that, for fixed $H$ and $K$, the number of choices for $S$ is at most
\[
2^{|G|+\kappa-m},
\]
where $m=\max\{|G\,{:}\,H|,|G\,{:}\,K|\}$. Since $H$ and $K$ satisfy~\eqref{LEM020.0}, we have $m\geq|G|/t$. Moreover, since $H$ and $K$ satisfy~\eqref{LEM020.1}, by Lemma~\ref{LEM008}, we have $\kappa\leq3m/4$. Therefore,
\[
2^{|G|+\kappa-m}\leq2^{|G|-\frac{m}{4}}\leq2^{n-\frac{n}{4t}}.
\]
Thus the conclusion of the lemma immediately follows, as there are at most $2^{\frac{\log_2^2n}{2}+6}$ choices for the pair $(H,K)$ of subgroups of $G$ by Lemma~\ref{LEM004}\eqref{LEM004.4}.
\end{proof}

We are now ready to give the main result of this section.

\begin{proposition}\label{PROP009}
Let $G$ be a finite group of order $n\geq2^{23}$, and let $\varepsilon\in\left(0,0.1\right]$. Then
\[
|\mathcal{Z}_{\UPHA}(G,\varepsilon)\cup\mathcal{Z}_{\UPSD}(G,\varepsilon)\cup\mathcal{Z}_{\UPTW}(G,\varepsilon)|
<2^{n-\frac{n^{0.5-\varepsilon}}{8\log_2^2n}+\frac{\log_2^2n}{2}+7}.
\]
\end{proposition}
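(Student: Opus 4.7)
The plan is to associate to each such $S$ a pair of subgroups $(H,K)$ of $G$ arising from the socle of the exceptional group, and then to reduce to Lemma~\ref{LEM020} or Proposition~\ref{PROP001}. Fix $S$ in the union together with an $\varepsilon$-critical pair $(S,M)$ of the corresponding primitive type, and let $N$ be the socle of $M$. Since $N\triangleleft M$ and $G\le M$ acts regularly on each of $G_\rs$, $G_\ls$, the $N$-orbits on $G_\rs$ are blocks for the regular $G$-action on $G_\rs$. Hence there is a subgroup $H\le G$ with $(1_\rs)^N=H_\rs$, and a short computation using $gN=Ng$ for $g\in G$ shows that the full list of $N$-orbits on $G_\rs$ is $\{(Hg)_\rs:g\in G\}$. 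Define $K$ analogously via $(1_\ls)^N=K_\ls$. Setting $t:=\lceil n^{0.5+\varepsilon}\log_2^2n\rceil$, Lemma~\ref{LEM019} yields $|H|,|K|\le t$, while faithfulness of $N$ on $V(\HH(G,S))$ rules out $(|H|,|K|)=(1,1)$.

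If $H\ne K$ or $H$ is not normal in $G$, then the triple $(H,K,N)$ satisfies all hypotheses of Lemma~\ref{LEM020} with this $t$, contributing at most
\[
2^{\,n-n/(4t)+(\log_2^2n)/2+6}\ =\ 2^{\,n-n^{0.5-\varepsilon}/(4\log_2^2n)+(\log_2^2n)/2+6}
\]
subsets $S$. Otherwise $H=K$ is normal in $G$, so the $N$-orbits coincide with the $H$-orbits on the whole vertex set. In each of the three types one has $N\not\le G$: for HA and TW we have $N\cap G=1$, and for SD the intersection $N\cap G\cong T$ is proper in $N=T^\ell$ since $\ell\ge 2$. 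Any $f\in N\setminus G$ therefore lies in $\Aut^+(\HH(G,S))\setminus G$ and stabilises every $N$-orbit, hence every $H$-orbit; moreover $1<|H|\le t<n$ for $n\ge 2^{23}$. Thus $(H,f)$ witnesses the hypotheses of Proposition~\ref{PROP001}, bounding the number of $S$ in this case by
\[
2^{\,n-(n/(3t))\log_2(4/3)+(\log_2^2n)/4+\log_2n+2\log_2t+5}.
\]

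To conclude it suffices to check that, for $n\ge 2^{23}$ and $\varepsilon\in(0,0.1]$, each of the two displayed bounds is at most $2^{\,n-n^{0.5-\varepsilon}/(8\log_2^2n)+(\log_2^2n)/2+6}$; summing then gives the required strict inequality, as Proposition~\ref{PROP001} delivers its bound strictly. The Case~1 comparison is immediate from $1/4\ge 1/8$. The main obstacle is Case~2, which reduces to
\[
\Bigl(\tfrac{\log_2(4/3)}{3}-\tfrac{1}{8}\Bigr)\frac{n^{0.5-\varepsilon}}{\log_2^2n}+\frac{\log_2^2n}{4}\ \ge\ \log_2n+2\log_2t-1.
\]
The coefficient $\log_2(4/3)/3-1/8$ exceeds $0.013$, so the first summand on the left is positive; the decisive contribution comes from $(\log_2^2n)/4$, which at $n=2^{23}$ already equals $\approx 132$ and dominates the right-hand side (bounded above by $\approx 68$ uniformly in $\varepsilon\in(0,0.1]$). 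The inequality then persists for all larger $n$ because $(\log_2^2n)/4$ grows strictly faster than $2\log_2t\le(1+2\varepsilon)\log_2n+4\log_2\log_2n$.
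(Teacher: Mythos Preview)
Your proof is correct and follows essentially the same approach as the paper: define $H$ and $K$ from the socle orbits, bound $|H|,|K|$ via Lemma~\ref{LEM019}, then split into the two cases handled respectively by Lemma~\ref{LEM020} and Proposition~\ref{PROP001}, with the same final numerical comparison. The only cosmetic wrinkle is that taking $t=\lceil n^{0.5+\varepsilon}\log_2^2 n\rceil$ makes your displayed equality in Case~1 only approximate (the ceiling pushes $n/(4t)$ slightly below $n^{0.5-\varepsilon}/(4\log_2^2 n)$), but the discrepancy is negligible and the paper itself simply applies the lemmas with the real value $t=n^{0.5+\varepsilon}\log_2^2 n$.
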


\begin{proof}
Let $(S,M)$ be an $\varepsilon$-critical pair of $G$ such that the primitive type of $M$ on $[M\,{:}\,G]$ is HA, SD or TW, and let $N$ be the socle of $M$. Then $M$ preserves the partition into $N$-orbits of $G_{\rs}\cup G_{\ls}$. Let $H$ and $K$ be the stabilizers in $G$ of $(1_{\rs})^N$ and $(1_{\ls})^N$, respectively.
It follows from the semiregularity of $G$ that $|H|=|(1_{\rs})^N|$ and $|K|=|(1_{\ls})^N|$. Thus Lemma~\ref{LEM019} gives
\begin{equation}\label{eq019}
\max\{|H|,|K|\}\leq n^{0.5+\varepsilon}\log_2^2n.
\end{equation}
We estimate the number of $S$ in the following two categories:
\begin{enumerate}[{\rm(i)}]
\item\label{PROP009.1} $S$ is such that $H=K$ is normal in $G$;
\item\label{PROP009.2} $S$ is such that either $H\neq K$ or $H$ is not normal in $G$.
\end{enumerate}

First assume that~\eqref{PROP009.1} holds. Then it follows from the definition of $H$ and $K$ that $H$ has the same orbits on $V(\HH(G,S))$ as $N$. In particular, $H\neq1$. As the inequality $n^{0.5+\varepsilon}\log_2^2n<n$ holds for $n\geq2^{23}$, we see from~\eqref{eq019} that $H\neq G$. Moreover, since $N\nleq G$, there is $f\in N\setminus G$ stabilizing each $H$-orbit on $V(\HH(G,S))$. Applying Proposition~\ref{PROP001} with $t=n^{0.5+\varepsilon}\log_2^2n$ to the pair $(H,f)$, we conclude from~\eqref{eq019} that the number of $S$ in~\eqref{PROP009.1} is at most
\begin{equation}\label{eq020}
2^{n-\frac{n}{3n^{0.5+\varepsilon}\log_2^2n}\log_2\left(\frac{4}{3}\right)+\frac{\log_2^2n}{4}+\log_2n+
2\log_2\left(n^{0.5+\varepsilon}\log_2^2n\right)+5}<2^{n-\frac{n^{0.5-\varepsilon}}{8\log_2^2n}+\frac{\log_2^2n}{2}+6}.
\end{equation}

Next assume that~\eqref{PROP009.2} holds. Since $(1_\rs)^H=(1_\rs)^N$ and $(1_\ls)^H=(1_\ls)^N$, it follows that
\begin{align*}
\{(Hg)_{\rs}\mid g\in G\}\cup\{(Kg)_{\ls}\mid g\in G\}
&=\{(1_{\rs})^{Hg}\mid g\in G\}\cup\{(1_{\ls})^{Kg}\mid g\in G\}\\
&=\{(1_{\rs})^{Ng}\mid g\in G\}\cup\{(1_{\ls})^{Ng}\mid g\in G\}
\end{align*}
is the set of orbits of $N$. Then Lemma~\ref{LEM020} and~\eqref{eq019} imply that the number of $S$ in~\eqref{PROP009.2} is at most
\[
2^{n-\frac{n}{4n^{0.5+\varepsilon}\log_2^2n}+\frac{\log_2^2n}{2}+6}<2^{n-\frac{n^{0.5-\varepsilon}}{8\log_2^2n}+\frac{\log_2^2n}{2}+6}.
\]
This together with~\eqref{eq020} proves the proposition.
\end{proof}

\section{Proof of Theorems~\ref{THM001} and~\ref{THM005}}\label{SEC5}

In this section, we first prove Theorem~\ref{THM001} by using the results in Sections~\ref{SEC3}.
Then we apply Theorem~\ref{THM001} to show Theorem~\ref{THM005}, which gives an asymptotic enumeration for HGRs up to isomorphism.

\subsection{Proof of Theorem~\ref{THM001}}\label{SUBSEC5.1}

First recall Definition~\ref{DEF002} and Remark~\ref{RMK001} on odd-quotient digraphs.

\begin{lemma}\label{LEM021}
Let $G$ be a finite group, and let $C$ be a normal subgroup of $G$. Then there are exactly $2^{|G|-|G|/|C|}$ subsets $S$ of $G$ corresponding to the same $\HH(G,S)^{\odd}_{C}$.
\end{lemma}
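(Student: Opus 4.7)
The plan is to extract from $\HH(G,S)^{\odd}_C$ a simple combinatorial invariant of $S$ (namely the parity vector of $|S\cap dC|$ over cosets $dC$) and then count the fibres of the map $S\mapsto\HH(G,S)^{\odd}_C$ via a cosetwise parity argument.

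First I would identify the $C$-orbits. Since $C\leq G$ acts semiregularly by right multiplication on $V(\HH(G,S))=G_{\rs}\cup G_{\ls}$, the orbits are exactly $\{(gC)_{\rs}:g\in G\}\cup\{(gC)_{\ls}:g\in G\}$. Because $C\leq G\leq\Aut(\HH(G,S))$, this partition $\mathcal{B}$ satisfies the hypothesis of Definition~\ref{DEF002}, so $\HH(G,S)^{\odd}_C$ is well-defined and, by Remark~\ref{RMK001}, undirected.

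Next, I would compute $e(B,D)$ for $B=(gC)_{\rs}$ and $D=(hC)_{\ls}$. For any $x\in gC$, the neighbours of $x_{\rs}$ inside $(hC)_{\ls}$ are $\{y_{\ls}:y\in hC,\ yx^{-1}\in S\}$, so their number is $|S\cap hCx^{-1}|$. Using normality of $C$ in $G$, we have $Cx^{-1}=x^{-1}C=g^{-1}C$ (as $x\in gC$), hence $hCx^{-1}=hg^{-1}C$. Therefore
\[
e\bigl((gC)_{\rs},(hC)_{\ls}\bigr)=|S\cap hg^{-1}C|,
\]
which depends only on the coset $hg^{-1}C$. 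Consequently, the arc set of $\HH(G,S)^{\odd}_C$ is encoded by, and encodes, the parity vector $\pi(S):=\bigl(|S\cap dC|\bmod 2\bigr)_{dC\in G/C}$: the adjacency between $C_{\rs}$ and $(dC)_{\ls}$ in $\HH(G,S)^{\odd}_C$ reveals the $dC$-coordinate of $\pi(S)$, and conversely these parities determine every edge count modulo $2$.

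Finally, I would count the fibres of $\pi$. For each of the $|G|/|C|$ cosets $dC$ of $C$ in $G$ (each of size $|C|\geq 1$), Lemma~\ref{LEM002} yields exactly $2^{|C|-1}$ subsets of $dC$ with a prescribed parity of cardinality; choices across distinct cosets are independent. Thus each parity vector is realised by exactly
\[
\bigl(2^{|C|-1}\bigr)^{|G|/|C|}=2^{|G|-|G|/|C|}
\]
subsets $S\subseteq G$, establishing the lemma. The main (and essentially only) substantive step is the normality computation $hCx^{-1}=hg^{-1}C$, which makes $e(B,D)\bmod 2$ a function of the single coset $hg^{-1}C$; once this is in hand, the remainder is a routine parity count via Lemma~\ref{LEM002}.
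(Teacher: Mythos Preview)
Your proposal is correct and follows essentially the same approach as the paper: identify the $C$-orbits as cosets, observe that adjacency in $\HH(G,S)^{\odd}_C$ from $C_{\rs}$ to $(dC)_{\ls}$ records the parity of $|S\cap dC|$, and count fibres cosetwise via Lemma~\ref{LEM002}. The paper's proof is simply a terser version of yours, jumping directly to the adjacency at $C_{\rs}$ without spelling out the general $e((gC)_{\rs},(hC)_{\ls})$ computation.
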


\begin{proof}
Let $\mathcal{B}=\{(Cg)_\rs\mid g\in G\}\cup\{(Cg)_\ls\mid g\in G\}$. Then $\mathcal{B}$ is the set of $C$-orbits on the vertex set of $\HH(G,S)$ for every $S\subseteq G$. Note that $C_{\rs}$ is adjacent to $(Cg)_{\ls}$ if and only if $|S\cap Cg|$ is odd. By Lemma~\ref{LEM002}, for a fixed $\HH(G,S)^{\odd}_{C}$, there are exactly $\big(2^{|C|-1}\big)^{|G\,{:}\,C|}=2^{|G|-|G|/|C|}$ choices for $S$, as the lemma asserts.
\end{proof}

Recall also the concept of exceptional pairs and minimally exceptional pairs in~Definition~\ref{DEF003}. The following result is crucial in the proof of Theorem~\ref{THM001}.

\begin{proposition}\label{PROP010}
Let $\varepsilon\in\left(0,0.1\right]$, and let $n_\varepsilon$ be a positive integer such that~\eqref{eq032} holds for all $n\geq n_\varepsilon$. Let $G$ be a finite group of order $n$. Then the number of subsets $S$ of $G$ such that $\Aut^+(\HH(G,S))>G$ is less than $2^{n-\frac{n^{0.5-\varepsilon}}{24(\log_2n)^{2.5}}+\frac{3\log_2^2n}{4}+15}$.
\end{proposition}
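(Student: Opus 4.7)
The plan is to partition the set $\mathcal{Y}(G) := \{S \subseteq G : \Aut^+(\HH(G,S)) > G\}$ according to properties of a minimally exceptional pair (Definition~\ref{DEF003}) and bound each piece using one of Propositions~\ref{PROP001}--\ref{PROP004}. For every $S \in \mathcal{Y}(G)$, a minimally exceptional pair $(S,M)$ exists, so I split $\mathcal{Y}(G) = \mathcal{Y}_1 \cup \mathcal{Y}_2 \cup \mathcal{Y}_3 \cup \mathcal{Y}_4$, where $\mathcal{Y}_1$ consists of those $S$ admitting such a pair with $|M| \leq 2^{n^{0.5-\varepsilon}+\log_2 n}$, $\mathcal{Y}_2$ those admitting one with $|M|$ larger and $|\Core_M(G)| > 8\log_2 n$, $\mathcal{Y}_3$ those admitting one with $|M|$ larger and $\Core_M(G) = 1$ (so $\mathcal{Y}_3 = \mathcal{Z}(G,\varepsilon)$), and $\mathcal{Y}_4$ the remainder, for which some witness has $|M| > 2^{n^{0.5-\varepsilon}+\log_2 n}$ and $1 < |\Core_M(G)| \leq 8\log_2 n$. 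Propositions~\ref{PROP002}, \ref{PROP003}, and~\ref{PROP004} directly bound $|\mathcal{Y}_1|$, $|\mathcal{Y}_2|$, and $|\mathcal{Y}_3|$, respectively.

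The substance is in $\mathcal{Y}_4$. Fix a witness $(S,M)$ with $C := \Core_M(G)$ satisfying $1 < |C| \leq 8\log_2 n$, and let $K$ be the kernel of the $M$-action on the partition of $V(\HH(G,S))$ into $C$-orbits. Since $C \trianglelefteq M$, one has $C \leq K \trianglelefteq M$, and the semiregularity of $G$ together with $C \trianglelefteq G$ forces $K \cap G = C$. If $K > C$, then any $f \in K \setminus C$ lies in $M \setminus G$ and stabilizes every $C$-orbit, so the pair $(C,f)$ meets the hypotheses of Proposition~\ref{PROP001} with $t = 8\log_2 n$. In the remaining subcase $K = C$, the quotient $M/C$ acts faithfully on the $C$-orbits and hence on the odd-quotient graph $\HH(G,S)^{\odd}_C$, which by Lemma~\ref{LEM021} equals the Haar graph $\HH(G/C,T)$ of $G/C$ for $T := \{Cg \in G/C : |S \cap Cg|\text{ is odd}\}$. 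A short computation using $|C| \geq 2$ shows that $(T,M/C)$ is $\varepsilon$-critical for $G/C$ (the condition $\Core_{M/C}(G/C) = 1$ is clear, and the size condition $|M/C| > 2^{|G/C|^{0.5-\varepsilon}+\log_2|G/C|}$ follows because $|C|^{0.5-\varepsilon} > 1$), so $T \in \mathcal{Z}(G/C,\varepsilon)$, and each such $T$ is the odd-quotient of exactly $2^{n - n/|C|}$ subsets $S$. Bounding $|\mathcal{Z}(G/C,\varepsilon)|$ via Proposition~\ref{PROP004} applied to $G/C$, multiplying by $2^{n - n/|C|}$, and summing over the at most $2^{\log_2^2 n/4 + 3}$ choices of $C$ afforded by Lemma~\ref{LEM004}\eqref{LEM004.4} handles this subcase.

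The proof closes with a routine summation of the four (five, counting both $\mathcal{Y}_4$ subcases) bounds against the target. The contributions from $\mathcal{Y}_1$, $\mathcal{Y}_2$, and the $K > C$ subcase of $\mathcal{Y}_4$ all yield $2^{n - \Omega(n/\log_2 n)}$, well under the target. The tightest pieces are $\mathcal{Y}_3$ and the $K = C$ subcase of $\mathcal{Y}_4$: applying Proposition~\ref{PROP004} to the quotient $G/C$ of order at least $n/(8\log_2 n)$ costs a factor of at most $(8\log_2 n)^{0.5-\varepsilon}$ in the numerator and a small constant in the denominator relative to the clean $n^{0.5-\varepsilon}/(8\log_2^2 n)$ of Proposition~\ref{PROP004} for $G$ itself, which is precisely what forces the target exponent to carry $(\log_2 n)^{2.5}$ rather than $(\log_2 n)^2$ and the constant $24$ rather than $8$. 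The main obstacle I anticipate is this final subcase: it is not covered by a direct application of Propositions~\ref{PROP001}--\ref{PROP004}, and the odd-quotient reduction together with careful bookkeeping of the quotient-group loss through Proposition~\ref{PROP004} is the crux that underlies the specific constants appearing in the target exponent.
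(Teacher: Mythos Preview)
Your proposal is correct and follows essentially the same approach as the paper: the same four-way split via Propositions~\ref{PROP002}, \ref{PROP003}, \ref{PROP004}, and for the residual case with small nontrivial core the same dichotomy on whether the kernel $K$ of the $M$-action on $C$-orbits strictly contains $C$ (handled by Proposition~\ref{PROP001}) or equals $C$ (handled by the odd-quotient reduction plus Proposition~\ref{PROP004} on $G/C$). The only points you left implicit, which the paper makes explicit, are that~\eqref{eq032} forces $n>2^{67}$ so that $|G/C|\geq n/(8\log_2 n)>2^{57}$ and Proposition~\ref{PROP004} is applicable to the quotient, and the final arithmetic showing that the $K=C$ subcase dominates and yields the stated exponent.
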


\begin{proof}
Since~\eqref{eq032} holds, a direct computation shows that $n\geq n_\varepsilon\geq n_{0.1}>2^{67}$. Let $\mathcal{X}(G)=\{S\subseteq G\mid\Aut^+(\HH(G,S))>G\}$. Then for each $S\in\mathcal{X}(G)$, there exists a minimally exceptional pair $(S,M)$. Consider the following conditions:
\begin{enumerate}[(C1)]
\item $|M|>2^{n^{0.5-\varepsilon}+\log_2n}$;
\item $\mathrm{Core}_M(G)=1$;
\item $|\mathrm{Core}_M(G)|\leq8\log_2n$;
\item no $f$ in $M_{1_{\rs}}\setminus G$ stabilizes every $\mathrm{Core}_M(G)$-orbit on $G_\rs\cup G_\ls$.
\end{enumerate}
For $i\in\{1,2,3,4\}$, we say that $(S,M)$ satisfies condition~($\text{C}i$) if ($\text{C}i$) holds, and that $(S,M)$ satisfies condition~($\overline{\text{C}i}$) if~($\text{C}i$) does not hold.

Let $\mathcal{Y}(G)=\{S\subseteq G\mid\text{there is a minimally exceptional pair $(S,M)$ satisfying~($\overline{\text{C1}}$) or~($\overline{\text{C3}}$)}\}$. Note that~\eqref{eq033} holds as $n>2^{67}$ and $\varepsilon\in\left(0,0.1\right]$. Applying Propositions~\ref{PROP002} and~\ref{PROP003}, we obtain
\begin{equation}\label{eq047}
|\mathcal{Y}(G)|<2^{\frac{3}{4}n+n^{1-\varepsilon}}
+2^{n-\frac{n}{4\log_2n}\log_2\left(\frac{e}{2}\right)
+\frac{\log^2_2n}{4}+\frac{1}{2}\log_2\left(\frac{n}{4\log_2n}\right)+\log_2(24)}.
\end{equation}
Recall from Definition~\ref{DEF004} that $\mathcal{Z}(G,\varepsilon)$ is the set of $S\subseteq G$ such that there exists a minimally exceptional pair $(S,M)$ satisfying~(C1) and (C2). By Proposition~\ref{PROP004},
\begin{equation}\label{eq049}
|\mathcal{Z}(G,\varepsilon)|<2^{n-\frac{n^{0.5-\varepsilon}}{8\log_2^2n}+\frac{\log_2^2n}{2}+9}.
\end{equation}
Hence we only need to estimate the size of $\mathcal{X}(G)\setminus(\mathcal{Y}(G)\cup\mathcal{Z}(G,\varepsilon))$, that is, the number of $S\subseteq G$ such that there exists a minimally exceptional pair $(S,M)$ satisfying~(C1),~($\overline{\text{C}2}$) and~(C3).

Let $S\in\mathcal{X}(G)\setminus(\mathcal{Y}(G)\cup\mathcal{Z}(G,\varepsilon))$, and let $(S,M)$ be a minimally exceptional pair satisfying~(C1),~($\overline{\text{C2}}$) and~(C3). We first estimate the number of $S$ under the assumption that $(S,M)$ satisfies~($\overline{\text{C4}}$). Let $f$ be an element of $M_{1_\rs}\setminus G$ stabilizing every $\mathrm{Core}_M(G)$-orbit on $G_\rs\cup G_\ls$. Then applying Proposition~\ref{PROP001} with the normal subgroup $\mathrm{Core}_M(G)$ of $G$ and the automorphism $f$, we derive from~(C3) that the number of such subsets $S$ is at most
\begin{equation}\label{eq048}
2^{n-\frac{n}{24\log_2n}\log_2(\frac{4}{3})+\frac{\log_2^2n}{4}+\log_2n+2\log_2(8\log_2n)+5}.
\end{equation}

Now assume that $(S,M)$ satisfies~(C4). Write $\Gamma=\HH(G,S)$ and $C=\mathrm{Core}_M(G)$. Then it is clear from the semiregularity of $G$ on $V(\Gamma)$ that $\Gamma^{\odd}_{C}$ is a Haar graph of $G/C$. Thus $\Gamma^{\odd}_{C}=\HH(G/C,T)$ for some $T\subseteq G/C$. Since the kernel $N$ of the induced action of $M$ on $V(\Gamma^{\odd}_{C})$ is contained in $M_{1_\rs}C$ while $M_{1_\rs}\cap G=1$, it follows from~(C4) that $N=C$, and so $M/C$ acts on $V(\Gamma^{\odd}_{C})$ faithfully. Since every automorphism of $\Gamma$ induces an automorphism of $\Gamma^{\odd}_{C}$, we may regard $M/C$ as a subgroup of $\Aut(\Gamma^{\odd}_{C})$. Then $(T,M/C)$ is a minimally exceptional pair of $G/C$, as $G$ is maximal in $M$. The condition~(C1) implies that
\[
|M/C|\geq\frac{2^{n^{0.5-\varepsilon}+\log_2n}}{|C|}=|G\,{:}\,C|\cdot2^{n^{0.5-\varepsilon}}
\geq|G\,{:}\,C|\cdot2^{|G:C|^{0.5-\varepsilon}}.
\]
Moreover, it follows from $C=\mathrm{Core}_M(G)$ that $\mathrm{Core}_{M/C}(G/C)=1$. Hence $(T,M/C)$ is an $\varepsilon$-critical pair of $G/C$ (recall Definition~\ref{DEF004}), and so $T\in\mathcal{Z}(G/C,\varepsilon)$. Since $n>2^{67}$ and (C3) gives $|C|\leq8\log_2n$, we have $|G/C|\geq n/(8\log_2n)>2^{57}$. Counting the number of choices for subgroups $C$ of $G$ and the number of $T\in\mathcal{Z}(G/C,\varepsilon)$, we derive from Lemma~\ref{LEM004}\eqref{LEM004.4}, Proposition~\ref{PROP004} and Lemma~\ref{LEM021} that the number of choices for $S$ is at most
\begin{align*}
2^{\frac{\log_2^2n}{4}+3}\cdot|\mathcal{Z}(G/C,\varepsilon)|\cdot2^{n-\frac{n}{|C|}}
&<2^{\frac{\log_2^2n}{4}+3}\cdot2^{\frac{n}{|C|}-\frac{(n/|C|)^{0.5-\varepsilon}}{8\log_2^2(n/|C|)}+\frac{\log_2^2\left(n/|C|\right)}{2}+9}
\cdot2^{n-\frac{n}{|C|}}\nonumber\\
&<2^{n-\frac{(n/|C|)^{0.5-\varepsilon}}{8\log_2^2(n/|C|)}+\frac{3\log_2^2n}{4}+12}
<2^{n-\frac{n^{0.5-\varepsilon}}{24(\log_2n)^{2.5}}+\frac{3\log_2^2n}{4}+12}.
\end{align*}
Note that the above upper bound is greater than~\eqref{eq048}. Accordingly,
\begin{equation}\label{eq064}
|\mathcal{X}(G)\setminus(\mathcal{Y}(G)\cup\mathcal{Z}(G,\varepsilon))|<
2\cdot2^{n-\frac{n^{0.5-\varepsilon}}{24(\log_2n)^{2.5}}+\frac{3\log_2^2n}{4}+12}
=2^{n-\frac{n^{0.5-\varepsilon}}{24(\log_2n)^{2.5}}+\frac{3\log_2^2n}{4}+13}.
\end{equation}


Since $n>2^{67}$, the right-hand sides of~\eqref{eq047} and~\eqref{eq049} are both less than the right-hand side of~\eqref{eq064}. We conclude that
\begin{align*}
|\mathcal{X}(G)|&\leq|\mathcal{Y}(G)|+|\mathcal{Z}(G)|+|\mathcal{X}(G)\setminus(\mathcal{Y}(G)\cup\mathcal{Z}(G,\varepsilon))|\\
&<3|\mathcal{X}(G)\setminus(\mathcal{Y}(G)\cup\mathcal{Z}(G,\varepsilon))|
<2^{n-\frac{n^{0.5-\varepsilon}}{24(\log_2n)^{2.5}}+\frac{3\log_2^2n}{4}+15}.
\end{align*}
This completes the proof.
\end{proof}

For a subset $S$ of a group, recall that $\mathcal{I}(S)$ is the set of elements in $S$ of order at most $2$, and $c(S)=(|S|+|\mathcal{I}(S)|)/2$.

\begin{lemma}\label{LEM022}
Let $G$ be a nonabelian permutation group acting semiregularly on $2n$ points with exactly two orbits $U$ and $W$. Then for each regular group $X>G$, there are at most $2^{7n/8}$ bipartite graphs $\Gamma$ with bipartition $\{U,W\}$ such that $X\leq\Aut(\Gamma)$.
\end{lemma}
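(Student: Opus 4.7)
The plan is to identify $\Gamma$ as a Cayley graph of $X$ and then count the inverse-closed connection sets that can produce a bipartite graph with the prescribed bipartition. Since $|G|=n$ and $X$ acts regularly on $U\cup W$, we have $|X|=2n$ and $|X:G|=2$. Moreover, since $X$ has only one orbit on $U\cup W$ while its index-$2$ subgroup $G$ has exactly the two orbits $U$ and $W$, every element of $X\setminus G$ must swap $U$ and $W$. Fixing a base point $u_0\in U$, the identification $x\mapsto u_0^x$ turns $V=U\cup W$ into the group $X$, with $U$ corresponding to $G$ and $W$ corresponding to $X\setminus G$.

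Under this identification, any graph $\Gamma$ with $X\leq\Aut(\Gamma)$ is a Cayley graph $\Cay(X,T)$ for some inverse-closed $T\subseteq X$, and distinct $T$ give distinct $\Gamma$. The condition that $\Gamma$ be bipartite with parts $\{U,W\}=\{G,X\setminus G\}$ is equivalent to every edge going between $G$ and its nontrivial coset, which forces $T\subseteq X\setminus G$. Conversely, any inverse-closed $T\subseteq X\setminus G$ yields such a bipartite $\Gamma$.

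Thus the number of bipartite graphs in question equals the number of inverse-closed subsets of $X\setminus G$, which by Lemma~\ref{LEM001} is $2^{c(X\setminus G)}$, where
\[
c(X\setminus G)=\frac{|X\setminus G|+|\mathcal{I}(X\setminus G)|}{2}=\frac{n+|\mathcal{I}(X\setminus G)|}{2}.
\]
Since $G$ is nonabelian and has index $2$ in $X$, Lemma~\ref{LEM007} yields $|\mathcal{I}(X\setminus G)|\leq 3n/4$, and therefore $c(X\setminus G)\leq 7n/8$. This gives the desired bound $2^{7n/8}$.

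The key step (and essentially the only nontrivial ingredient) is the application of Lemma~\ref{LEM007}, which in turn rests on the classical fact (Lemma~\ref{LEM006}) that no finite nonabelian group admits an automorphism inverting more than $3/4$ of its elements; this is precisely where the nonabelian hypothesis on $G$ is used. The remaining work is the bookkeeping above to reduce the problem to counting inverse-closed subsets of $X\setminus G$.
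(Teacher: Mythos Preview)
Your proof is correct and follows essentially the same approach as the paper: identify $\Gamma$ as a Cayley graph $\Cay(X,T)$ with $T$ an inverse-closed subset of $X\setminus G$, then bound the number of such $T$ by $2^{c(X\setminus G)}$ via Lemma~\ref{LEM001} and use Lemma~\ref{LEM007} to get $c(X\setminus G)\le 7n/8$. You supply more detail on the identification and the equivalence $T\subseteq X\setminus G$ than the paper does, but the argument is the same.
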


\begin{proof}
Each bipartite graph $\Gamma$ with bipartition $\{U,W\}$ such that $X\leq\Aut(\Gamma)$ is a Cayley graph $\Cay(X,S)$ for some inverse-closed subset $S$ of $X\setminus G$. By Lemma~\ref{LEM007},
\[
c(X\setminus G)=\frac{|X\setminus G|+|\mathcal{I}(X\setminus G)|}{2}
\leq\frac{1}{2}\left(n+\frac{3n}{4}\right)\leq\frac{7n}{8}.
\]
Thus Lemma~\ref{LEM001} implies that there are at most $2^{c(X\setminus G)}\leq2^{7n/8}$ choices for the inverse-closed subsets $S$ of $X\setminus G$ and hence for bipartite graphs $\Gamma$.
\end{proof}

We are now in a position to prove Theorem~\ref{THM001}.

\begin{proof}[Proof of Theorem~$\ref{THM001}$]
Recall Definition~\ref{DEF005} and note that, when $G$ is abelian and $S\subseteq G$, $\Aut(\HH(G,S))=G\rtimes\langle\iota\rangle$ if and only if $\Aut^+(\HH(G,S))=G$. Hence Theorem~\ref{THM001}\eqref{THM001.2} immediately follows from Proposition~\ref{PROP010}. Next let $G$ be nonabelian, and let $\mathcal{X}(G)=\{S\subseteq G\mid\Aut(\HH(G,S))>G\}$. Proposition~\ref{PROP010} gives an upper bound for the size of the subset $\mathcal{Y}(G):=\{S\subseteq G\mid\Aut^{+}(\HH(G,S))>G\}$ of $\mathcal{X}(G)$. Hence we just need to estimate $|\mathcal{X}(G)\setminus\mathcal{Y}(G)|$. Let $S\in\mathcal{X}(G)\setminus\mathcal{Y}(G)$. Then $\Aut(\HH(G,S))=X$ for some $X>G$ with $|X\,{:}\,G|=2$ such that $X$ is regular on $V(\HH(G,S))$. Since $G$ is normal and maximal in $X$, the group $X$ can be determined by an element of $\mathbf{N}_{\Sym(2n)}(G)$. Observe that since $G$ is semiregular with $2$ orbits, we have $|{\bf C}_{\mathrm{Sym}(n)}(C)|=2|G|^2=2n^2$. Now Lemma~\ref{LEM004}\eqref{LEM004.3} implies that
\begin{align*}
|\mathbf{N}_{\Sym(2n)}(G)|&=|\mathbf{C}_{\Sym(2n)}(G)|\cdot|\Aut(G)|\\
&=2n^2\cdot|\Aut(G)|\leq2n^2\cdot2^{\log_2^2n}
\leq2^{\log_2^2n+2\log_2n+1}.
\end{align*}
Hence there are at most $2^{\log_2^2n+2\log_2n+1}$ choices for  $X>G$ with $|X\,{:}\,G|=2$. Combining this with Lemma~\ref{LEM022}, we deduce that
\[
|\mathcal{X}(G)\setminus\mathcal{Y}(G)|\leq2^{\frac{7n}{8}+\log_2^2n+2\log_2n+1}.
\]
Together with Proposition~\ref{PROP010}, we conclude that
\begin{align*}
|\mathcal{X}(G)|=|\mathcal{Y}(G)|+|\mathcal{X}(G)\setminus\mathcal{Y}(G)|
&\leq2^{n-\frac{n^{0.5-\varepsilon}}{24(\log_2n)^{2.5}}+\frac{3\log_2^2n}{4}+15}+2^{\frac{7n}{8}+\log_2^2n+2\log_2n+1}\\
&<2^{n-\frac{n^{0.5-\varepsilon}}{24(\log_2n)^{2.5}}+\frac{3\log_2^2n}{4}+16},
\end{align*}
proving Theorem~\ref{THM001}\eqref{THM001.1}.
\end{proof}

\subsection{Unlabeled Haar graphs}\label{SUBSEC5.2}

For a group $G$, let $\mathcal{H}(G)$ be the set of Haar graphs of $G$ up to isomorphism. Elements of $\mathcal{H}(G)$ are called \emph{unlabelled} Haar graphs of $G$, and in contrast, we call a Haar graph $\HH(G,S)$ \emph{labeled} to indicate that it is not counted up to isomorphism. In this section, we prove Theorem~\ref{THM005}, which indicates that, up to isomorphism, almost all Haar graphs of a finite group have the smallest possible automorphism group. We first give the following lemma.

\begin{lemma}\label{LEM023}
Let $G$ be a finite group of order $n$.
\begin{enumerate}[{\rm(a)}]
\item\label{LEM023.1} If $G$ is nonabelian, then there are at most $2^{\log_2^2n+2\log_2n+1}$ labeled HGRs of $G$ isomorphic to each other.
\item\label{LEM023.2} If $G$ is abelian, then there are at most $2^{\log_2^2(2n)+2\log_2n+1}$ labeled Haar graphs of $G$ isomorphic to each other such that they have automorphism group $G\rtimes\langle\iota\rangle$.
\end{enumerate}
\end{lemma}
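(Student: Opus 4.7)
The plan is to realize each isomorphism from a fixed labeled Haar graph $\Gamma_0=\HH(G,S_0)$ of the prescribed type to another labeled Haar graph of the same type as a permutation of $V=G_\rs\cup G_\ls$ that normalizes a specific subgroup of $\Sym(V)$, and then to bound the order of that normalizer using Lemma~\ref{LEM004}\eqref{LEM004.3}.

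First I would pin down $\Aut(\Gamma_0)$ in each case. In part (a), the HGR hypothesis forces $|\Aut(\Gamma_0)|=n$; combined with $G\le\Aut(\Gamma_0)$ (via the identification $R(G)=G$) and $|G|=n$, this yields $\Aut(\Gamma_0)=G$. In part (b), by hypothesis $\Aut(\Gamma_0)=G\rtimes\langle\iota\rangle$, which is regular on $V$ since $G$ has orbits $G_\rs,G_\ls$ and $\iota$ swaps them. Next, any labeled Haar graph $\Gamma$ isomorphic to $\Gamma_0$ and of the same automorphism-group type is of the form $\sigma(\Gamma_0)$ for some $\sigma\in\Sym(V)$, and the requirement $\sigma\Aut(\Gamma_0)\sigma^{-1}=\Aut(\Gamma)$ with $\Aut(\Gamma)$ of the same form forces $\sigma$ to normalize $\Aut(\Gamma_0)$; hence the number of such labeled Haar graphs is at most $|\mathbf{N}_{\Sym(V)}(\Aut(\Gamma_0))|$.

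I would then bound the normalizers via $|\mathbf{N}_{\Sym(V)}(X)|\le|\mathbf{C}_{\Sym(V)}(X)|\cdot|\Aut(X)|$ together with Lemma~\ref{LEM004}\eqref{LEM004.3}. For part (a), $G$ is semiregular on $V$ with two orbits of size $n$; the centralizer of the regular action on each orbit has order $n$, and the involution $g_\rs\leftrightarrow g_\ls$ centralizes $G$ while swapping the two orbits, yielding $|\mathbf{C}_{\Sym(V)}(G)|=2n^2$. Combined with $|\Aut(G)|\le2^{\log_2^2 n}$, this gives
\[
|\mathbf{N}_{\Sym(V)}(G)|\le 2n^2\cdot 2^{\log_2^2 n}=2^{\log_2^2 n+2\log_2 n+1},
\]
as required. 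For part (b), $G\rtimes\langle\iota\rangle$ is regular on $V$ of order $2n$, so its centralizer in $\Sym(V)$ equals itself and has order $2n$, and Lemma~\ref{LEM004}\eqref{LEM004.3} applied to the group $G\rtimes\langle\iota\rangle$ of order $2n$ gives $|\Aut(G\rtimes\langle\iota\rangle)|\le 2^{\log_2^2(2n)}$; hence
\[
|\mathbf{N}_{\Sym(V)}(G\rtimes\langle\iota\rangle)|\le 2n\cdot 2^{\log_2^2(2n)}\le 2^{\log_2^2(2n)+2\log_2 n+1}.
\]

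The main point requiring care is the computation $|\mathbf{C}_{\Sym(V)}(G)|=2n^2$ in part (a): one splits the centralizer according to whether an element fixes both orbits $G_\rs,G_\ls$ setwise (contributing $n\cdot n$, using that the centralizer of a regular action in the relevant symmetric group is the regular group itself) or swaps them (contributing another $n^2$, and requiring the explicit swap $g_\rs\leftrightarrow g_\ls$ as a witness that centralizes $G$, which works precisely because $G$ acts by right multiplication identically on $G_\rs$ and $G_\ls$).
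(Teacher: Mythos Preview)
Your argument is correct and is essentially the paper's proof recast in normalizer language. The paper fixes an isomorphism $\varphi\colon\HH(G,S)\to\HH(G,T)$, records the images $x_\epsilon=(1_\rs)^\varphi$, $y_{-\epsilon}=(1_\ls)^\varphi$ and the induced group isomorphism $\psi\colon\Aut(\Gamma)\to\Aut(\Sigma)$, and shows $T$ is determined by $(\epsilon,x,y,\psi)$; counting these gives $2\cdot n^2\cdot|\Aut(\Aut(\Gamma))|$. Your observation that any such $\varphi$ lies in $\mathbf N_{\Sym(V)}(\Aut(\Gamma_0))$, together with $|\mathbf N|\le|\mathbf C|\cdot|\Aut(\Aut(\Gamma_0))|$, is the same computation: the $2n^2$ centralizer elements in part~(a) correspond precisely to the paper's $(\epsilon,x,y)$, and $\psi$ is the $N/C$ quotient. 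Your framing is a bit cleaner, and in part~(b) it even gives the sharper bound $2n\cdot 2^{\log_2^2(2n)}$ versus the paper's $2n^2\cdot 2^{\log_2^2(2n)}$, both comfortably within the stated bound.

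One small wording issue: you say the centralizer of the regular group $G\rtimes\langle\iota\rangle$ in $\Sym(V)$ ``equals itself''. That is only literally true when $G\rtimes\langle\iota\rangle$ is abelian (i.e.\ when $G$ has exponent~$2$); in general the centralizer of a regular right-multiplication action is the left-multiplication copy, a different subgroup isomorphic to $G\rtimes\langle\iota\rangle$. Either way its order is $2n$, so your bound is unaffected.
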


\begin{proof}
When $n=1$, the result is clear. Thus we suppose $n>1$.

First assume that $G$ is nonabelian. Fix a subset $S$ of $G$ such that $\HH(G,S)$ is an HGR. To prove~\eqref{LEM023.1}, we show that there are at most $2^{\log_2^2n+2\log_2n+2}$ subsets $T$ of $G$ such that $\HH(G,T)$ is isomorphic to $\HH(G,S)$. Let $T$ be such a subset, and write $\Gamma=\HH(G,S)$ and $\Sigma=\HH(G,T)$. Then there exists a graph isomorphism $\varphi$ from~$\Gamma$ to~$\Sigma$, and the mapping $\psi\colon g\mapsto\varphi^{-1}g\varphi$, for each $g\in G$, is a group isomorphism from $\Aut(\Gamma)$ to $\Aut(\Sigma)$. Since $N_{\Gamma}(1_\rs)=S_\ls$, we have
\begin{equation}\label{eq065}
N_{\Sigma}\big((1_\rs)^\varphi\big)=(S_\ls)^{\varphi}.
\end{equation}
Let $(1_\rs)^\varphi=x_\epsilon$ and $(1_\ls)^\varphi=y_\mu$ for some $x,y\in G$ and $\epsilon,\mu\in\{\rs,\ls\}$. Since $\Aut^+(\Gamma)=\Aut^+(\Sigma)=G$ and $|G|=n>1$, both the bipartite graphs $\Gamma$ and $\Sigma$ are connected, and so the partition $\{G_\rs,G_\ls\}$ is preserved by $\varphi$. This implies that $\mu=-\epsilon$. Then~\eqref{eq065} yields that
\[
N_{\Sigma}(x_\epsilon)=N_{\Sigma}\big((1_\rs)^\varphi\big)=(S_\ls)^{\varphi}
=(1_\ls)^{S\varphi}=(1_\ls)^{\varphi\varphi^{-1}S\varphi}=(y_{-\epsilon})^{\varphi^{-1}S\varphi}
=(y_{-\epsilon})^{S^{\psi}},
\]
where $S^{\psi}=\{s^{\psi}\mid s\in S\}$. Thus $(T^{\epsilon1}x)_{-\epsilon}=N_{\Sigma}(x_\epsilon)=(y_{-\epsilon})^{S^{\psi}}$, and so $T$ is uniquely determined by $(\epsilon,x,y,\psi)$. Considering the choices for $(\epsilon,x,y,\psi)$, we conclude from $\Aut(\Gamma)=\Aut(\Sigma)=G$ and Lemma~\ref{LEM004}\eqref{LEM004.3} that the number of choices for $T$ is at most
\[
2\cdot n^2\cdot2^{\log_2^2n}=2^{\log_2^2n+2\log_2n+1},
\]
proving~\eqref{LEM023.1}.

Next assume that $G$ is abelian. Fix a subset $S$ of $G$ such that $\Aut(\HH(G,S))=G\rtimes\langle\iota\rangle$.  We enumerate the subsets $T$ of $G$ such that $\HH(G,T)$ is isomorphic to $\HH(G,S)$. By the notation and similar argument as above (note that $|\Aut(\Gamma)|=2n$ in this case), there are at most
\[
2\cdot n^2\cdot2^{\log_2^2(2n)}=2^{\log_2^2(2n)+2\log_2n+1}
\]
choices for $T$, which proves~\eqref{LEM023.2}.
\end{proof}

We conclude this section by proving Theorem~\ref{THM005}.

\begin{proof}[Proof of Theorem~$\ref{THM005}$]
Use the notation established at the beginning of Section~\ref{SUBSEC5.2}, and let $\HGR(G)$ be the set of HGRs of $G$ up to isomorphism. First assume that $G$ is nonabelian. To prove~Theorem~\ref{THM005}\eqref{THM005.1}, we need to estimate the ratio $|\HGR(G)|/|\mathcal{H}(G)|$. Let
\[
a_\varepsilon(n)=\frac{n^{0.5-\varepsilon}}{24(\log_2n)^{2.5}}-\frac{3\log_2^2n}{4}-15
\ \text{ and }\ b(n)=\log_2^2n+2\log_2n+1.
\]
Then Theorem~\ref{THM001}\eqref{THM001.1} and Lemma~\ref{LEM023}\eqref{LEM023.1} imply that
\[
|\mathcal{H}(G)\setminus\HGR(G)|<2^{n-a_\varepsilon(n)}\ \text{ and }\ |\HGR(G)|>\frac{2^n-2^{n-a_\varepsilon(n)}}{2^{b(n)}}.
\]
Hence we deduce from $|\mathcal{H}(G)|=|\mathcal{H}(G)\setminus\HGR(G)|+|\HGR(G)|$ that
\[
\frac{|\mathcal{H}(G)|}{|\HGR(G)|}=1+\frac{|\mathcal{H}(G)\setminus\HGR(G)|}{|\HGR(G)|}
<1+\frac{2^{n+b(n)-a_\varepsilon(n)}}{2^{n}-2^{n-a_\varepsilon(n)}}
=1+\frac{2^{b(n)-a_\varepsilon(n)}}{1-2^{-a_\varepsilon(n)}},
\]
which yields
\begin{align*}
\frac{|\HGR(G)|}{|\mathcal{H}(G)|}=\left(\frac{|\mathcal{H}(G)|}{|\HGR(G)|}\right)^{-1}
&>\frac{1-2^{-a_\varepsilon(n)}}{1-2^{-a_\varepsilon(n)}+2^{b(n)-a_\varepsilon(n)}}\\
&=1-\frac{2^{b(n)-a_\varepsilon(n)}}{1+2^{b(n)-a_\varepsilon(n)}-2^{-a_\varepsilon(n)}}
>1-2^{b(n)-a_\varepsilon(n)}.
\end{align*}
Since $b(n)-a_\varepsilon(n)=-\frac{n^{0.5-\varepsilon}}{24(\log_2n)^{2.5}}+\frac{7\log_2^2n}{4}+2\log_2n+16
<-h_\varepsilon(n)$, Theorem~\ref{THM005}\eqref{THM005.1} follows.

Next assume that $G$ is abelian. Let $\mathcal{A}(G)$ be the set of Haar graphs of $G$ up to isomorphism that have automorphism group $G\rtimes\langle\iota\rangle$. By an argument similar to the one above and applying Theorem~\ref{THM001}\eqref{THM001.2} and Lemma~\ref{LEM023}\eqref{LEM023.2}, we obtain that
\[
\frac{|\mathcal{A}(G)|}{|\mathcal{H}(G)|}>1-2^{\left(\log_2^2(2n)+2\log_2n+1\right)
-\left(\frac{n^{0.5-\varepsilon}}{24(\log_2n)^{2.5}}-\frac{3\log_2^2n}{4}-14\right)}=1-2^{-h_\varepsilon(n)},
\]
as~Theorem~\ref{THM005}\eqref{THM005.2} asserts.
\end{proof}

\section{Asymptotic enumeration of $m$-Cayley digraphs}\label{SEC6}

For an $m$-Cayley digraph $\mg\Cay(G,\mathcal{S})$ of a group $G$, set $G_i=\{(g,i)\mid g\in G\}$ for $i\in\{1,\ldots,m\}$. Since $G$ acts transitively on each $G_i$, each vertex in $G_i$ has the same out-valency and the same in-valency, denoted as $d^+_i(\mathcal{S})$ and $d^-_i(\mathcal{S})$, respectively.
In this section, we prove the conclusion of Theorem~\ref{THM004} for digraphs, which shows that almost all $m$-Cayley digraphs of a finite group $G$ are $\DMSRS$.

\begin{proposition}\label{PROP015}
Fix an integer $m\geq2$, and let $G$ be a finite group of order $n$. When $n$ is sufficiently large, the number of set-matrices $\mathcal{S}$ of $G$ such that $\mg\Cay(G,\mathcal{S})$ is not a $\DMSR$ is less than $m^2\cdot2^{m^2n}/\sqrt{n}$.
\end{proposition}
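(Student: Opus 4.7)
The plan is to decompose the set-matrices $\mathcal{S}$ for which $\Gamma := \mg\Cay(G,\mathcal{S})$ is not a $\DMSR$ according to the structure of $A := \Aut(\Gamma)$ on the orbit set $\mathcal{O} = \{G_1,\ldots,G_m\}$. Let $K$ be the kernel of the induced action $A \to \Sym(\mathcal{O})$, so that $G \le K \le A$; such an $\mathcal{S}$ is bad precisely when either $K > G$ or $A > K$. The target bound $m^2/\sqrt{n}$ is relatively weak, so the main task is to organise the case analysis cleanly rather than to sharpen any individual estimate; Theorem~\ref{THM002} plus elementary counting will handle each piece.

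First I would dispose of those $\mathcal{S}$ for which some diagonal Cayley digraph $\Cay(G,S_{i,i})$ fails to be a $\DRR$; note that $\Cay(G,S_{i,i})$ is precisely the subdigraph of $\Gamma$ induced on $G_i$. Applying Theorem~\ref{THM002} to $S_{i,i}$ and taking a union bound over $i\in\{1,\ldots,m\}$, the number of such $\mathcal{S}$ is at most $m\cdot 2^{(m^2-1)n}\cdot 2^{n-bn^{0.499}/(4\log_2^3n)+2}$, which for large $n$ is negligible compared with $m^2\,2^{m^2n}/\sqrt{n}$. For the remainder of the argument I may therefore assume every $\Cay(G,S_{i,i})$ is a $\DRR$, forcing $K|_{G_i}\le \Aut(\Cay(G,S_{i,i})) = R(G)$ for every $i$; equivalently, each $\sigma\in K$ acts on $G_i$ as right multiplication by some $g_i^{\sigma}\in G$.

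If $K>G$, pick $\sigma\in K\setminus G$ with corresponding $g_1,\ldots,g_m\in G$ not all equal. Writing out the arc-preservation condition for an arc $(x,i)\to(sx,j)$ with $s\in S_{i,j}$ yields $S_{i,j}\,g_jg_i^{-1}=S_{i,j}$ for all $i,j$, so any pair $(i,j)$ with $g_i\ne g_j$ forces the off-diagonal entry $S_{i,j}$ to be a union of right cosets of the nontrivial subgroup $\langle g_jg_i^{-1}\rangle$. The number of $\mathcal{S}$ arising here is therefore at most $m(m-1)(n-1)\cdot 2^{n/2}\cdot 2^{(m^2-1)n}$, which is negligible. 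In the remaining situation $K=G$ but $A>K$, pick $\tau\in A\setminus K$; since $K\trianglelefteq A$, the element $\tau$ normalises $G$ and induces both a nontrivial permutation $\pi$ of the orbits and an automorphism $\phi\in\Aut(G)$ via $\tau R(g)\tau^{-1}=R(\phi(g))$. A short computation using this identity gives $\tau(x,i)=(a_i\phi(x),\pi(i))$ for certain $a_1,\ldots,a_m\in G$, and arc-preservation then forces
\[
S_{\pi(i),\pi(j)} = a_j\,\phi(S_{i,j})\,a_i^{-1} \qquad \text{for all } i,j.
\]
Hence $\mathcal{S}$ is completely determined once $\pi,\phi,a_1,\ldots,a_m$ are chosen together with one subset per $(\pi,\pi)$-orbit on $\{1,\ldots,m\}^2$. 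The number of such orbits is maximised at $(m-1)^2+1<m^2$ when $\pi$ is a transposition, so the number of bad $\mathcal{S}$ here is at most $(m!-1)\cdot|\Aut(G)|\cdot n^m\cdot 2^{n((m-1)^2+1)}$; bounding $|\Aut(G)|$ via Lemma~\ref{LEM004}\eqref{LEM004.3}, this too is comfortably smaller than $2^{m^2n}/\sqrt{n}$ for large $n$.

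The step I expect to require the most care is deriving the precise form $\tau(x,i)=(a_i\phi(x),\pi(i))$ and extracting the global constraint $S_{\pi(i),\pi(j)}=a_j\phi(S_{i,j})a_i^{-1}$ in the third subcase, since this requires carefully combining the normality of $K=G$ in $A$ with the semiregularity of $R(G)$; once this structural description is in place, summing the three bounds gives the desired inequality for all sufficiently large $n$.
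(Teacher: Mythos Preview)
Your argument is correct and in fact yields a much stronger conclusion than the stated bound: each of your three pieces is at most $2^{m^2n}$ times something exponentially small in $n$, so the non-$\DMSR$ proportion you establish is $o(2^{-cn})$ for some $c>0$, far below $m^2/\sqrt{n}$.

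The paper takes a different route. It organises the cases in the opposite order: first it bounds the set-matrices $\mathcal{S}$ for which $A=\Aut(\Gamma)$ fails to stabilise some $G_i$, and it does this by an elementary valency argument---if $A$ moves $G_i$ to $G_j$ then $d_i^+(\mathcal S)+d_i^-(\mathcal S)=d_j^+(\mathcal S)+d_j^-(\mathcal S)$, which pins down $|S_{i,i}|$ and costs a factor $1/\sqrt{n}$ via Lemma~\ref{LEM003}. This case supplies the dominant term $\binom{m}{2}2^{m^2n}/\sqrt{n}$; the paper's remaining two cases (some induced action $A_i\not\cong G$ via Theorem~\ref{THM002}, then all $A_i\cong G$ via a kernel argument) are exponentially small. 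Thus the paper's bottleneck is precisely the case you treat last, and your normaliser/automorphism analysis of the orbit-permuting case replaces the paper's valency trick with a structural argument that is more work but gives an exponentially better estimate. Both approaches invoke Theorem~\ref{THM002} for the diagonal DRR step. One small slip: in your $K>G$ case, $S_{i,j}g_jg_i^{-1}=S_{i,j}$ makes $S_{i,j}$ a union of \emph{left} cosets of $\langle g_jg_i^{-1}\rangle$, not right cosets; this does not affect the count.
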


\begin{proof}
Let $\mathcal{Z}$ be the set of set-matrices $\mathcal{S}$ of $G$ such that $\mg\Cay(G,\mathcal{S})$ is not a $\DMSR$.
We first estimate the size of
\[
\mathcal{Z}_1:=\{\mathcal{S}\in\mathcal{Z}\mid\text{there exists $i\in\{1,\ldots,m\}$ such that $\Aut(\mg\Cay(G,\mathcal{S}))$ does not stabilize $G_i$}\}.
\]
Note that, if a vertex of $G_i$ is mapped by some automorphism of $\mg\Cay(G,\mathcal{S})$ into $G_j$, then $d^+_i(\mathcal{S})=d^+_j(\mathcal{S})$ and $d^-_i(\mathcal{S})=d^-_j(\mathcal{S})$. This means that, for any $\mathcal{S}\in\mathcal{Z}_1$, there exists a pair $(i,j)$ with $i<j$ such that
\begin{equation}\label{eq051}
d^+_i(\mathcal{S})+d^-_i(\mathcal{S})=d^+_j(\mathcal{S})+d^-_j(\mathcal{S}).
\end{equation}
Now, fix $(i,j)$ with $i<j$ satisfying~\eqref{eq051}. Clearly,
\begin{align*}
&d^+_i(\mathcal{S})=|S_{i,i}|+|S_{i,j}|+\sum_{k\neq i,j}|S_{i,k}|,
&&d^-_i(\mathcal{S})=|S_{i,i}|+|S_{j,i}|+\sum_{k\neq i,j}|S_{k,i}|,\\
&d^+_j(\mathcal{S})=|S_{j,j}|+|S_{j,i}|+\sum_{k\neq i,j}|S_{j,k}|,
&&d^-_i(\mathcal{S})=|S_{j,j}|+|S_{i,j}|+\sum_{k\neq i,j}|S_{k,j}|.
\end{align*}
Substituting these into~\eqref{eq051}, we obtain
\[
2|S_{i,i}|+\sum\limits_{k\neq i,j}(|S_{i,k}|+|S_{k,i}|)=2|S_{j,j}|+\sum_{k\neq i,j}(|S_{j,k}|+|S_{k,j}|).
\]
This indicates that the size of $S_{i,i}$ is determined by $S_{k,\ell}$ with $(k,\ell)\neq(i,i)$. Hence, we derive from Lemma~\ref{LEM003} that the number of $\mathcal{S}$ satisfying~\eqref{eq051} is at most
\begin{equation*}
2^{(m^2-1)n}\cdot\frac{2^{n}}{\sqrt{n}}=\frac{2^{m^2n}}{\sqrt{n}}.
\end{equation*}
Since there are $\binom{m}{2}$ choices for $(i,j)$ with $i<j$, we conclude that
\begin{equation}\label{eq052}
|\mathcal{Z}_1|\leq\binom{m}{2}\frac{2^{m^2n}}{\sqrt{n}}.
\end{equation}

By the definition of $\mathcal{Z}_1$, every $\mathcal{S}\in\mathcal{Z}\setminus\mathcal{Z}_1$ is such that $\Aut(\mg\Cay(G,\mathcal{S}))$ stabilizes each $G_i$ and hence induces a subgroup $A_i$ of $\Aut(\Cay(G,S_{i,i}))$ on $G_i$. Let
\[
\mathcal{Z}_2=\{\mathcal{S}\in\mathcal{Z}\setminus\mathcal{Z}_1\mid\text{there exists $i\in\{1,\ldots,m\}$ such that $A_i\not\cong G$}\}.
\]
Then we derive from Theorem~\ref{THM002} that
\begin{equation}\label{eq053}
|\mathcal{Z}_2|\leq m\cdot  2^{(m^2-1)n}\cdot 2^{n-\frac{bn^{0.499}}{4\log^3_2n}+2}=m\cdot  2^{m^2n-\frac{bn^{0.499}}{4\log^3_2n}+2},
\end{equation}
where $b$ is an absolute constant.

Now consider $\mathcal{S}\in\mathcal{Z}\setminus(\mathcal{Z}_1\cup\mathcal{Z}_2)$. It follows from $\mathcal{S}\notin\mathcal{Z}_1\cup\mathcal{Z}_2$ and $\Aut(\mg\Cay(G,\mathcal{S}))>G$ that the action of $\Aut(\mg\Cay(G,\mathcal{S}))$ on each $G_i$ has kernel $K_i>1$. Take an arbitrary $i$. Then there exists $j$ distinct from $i$ such that $K_i$ acts nontrivially on $G_j$. This implies that $S_{i,j}$ is a union of some $K_i$-orbits on $G_j$. Since $A_j\cong G$ acts regularly on $G_j$, the action of $K_i$ on $G_j$ is semiregular, and so there are at most $n/2$ orbits of $K_i$ on $G_j$. Hence we derive from Lemma~\ref{LEM004}\eqref{LEM004.4} that the choices of $S_{i,j}$ is at most $2^{\log^2_2n}\cdot2^{n/2}$. Consequently,
\begin{equation}\label{eq054}
|\mathcal{Z}\setminus(\mathcal{Z}_1\cup\mathcal{Z}_2)|\leq
2^{(m^2-m)n}\cdot\left((m-1)\cdot2^{\log^2_2n+\frac{n}{2}}\right)^m=(m-1)^m\cdot2^{m^2n-\frac{mn}{2}+m\log^2_2n}.
\end{equation}

Noting $m\geq2$ and combining~\eqref{eq052}--\eqref{eq054}, we obtain for sufficiently large $n$ that
\[
|\mathcal{Z}|=|\mathcal{Z}_1|+|\mathcal{Z}_2|+|\mathcal{Z}\setminus(\mathcal{Z}_1\cup\mathcal{Z}_2)|< m^2\frac{2^{m^2n}}{\sqrt{n}},
\]
which completes the proof.
\end{proof}

\begin{proof}[Proof of Theorem~$\ref{THM004}$ for digraphs]
Since $G$ has order $n$, there are exactly $2^{m^2n}$ set-matrices of $G$. Thus, by Proposition~\ref{PROP015}, the proportion of set-matrices $\mathcal{S}$ of $G$ such that $\mg\Cay(G,\mathcal{S})$ is a $\DMSR$ is greater than
\[
1-\frac{m^2\cdot2^{m^2n}/\sqrt{n}}{2^{m^2n}}=1-m^2/\sqrt{n}.\qedhere
\]
\end{proof}

\section{Asymptotic enumeration of $m$-Cayley graphs}\label{SEC7}

In this section, we prove Theorem~\ref{THM004} for graphs, which implies that almost all $m$-Cayley graphs of a finite group $G$ are $\GMSRS$.
We divide the proof into three cases in the following three subsections, respectively, which will be summarized at the end of the section to complete the proof. Recall the notations $c(S)$ and $\mathcal{I}(S)$ in Section~\ref{SUBSEC2.3}.

Recall that a digraph $\mg\Cay(G,\mathcal{S})$ is undirected if and only if $\mathcal{S}$ is inverse-closed (see Definition~\ref{DEF001}). Such a set-matrix $\mathcal{S}=(S_{i,j})_{m\times m}$ is uniquely determined by the subsets $S_{i,j}$ for $1\leq i<j\leq m$ and the inverse-closed subsets $S_{i,i}$ for $i\in\{1,\ldots,m\}$. So the number of inverse-closed set-matrices $\mathcal{S}$ of $G$ is $2^d$, where
\[
d=\binom{m}{2}|G|+mc(G).
\]
For an $m$-Cayley graph $\mg\Cay(G,\mathcal{S})$, set $G_i=\{(g,i)\mid g\in G\}$ for $i\in\{1,\ldots,m\}$. Since $G$ acts transitively on each $G_i$, every vertex of $G_i$ has the same valency, denoted as $d_i(\mathcal{S})$.
\begin{lemma}\label{LEM024}
Fix an integer $m\geq2$. Let $G$ be a group of order $n$. Then the number of inverse-closed set-matrices $\mathcal{S}$ of $G$ such that $\Aut(\mg\Cay(G,\mathcal{S}))$ does not stabilize $G_i$ for some $i\in\{1,\ldots,m\}$ is at most $m(m-1)2^{d}/\sqrt{n}$, where $d=\binom{m}{2}n+mc(G)$.
\end{lemma}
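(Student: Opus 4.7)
The plan is to isolate, for each pair $(i,j)$ with $i<j$, a linear constraint on the block sizes $|S_{k,\ell}|$ that must hold whenever $\Aut(\mg\Cay(G,\mathcal{S}))$ fails to stabilize $G_i$, and then to use this constraint to trade the single degree of freedom on $|S_{i,i}|$ for a saving of order $\sqrt{n}$, exactly parallel to the digraph argument in Proposition~\ref{PROP015}.

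First I would show that if some $\varphi\in\Aut(\mg\Cay(G,\mathcal{S}))$ does not stabilize $G_i$, then there exists $j\neq i$ with $d_i(\mathcal{S})=d_j(\mathcal{S})$. Indeed $\varphi(G_i)$ has $n$ vertices, all of valency $d_i(\mathcal{S})$, and since $\varphi(G_i)\neq G_i$ it meets some $G_j$ with $j\neq i$, whose vertices have valency $d_j(\mathcal{S})$. Expanding $d_i(\mathcal{S})=|S_{i,i}|+\sum_{k\neq i}|S_{i,k}|$ and using $|S_{j,i}|=|S_{i,j}|$ (from $\mathcal{S}$ being inverse-closed), the equation $d_i(\mathcal{S})=d_j(\mathcal{S})$ reduces to
\[
|S_{i,i}|=|S_{j,j}|+\sum_{k\neq i,j}\bigl(|S_{j,k}|-|S_{i,k}|\bigr),
\]
so that, with $(i,j)$ fixed and every other entry of $\mathcal{S}$ prescribed, the size of $S_{i,i}$ is forced to a single value.

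The main technical step is then to bound, by $2\cdot 2^{c(G)}/\sqrt{n}$, the number of inverse-closed subsets of $G$ of any prescribed size. Writing $I=|\mathcal{I}(G)|$ and $P=(n-I)/2$, so that $c(G)=I+P$ and $I+2P=n$, an inverse-closed subset of $G$ amounts to a choice of $A\subseteq\mathcal{I}(G)$ together with a set $B$ of $\{g,g^{-1}\}$-pairs in $G\setminus\mathcal{I}(G)$, of total size $|A|+2|B|$. Fixing this size and factoring out the maximum binomial coefficient in whichever of the two factors $I$ or $P$ is the larger, Lemma~\ref{LEM003} yields the estimate $2^{c(G)}/\sqrt{\max(I,P)}$; and $I+2P=n$ forces $\max(I,P)\geq n/4$, giving the claimed bound.

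Finally, for each of the $\binom{m}{2}$ pairs $i<j$, the off-diagonal entries $S_{k,\ell}$ with $k<\ell$ and the $m-1$ diagonal entries $S_{k,k}$ with $k\neq i$ can be chosen freely, contributing $2^{\binom{m}{2}n+(m-1)c(G)}=2^{d-c(G)}$ possibilities, after which $S_{i,i}$ has at most $2\cdot 2^{c(G)}/\sqrt{n}$ choices. Summing over pairs yields $\binom{m}{2}\cdot 2\cdot 2^d/\sqrt{n}=m(m-1)\,2^d/\sqrt{n}$, matching the stated bound. The only delicate point will be the inverse-closed counting estimate: one must split according to the relative sizes of $I$ and $P$ to avoid losing more than the factor of $2$ that the statement allows, and to handle the degenerate extremes $I=0$ and $I=n$ uniformly.
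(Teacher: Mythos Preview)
Your argument is correct and follows the same route as the paper: both reduce to the valency equality $d_i(\mathcal{S})=d_j(\mathcal{S})$ for some pair $i<j$, deduce that $|S_{i,i}|$ is forced by the remaining entries, and then bound the number of inverse-closed subsets of $G$ of a given size by roughly $2\cdot 2^{c(G)}/\sqrt{n}$. The only cosmetic difference is that the paper organizes the last estimate as a case split (elementary abelian $2$-group versus not, invoking Lemma~\ref{LEM006} in the latter case to get $|\mathcal{N}(G)|\geq n/4$), whereas your observation $\max(I,P)\geq n/4$ from $I+2P=n$ handles both cases uniformly.
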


\begin{proof}
Let $\mathcal{Z}$ be the set of inverse-closed set-matrices $\mathcal{S}$ of $G$ such that $\Aut(\mg\Cay(G,\mathcal{S}))$ does not stabilize $G_i$ for some $i\in\{1,\ldots,m\}$.
Note that, if a vertex of $G_i$ is mapped by some automorphism of $\mg\Cay(G,\mathcal{S})$ into $G_j$, then $d_i(\mathcal{S})=d_j(\mathcal{S})$. This means that, for any $\mathcal{S}\in\mathcal{Z}_1$, there exists a pair $(i,j)$ with $i<j$ such that $d_i(\mathcal{S})=d_j(\mathcal{S})$. Now, fix $(i,j)$ with $i<j$ that satisfies $d_i(\mathcal{S})=d_j(\mathcal{S})$. Then
\[
|S_{i,i}|+|S_{i,j}|+\sum_{k\neq i,j}|S_{i,k}|=d_i(\mathcal{S})=
d_j(\mathcal{S})=|S_{j,j}|+|S_{j,i}|+\sum_{k\neq i,j}|S_{j,k}|.
\]
For a subset $X$ of $G$, denote $\mathcal{N}(X)=X\setminus\mathcal{I}(X)$. Since $S_{i,j}=S_{j,i}^{-1}$, the equation above becomes
\begin{equation}\label{eq055}
|\mathcal{I}(S_{i,i})|+|\mathcal{N}(S_{i,i})|+\sum_{k\neq i,j}|S_{i,k}|=|S_{j,j}|+\sum_{k\neq i,j}|S_{j,k}|.
\end{equation}
When $G$ is not an elementary abelian $2$-group, the equation~\eqref{eq055} indicates that the size of $\mathcal{N}(S_{i,i})$ is determined by $\mathcal{I}(S_{i,i})$ and $S_{k,\ell}$ with $(k,\ell)\neq(i,i)$. In this case, we derive from Lemmas~\ref{LEM003} and~\ref{LEM006} that the number of $\mathcal{S}$ satisfying~\eqref{eq055} is at most
\begin{equation*}
\frac{2^{d}}{2^{c(G)}}\cdot 2^{|\mathcal{I}(G)|}\cdot \frac{2^{|\mathcal{N}(G)|}}{\sqrt{|\mathcal{N}(G)|}}=
\frac{2^{d}}{\sqrt{|\mathcal{N}(G)|}}\leq
\frac{2^{d}}{\sqrt{n/4}}=
\frac{2^{d+1}}{\sqrt{n}}.
\end{equation*}
When $G$ is an elementary abelian $2$-group, we have $\mathcal{I}(S_{i,i})=S_{i,i}$ and $\mathcal{N}(S_{i,i})=\emptyset$, and so~\eqref{eq055} implies that the size of $S_{i,i}$ is determined by the sets $S_{k,\ell}$ with $(k,\ell)\neq(i,i)$. In this case, the number of $\mathcal{S}$ satisfying~\eqref{eq055} is at most
\begin{equation*}
\frac{2^{d}}{2^{c(G)}}\cdot \frac{2^{c(G)}}{\sqrt{c(G)}}=
\frac{2^{d}}{\sqrt{c(G)}}=
\frac{2^{d}}{\sqrt{n}}<
\frac{2^{d+1}}{\sqrt{n}}.
\end{equation*}
Since there are $\binom{m}{2}$ choices for $(i,j)$ with $i<j$, summing up the two cases, we conclude that
\[
|\mathcal{Z}|\leq\binom{m}{2}\frac{2^{d+1}}{\sqrt{n}}=m(m-1)\frac{2^{d}}{\sqrt{n}},
\]
as the lemma asserts.
\end{proof}

\subsection{\boldmath{$m$}-Cayley graphs on abelian groups}\label{SUBSEC7.1}

In this section, we concentrate on the case when the group is abelian with exponent greater than $2$.
We first present a useful result and give a technical lemma as follows.

\begin{theorem}{\rm(\cite[Theorem~1.7]{DSV2016})}\label{THM006}
Let $G$ be an abelian group of order $n$, and let $\iota$ be the automorphism of $G$ defined by $\iota\colon g\mapsto g^{-1}$ for every $g\in G$. Then the number of inverse-closed subsets $S$ such that $\Aut(\Cay(G,S))\neq G\rtimes\langle\iota\rangle$ is at most $2^{c(G)-n/24+2\log^2_2n+2}$.
\end{theorem}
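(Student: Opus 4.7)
The plan is to mimic the Morris--Spiga style argument used for DRRs in Theorem~\ref{THM002}, but in the setting where $H := G \rtimes \langle \iota \rangle$ (rather than $G$ itself) is the baseline symmetry group that must be enlarged in an exceptional pair. Write $A = \Aut(\Cay(G,S))$; since $G$ is abelian and $S$ is inverse-closed, $H \leq A$ automatically, so $S$ is exceptional precisely when $H < A$. In that case choose a minimal overgroup $M$ of $H$ in $A$, giving a minimally exceptional pair $(S,M)$ in the obvious analogue of Definition~\ref{DEF003}, in which $H$ is maximal in $M$ and $M$ is $(1+\log_2 n)$-generated.

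First I would establish a normalizer bound: since $H$ contains the semiregular subgroup $G$ of index at most $2$, one obtains $|\mathbf{N}_{\Sym(n)}(H)| \leq n^{O(1)} |\Aut(G)| \leq 2^{\log_2^2 n + O(\log_2 n)}$ via Lemma~\ref{LEM004}\eqref{LEM004.3}. Combined with Lubotzky's Theorem~\ref{LEM005} applied to $M$, this limits the number of $\Sym(n)$-conjugacy classes of minimally exceptional $M$ to $2^{O(\log_2^2 n)}$, in exactly the spirit of Lemma~\ref{LEM013}.

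Next I would split on the size of $C := \Core_M(H)$. If $|C|$ is ``large'' (say exceeds $8\log_2 n$), then $\mathbf{C}_{\Sym(n)}(C)$ has the very restrictive wreath-product form $C \wr \Sym(n/|C|)$, and a Stirling-type bound (as in Proposition~\ref{PROP003}) shows that very few $M = \langle H, g \rangle$ with $g$ centralizing $C$ exist. If $|C| \leq 8\log_2 n$, pass to the faithful primitive action of $M/C$ on a suitable coset space in which the image of $H$ contains an abelian regular, or index-$2$, subgroup; the Liebeck--Praeger--Saxl classification of primitive groups with an abelian subgroup of small index then restricts $M/C$ to a short list (affine, almost simple with a specific factorization, or a tightly controlled diagonal/product-action type). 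For each type, count $M$ directly and apply the double-coset Lemma~\ref{LEM009}; the inverse-closed constraint roughly halves the count of preserved subsets, since inverse-closed $S$ correspond to $\iota$-invariant unions of double cosets.

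The main obstacle will be the affine primitive case, where $M/C$ is essentially $G$ extended by a small-index subgroup of $\Aut(G)$, so the enlargement of $H$ is mild and the above structural savings are weak. Here a direct Babai--Godsil style reduction is needed: for $\alpha \in \Aut(G)$ with $\alpha \neq 1$ and $\alpha \neq \iota$ of prime order $p$, the number of inverse-closed subsets of $G$ that are $\langle \alpha, \iota \rangle$-invariant is at most $2^{t(\alpha)}$, where $t(\alpha)$ counts $\langle \alpha, \iota \rangle$-orbits on $G$ with the appropriate ``involution'' weighting that produces the $c(G)$ in Lemma~\ref{LEM001}. A case analysis on $p$ and on $|\Fix(\alpha)|$, using Lemma~\ref{LEM006} (which prevents $\alpha$ from behaving too much like $\iota$) together with the fact that $\alpha \notin \langle \iota \rangle$ forces a nontrivial non-involution orbit of $\langle \alpha, \iota \rangle$ on at least a constant fraction of $G$, yields $t(\alpha) \leq c(G) - n/24 + O(\log_2 n)$; summing over the $2^{O(\log_2^2 n)}$ choices of $M$ then produces the claimed bound.
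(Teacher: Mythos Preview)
This theorem is not proved in the paper at all: it is quoted as \cite[Theorem~1.7]{DSV2016} and used as a black box in the proof of Proposition~\ref{PROP011}. There is no ``paper's own proof'' to compare your proposal against.

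For what it is worth, the approach in \cite{DSV2016} is much lighter than the one you sketch. Because $G$ is abelian, one has $\mathbf{N}_{\Sym(G)}(G)=G\rtimes\Aut(G)$, and the argument splits cleanly into (i) counting inverse-closed $S$ fixed by some $\alpha\in\Aut(G)\setminus\langle\iota\rangle$, which is a direct $\langle\alpha,\iota\rangle$-orbit count on $G$ and is exactly the computation you identify as the ``main obstacle'' in the affine case (this is where the constant $n/24$ comes from), and (ii) counting $S$ for which $G$ fails to be normal in $\Aut(\Cay(G,S))$, handled by a Babai--Godsil style quotient argument that does not require the primitive-group classification. Your framework of minimally exceptional pairs, core reduction, and the Liebeck--Praeger--Saxl classification would plausibly go through, but it imports machinery well beyond what the abelian setting needs, and in the end you still have to perform the same direct orbit count to extract the constant $n/24$; the rest of your outline buys nothing additional here.
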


\begin{lemma}\label{LEM025}
Let $G$ be an abelian group of exponent greater than $2$, and let $\iota$ be the automorphism of $G$ defined by $\iota\colon g\mapsto g^{-1}$ for every $g\in G$. Then for each non-identity $b\in G\rtimes\langle\iota\rangle$, the number of orbits of $\langle b\rangle$ on $G$ is at most $5|G|/6$.
\end{lemma}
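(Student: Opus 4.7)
The plan is to split into two cases according to whether $b$ lies in the abelian subgroup $G$ or in the nontrivial coset $G\iota$. In both cases the permutation action being analyzed is the action of $G\rtimes\langle\iota\rangle$ on $G$ itself, where $h\in G$ acts by right multiplication and $\iota$ acts by inversion.

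\textbf{Case 1: $b=g\in G\setminus\{1\}$.} Then $\langle b\rangle=\langle g\rangle$ acts on $G$ by right multiplication, so its orbits are precisely the right cosets of $\langle g\rangle$ in $G$. The number of orbits is therefore $|G|/|g|$, which is at most $|G|/2<5|G|/6$ since $|g|\geq 2$. This case requires nothing beyond Lagrange's theorem.

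\textbf{Case 2: $b=g\iota$ for some $g\in G$.} A short computation (using that $G$ is abelian, so $\iota h\iota^{-1}=h^{-1}$) gives $b^{2}=g\iota g\iota=gg^{-1}=1$, so $\langle b\rangle$ has order $2$ and every orbit has size $1$ or $2$. The action of $b$ on $G$ is $x\mapsto x^{-1}g^{-1}$, and the fixed-point set is
\[
F=\{x\in G\mid x^{2}=g^{-1}\}.
\]
If $F=\emptyset$, the number of orbits is $|G|/2\leq 5|G|/6$. Otherwise, picking $x_{0}\in F$ and writing $x=x_{0}y$ shows that $F=x_{0}\Omega_{1}(G)$, where $\Omega_{1}(G)=\{y\in G\mid y^{2}=1\}$; in particular $|F|=|\Omega_{1}(G)|$. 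Since $G$ has exponent greater than $2$, the subgroup $\Omega_{1}(G)$ is proper in $G$, hence $|\Omega_{1}(G)|\leq|G|/2$. The number of orbits of $\langle b\rangle$ on $G$ is
\[
|F|+\tfrac{1}{2}(|G|-|F|)=\tfrac{1}{2}(|G|+|F|)\leq\tfrac{3}{4}|G|\leq\tfrac{5}{6}|G|,
\]
as required.

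The only non-routine ingredient is the bound $|\Omega_{1}(G)|\leq|G|/2$, which is precisely where the hypothesis on the exponent enters; everything else is a direct computation inside the semidirect product. The bound $5|G|/6$ stated in the lemma is in fact slack (the proof yields $3|G|/4$), but $5|G|/6$ is presumably the form needed for the subsequent application.
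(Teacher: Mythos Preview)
Your proof is correct and follows the same overall strategy as the paper: split according to whether $b$ lies in $G$ or in the coset $G\iota$, and in the latter case bound the fixed-point set of the involution $x\mapsto x^{-1}g^{-1}$ via the solution set of $x^{2}=g^{-1}$.

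The one genuine difference is in how you bound $|\Omega_{1}(G)|$. The paper writes $G=C_{k}\times H$ with $k>2$, analyzes the image of the squaring map to obtain $|\Omega_{1}(G)|\leq 2|G|/k\leq 2|G|/3$, and from this derives the stated bound $5|G|/6$. You instead observe directly that $\Omega_{1}(G)$ is a proper subgroup (since the exponent exceeds $2$) and invoke Lagrange to get $|\Omega_{1}(G)|\leq|G|/2$, yielding the sharper bound $3|G|/4$. Your argument is both simpler and stronger; the paper's decomposition step is unnecessary, and as you note, the constant $5/6$ in the lemma is slack. Incidentally, the paper's written proof treats only the case $b\in G\iota$, leaving the trivial case $b\in G\setminus\{1\}$ implicit; you handle both explicitly.
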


\begin{proof}
Since $G$ is abelian and of exponent greater than $2$, we may assume $G=C_k\times H$ for some $k>2$ and subgroup $H$. Note that, if $x\in G$ is fixed by $g\iota$, then
\[
x=x^{g\iota}=(xg)^{\iota}=g^{-1}x^{-1},
\]
which implies $x\in\{y\mid y^2=g^{-1}\}$. Moreover, we consider the homomorphism $\psi\colon G\to\{g^2\mid g\in G\}$, which maps $g$ to $g^2$. As $k>2$, the image $\psi(G)$ has size at least $k/2$. So, we obtain
\[
|\Ker(\psi)|=\frac{|G|}{|\psi(G)|}\leq\frac{2|G|}{k}\leq\frac{2}{3}|G|.
\]
This implies that $|\Fix(g\iota)|\leq\frac{2}{3}|G|$. Hence the number of orbits of $\langle g\iota\rangle$ on $G$ is at most
\[
|\Fix(g\iota)|+\frac{|G|-|\Fix(g\iota)|}{2}=\frac{|G|+|\Fix(g\iota)|}{2}\leq\frac{5}{6}|G|,
\]
as the lemma asserts.
\end{proof}

We conclude this section with the following proposition.

\begin{proposition}\label{PROP011}
Fix an integer $m\geq2$, and let $G$ be an abelian group with exponent greater than $2$ and order $n$. When $n$ is sufficiently large, the number of inverse-closed set-matrices $\mathcal{S}$ of $G$ such that $\mg\Cay(G,\mathcal{S})$ is not a $\GMSR$ is less than $m^22^{d}/\sqrt{n}$, where $d=\binom{m}{2}n+mc(G)$.
\end{proposition}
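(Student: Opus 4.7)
The plan is to mirror the digraph argument of Proposition~\ref{PROP015}, substituting Theorem~\ref{THM006} for Theorem~\ref{THM002} (since for an abelian group of exponent greater than $2$ the minimal Cayley-graph automorphism group is $G\rtimes\langle\iota\rangle$ rather than $G$), and invoking Lemma~\ref{LEM025} to control the extra involution $\iota$ in the final kernel analysis. Let $\mathcal{Z}$ be the set of inverse-closed set-matrices $\mathcal{S}$ for which $\mg\Cay(G,\mathcal{S})$ is not a $\GMSR$, and split $\mathcal{Z}$ into three disjoint pieces: $\mathcal{Z}_1$ consisting of those $\mathcal{S}$ whose automorphism group fails to stabilize some part $G_i$; $\mathcal{Z}_2\subseteq\mathcal{Z}\setminus\mathcal{Z}_1$ consisting of those $\mathcal{S}$ with $\Aut(\Cay(G,S_{i,i}))\neq G\rtimes\langle\iota\rangle$ for some $i$; and $\mathcal{Z}_3=\mathcal{Z}\setminus(\mathcal{Z}_1\cup\mathcal{Z}_2)$.

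Lemma~\ref{LEM024} immediately yields $|\mathcal{Z}_1|\leq m(m-1)\cdot 2^d/\sqrt{n}$, which is the dominant contribution. For $\mathcal{Z}_2$, Theorem~\ref{THM006} bounds by $2^{c(G)-n/24+2\log_2^2n+2}$ the number of inverse-closed $S\subseteq G$ with $\Aut(\Cay(G,S))\neq G\rtimes\langle\iota\rangle$; choosing the index $i$, the bad $S_{i,i}$, and the remaining entries of $\mathcal{S}$ freely gives $|\mathcal{Z}_2|\leq m\cdot 2^{d-n/24+2\log_2^2n+2}$, which is $o(2^d/\sqrt{n})$.

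The crux is bounding $\mathcal{Z}_3$. For $\mathcal{S}\in\mathcal{Z}_3$, write $X=\Aut(\mg\Cay(G,\mathcal{S}))$, and for each $i$ let $A_i$ be the induced permutation group of $X$ on $G_i$; then $G\leq A_i\leq\Aut(\Cay(G,S_{i,i}))=G\rtimes\langle\iota\rangle$, so $A_i\in\{G,G\rtimes\langle\iota\rangle\}$. Since $X>G$ we may pick $k\in X\setminus G$, acting on each $G_i$ as some $b_i\in G\rtimes\langle\iota\rangle$. Writing $b_i=R(x_i)\iota^{\varepsilon_i}$ with $\varepsilon_i\in\{0,1\}$ and using the abelian law, the condition that $k$ preserves the edges between $G_j$ and $G_\ell$ (after the substitution $s=hg^{-1}$) reduces to the invariance of $S_{j,\ell}$ under an explicit permutation $\pi_{j,\ell}$ of $G$ built from $b_j$ and $b_\ell$: in the pure-translation subcase $\pi_{j,\ell}=R(x_\ell x_j^{-1})$; in the pure-$\iota$-twist subcase $\pi_{j,\ell}(s)=s^{-1}x_\ell x_j^{-1}$; and in the mixed subcases $S_{j,\ell}$ must be invariant under a whole coset of translations $s\mapsto s\cdot x_\ell x_j^{-1}g^2$ generating a subgroup containing $G^2$ (which has order at least $2$ by the exponent hypothesis). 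Because $k\notin G$, at least one pair $(j,\ell)$ yields a non-identity $\pi_{j,\ell}$: when some $\varepsilon_i=1$ this follows from $\iota\neq 1$ (exponent $>2$), and when all $\varepsilon_i=0$ the non-constancy of the tuple $(x_1,\ldots,x_m)$ furnishes a pair with $x_\ell\neq x_j$. By Lemma~\ref{LEM025}, $\langle\pi_{j,\ell}\rangle$ has at most $5n/6$ orbits on $G$, so $S_{j,\ell}$ admits at most $2^{5n/6}$ possibilities. Summing over the $m(m-1)$ ordered pairs $(j,\ell)$, the at most $2n$ non-identity elements of $G\rtimes\langle\iota\rangle$, and the free choices of the remaining entries of $\mathcal{S}$,
\[
|\mathcal{Z}_3|\leq m(m-1)\cdot 2n\cdot 2^{5n/6}\cdot 2^{d-n}=2m(m-1)n\cdot 2^{d-n/6},
\]
which is $o(2^d/\sqrt{n})$. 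Combining the three bounds gives $|\mathcal{Z}|<m^2\cdot 2^d/\sqrt{n}$ for all sufficiently large $n$.

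The principal obstacle is the four-subcase edge-preservation derivation of $\pi_{j,\ell}$ and the verification that it is nontrivial in every scenario; the exponent-$>2$ hypothesis is essential throughout, as it both makes $\iota$ a nontrivial permutation of $G$ (via Definition~\ref{DEF005}) and, via Lemma~\ref{LEM025}, supplies the uniform $5n/6$ orbit bound that controls $|\mathcal{Z}_3|$.
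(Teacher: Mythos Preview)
Your argument is correct in outline and close to the paper's, but with one structural difference and one imprecision worth flagging.

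\textbf{Comparison with the paper.} The paper also uses Lemma~\ref{LEM024} for $\mathcal{Z}_1$ and Theorem~\ref{THM006} for $\mathcal{Z}_2$, but it then splits your $\mathcal{Z}_3$ into two further pieces: a $\mathcal{Z}_3$ where $\Aut(\mg\Cay(G,\mathcal{S}))\neq G\rtimes\langle\iota\rangle$, handled by a kernel argument (the kernel $K_1$ of the action on $G_1$ is nontrivial, and an element of $K_1$ acting nontrivially on some $G_j$ gives a nontrivial $\alpha_{1j}\in G\rtimes\langle\iota\rangle$ to which Lemma~\ref{LEM025} applies), and a $\mathcal{Z}_4$ where $\Aut(\mg\Cay(G,\mathcal{S}))=G\rtimes\langle\iota\rangle$ exactly, handled by observing that every $S_{i,j}$ is then $\langle\iota\rangle$-invariant, hence one of only $2^{c(G)}\le 2^{7n/8}$ subsets. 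Your unified element-by-element analysis of a single $k\in X\setminus G$ absorbs both cases at once; the cost is the four-subcase edge-preservation computation, while the paper's kernel/global-$\iota$ split avoids that computation but needs the extra $\mathcal{Z}_4$ estimate. Your definition of $\mathcal{Z}_2$ via $\Aut(\Cay(G,S_{i,i}))$ (rather than the induced group $A_i$) is also slightly cleaner for invoking Theorem~\ref{THM006}.

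\textbf{Imprecision in the mixed subcases.} Your description ``$S_{j,\ell}$ must be invariant under a whole coset of translations'' is accurate for only one of the two mixed subcases. If $(\varepsilon_j,\varepsilon_\ell)=(1,0)$ one indeed obtains, for every $g$, invariance under the translation $s\mapsto s\cdot g^2x_\ell x_j$; but if $(\varepsilon_j,\varepsilon_\ell)=(0,1)$ the adjacency condition gives, for every $g$, invariance under the $\iota$-twist $s\mapsto s^{-1}g^{-2}x_\ell^{-1}x_j^{-1}$, which is not a translation. This does not damage the conclusion: in the first mixed subcase choose $g$ with $g^2x_\ell x_j\neq 1$ (possible since $|G^2|\ge 2$), and in the second take $g=1$ to get a nontrivial element of $G\iota$. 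Either way one obtains a nontrivial $\pi_{j,\ell}\in G\rtimes\langle\iota\rangle$, and Lemma~\ref{LEM025} applies as you claim. (There is also a harmless sign slip in your pure-$\iota$ case: the permutation is $s\mapsto s^{-1}x_\ell^{-1}x_j$, not $s^{-1}x_\ell x_j^{-1}$.)
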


\begin{proof}
Let $\mathcal{Z}$ be the set of inverse-closed set-matrices $\mathcal{S}$ of $G$ such that $\mg\Cay(G,\mathcal{S})$ is not a $\GMSR$.
By Lemma~\ref{LEM024}, the size of the set
\[
\mathcal{Z}_1:=\{\mathcal{S}\in\mathcal{Z}\mid\text{there exists $i\in\{1,\ldots,m\}$ such that $\Aut(\mg\Cay(G,\mathcal{S}))$ does not stabilize $G_i$}\}.
\]
is at most $m(m-1)2^{d}/\sqrt{n}$. By the definition of $\mathcal{Z}_1$, every $\mathcal{S}\in\mathcal{Z}\setminus\mathcal{Z}_1$ is such that $\Aut(\mg\Cay(G,\mathcal{S}))$ stabilizes each $G_i$ and hence induces a subgroup $A_i$ of $\Aut(\Cay(G,S_{i,i}))$ on $G_i$. Let
\[
\mathcal{Z}_2=\{\mathcal{S}\in\mathcal{Z}\setminus\mathcal{Z}_1\mid\text{there exists $i\in\{1,\ldots,m\}$ such that $A_i\neq G\rtimes\langle\iota\rangle$}\}.
\]
Then we derive from Theorem~\ref{THM006} that
\begin{equation}\label{eq057}
|\mathcal{Z}_2|\leq m\cdot\frac{2^{d}}{2^{c(G)}}\cdot 2^{c(G)-\frac{n}{24}+2\log_2^2n+2}=m\cdot2^{d-\frac{n}{24}+2\log_2^2n+2}.
\end{equation}

Now consider
\[
\mathcal{Z}_3=\{\mathcal{S}\in\mathcal{Z}\setminus(\mathcal{Z}_1\cup\mathcal{Z}_2)
\mid\Aut(\mg\Cay(G,\mathcal{S}))\neq G\rtimes\langle\iota\rangle\}.
\]
Take $\mathcal{S}\in\mathcal{Z}_3$. It follows from $\mathcal{S}\notin\mathcal{Z}_1\cup\mathcal{Z}_2$ and $\Aut(\mg\Cay(G,\mathcal{S}))\neq G\rtimes\langle\iota\rangle$ that the action of $\Aut(\mg\Cay(G,\mathcal{S}))$ on each $G_i$ has kernel $K_i>1$. In particular, there exists $j\in\{2,\ldots,m\}$ such that some element of $K_1$ induces a nontrivial action $\alpha_{1j}\in G\rtimes\langle\iota\rangle$ on $G_j$. It follows that $S_{1,j}$ is a union of some $\langle\alpha_{1j}\rangle$-orbits on $G_j$. Moreover, according to Lemma~\ref{LEM025}, there are at most $5n/6$ orbits of $\langle\alpha_{1j}\rangle$ on $G_j$. Hence we deduce that the number of choices for $S_{1,j}$ is at most $2n\cdot2^{5n/6}$. Consequently,
\begin{equation}\label{eq010}
|\mathcal{Z}_3|\leq
(m-1)\frac{2^{d}}{2^{n}}\cdot
2n\cdot2^{\frac{5}{6}n}\leq (m-1)2^{d-\frac{n}{6}+\log_2n+1}.
\end{equation}

Finally, we estimate the cardinality of $\mathcal{Z}_4:=\mathcal{Z}\setminus(\mathcal{Z}_1\cup\mathcal{Z}_2\cup\mathcal{Z}_3)$. Let $\mathcal{S}\in\mathcal{Z}_4$. Then $\Aut(\mg\Cay(G,\mathcal{S}))= G\rtimes\langle\iota\rangle$. Clearly, the automorphism $\iota$ keeps each $1_i$ invariant for each $i\in\{1,\ldots,m\}$. Thus, for any pair $(i,j)$ with $i<j$, the subset $S_{i,j}$ is a union of $\langle\iota\rangle$-orbits on $G_j$.  Since $\langle\iota\rangle$ has $c(G)$ orbits on $G$, and Lemma~\ref{LEM006} implies $c(G)=(|G|+|\mathcal{I}(G)|)/2\le 7n/8$, we derive that that
\begin{equation}\label{eq056}
|\mathcal{Z}_4|=
2^{mc(G)}\cdot \left(2^{c(G)}\right)^{\frac{m(m-1)}{2}}=2^d\cdot(2^{c(G)-n})^{\frac{m(m-1)}{2}}
\le 2^d\cdot\left(2^{-\frac{n}{8}}\right)^{\frac{m(m-1)}{2}}=
2^{d-\frac{m(m-1)}{16}n}.
\end{equation}

Combining $|\mathcal{Z}_1|\leq m(m-1)2^{d}/\sqrt{n}$ and~\eqref{eq057}--\eqref{eq056}, we obtain for sufficiently large $n$ that
\[
|\mathcal{Z}|=|\mathcal{Z}_1|+|\mathcal{Z}_2|+|\mathcal{Z}_3|+|\mathcal{Z}_4|< m^2\frac{2^{d}}{\sqrt{n}},
\]
which completes the proof.
\end{proof}

\subsection{\boldmath{$m$}-Cayley graphs on generalised dicyclic groups}\label{SUBSEC7.2}

Let $A$ be an abelian group of even order and of exponent greater than $2$, and let $y$ be an involution of $A$. The \emph{generalised dicyclic group} $\Dic(A,y,x)$ is the group $\langle A,x\mid x^2=y,\,a^x=a^{-1},\,\forall a\in A\rangle$. In this section, we establish asymptotic results for $m$-Cayley graphs of $\Dic(A,y,x)$. The conclusions are divided into Propositions~\ref{PROP012} and~\ref{PROP013}, addressing two cases based on $\Dic(A,y,x)\cong Q_8\times C_2^{\ell}$ or not, where $C_2^{\ell}$ is an elementary abelian $2$-group. Let us start with the case $\Dic(A,y,x)\cong Q_8\times C_2^{\ell}$.


\begin{notation}\label{NOT002}
Let $G=Q_8\times E$ with $E=C_2^{\ell}$ for some $\ell\geq0$, and we label the element of $Q_8$ with $\{1,-1,\textbf{i},-\textbf{i},\textbf{j},-\textbf{j},\textbf{k},-\textbf{k}\}$ in the usual way. Define the following permutations of $G$: for $\textbf{u}\in\{\textbf{i},\textbf{j},\textbf{k}\}$, $\alpha_{\textbf{u}}$ is the involution which swaps $\textbf{u} e$ and $-\textbf{u} e$ for every $e\in E$ while fixes every other element of $G$. Let $M(G)=\langle G,\iota_\textbf{i},\iota_\textbf{j},\iota_\textbf{k}\rangle$, viewed as a permutation group on $G$ with $G$ acting regularly on itself by right multiplication.
\end{notation}

We need the following asymptotic result and technical lemma.

\begin{theorem}{\rm\cite[Theorems~1.6]{MSV2015}}\label{THM007}
Let $G$, $M(G)$ as in Notation\textup{~\ref{NOT002}} and $|G|=n$. Then the number of inverse-closed subsets $S$ of $G$ such that $\Aut(\Cay(G,S))\neq M(G)$ is at most $2^{c(G)-n/512+2\log^2_2n+2}$.
\end{theorem}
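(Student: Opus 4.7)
The plan is to adapt the enumeration paradigm of Theorem~\ref{THM002} to the generalised dicyclic setting of $G=Q_8\times C_2^\ell$. First, one verifies that $M(G)\leq\Aut(\Cay(G,S))$ for every inverse-closed $S\subseteq G$: each $\iota_{\mathbf{u}}$ was defined so as to fix $1$, normalise the right-regular copy of $G$, and preserve every inverse-closed subset of $G\setminus\{1\}$, since on $G$ the orbits of $\langle\iota_{\mathbf{i}},\iota_{\mathbf{j}},\iota_{\mathbf{k}}\rangle$ decompose into inverse-pairs $\{g,g^{-1}\}$. Hence the theorem reduces to bounding the number of inverse-closed $S$ for which $\Aut(\Cay(G,S))$ properly contains $M(G)$.

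For each such $S$, pick $X\leq\Sym(G)$ minimal with $M(G)<X\leq\Aut(\Cay(G,S))$. Then $X$ is transitive on $G$, the stabiliser $X_1$ properly contains $M(G)_1=\langle\iota_{\mathbf{i}},\iota_{\mathbf{j}},\iota_{\mathbf{k}}\rangle$, and $X_1$ permutes the inverse-pairs in $G\setminus\{1\}$. The number of inverse-closed subsets fixed setwise by $X$ is at most $2^{c_X}$, where $c_X$ counts the $X_1$-orbits on these inverse-pairs. The problem therefore splits into (a) classifying the possible minimal $X$ up to $\Sym(G)$-conjugacy, and (b) for each class estimating both the number of conjugates and the quantity $c_X$.

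Split into two regimes. In the \emph{large-stabiliser} regime $|X_1|>2^{n^{0.49}}$, run the O'Nan--Scott analysis of Section~\ref{SEC4} applied to the regular subgroup $G\leq X$. Since $G=Q_8\times C_2^\ell$ is nilpotent of class $2$ with a distinguished central involution, most primitive types cannot occur; the surviving $\UPHA$, $\UPPA$, and $\UPCD$ cases yield at most $2^{c(G)-n/512+O(\log_2^2 n)}$ subsets $S$ by arguments in the spirit of Propositions~\ref{PROP007},~\ref{PROP008}, and~\ref{PROP009}. In the \emph{small-stabiliser} regime $|X_1|\leq 2^{n^{0.49}}$, bound the number of $\Sym(G)$-conjugacy classes of $X$ via Lubotzky's Theorem~\ref{LEM005} combined with the normaliser argument of Lemma~\ref{LEM013}, and then apply Lemma~\ref{LEM008}-style double-coset estimates to force $c_X\leq c(G)-n/512$ whenever $X$ acts non-semiregularly beyond $M(G)$.

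The main obstacle is the $\UPHA$-type family of overgroups: because $G$ has a large elementary-abelian factor $C_2^\ell$, the group $\AGL(\ell,2)$ acts naturally on this factor and can be spliced with near-trivial actions on the $Q_8$-factor, producing a rich supply of overgroups whose stabilisers fuse many inverse-pairs. The essential structural step is to show that any $X$ strictly containing $M(G)$ must either move the central involution $-1\in Z(Q_8)$ or else act non-trivially on $Q_8/\langle-1\rangle$ across the $C_2^\ell$-coordinate; in either case an explicit pigeonhole on $Q_8$-cosets forces a reduction of the orbit count by at least $n/512$. The denominator $512=2^9$ tracks the worst case in which the extra action on $Q_8$ is as close to trivial as possible while still lying outside $M(G)_1$, and pinning this constant down rigorously is what I expect to be the most delicate part of the argument.
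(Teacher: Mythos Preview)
This theorem is not proved in the present paper at all: it is quoted verbatim from~\cite[Theorems~1.6]{MSV2015} and used as a black box in the proof of Proposition~\ref{PROP012}. There is therefore no proof in the paper to compare your proposal against.

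For what it is worth, your sketch does not match the approach of~\cite{MSV2015} either. That paper does not invoke an O'Nan--Scott analysis or the Lubotzky enumeration; instead it works directly with the lattice of subgroups of $G=Q_8\times C_2^\ell$, studies the structure of the stabiliser $\Aut(\Cay(G,S))_1$ via its action on the neighbourhood of $1$, and obtains the constant $1/512$ from an explicit case analysis of how an automorphism outside $M(G)_1$ must fuse orbits on the eight $Q_8$-cosets. Your plan to import the large-stabiliser/small-stabiliser dichotomy and the primitive-type machinery of Sections~\ref{SEC3}--\ref{SEC4} is aimed at a different target (Haar graphs with a semiregular group having two orbits, not Cayley graphs with a regular group), and several of the steps you outline---for instance, invoking Proposition~\ref{PROP007} or Lemma~\ref{LEM013}---do not transfer as stated, since those results are formulated for the bipartite $\HH(G,S)$ setting and for groups $G$ that are core-free in an overgroup acting on $[M:G]$, not for a regular $G$ inside $\Aut(\Cay(G,S))$.
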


\begin{lemma}\label{LEM026}
Let $G=Q_8\times E$ and $M(G)$ as in Notation\textup{~\ref{NOT002}}. Then for any non-identity element $s\in M(G)$, the number of orbits of $\langle s\rangle$ on $G$ is at most $7|G|/8$.
\end{lemma}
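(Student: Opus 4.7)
The plan is to translate the orbit bound into a fixed-point bound and then exploit the block system on $G$ formed by the cosets of the centre $Z:=Z(G)=\{\pm 1\}\times E$. First, I would observe that for any permutation $s$ on $G$ the non-trivial $\langle s\rangle$-orbits have size at least $2$, so the total number of orbits is at most $|\Fix(s)|+\tfrac12(|G|-|\Fix(s)|)=\tfrac12(|G|+|\Fix(s)|)$. Hence the lemma reduces to proving $|\Fix(s)|\le 3|G|/4$ for every non-identity $s\in M(G)$.

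Next I would note that $G/Z\cong C_2\times C_2$, so $G$ is partitioned into four cosets $\mathcal{P}=\{Z,\mathbf{i}Z,\mathbf{j}Z,\mathbf{k}Z\}$ of cardinality $|G|/4=2|E|$ each, and that $M(G)$ preserves this partition: the right-multiplication action of $G$ on itself permutes $\mathcal{P}$ transitively through the quotient $G/Z$, while each $\iota_{\mathbf{u}}$ fixes every coset setwise by Notation~\ref{NOT002}. If $s\in M(G)$ does not stabilise some coset $A\in\mathcal{P}$, then no point of $A$ is fixed by $s$, so $|\Fix(s)|\le|G|-|A|=3|G|/4$ and we are done in this case.

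The core of the argument is therefore the case where $s$ stabilises every coset. The key structural fact I would establish is that $Z$, viewed as a subgroup of $M(G)$ via right multiplication, lies in the centre of $M(G)$. Since $Z$ commutes with $G$ inside $M(G)$ (as $Z=Z(G)$) and $M(G)=\langle G,\iota_{\mathbf{i}},\iota_{\mathbf{j}},\iota_{\mathbf{k}}\rangle$, it suffices to check that each $\iota_{\mathbf{u}}$ commutes with right multiplication by every $z\in Z$: for $z=(1,e)$ this is immediate since the shift of the $E$-component is unaffected by the $Q_8$-swap performed by $\iota_{\mathbf{u}}$, and for $z=(-1,1)$ a direct check shows that on $\mathbf{u}Z$ right multiplication by $(-1,1)$ coincides with $\iota_{\mathbf{u}}|_{\mathbf{u}Z}$ (both are the swap $\mathbf{u}e\leftrightarrow-\mathbf{u}e$), while on the complement of $\mathbf{u}Z$ the map $\iota_{\mathbf{u}}$ is trivial. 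Granting this centrality, $Z$ acts regularly on each coset $\mathbf{u}Z$; because $Z$ is abelian, the centraliser of this regular action inside $\Sym(\mathbf{u}Z)$ is $Z$ itself. Therefore $s|_{\mathbf{u}Z}$ equals right multiplication by some $z_{\mathbf{u}}\in Z$, which is either the identity (when $z_{\mathbf{u}}=1$) or fixed-point-free (when $z_{\mathbf{u}}\neq 1$). Because $s\neq 1$, at least one $z_{\mathbf{u}}$ is non-trivial, so the corresponding coset contributes no fixed points and $|\Fix(s)|\le 3\cdot|G|/4=3|G|/4$, as desired.

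The hard part will be the centrality of $Z$ in $M(G)$, since this is the only step that exploits the specific form of the permutations $\iota_{\mathbf{u}}$ from Notation~\ref{NOT002}; once it is in hand, the remainder of the argument rests only on the classical fact that an abelian regular subgroup of a symmetric group is self-centralising, together with the coset-partition dichotomy.
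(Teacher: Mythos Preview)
Your argument is correct and takes a genuinely different route from the paper. Both proofs reduce to the bound $|\Fix(s)|\le 3|G|/4$ via the elementary inequality on orbit counts, but they establish this bound very differently. The paper writes $M(G)=\bigcup_{i,j,k\in\{0,1\}}G\alpha^i\beta^j\gamma^k$ (using $|M(G):G|=8$ and that the three involutions commute) and then verifies by direct inspection that every non-identity element in each coset has fixed-point set missing at least one of the four sets $\{\pm 1\}\times E$, $\{\pm\mathbf{i}\}\times E$, $\{\pm\mathbf{j}\}\times E$, $\{\pm\mathbf{k}\}\times E$. You instead observe that these four sets form a block system for $M(G)$, prove that $Z=\{\pm1\}\times E$ is central in $M(G)$, and deduce that any $s$ stabilising all four blocks must act on each block as a $Z$-translation (via the self-centralising property of abelian regular subgroups), hence be fixed-point-free on at least one block.

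Your approach is more conceptual: it explains \emph{why} a quarter of $G$ is always missed, rather than checking it case by case, and it avoids writing out the coset representatives of $M(G)/G$. The paper's approach is more direct and requires no auxiliary structural facts. One small comment on your write-up: your justification for the commutation of $\iota_{\mathbf{u}}$ with right multiplication by $(-1,1)$ is correct but could be stated more cleanly by noting that both permutations preserve every block in $\mathcal{P}$, so it suffices to check commutation block by block; on $\mathbf{u}Z$ they coincide (hence commute), and on the other three blocks $\iota_{\mathbf{u}}$ is the identity (hence commutes with anything).
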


\begin{proof}
Let $\alpha=\iota_{\textbf{i}}$, $\beta=\iota_{\textbf{j}}$ and $\gamma=\iota_{\textbf{k}}$. Since $|M(G)\,{:}\,G|=8$ (see~\cite[Lemma~4.3]{MSV2015}), and $\alpha,\beta,\gamma$ commute pairwise, the group $M(G)$ has the coset decomposition
\[
M(G)=\bigcup_{i,j,k\in\{0,1\}} G\alpha^{i}\beta^{j}\gamma^{k}.
\]
It is straightforward to verify that
\begin{align*}
&\Fix(qe\alpha^{i}\beta^{j}\gamma^{k})=\emptyset\, \text{ for each $q\in Q_8$ and non-identity $e\in E$},\\
&\Fix(q\alpha^{i}\beta^{j}\gamma^{k})\cap \left((Q_8\setminus\{\pm1\})\times E\right)=\emptyset\,\text{ for each non-identity $q\in Q_8$},\\
&\Fix(\alpha\beta^{j}\gamma^{k})\cap \left(\{\pm\alpha\}\times E\right)=\emptyset,\\
&\Fix(\alpha^i\beta\gamma^{k})\cap \left(\{\pm\beta\}\times E\right)=\emptyset,\\
&\Fix(\alpha^{i}\beta^{j}\gamma)\cap \left(\{\pm\gamma\}\times E\right)=\emptyset.
\end{align*}
This implies that for any $s\in M(G)$ with $s\ne 1$, the fixed point ratio $|\Fix(s)|/|G|$ on $G$ is at most $3/4$. Thus, the number of orbits of $\langle s\rangle$ on $G$ is at most $7|G|/8$, as the lemma asserts.
\end{proof}

We are now ready to prove the asymptotic result for $m$-Cayley graphs of $Q_8\times C_2^{\ell}$.

\begin{proposition}\label{PROP012}
Fix an integer $m\geq2$, and let $G=Q_8\times C_2^{\ell}$ with order~$n$. When $n$ is sufficiently large, the number of inverse-closed set-matrices $\mathcal{S}$ of $G$ such that $\mg\Cay(G,\mathcal{S})$ is not a $\GMSR$ is less than $m^22^{d}/\sqrt{n}$, where $d=\binom{m}{2}n+mc(G)$.
\end{proposition}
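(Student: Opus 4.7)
My plan mirrors the four-piece decomposition of Proposition~\ref{PROP011}, replacing the generic overgroup $G\rtimes\langle\iota\rangle$ (governing the abelian case) with $M(G)=\langle G,\iota_\textbf{i},\iota_\textbf{j},\iota_\textbf{k}\rangle$ from Notation~\ref{NOT002}, which by Theorem~\ref{THM007} plays the analogous role as the generic automorphism group of $\Cay(G,S)$ for inverse-closed $S\subseteq G$. Let $\mathcal{Z}$ denote the set of inverse-closed set-matrices $\mathcal{S}$ of $G$ for which $\mg\Cay(G,\mathcal{S})$ is not a $\GMSR$. I will decompose $\mathcal{Z}=\mathcal{Z}_1\cup\mathcal{Z}_2\cup\mathcal{Z}_3\cup\mathcal{Z}_4$ and estimate each piece. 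First, $\mathcal{Z}_1$ consists of those $\mathcal{S}$ for which $\Aut(\mg\Cay(G,\mathcal{S}))$ fails to stabilise some $G_i$; Lemma~\ref{LEM024} directly yields $|\mathcal{Z}_1|\leq m(m-1)\cdot 2^d/\sqrt{n}$, which supplies the dominant term.

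For $\mathcal{S}\in\mathcal{Z}\setminus\mathcal{Z}_1$, $\Aut(\mg\Cay)$ stabilises each $G_i$ and induces a subgroup $A_i\leq\Aut(\Cay(G,S_{i,i}))$; I take $\mathcal{Z}_2\subseteq\mathcal{Z}\setminus\mathcal{Z}_1$ to collect those $\mathcal{S}$ with $A_i\neq M(G)$ for some $i$, and applying Theorem~\ref{THM007} to the $m$ diagonal entries will give $|\mathcal{Z}_2|\leq m\cdot 2^{d-n/512+2\log_2^2 n+2}$. Next, with $\mathcal{Z}_3=\{\mathcal{S}\in\mathcal{Z}\setminus(\mathcal{Z}_1\cup\mathcal{Z}_2):\Aut(\mg\Cay(G,\mathcal{S}))\neq M(G)\}$, paralleling the $\mathcal{Z}_3$ step of Proposition~\ref{PROP011}, the condition $A_i=M(G)$ for every $i$ combined with $\Aut(\mg\Cay)\neq M(G)$ will force the restriction kernel $K_i$ on some $G_i$ to be nontrivial. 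A nonidentity $k\in K_i$ fixes $G_i$ pointwise but acts on some $G_j$ as a nonidentity element $\alpha_{ij}\in M(G)$, forcing $S_{i,j}$ to be $\langle\alpha_{ij}\rangle$-invariant. Lemma~\ref{LEM026} bounds the orbit count by $7n/8$; summing over the at most $m(m-1)(8n-1)$ triples $(i,j,\alpha_{ij})$ yields $|\mathcal{Z}_3|\leq m(m-1)\cdot 8n\cdot 2^{d-n/8}$.

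The remaining piece $\mathcal{Z}_4$ corresponds to $\Aut(\mg\Cay(G,\mathcal{S}))=M(G)$ acting diagonally on $V(\mg\Cay)$. Since each $\iota_\textbf{u}$ fixes $1\in G$ and hence each $1_i$, every entry of $\mathcal{S}$ must be a union of orbits of $\langle\iota_\textbf{i},\iota_\textbf{j},\iota_\textbf{k}\rangle$ on $G$. I will verify by direct enumeration that this subgroup has exactly $5n/8$ orbits on $G$ (the $n/4$ singletons in $\{\pm 1\}\times E$ together with the $3n/8$ pairs $\{\pm\textbf{u} e\}$); since $\iota=\iota_\textbf{i}\iota_\textbf{j}\iota_\textbf{k}$, this partition also coincides with the orbit partition of the inverse map, so $c(G)=5n/8$. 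Combined with $d=m(m-1)n/2+5mn/8$ this gives $|\mathcal{Z}_4|\leq 2^{5mn(m+1)/16}=2^{d-3mn(m-1)/16}$.

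Summing the four estimates, only $|\mathcal{Z}_1|$ contributes at the $1/\sqrt{n}$ scale while $|\mathcal{Z}_2|, |\mathcal{Z}_3|, |\mathcal{Z}_4|$ all decay exponentially in $n$, so $|\mathcal{Z}|<m^2\cdot 2^d/\sqrt{n}$ for sufficiently large $n$. The main obstacle will be the $\mathcal{Z}_3$ step, where I must verify that under $\mathcal{S}\notin\mathcal{Z}_1\cup\mathcal{Z}_2$ the condition $\Aut(\mg\Cay)\neq M(G)$ genuinely forces some kernel $K_i$ to be nontrivial rather than allowing a twisted embedding of $M(G)$ into $\prod_i A_i$; I expect this to resolve by the same kernel-based argument used in Proposition~\ref{PROP011}, exploiting that the diagonal $G\leq\Aut(\mg\Cay)$ already pins down much of the cross-orbit structure, so any additional automorphism witnessed by the $A_i=M(G)$ condition must collapse in the restriction to at least one $G_i$.
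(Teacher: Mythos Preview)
Your decomposition is more elaborate than necessary, and the obstacle you flag in the $\mathcal{Z}_3$ step is genuine and does not resolve the way you hope. The paper uses only \emph{three} pieces, merging your $\mathcal{Z}_3$ and $\mathcal{Z}_4$ into a single set $\mathcal{Z}\setminus(\mathcal{Z}_1\cup\mathcal{Z}_2)$ and handling it via the point-stabiliser rather than the restriction kernel. Concretely: for $\mathcal{S}\in\mathcal{Z}\setminus(\mathcal{Z}_1\cup\mathcal{Z}_2)$ and $x\in G_1$, one has $A_x>1$ automatically (since $A>G$), and some $a\in A_x\setminus\{1\}$ must act nontrivially on some $G_j$ with $j\geq 2$; this forces $S_{1,j}$ to be $\langle a|_{G_j}\rangle$-invariant with at most $7n/8$ orbits by Lemma~\ref{LEM026}. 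No separate $\mathcal{Z}_4$ is needed, and the count $(m-1)\cdot 2^d/2^n\cdot 2n\cdot 2^{7n/8}$ already covers the diagonal case.

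Your kernel-based claim for $\mathcal{Z}_3$ (``$A_i=M(G)$ for all $i$ and $A\neq M(G)$ forces some $K_i>1$'') can fail: if every $K_i=1$ then $|A|=8n$ and $A$ sits inside $\prod_i M(G)$ as the graph of automorphisms $\phi_i\in\Aut(M(G))$ fixing $G$ pointwise, and such $\phi_i$ need not be the identity, so $A$ can be a genuinely twisted diagonal. The fix is not to chase this down but to observe that even when all $K_i=1$, the stabiliser $A_x$ (of order $8$) injects into each $A_j=M(G)$ and hence acts nontrivially on every $G_j$; so the stabiliser argument succeeds where the kernel argument stalls. Replacing your $\mathcal{Z}_3\cup\mathcal{Z}_4$ analysis with this single stabiliser step both closes the gap and shortens the proof.
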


\begin{proof}
We use Notation~\ref{NOT002}. Let $\mathcal{Z}$ be the set of inverse-closed set-matrices $\mathcal{S}$ of $G$ such that $\mg\Cay(G,\mathcal{S})$ is not a $\GMSR$,
and let
\[
\mathcal{Z}_1=\{\mathcal{S}\in\mathcal{Z}\mid\text{there exists $i\in\{1,\ldots,m\}$ such that $\Aut(\mg\Cay(G,\mathcal{S}))$ does not stabilize $G_i$}\}.
\]
By Lemma~\ref{LEM024}, the cardinality of $\mathcal{Z}_1$ satisfies
\begin{equation}\label{eq105}
|\mathcal{Z}_1|\leq m(m-1)2^{d}/\sqrt{n}.
\end{equation}
By the definition of $\mathcal{Z}_1$, every $\mathcal{S}\in\mathcal{Z}\setminus\mathcal{Z}_1$ is such that $\Aut(\mg\Cay(G,\mathcal{S}))$ stabilizes each $G_i$ and hence induces a subgroup $A_i$ of $\Aut(\Cay(G,S_{i,i}))$ on $G_i$. Let
\[
\mathcal{Z}_2=\{\mathcal{S}\in\mathcal{Z}\setminus\mathcal{Z}_1\mid\text{there exists $i\in\{1,\ldots,m\}$ such that $A_i\ncong M(G)$}\}.
\]
Then we derive from Theorem~\ref{THM007} that
\begin{equation}\label{eq106}
|\mathcal{Z}_2|\leq m\cdot\frac{2^{d}}{2^{c(G)}}\cdot 2^{c(G)-\frac{n}{512}+2\log^2_2n+2}=m\cdot2^{d-\frac{n}{512}+2\log^2_2n+2}.
\end{equation}

Now consider $\mathcal{S}\in\mathcal{Z}\setminus(\mathcal{Z}_1\cup\mathcal{Z}_2)$. As $\Aut(\mg\Cay(G,\mathcal{S}))>G$, we have $\Aut(\mg\Cay(G,\mathcal{S}))_x>1$ for any $x\in G_1$. Moreover, since $\mathcal{S}\in\mathcal{Z}\setminus(\mathcal{Z}_1\cup\mathcal{Z}_2)$, there exists $j\in\{2,\ldots,m\}$ such that some element of $\Aut(\mg\Cay(G,\mathcal{S}))_x$ induces a nontrivial permutation $\alpha_{j}\in M(G)$ on $G_j$. Hence $S_{1,j}$ is a union of some $\langle\alpha_{j}\rangle$-orbits on $G_j$. On the other hand, according to Lemma~\ref{LEM026}, there are at most $7n/8$ orbits of $\langle\alpha_{j}\rangle$ on $G_j$. Hence we deduce that the number of choices of $S_{1,j}$ is at most $2n\cdot2^{7n/8}$. Consequently,
\begin{equation}\label{eq128}
|\mathcal{Z}\setminus(\mathcal{Z}_1\cup\mathcal{Z}_2)|\leq
(m-1)\frac{2^{d}}{2^{n}}\cdot
2n\cdot2^{\frac{7}{8}n}\leq (m-1)2^{d-\frac{n}{8}+\log_2n+1}.
\end{equation}

Combining~\eqref{eq105}--\eqref{eq128}, we obtain for sufficiently large $n$ that
\[
|\mathcal{Z}|=|\mathcal{Z}_1|+|\mathcal{Z}_2|+|\mathcal{Z}\setminus(\mathcal{Z}_1\cup\mathcal{Z}_2)|< m^2\frac{2^{d}}{\sqrt{n}},
\]
which completes the proof.
\end{proof}

In the rest of this section, we handle the case $\Dic(A,y,x)\ncong Q_8\times C_2^{\ell}$.

\begin{notation}\label{NOT003}
Let $G=\Dic(A,y,x)$ be a generalised dicyclic group such that $G\not\cong Q_8\times C_2^{\ell}$, where $Q_8$ is the quaternion group. Let $\iota$ be the automorphism of $G$ fixing $A$ pointwise and mapping every element of $G\setminus A$ to its inverse, and let $M(G)=G\rtimes\langle\iota\rangle$.
\end{notation}

Similarly, we need the following asymptotic result and technical lemma.

\begin{theorem}{\rm(\cite[Theorems~1.5]{MSV2015})}\label{THM008}
Let $G=\Dic(A,y,x)$ be a generalised dicyclic group of order $n$ such that $G\not\cong Q_8\times C_2^{\ell}$ for any $\ell\geq0$. Then the number of inverse-closed subsets $S$ of $G$ with $\Aut(\Cay(G,S))\neq M(G)$ is at most $2^{c(G)-n/48+2\log^2_2n+5}$.
\end{theorem}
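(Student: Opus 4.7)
Theorem~\ref{THM008} is imported verbatim from~\cite[Theorem~1.5]{MSV2015}, so strictly speaking the ``proof'' in the present paper is just that citation. If I instead had to reprove it from scratch, I would follow the same scheme as in~\cite{DSV2016,MSV2015} and as echoed in Sections~\ref{SEC3}--\ref{SEC4} of this paper. Write $A=\Aut(\Cay(G,S))$ and assume $A>M(G)$, aiming to count the $S$ for which this happens. The first split is whether the regular subgroup $G$ is normal in $A$ or not.

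If $G\trianglelefteq A$, then the point stabiliser $A_{1}$ embeds into $\Aut(G)$ and preserves $S$ setwise. Since $A>M(G)$, there exists $\alpha\in\Aut(G)\setminus\langle\iota\rangle$ with $S^{\alpha}=S$. For each such $\alpha$ the number of $\alpha$-invariant inverse-closed subsets of $G$ is $2^{c_{\alpha}}$, where $c_{\alpha}$ counts the orbits of $\langle\alpha,\iota\rangle$ on $G$ paired off under inversion in the manner of Lemma~\ref{LEM001}. The hypothesis $G\not\cong Q_{8}\times C_{2}^{\ell}$ is exactly what forces a uniform upper bound on the fixed-point ratio $|\Fix(\alpha)|/|G|$ for every $\alpha\in\Aut(G)\setminus\langle\iota\rangle$ (roughly $1-1/24$), which gives $c_{\alpha}\leq c(G)-n/48$. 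Summing over the at most $|\Aut(G)|\leq 2^{\log_{2}^{2}n}$ candidate $\alpha$'s via Lemma~\ref{LEM004}\eqref{LEM004.3} then produces the claimed bound in this case.

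If instead $G\not\trianglelefteq A$, pick a minimal transitive overgroup $G<M\leq A$ and run an O'Nan--Scott-style analysis of the primitive quotient acting on $[M\,{:}\,G]$, in the spirit of Section~\ref{SEC4}. The primitive types (affine, almost simple, product action, diagonal, twisted wreath) reduce to a short list; in each, the number of inverse-closed $S$ that can accommodate such an overgroup is estimated by a combination of Lemma~\ref{LEM009} and case-by-case group-theoretic bounds, and turns out to be negligible relative to the main term. Combining the two cases and absorbing the overhead $|\Aut(G)|$ into the factor $2^{2\log_{2}^{2}n+5}$ yields the stated bound $2^{c(G)-n/48+2\log_{2}^{2}n+5}$.

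The decisive obstacle is the quantitative fixed-point-ratio lemma underlying the normal case. Showing that every $\alpha\in\Aut(G)\setminus\langle\iota\rangle$ fixes at most $(1-c)|G|$ points for some explicit $c>0$ requires a fine description of $\Aut(\Dic(A,y,x))$ together with a careful verification that the extremal $\alpha$'s occur precisely when $G\cong Q_{8}\times C_{2}^{\ell}$ --- which is exactly the case excluded here, and which is why Theorem~\ref{THM007} is forced to enlarge the group $G\rtimes\langle\iota\rangle$ to $M(Q_{8}\times C_{2}^{\ell})$ in Notation~\ref{NOT002}. It is this delicate structural combinatorics of generalised dicyclic groups, rather than any soft combinatorial counting, that carries the weight of~\cite{MSV2015}.
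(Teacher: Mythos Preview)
Your proposal is correct: the paper does not prove Theorem~\ref{THM008} but simply imports it from~\cite[Theorem~1.5]{MSV2015}, exactly as you say in your first sentence. Your supplementary sketch of how~\cite{MSV2015} proceeds is a reasonable outline of that paper's strategy, but it goes beyond what the present paper does, which is just the citation.
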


\begin{lemma}\label{LEM027}
Let $G=\Dic(A,y,x)$ be a generalised dicyclic group such that $G\not\cong Q_8\times C_2^{\ell}$. Then for each non-identity $s\in M(G)$, the number of orbits of $\langle s\rangle$ on $G$ is at most $3|G|/4$.
\end{lemma}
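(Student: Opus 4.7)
The plan is to reduce the claim to bounding the fixed-point set of $s$. Since every non-trivial orbit of $\langle s\rangle$ on $G$ has size at least $2$, the number of orbits is at most $|\Fix(s)|+(|G|-|\Fix(s)|)/2=(|G|+|\Fix(s)|)/2$, so the lemma follows once I show $|\Fix(s)|\leq |G|/2$ for every non-identity $s\in M(G)$.

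For $s\in G\setminus\{1\}$ this is immediate, because the right regular representation of $G$ on itself is free. The interesting case is $s=g\iota$ with $g\in G$. Following the action convention used in Lemma~\ref{LEM025}, this sends $h\in G$ to $\iota(hg)$, so the fixed points are solutions of $h=\iota(hg)$.

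Next I would split according to the coset of $hg$ in the disjoint union $G=A\sqcup Ax$. If $hg\in A$ then $\iota(hg)=hg$, forcing $g=1$; the corresponding $s=\iota$ fixes exactly $A$, of size $|G|/2$. If $hg\in Ax$ then $\iota(hg)=(hg)^{-1}$ and the fixed-point equation collapses to $h^2=g^{-1}$. At this point I invoke the two defining features of $\Dic(A,y,x)$: the subgroup $A$ is closed under squaring, while every element of $Ax$ squares to $y$. A short case check on whether $g\in A$ or $g\in Ax$ then shows that either $|\Fix(g\iota)|=0$, or else $g=y$, in which case every $h\in Ax$ is fixed and $|\Fix(y\iota)|=|Ax|=|G|/2$.

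Putting the cases together gives $|\Fix(s)|\leq|G|/2$ for every non-identity $s\in M(G)$, whence the orbit count is at most $(|G|+|G|/2)/2=3|G|/4$, as required. The whole argument is essentially bookkeeping of the cosets $A$ and $Ax$ for $g$, $h$, and $hg$, and no step is genuinely difficult; it is worth noting that the hypothesis $G\not\cong Q_8\times C_2^{\ell}$ is not used in this proof itself and is only relevant elsewhere (for instance when invoking Theorem~\ref{THM008}).
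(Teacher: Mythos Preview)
Your proof is correct and follows essentially the same approach as the paper: both reduce to showing $|\Fix(s)|\le |G|/2$ via the coset decomposition $G=A\cup Ax$, with the paper listing the relevant intersection facts ($\Fix(ax\iota)\cap Ax=\emptyset$, $\Fix(a\iota)\cap A=\emptyset$ for $a\ne 1$, $\Fix(\iota)\cap Ax=\emptyset$) while you carry out the underlying computation explicitly. Your remark that the hypothesis $G\not\cong Q_8\times C_2^\ell$ is not used in the argument itself is also accurate.
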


\begin{proof}
Note that $M(G)=G\cup G\iota$ and $G=A\cup Ax$. It is straightforward to verify that
\begin{align*}
&\Fix(g)=\emptyset\,\text{ for each $g\in G$},\\
&\Fix(ax\iota)\cap Ax=\emptyset\,\text{ for each $a\in A$},\\
&\Fix(a\iota)\cap A=\emptyset\,\text{ for each non-identity $a\in A$},\\
&\Fix(\iota)\cap Ax=\emptyset.
\end{align*}
This implies that for each $s\in M(G)$, we have $|\Fix(s)|\leq|G|/2$. Hence the number of orbits of $\langle s\rangle$ on $G$ is at most
\[
|\Fix(s)|+\frac{|G|-|\Fix(s)|}{2}=\frac{|G|+|\Fix(s)|}{2}\leq\frac{3}{4}|G|,
\]
as the lemma asserts.
\end{proof}

We close this section with the following proposition.

\begin{proposition}\label{PROP013}
Fix an integer $m\geq2$. Let $G$ be a generalised dicyclic group of order $n$ such that $G\not\cong Q_8\times C_2^{\ell}$ for each $\ell\geq0$. When $n$ is sufficiently large, the number of inverse-closed set-matrices $\mathcal{S}$ of $G$ such that $\mg\Cay(G,\mathcal{S})$ is not a $\GMSR$
is less than $m^22^{d}/\sqrt{n}$, where $d=\binom{m}{2}n+mc(G)$.
\end{proposition}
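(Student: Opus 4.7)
The proof will follow the exact template of Proposition~\ref{PROP012}, with the substitutions $M(G)=G\rtimes\langle\iota\rangle$ from Notation~\ref{NOT003}, Theorem~\ref{THM008} in place of Theorem~\ref{THM007}, and Lemma~\ref{LEM027} in place of Lemma~\ref{LEM026}. First I would let $\mathcal{Z}$ denote the set of inverse-closed set-matrices $\mathcal{S}$ such that $\mg\Cay(G,\mathcal{S})$ is not a $\GMSR$, and partition $\mathcal{Z}$ into three regions measured by how close $\Aut(\mg\Cay(G,\mathcal{S}))$ is to $G$ on each fibre $G_i$.

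Specifically: $\mathcal{Z}_1$ consists of $\mathcal{S}\in\mathcal{Z}$ for which $\Aut(\mg\Cay(G,\mathcal{S}))$ fails to stabilize some $G_i$; $\mathcal{Z}_2$ consists of $\mathcal{S}\in\mathcal{Z}\setminus\mathcal{Z}_1$ for which the induced group $A_i\leq\Aut(\Cay(G,S_{i,i}))$ on some fibre $G_i$ differs from $M(G)$; and the remainder is $\mathcal{Z}\setminus(\mathcal{Z}_1\cup\mathcal{Z}_2)$. Lemma~\ref{LEM024} gives $|\mathcal{Z}_1|\leq m(m-1)2^d/\sqrt{n}$ directly. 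For $\mathcal{Z}_2$, applying Theorem~\ref{THM008} to the diagonal entry $S_{i,i}$ (contributing a factor $2^{c(G)-n/48+2\log_2^2n+5}$) and bounding the remaining entries of $\mathcal{S}$ crudely by $2^{d-c(G)}$, I would obtain $|\mathcal{Z}_2|\leq m\cdot2^{d-n/48+2\log_2^2n+5}$.

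For $\mathcal{S}\in\mathcal{Z}\setminus(\mathcal{Z}_1\cup\mathcal{Z}_2)$, the argument mirrors the final step of Proposition~\ref{PROP012}: since $G$ is semiregular on $G_1$ and $\Aut(\mg\Cay(G,\mathcal{S}))>G$, the stabilizer $\Aut(\mg\Cay(G,\mathcal{S}))_x$ is nontrivial for every $x\in G_1$, and it produces a non-identity $\sigma$ which induces a nontrivial permutation $\alpha_j\in M(G)$ on some $G_j$ with $j\in\{2,\ldots,m\}$. Then $S_{1,j}$ must be a union of $\langle\alpha_j\rangle$-orbits on $G_j$, and Lemma~\ref{LEM027} bounds the number of such orbits by $3n/4$. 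Since $|M(G)|=2n$ bounds the number of candidates for $\alpha_j$, there are at most $2n\cdot2^{3n/4}$ choices for $S_{1,j}$. Multiplying by $m-1$ choices for $j$ and $2^{d-n}$ choices for the remaining entries of $\mathcal{S}$ gives
\[
|\mathcal{Z}\setminus(\mathcal{Z}_1\cup\mathcal{Z}_2)|\leq(m-1)\cdot2^{d-n}\cdot2n\cdot2^{3n/4}\leq(m-1)\cdot2^{d-n/4+\log_2n+1}.
\]

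Summing the three bounds, the $|\mathcal{Z}_1|$ term dominates for large $n$, yielding $|\mathcal{Z}|<m^22^d/\sqrt{n}$ as required. The main obstacle is the existence of the index $j\in\{2,\ldots,m\}$ in the third step: one must justify that a nontrivial stabilizer element cannot act trivially on every $G_j$ with $j\neq 1$. This is handled exactly as in the proof of Proposition~\ref{PROP012}, using that the induced action on $G_1$ lies inside $M(G)$ (since $\mathcal{S}\notin\mathcal{Z}_2$), so its point stabilizer $M(G)_x$ has bounded order and any nontrivial stabilizer element of $\Aut(\mg\Cay(G,\mathcal{S}))$ must then act nontrivially on some $G_j$ with $j\neq 1$. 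All remaining steps are routine counting.
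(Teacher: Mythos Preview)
Your proposal is correct and follows essentially the same approach as the paper's proof: the same three-way split into $\mathcal{Z}_1$, $\mathcal{Z}_2$, and $\mathcal{Z}\setminus(\mathcal{Z}_1\cup\mathcal{Z}_2)$, with Lemma~\ref{LEM024}, Theorem~\ref{THM008}, and Lemma~\ref{LEM027} supplying the respective bounds $m(m-1)2^d/\sqrt{n}$, $m\cdot 2^{d-n/48+2\log_2^2 n+5}$, and $(m-1)\cdot 2^{d-n/4+\log_2 n+1}$. Your identification of the existence of the index $j$ as the only subtle point is apt, and your treatment of it is at the same level of detail as the paper's.
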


\begin{proof}
Let $\mathcal{Z}=\{\mathcal{S}\mid\mathcal{S}\text{ is inverse-closed},\,\Aut(\mg\Cay(G,\mathcal{S}))>G\}$ and let
\[
\mathcal{Z}_1=\{\mathcal{S}\in\mathcal{Z}\mid\text{there exists $i\in\{1,\ldots,m\}$ such that $\Aut(\mg\Cay(G,\mathcal{S}))$ does not stabilize $G_i$}\}.
\]
By Lemma~\ref{LEM024}, the cardinality of $\mathcal{Z}_1$ satisfies
\begin{equation}\label{eq058}
|\mathcal{Z}_1|\leq m(m-1)2^{d}/\sqrt{n}.
\end{equation}
By the definition of $\mathcal{Z}_1$, every $\mathcal{S}\in\mathcal{Z}\setminus\mathcal{Z}_1$ is such that $\Aut(\mg\Cay(G,\mathcal{S}))$ stabilizes each $G_i$ and hence induces a subgroup $A_i$ of $\Aut(\Cay(G,S_{i,i}))$ on $G_i$. Let
\[
\mathcal{Z}_2=\{\mathcal{S}\in\mathcal{Z}\setminus\mathcal{Z}_1\mid\text{there exists $i\in\{1,\ldots,m\}$ such that $A_i\ncong M(G)$}\}.
\]
Then we derive from Theorem~\ref{THM008} that
\begin{equation}\label{eq059}
|\mathcal{Z}_2|\leq m\cdot\frac{2^{d}}{2^{c(G)}}\cdot 2^{c(G)-\frac{n}{48}+2\log^2_2n+5}=m\cdot2^{d-\frac{n}{48}+2\log^2_2n+5}.
\end{equation}

Now consider $\mathcal{S}\in\mathcal{Z}\setminus(\mathcal{Z}_1\cup\mathcal{Z}_2)$. As $\Aut(\mg\Cay(G,\mathcal{S}))\neq G$, it holds $\Aut(\mg\Cay(G,\mathcal{S}))_v>1$ for any $v\in G_1$. Moreover, since $\mathcal{S}\in\mathcal{Z}\setminus(\mathcal{Z}_1\cup\mathcal{Z}_2)$, there exists $j\in\{2,\ldots,m\}$ such that some element of $\Aut(\mg\Cay(G,\mathcal{S}))_v$ induces a nontrivial permutation $\alpha_{j}\in M(G)$ on $G_j$. Hence $S_{1,j}$ is a union of some $\langle\alpha_{j}\rangle$-orbits on $G_j$. According to Lemma~\ref{LEM027}, there are at most $3n/4$ orbits of $\langle\alpha_{j}\rangle$ on $G_j$. Hence we derive that the choices of $S_{1,j}$ is at most $|M(G)|\cdot2^{3n/4}=2n\cdot2^{3n/4}$. Consequently,
\begin{equation}\label{eq060}
|\mathcal{Z}\setminus(\mathcal{Z}_1\cup\mathcal{Z}_2)|\leq
(m-1)\frac{2^{d}}{2^{n}}\cdot
2n\cdot2^{\frac{3}{4}n}\leq (m-1)2^{d-\frac{n}{4}+\log_2n+1}.
\end{equation}

Combining~\eqref{eq058}--\eqref{eq060}, we obtain for sufficiently large $n$ that
\[
|\mathcal{Z}|=|\mathcal{Z}_1|+|\mathcal{Z}_2|+|\mathcal{Z}\setminus(\mathcal{Z}_1\cup\mathcal{Z}_2)|< m^2\frac{2^{d}}{\sqrt{n}},
\]
which completes the proof.
\end{proof}

\subsection{\boldmath{$m$}-Cayley graphs on other groups}\label{SUBSEC7.3}

In view of~Sections~\ref{SUBSEC7.1} and~\ref{SUBSEC7.2}, we are left to deal with the case that $G$ is neither abelian with exponent greater than $2$ nor generalised dicyclic.

\begin{proposition}\label{PROP014}
Fix an integer $m\geq2$. Let $G$ be a group of order $n$ such that $G$ is neither abelian of exponent greater than $2$ nor generalised dicyclic. When $n$ is sufficiently large, the number of inverse-closed set-matrices $\mathcal{S}$ of $G$ such that
$\mg\Cay(G,\mathcal{S})$ is not a $\GMSR$
is less than $m^22^{d}/\sqrt{n}$, where $d=\binom{m}{2}n+mc(G)$.
\end{proposition}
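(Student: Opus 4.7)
The plan is to follow the three-step partition used in the proofs of Propositions~\ref{PROP011},~\ref{PROP012} and~\ref{PROP013}, replacing the group-specific Cayley graph results (Theorems~\ref{THM006},~\ref{THM007} and~\ref{THM008}) by the Xia--Zheng theorem (Theorem~\ref{THM003}), which applies precisely because $G$ is by hypothesis neither abelian of exponent greater than $2$ nor generalised dicyclic. Let $\mathcal{Z}$ denote the set of inverse-closed set-matrices $\mathcal{S}$ of $G$ such that $\mg\Cay(G,\mathcal{S})$ fails to be a $\GMSR$, and partition $\mathcal{Z}$ into three pieces to be bounded separately.

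First, define $\mathcal{Z}_1$ to be the set of $\mathcal{S}\in\mathcal{Z}$ for which $\Aut(\mg\Cay(G,\mathcal{S}))$ does not stabilize some $G_i$. Lemma~\ref{LEM024} immediately yields $|\mathcal{Z}_1|\le m(m-1)\,2^d/\sqrt{n}$, the only term of the size required by the target bound. For $\mathcal{S}\in\mathcal{Z}\setminus\mathcal{Z}_1$ the group $\Aut(\mg\Cay(G,\mathcal{S}))$ stabilizes each $G_i$ and induces on it a subgroup $A_i\le\Aut(\Cay(G,S_{i,i}))$. Let $\mathcal{Z}_2$ be the set of such $\mathcal{S}$ for which $A_i\not\cong G$ for some $i$. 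Invoking Theorem~\ref{THM003} on each diagonal entry gives
\[
|\mathcal{Z}_2|\le m\cdot 2^{d-c(G)}\cdot 2^{c(G)-n^{0.499}/(8\log^3_2 n)+\log^2_2 n+3}=m\cdot 2^{d-n^{0.499}/(8\log^3_2 n)+\log^2_2 n+3},
\]
which is negligible compared with $2^d/\sqrt{n}$ for large $n$.

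The third piece $\mathcal{Z}\setminus(\mathcal{Z}_1\cup\mathcal{Z}_2)$ is handled by the orbit-union argument familiar from the earlier propositions. For any such $\mathcal{S}$, every $A_i$ is isomorphic to $G$ and hence acts regularly on $G_i$, while $\Aut(\mg\Cay(G,\mathcal{S}))>G$ forces the stabilizer of $(1,1)\in G_1$ to contain a non-identity element $f$. Regularity of $A_1$ forces $f$ to fix $G_1$ pointwise, so for some $j\ne 1$ the permutation $\alpha_j$ that $f$ induces on $G_j$ is a non-identity element of the regular group $A_j\cong G$. Since $f$ fixes $(1,1)$ and is a graph automorphism, $S_{1,j}$ must be a union of $\langle\alpha_j\rangle$-orbits on $G_j$, and each such orbit has size $|\alpha_j|\ge 2$, giving at most $n/2$ orbits and at most $2^{n/2}$ choices for $S_{1,j}$. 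Counting at most $(m-1)(n-1)$ choices for $(j,\alpha_j)$ and the $2^{d-n}$ choices for the remaining entries yields
\[
|\mathcal{Z}\setminus(\mathcal{Z}_1\cup\mathcal{Z}_2)|\le (m-1)(n-1)\,2^{d-n/2}.
\]
Summing the three bounds then delivers $|\mathcal{Z}|<m^2\,2^d/\sqrt{n}$ for all sufficiently large $n$, and dividing by the total number $2^d$ of inverse-closed set-matrices completes the proof of Theorem~\ref{THM004} in the graph case.

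I do not anticipate any serious obstacle, since the argument is structurally identical to that of Proposition~\ref{PROP013}. The only point requiring a brief additional remark, not present in the earlier three cases, is that when every $A_i$ is isomorphic to $G$ (not merely containing $G$ as in the dicyclic or abelian setting), a non-identity stabilizer element is automatically forced to act trivially on its own part and hence non-trivially on some other part; this is what produces the orbit-union restriction on $S_{1,j}$ and gives the clean bound $n/2$ on the number of orbits, replacing the $5n/6$, $7n/8$, $3n/4$ bounds that appeared in Propositions~\ref{PROP011},~\ref{PROP012},~\ref{PROP013}.
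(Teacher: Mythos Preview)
Your proof is correct and follows essentially the same three-piece decomposition as the paper's own proof. The only cosmetic difference is in the third step: you count choices for a single element $\alpha_j\in A_j\setminus\{1\}$ (contributing a factor $n-1$), whereas the paper counts the possible nontrivial images $K_1|_{G_j}\le A_j\cong G$ via Lemma~\ref{LEM004}\eqref{LEM004.4} (contributing a factor $2^{\log_2^2 n}$); both are negligible against $2^d/\sqrt{n}$.
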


\begin{proof}
Let $\mathcal{Z}=\{\mathcal{S}\mid\mathcal{S}\text{ is inverse-closed},\,\Aut(\mg\Cay(G,\mathcal{S}))>G\}$. We first estimate the size of
\[
\mathcal{Z}_1:=\{\mathcal{S}\in\mathcal{Z}\mid\text{there exists $i\in\{1,\ldots,m\}$ such that $\Aut(\mg\Cay(G,\mathcal{S}))$ does not stabilize $G_i$}\}.
\]
Since there are $\binom{m}{2}$ choices for $(i,j)$ with $i<j$, we derive from Lemma~\ref{LEM024} that
\begin{equation}\label{eq107}
|\mathcal{Z}_1|\leq m(m-1)2^{d}/\sqrt{n}.
\end{equation}

By the definition of $\mathcal{Z}_1$, every $\mathcal{S}\in\mathcal{Z}\setminus\mathcal{Z}_1$ is such that $\Aut(\mg\Cay(G,\mathcal{S}))$ stabilizes each $G_i$ and hence induces a subgroup $A_i$ of $\Aut(\Cay(G,S_{i,i}))$ on $G_i$. Let
\[
\mathcal{Z}_2=\{\mathcal{S}\in\mathcal{Z}\setminus\mathcal{Z}_1\mid\text{there exists $i\in\{1,\ldots,m\}$ such that $A_i\not\cong G$}\}.
\]
Then we derive from Theorem~\ref{THM003} and Lemma~\ref{LEM001} that
\begin{equation}\label{eq108}
|\mathcal{Z}_2|\leq m\cdot\frac{2^{d}}{2^{c(G)}}\cdot 2^{c(G)-\frac{n^{0.499}}{8\log^3_2n}+\log_{2}^2n+3}=m\cdot 2^{d-\frac{n^{0.499}}{8\log^3_2n}+\log_{2}^2n+3}.
\end{equation}

Now consider $\mathcal{S}\in\mathcal{Z}\setminus(\mathcal{Z}_1\cup\mathcal{Z}_2)$. It follows from $\mathcal{S}\notin\mathcal{Z}_1\cup\mathcal{Z}_2$ and $\Aut(\mg\Cay(G,\mathcal{S}))>G$ that the action of $\Aut(\mg\Cay(G,\mathcal{S}))$ on each $G_i$ has kernel $K_i>1$. In particular, there exists $j\in\{2,\ldots,m\}$ such that $K_1$ acts nontrivially on $G_j$. It follows that $S_{1,j}$ is a union of some $K_1$-orbits on $G_j$. Since $A_j\cong G$ acts regularly on $G_j$, the action of $K_1$ on $G_j$ is semiregular, and so there are at most $n/2$ orbits of $K_1$ on $G_j$. Hence we derive from Lemma~\ref{LEM004}\eqref{LEM004.4} that the choices of $S_{1,j}$ is at most $2^{\log^2_2n}\cdot2^{n/2}$. Consequently,
\begin{equation}\label{eq109}
|\mathcal{Z}\setminus(\mathcal{Z}_1\cup\mathcal{Z}_2)|\leq
(m-1)\frac{2^{d}}{2^{n}}\cdot
2^{\log^2_2n+\frac{n}{2}}\leq (m-1)2^{d-\frac{n}{2}+\log^2_2n}.
\end{equation}

Combining~\eqref{eq107}--\eqref{eq109}, we obtain for sufficiently large $n$ that
\[
|\mathcal{Z}|=|\mathcal{Z}_1|+|\mathcal{Z}_2|+|\mathcal{Z}\setminus(\mathcal{Z}_1\cup\mathcal{Z}_2)|< m^2\frac{2^{d}}{\sqrt{n}},
\]
which completes the proof.
\end{proof}

\begin{proof}[Proof of Theorem~$\ref{THM004}$ for graphs]
As stated at the beginning of this section, there are exactly $2^{d}$ inverse-closed set-matrices of $G$, where $d=\binom{m}{2}n+mc(G)$. Combining this with Propositions~\ref{PROP011},~\ref{PROP012},~\ref{PROP013} and~\ref{PROP014}, we conclude that the proportion of inverse-closed set-matrices $\mathcal{S}$ of $G$ such that $\mg\Cay(G,\mathcal{S})$ is not a $\GMSR$ is less than
\[
\frac{m^22^{d}/\sqrt{n}}{2^{d}}=m^2/\sqrt{n}.\qedhere
\]
\end{proof}

\section{$m$-partite graphical semiregular representations}\label{SEC8}

Recall from Definition~\ref{DEF001} that a set-matrix $\mathcal{S}=(S_{i,j})_{m\times m}$ is skew if it is inverse-closed and $S_{i,i}=\emptyset$ for each $i\in\{1,\ldots,m\}$. Such a set-matrix $\mathcal{S}=(S_{i,j})_{m\times m}$ is uniquely determined by the subsets $S_{i,j}$ for $1\leq i<j\leq m$, and so there are exactly $2^d$ skew set-matrices of $G$, where $d=\binom{m}{2}n$.

\begin{proof}[Proof of Theorem~$\ref{THM009}$]
Write $d=\binom{m}{2}n$, and let $\mathcal{Z}$ be the set of skew set-matrices $\mathcal{S}$ of $G$ such that $\Aut(\mg\Cay(G,\mathcal{S}))>G$. Let
\[
\mathcal{Y}:=\{\mathcal{S}\in\mathcal{Z}\mid\text{there exists $i\in\{1,\ldots,m\}$ such that $\Aut(\mg\Cay(G,\mathcal{S}))$ does not stabilize $G_i$}\}.
\]
Observe that if a vertex of $G_i$ is mapped by some automorphism of $\mg\Cay(G,\mathcal{S})$ into $G_j$, then $d_i(\mathcal{S})=d_j(\mathcal{S})$. This means that, for each $\mathcal{S}\in\mathcal{Y}$, there exists a pair $(i,j)$ with $i<j$ such that $d_i(\mathcal{S})=d_j(\mathcal{S})$. Fix some $(i,j)$ with $i<j$ that satisfies $d_i(\mathcal{S})=d_j(\mathcal{S})$. Then
\begin{equation}\label{eq471}
|S_{i,j}|+\sum_{k\neq i,j}|S_{i,k}|=d_i(\mathcal{S})=
d_j(\mathcal{S})=|S_{j,i}|+\sum_{k\neq i,j}|S_{j,k}|.
\end{equation}
Take $x\in\{1,\ldots,m\}\setminus\{i,j\}$.
Then~\eqref{eq471} indicates that the size of $S_{i,x}$ is determined by the sets $S_{k,\ell}$ with $(k,\ell)\neq(i,x)$. We derive from Lemma~\ref{LEM003} that the number of $\mathcal{S}$ satisfying~\eqref{eq471} is at most $2^{d-n}\cdot2^{n}/\sqrt{n}=2^{d}/\sqrt{n}$. Since there are $\binom{m}{2}$ choices for $(i,j)$ with $i<j$, we deduce that
\begin{equation}\label{eq252}
|\mathcal{Y}|\leq\binom{m}{2}\frac{2^{d}}{\sqrt{n}}.
\end{equation}

Now consider $\mathcal{S}\in\mathcal{Z}\setminus\mathcal{Y}$. Note that for each pair $(i,j)$, the parts $G_i$ and $G_j$ induce the Haar graph $\HH(G,S_{i,j})$. Since $\Aut(\mg\Cay(G,\mathcal{S}))$ stabilizes each part and $\Aut(\mg\Cay(G,\mathcal{S}))>G$, there exists a pair $(i,j)$ with $i<j$ such that $\Aut^{+}(\HH(G,S_{i,j}))>G$. Applying Proposition~\ref{PROP010} with $\varepsilon=0.1$, we obtain that, when $n$ is sufficiently large, there are at most
\[
2^{n-\frac{n^{0.4}}{24(\log_2n)^{2.5}}+\frac{3\log_2^2n}{4}+15}
\]
choices for $S_{i,j}$. Counting the choices for $(i,j)$ with $i<j$, we conclude that
\[
|\mathcal{Z}\setminus\mathcal{Y}|
\leq\binom{m}{2}2^{d-n}\cdot2^{n-\frac{n^{0.4}}{24(\log_2n)^{2.5}}+\frac{3\log_2^2n}{4}+15}
=\binom{m}{2}2^{d-\frac{n^{0.4}}{24(\log_2n)^{2.5}}+\frac{3\log_2^2n}{4}+15}.
\]
This together with~\eqref{eq252} yields that, for sufficiently large $n$,
\[
|\mathcal{Z}|=|\mathcal{Y}|+|\mathcal{Z}\setminus\mathcal{Y}|< m^2\frac{2^{d}}{\sqrt{n}}.
\]
Since there are exactly $2^{d}$ skew set-matrices of $G$, the theorem follows.
\end{proof}

\section*{Acknowledgments}

The first author was supported by the China Scholarship Council (202306370173). The work was done during a visit of the first author to The University of Melbourne.


\begin{thebibliography}{}

\bibitem{AAS2019}
M. Arezoomand, A. Abdollahi and P. Spiga, On problems concerning fixed-point-free permutations and on the polycirculant conjecture--a survey, \emph{Trans. Comb.} 8 (2019), no.~1, 15--40.

\bibitem{Babai1980}
L. Babai, Finite digraphs with given regular automorphism groups, \emph{Period. Math. Hung.} 11 (1980), 257--270.

\bibitem{BG1982}
L. Babai and C.~D. Godsil, On the automorphism groups of almost all Cayley graphs, \emph{European J. Combin.} 3 (1982), 9--15.

\bibitem{BCP1997}
W. Bosma, J. Cannon and C. Playoust, The magma algebra system I: The user language, \emph{J. Symbolic Comput.} 24 (1997), 235--265.


\bibitem{CEP2018}
M. Conder, I. Est\'{e}lyi and T. Pisanski, Vertex-transitive Haar graphs that are not Cayley graphs, \emph{Discrete geometry and symmetry}, 61–70, \emph{Springer Proc. Math. Stat.}, 234, Springer, Cham, 2018.

\bibitem{DM1996}
J.~D. Dixon and B. Mortimer, \emph{Permutation groups}, Springer-Verlag, New York, (1996).

\bibitem{Dobsen2022}
T. Dobson, On automorphisms of Haar graphs of abelian groups, \emph{Art Discrete Appl. Math.} 5 (2022), no.~3, Paper No.~3.06, 22~pp.

\bibitem{DSV2016}
E. Dobson, P. Spiga and G. Verret, Cayley graphs on abelian groups, \emph{Combinatorica} 36 (2016), 371--393.

\bibitem{DX2000}
S.~F. Du and M.~Y. Xu, A classification of semisymmetric graphs of order $2pq$, \emph{Comm. Algebra} 28 (2000), 2685--2715.

\bibitem{DFS20201}
J.-L. Du, Y.-Q. Feng and P. Spiga, A classification of the graphical $m$-semiregular representation of finite groups, \emph{J. Combin. Theory, Ser. A} 171 (2020).

\bibitem{DFS2020}
J.-L. Du, Y.-Q. Feng and P. Spiga, On Haar digraphical representations of groups, \emph{Discrete Math.} 343 (2020), 6~pp.




\bibitem{EN}
A.~L. Edmonds and Z.~B. Norwood, Finite groups with many involutions,
\href{https://arxiv.org/abs/0911.1154v1}{https://arxiv.org/abs/0911.1154v1}.

\bibitem{EP2016}
I. Est\'elyi and T. Pisanski,
Which Haar graphs are Cayley graphs?,
\emph{Electron.~J.~Combin.}, 23 (2016), no.~3, Paper 3.10, 13 pp.

\bibitem{FKWY2020}
Y.-Q. Feng, I. Kov\'{a}cs, J. Wang and D.-W. Yang, Existence of non-Cayley Haar graphs, \emph{European J. Combin.} 89 (2020), 12 pp.

\bibitem{FKY2020}
Y.-Q.~Feng, I.~Kova\' cs and D.-W.~Yang,
On groups all of whose Haar graphs are Cayley graphs,
\emph{J. Algebraic Combin.}, 52 (2020), no.~1, 59--76.

\bibitem{FusariSpiga}
M.~Fusari and P.~Spiga, On the maximum number of subgroups of a finite group, \emph{J. Algebra}, 635 (2023), 486--526.

\bibitem{GLP2004}
M. Giudici, C.~H. Li and C.~E. Praeger, Analysing finite locally $s$-arc transitive graphs, \emph{Trans. Amer. Math. Soc.} 356 (2004), no.~1, 291--317.

\bibitem{Godsil1981}
C.~D. Godsil, On the full automorphism group of a graph, \emph{Combinatorica} 1 (1981), 243--256.

\bibitem{GMPS2015}
S. Guest, J. Morris, C.~E. Praeger and P. Spiga, On the maximum orders of elements of finite almost simple groups and primitive permutation groups, \emph{Trans. Amer. Math. Soc.} 367 (2015), 7665--7694.

\bibitem{HMP2002}
M. Hladnik, D. Maru\v{s}i\v{c} and T. Pisanski, Cyclic Haar graphs, \emph{Discrete Math.} 244 (2002), no.~1-3, 137--152.

\bibitem{Imrich1978}
W. Imrich, Graphical regular representations of groups of odd order, in: Combinatorics (Proc.~Fifth Hungarian Colloq., Keszthely, 1976) Vol.~II, \emph{Colloq.~Math.~Soc.~J\'anos Bolayi}, 18 (1978), 611--621.

\bibitem{Kantor1972}
W.~M. Kantor, $k$-homogenous groups, \emph{Math. Z.} 124 (1972), 261--265.

\bibitem{KL1990}
P.~B. Kleidman and M.~W. Liebeck, \emph{The subgroup structure of the finite classical groups}, Cambridge University Press, Cambridge, 1900.

\bibitem{KK2014}
H. Koike and I. Kov\'{a}cs, Isomorphic tetravalent cyclic Haar graphs, \emph{Ars Math. Contemp.} 7 (2014), no.~1, 215--235.

\bibitem{LM1972}
H. Liebeck and D. MacHale, Groups with automorphisms inverting most elements, \emph{Math. Z.} 124 (1972), 51--63.


\bibitem{LPS1988}
M.~W. Liebeck, C.~E. Praeger and J. Saxl, On the O'Nan--Scott theorem for finite primitive permutation groups, \emph{J. Austral. Math. Soc.} 44 (1988), 389--396.

\bibitem{LPS1990}
M.~W. Liebeck, C.~E. Praeger and J. Saxl, The maximal factorizations of the finite simple groups and their automorphism groups, \emph{Mem. Amer. Math. Soc.} 86 (1990).

\bibitem{LPS1996}
M.~W. Liebeck, C.~E. Praeger and J. Saxl, On factorizations of almost simple groups, \emph{J. Algebra} 185 (1996), 409--416.

\bibitem{LPS2000}
M.~W. Liebeck, C.~E. Praeger and J. Saxl, Transitive subgroups of primitive permutation groups, \emph{J. Algebra} 234 (2000), 291--361.

\bibitem{LPS2010}
M.~W. Liebeck, C.~E. Praeger and J. Saxl, Regular subgroups of primitive permutation groups, \emph{Mem. Amer. Math. Soc.} 203 (2010).

\bibitem{Lubotzky2001}
A. Lubotzky, Enumerating boundedly Generated Finite Groups, \emph{J. Algebra} 238 (2001), 194--199.

\bibitem{Maroti2002}
A. Mar\'{o}ti, On the orders of primitive groups, \emph{J. Algebra} 258 (2002), 631--640.

\bibitem{Dragan81}
D. Maru\v{s}i\v{c}, On vertex symmetric digraphs, \emph{Discrete Math.} 36 (1981), 69--81.

\bibitem{MP1994}
B.D. McKay and C.E. Praeger, Vertex-transitive graphs which are not Cayley graphs, I, \emph{J. Austral. Math. Soc. Ser. A} 56 (1994), no.~1, 53--63.

\bibitem{MMS2022}
J. Morris, M. Moscatiello and P. Spiga, On the asymptotic enumeration of Cayley graphs, \emph{Ann. Mat. Pura Appl.} 201 (2022), 1417--1461.

\bibitem{MS2021}
J. Morris and P. Spiga, Asymptotic enumeration of Cayley digraphs, \emph{Israel J. Math.} 242 (2021), 401--459.

\bibitem{MS2024}
J. Morris and P. Spiga, Haar graphical representations of finite groups and an application to poset representations, \href{https://arxiv.org/abs/2404.12658}{https://arxiv.org/abs/2404.12658}.

\bibitem{MSV2015}
J. Morris, P. Spiga and G. Verret, Automorphisms of Cayley graphs on generalised dicyclic groups, \emph{European J. Combin.} 43 (2015), 68--81.


\bibitem{NW1972}
L.~A. Nowitz and M.~E. Watkins, Graphical regular representations of non-abelian groups. I, II, \emph{Canadian J.~Math.}, 24 (1972), 993--1018.



\bibitem{PS1997}
L. Pyber and A. Shalev, Asymptotic results for primitive permutation groups, \emph{J. Algebra} 188 (1997), 103--124.


\bibitem{Praeger1997}
C.~E. Praeger, Finite quasiprimitive graphs, \emph{Surveys in Combinatorics}, 1997 (London), Cambridge Univ. Press, (1997), 65--85.

\bibitem{Robbins1955}
H. Robbins, A remark on Stirling's formula, \emph{Amer. Math. Monthly} 62 (1955), 26--29.


\bibitem{Spiga2021}
P. Spiga, On the equivalence between a conjecture of Babai-Godsil and a conjecture of Xu concerning the enumeration of Cayley graphs, \emph{Art Discrete Appl. Math.} 4 (2021), no.~1, 1--10.

\bibitem{Spiga2024}
P. Spiga, Finite transitive groups having many suborbits of cardinality at most $2$ and an application to the enumeration of Cayley graphs, \emph{Canad. J. Math.} 76 (2024), no.~1, 345--366.

\bibitem{Stefan}
K. Stefan, A bound on the order of the outer automorphism group of finite simple group of given order, https://stefan-kohl.github.io/preprints/outbound.pdf.


\bibitem{XZ2023}
B. Xia and S. Zheng, Asymptotic enumeration of graphical regular representations, \emph{Proc. London Math. Soc.}(3) 127 (2023), 1424--1450.

\end{thebibliography}
\end{document}